\documentclass[9pt]{article}

\makeatletter
\DeclareRobustCommand{\qed}{%
  \ifmmode 
  \else \leavevmode\unskip\penalty9999 \hbox{}\nobreak\hfill
  \fi
  \quad\hbox{\qedsymbol}}
\newcommand{\openbox}{\leavevmode
  \hbox to.77778em{%
  \hfil\vrule
  \vbox to.675em{\hrule width.6em\vfil\hrule}%
  \vrule\hfil}}
\newcommand{\qedsymbol}{\openbox}
\newenvironment{proof}[1][\proofname]{\par
  \normalfont
  \topsep6\p@\@plus6\p@ \trivlist
  \item[\hskip\labelsep\itshape
    #1.]\ignorespaces
}{%
  \qed\endtrivlist
}
\newcommand{\proofname}{Proof}
\makeatother

\usepackage[utf8]{inputenc}

\usepackage{bm}

\usepackage{color}
\usepackage{latexsym}
\usepackage{dsfont}
\usepackage{amssymb}
\usepackage{comment}
\usepackage{graphicx}
\usepackage{amsmath,amsfonts,amssymb,theorem,euscript,array,enumerate,amsfonts,mathrsfs}
\usepackage{appendix}
\usepackage[T1]{fontenc}
\usepackage{babel}
\numberwithin{equation}{section}
\usepackage{bbm}
\usepackage{subfigure}
\usepackage{color}
\usepackage{stmaryrd}

\usepackage[algo2e,ruled,vlined]{algorithm2e} 
 \usepackage{algorithm}
 \usepackage{algorithmicx}
\usepackage{algpseudocode}
\usepackage{ marvosym }
\usepackage{bm}
\usepackage{xcolor, soul}

\usepackage[hidelinks]{hyperref}

\hypersetup{
  pdftitle={Quantitative finite-population approximation of heterogeneous linear-quadratic mean-field control with common noise},
  pdfauthor={Aqib Ahmed},
  pdfsubject={Stochastic control, mean-field systems, and finite-population approximation},
  pdfkeywords={linear-quadratic control, mean-field control, non-exchangeable systems, stochastic Riccati equations, finite-population approximation}
}

\usepackage{etoolbox}

\makeatletter
\patchcmd{\thebibliography}
  {\sloppy}
  {\sloppy
   \setlength{\itemsep}{6pt}
   \setlength{\parskip}{0.2pt}}
  {}{}
\makeatother

\usepackage{mathtools}
\mathtoolsset{showonlyrefs}

\usepackage{comment}

\usepackage{tikz}
\usetikzlibrary{fit,matrix,chains,positioning,decorations.pathreplacing,arrows}

\usepackage{mathtools}

\usepackage{geometry}
\usepackage{algpseudocode}
\usepackage{algorithm}

\usepackage[normalem]{ulem}

\def \b1{\bf{1}}

\def \N{\mathbb{N}}
\def \R{\mathbb{R}}

\def \E{\mathbb{E}}
\def \F{\mathbb{F}}

\def \P{\mathbb{P}}

\def \S{\mathbb{S}}

\def \H{\mathbb{H}}

\def \d{\mathrm{d}}

\def\esssup_#1{\underset{#1}{\mathrm{ess\,sup\, }}}

\def\argmin_#1{\underset{#1}{\mathrm{argmin\, }}}
\def\argmax_#1{\underset{#1}{\mathrm{argmax\, }}}

\def\dm#1{\frac{\delta}{\delta m}}

\def \Ac{{\cal A}}

\def \Fc{{\cal F}}

\def \Ic{{\cal I}}

\def \Lc{{\cal L}}
\def \Pc{{\cal P}}

\def\bx{{\boldsymbol x}}

\def \d{\mathrm{d}}

\def\beqs{\begin{eqnarray*}}
\def\enqs{\end{eqnarray*}}
\def\beq{\begin{eqnarray}}
\def\enq{\end{eqnarray}}

\newtheorem{Theorem}{Theorem}[section] 
\newtheorem{Definition}[Theorem]{Definition} 
\newtheorem{Proposition}[Theorem]{Proposition}
\newtheorem{Assumption}[Theorem]{Assumption}
\newtheorem{Lemma}[Theorem]{Lemma}
\newtheorem{Corollary}[Theorem]{Corollary}
\newtheorem{Remark}[Theorem]{Remark}

\title{Quantitative finite-population approximation of heterogeneous linear--quadratic mean-field control with common noise}

\author{Aqib AHMED\footnote{Reykjavik University, Department of Engineering, \sf aqib24 at ru.is. This author is supported by the Sustainability Institute and Forum (SIF), grant number 224089 backed by Landsnet, and from the Energy Research fund of Landsvirkjun, in Iceland} }

\date{}

\begin{document}

\maketitle

\begin{abstract}
We study the finite-population approximation of a heterogeneous linear--quadratic mean-field control problem with common noise and random coefficients. Starting from the centralized social planner problem for $N$ non-exchangeable agents, we derive its exact finite-dimensional stochastic Riccati system and identify the scaling of its diagonal, off-diagonal, affine, and scalar components. We then compare this system with the Hilbert-space-valued Riccati system of the continuum model. Our main estimates give quantitative convergence of the complete backward system, including its common-noise martingale integrands. The error separates coefficient and kernel consistency, common-noise approximation, interface effects, exact diagonal corrections, and the mass of the limiting interaction kernel on the discrete diagonal band. We propagate these estimates to the feedback gains and, by a cellwise coupling, to the optimal states, controls, and initial social values. Finally, a cellwise projection of the limiting representative-agent feedback yields a decentralized finite-population strategy whose optimality gap vanishes. Under the stated regularity assumptions, the error bounds are expressed through explicit approximation and concentration moduli. Quantitative bounds on these moduli yield algebraic rates, including an $N^{-1/2}$ rate for the backward system in finite-type regimes; the rates for optimal trajectories additionally reflect the available conditional moment bounds.
\end{abstract}

\vspace{5mm}

\noindent {\bf MSC Classification}:  49N10; 93E20; 60H10; 60K35; 49N80   

\vspace{5mm}

\noindent {\bf Key words}: Non-exchangeable mean field systems, random coefficients, linear quadratic mean-field control, Riccati equation, quantitative finite-population approximation, asymptotic optimality


\section{Introduction}

Linear--quadratic stochastic control is one of the few settings in which optimal feedbacks and value functions can be characterized explicitly. With random coefficients, the relevant objects are backward stochastic Riccati equations (BSREs), initiated by \cite{Bismut1976LQ} and developed, among others, in \cite{Tang2003GeneralLQ,HuZhou2003IndefiniteSRE,Tang2015LQ}. The
linear--quadratic structure is equally important in McKean--Vlasov control, where it provides tractable models with random coefficients and common noise; see \cite{Pham2016LQConditional,BaseiPham2019WeakMartingale} and the monographs \cite{CarmonaDelarue2018a,CarmonaDelarue2018b}.

Classical mean-field models rely on exchangeability. Many networked populations, however, contain agents with persistent labels and heterogeneous, possibly asymmetric, interaction intensities. Graphons and related kernel descriptions provide a natural continuum language for such systems; see \cite{Lovasz2012,BorgsChayesLovaszSosVesztergombi2008} and, for stochastic systems and their finite approximations, \cite{BayraktarChakrabortyWu2023,BayraktarWu2023Concentration, BayraktarKim2024Concentration,JabinPoyatoSoler2025}. Graphon games and their linear--quadratic specializations are studied in \cite{CainesHuang2021GraphonGames,AurellCarmonaLauriere2022, BayraktarWuZhang2023,LackerSoret2023}. Common-noise graphon particle systems have recently been investigated in \cite{BayraktarHeKim2026}.

For control rather than games, a general non-exchangeable mean-field framework, its law invariance property, and its dynamic programming equation were developed in \cite{DeCrescenzoFuhrmanKharroubiPham2026}. The linear--quadratic model without common noise was solved in \cite{DeCrescenzoDeFeoPham2026}, while a stochastic maximum principle was derived in \cite{KharroubiMekkaouiPham2025}. The continuum problem used in the present paper is directly based on \cite{DeFeoMekkaoui2025}, which treats heterogeneous linear--quadratic mean-field control with common noise, proves well-posedness of the associated infinite-dimensional stochastic Riccati system, and gives financial applications. Related recent directions include controlled interaction structures \cite{Djete2025} and quantitative non exchangeable mean-field Markov decision processes with common noise
\cite{MekkaouiPham2026}.

Finite-network control approximation has also been studied directly. Gao and Caines \cite{GaoCaines2020GraphonControl} develop graphon-based approximations for deterministic linear--quadratic network control, while Dunyak and Caines \cite{DunyakCaines2026GraphonQNoise} treat linear systems with additive Q-noise. Xu, Gou and Huang \cite{XuGouHuang2025SocialOptima} construct centralized controls and asymptotically socially optimal decentralized strategies for a linear--quadratic graphon model with correlated noises. Cao and Lauri\`ere \cite{CaoLauriere2025GraphonMFC} study graphon mean-field control through forward--backward equations and finite-agent approximation. Our emphasis is on quantitative comparison of the exact finite and continuum stochastic Riccati systems with common-noise-adapted random coefficients, including their martingale integrands and the diagonal corrections induced by the finite embedding.

The question addressed here is different from solving the continuum problem itself. We start from the centralized social planner problem for $N$ heterogeneous agents and ask whether its exact stochastic Riccati system, optimizer, and value are quantitatively approximated by their continuum counterparts. This is not a direct consequence of a state-level propagation-of-chaos estimate. The diagonal blocks of the finite Riccati matrix and its off-diagonal blocks have different scalings; the off-diagonal embedding removes a band of positive measure $h_N$; random coefficients create common-noise martingale integrands; and the feedback comparison couples local multiplication operators with interaction kernels. These features must be controlled simultaneously.

The \textbf{Main Contributions} are :

\begin{enumerate}
\item We derive the exact finite-dimensional BSRE of the centralized $N$-agent problem. We separate its local quadratic, interaction, linear, and scalar components, denoted by $(K^N,\bar K^N,Y^N,q^N)$. The computation retains both the terminal diagonal correction and the diagonal remainder generated by the off-diagonal blocks.

\item We prove uniform operator, Hilbert--Schmidt, and bounded mean oscillation (BMO) estimates and quantitative convergence of all components of the stochastic Riccati system, including their common-noise martingale integrands. The error explicitly accounts for coefficient and kernel approximation and for the diagonal blocks removed by the embedding. The diagonal-band modulus vanishes under the natural Hilbert-space regularity of the limiting interaction component, without a universal algebraic rate under that regularity alone. The full convergence result uses the regularity and consistency conditions in Assumption~\ref{ass:reg_consistency}.

\item We show stability of the feedback maps and use a cellwise conditional coupling to compare the finite optimizer with independent copies of the continuum optimizer. This gives convergence of the centralized optimal states and controls and of the initial social values. Algebraic rates are stated only when all the relevant moduli and conditional moments have corresponding quantitative bounds.

\item We distinguish centralized and decentralized information. A cellwise projection of the limiting representative-agent feedback is admissible under decentralized finite information. Its social cost converges to the continuum value, and a squeeze argument proves that its gap relative both to the centralized and decentralized finite values vanishes.
\end{enumerate}

The quantitative assumptions are stronger than those needed for well-posedness. Piecewise Lipschitz regularity allows finitely many heterogeneous subpopulations; related block regularity is used in \cite{BayraktarChakrabortyWu2023,CaoLauriere2025GraphonMFC}. Our comparison uses strong $L^2$ kernel consistency and additional row/column and spatial moment estimates on the limiting backward solution. These solution estimates are not deduced from block regularity alone in the general model; they hold in the finite-type class described in Remark~\ref{rem:finite_type_models}. The diagonal-band modulus vanishes under the natural Hilbert-space regularity, but this alone gives no universal algebraic rate. For exactly sampled finite-type data on compatible grids, all error terms are controlled and the non-squared backward estimates have order $N^{-1/2}$. Stronger rates require stronger bounds on every error component; block compatibility alone does not remove the diagonal correction. The trajectory estimates additionally depend on conditional moment bounds, stated separately below.

The paper is organized as follows. Section~\ref{sec:notations} introduces the functional framework. Sections~\ref{sec:problem_discret} and~\ref{sec:solution_continuum} formulate and solve the continuum problem. Sections~\ref{sec:problem_discrete} and~\ref{sec:discrete_LQ} treat the finite-population problem and derive its exact Riccati representation. Section~\ref{sec:convergence} proves convergence of the backward systems. Sections~\ref{sec:feedback_convergence} and \ref{sec:state_convergence} propagate the estimates to feedbacks, states, controls, and values. Section~\ref{sec:decentralized_lift} establishes asymptotic optimality of the decentralized lifted feedback.

\section{Notations and functional framework}\label{sec:notations}
\begin{enumerate}
\item [$\bullet$] Throughout the paper, $I=[0,1]$ is endowed with its Borel sigma-field and Lebesgue measure $du$. The state and control dimension are $d\ge1$ and $m\ge1$ respectively. The common brownian motion has a fixed dimension $d_0\ge1$, while the idiosyncratic Brownian motion in one-dimensioned.

Euclidean norms and scalar products are denoted by $|\cdot|$ and $\langle\cdot,\cdot\rangle$ and we write $\mathbb S^d$ (resp. $\mathbb S_+^d$, $\mathbb S_{++}^d$) for the
symmetric (resp. non-negative, positive definite) $d\times d$ matrices. For matrices, $\langle A,B\rangle_{\mathrm F}:=\operatorname{Tr}(A^\top B)$
and $|A|_{\mathrm F}^2:=\langle A,A\rangle_{\mathrm F}$. Matrix norms are Frobenius norms unless the operator norm is explicitly indicated.

\item [$\bullet$] For a separable Hilbert space $E$, $L^p(I;E)$ has its usual meaning. We use the spatial spaces
\[
\mathsf H:=L^2(I;\mathbb R^d),\qquad
\mathsf U:=L^2(I;\mathbb R^m),\qquad
\overline{\mathsf H}:=L^2(I^2;\mathbb R^{d\times d}).
\]
The target space in an $L^2(I)$ or $L^2(I^2)$ norm is omitted when it is determined by the object being measured.

Their inner products are denoted by $\langle\cdot,\cdot\rangle_{\mathsf H}$
and $\langle\cdot,\cdot\rangle_{\overline{\mathsf H}}$. 
For Hilbert spaces $E_1,E_2$, $\mathcal L(E_1;E_2)$ and $\mathcal L_2(E_1;E_2)$ denote the bounded and Hilbert--Schmidt operators; the target space is omitted when $E_1=E_2$. 

\item[$\bullet$] Let $\mathbb G=(\mathcal G_s)_{s\in[a,b]}$ be a filtration. We use
\begin{align*}
S_{\mathbb G}^p([a,b];E)
&:=\left\{X:\ X\text{ is continuous and }\mathbb G\text{-adapted},\quad
\E\sup_{a\le s\le b}\|X_s\|_E^p<\infty\right\},\\
H_{\mathbb G}^2([a,b];E)
&:=\left\{Z:\ Z\text{ is }\mathbb G\text{-predictable},\quad
\E\int_a^b\|Z_s\|_E^2ds<\infty\right\},\\
S_{\mathbb G}^{\infty}([a,b];E) 
&:=\left\{X:\ X\text{ is continuous and }\mathbb G\text{-adapted}, \quad \|X\|_{S_{\mathbb G}^{\infty}([a,b];E)} :=\operatorname*{ess\,sup}_{\omega\sim\mathbb P} \sup_{a\le s\le b}\|X_s(\omega)\|_E<\infty \right\}
\end{align*}
Intervals equal to $[0,T]$ are omitted. For progressive coefficient and feedback fields, which need not be continuous, uniform estimates are written using essential suprema in time:
\[
\operatorname*{ess\,sup}_{(s,\omega)}
\|a_s\|_{L^2},
\qquad
\mathbb E\left[
\operatorname*{ess\,sup}_{s\in[0,T]}
\|a_s\|_{L^2}^2
\right].
\]
In the first expression, the essential supremum is taken with respect to $ds\otimes d\mathbb P$. In the second expression, the inner essential supremum is taken with respect to Lebesgue measure in time, for almost every $\omega$, and the resulting random variable is then integrated with respect to $\mathbb P$. When $S^p(L^\infty(I))$ is used for a backward field, it denotes the corresponding supremum-moment bound on a jointly measurable version, without requiring continuity in the spatial $L^\infty$ norm.

\item[$\bullet$] Let $\mathcal T_{a,b}(\mathbb G)$ denote the set of $\mathbb G$-stopping times with values in $[a,b]$. An adapted real-valued process $X$ is of class $(D)$ on $[a,b]$ if $X_\tau$ is integrable for every $\tau\in\mathcal T_{a,b}(\mathbb G)$ and $\lim_{R\to\infty} \sup_{\tau\in\mathcal T_{a,b}(\mathbb G)} \mathbb E\left[|X_\tau|\mathbf1_{\{|X_\tau|>R\}}\right]=0$.

\item[$\bullet$] We denote by $\mathcal P^0$ the progressive $\sigma$-field of $\mathbb F^0$:
\[
\mathcal P^0
:=
\left\{A\subset[0,T]\times\Omega:
A\cap([0,t]\times\Omega)\in
\mathcal B([0,t])\otimes\mathcal F_t^0
\text{ for every }t\in[0,T]\right\}.
\]
For a finite-dimensional space $E$, we write
\[
\begin{aligned}
L^\infty_{\mathbb F^0}(I\times[0,T];E)
:=\big\{a:\;&a\text{ is }
\mathcal P^0\otimes\mathcal B(I)\text{-measurable},\quad\operatorname*{ess\,sup}_{(t,\omega,u)}
|a_t(u,\omega)|_E<\infty\big\},
\end{aligned}
\]
where the essential supremum is taken with respect to $dt\otimes d\mathbb P\otimes du$.

\item[$\bullet$] For an $\mathbb F^0$-predictable process $Z$ with values in $\mathcal L_2(\mathbb R^{d_0};E)$, 
\[
\|Z\|_{\mathrm{BMO}(E)}^2
:=\sup_{\tau\in\mathcal T_{0,T}}
\left\|
\E\left[\int_\tau^T\|Z_s\|_{\mathcal L_2(\mathbb R^{d_0};E)}^2ds
\,\middle|\,\mathcal F_\tau^0\right]
\right\|_{L^\infty(\Omega)}
\]
where $\mathcal T_{0,T}$ is the set of $\mathbb F^0$-stopping times with values in $[0,T]$

\item[$\bullet$] For an essentially bounded measurable matrix field
$a:I\to\mathbb R^{r\times q}$, $M_a$ is the operator such as $M_a:L^2(I;\mathbb R^q)\to L^2(I;\mathbb R^r)$ with $(M_af)(u)=a(u)f(u)$. For $G\in L^2(I^2;\mathbb R^{r\times q})$, define
\[
(T_Gf)(u):=\int_I G(u,v)f(v)\,dv,
\qquad f\in L^2(I;\mathbb R^q).
\]
Then $T_G$ is Hilbert--Schmidt and $\|T_G\|_{\mathrm{op}}\le\|T_G\|_{\mathrm{HS}}=\|G\|_{L^2(I^2)}$. We set $G^\dagger(u,v):=G(v,u)^\top$; hence $T_G^*=T_{G^\dagger}$, and a square kernel is called symmetric when $G=G^\dagger$ a.e. We also use
\[
\|G\|_{\mathrm{row}}
:=\operatorname*{ess\,sup}_{u\in I}
\left(\int_I|G(u,v)|_{\mathrm F}^2dv\right)^{1/2},
\quad
\|G\|_{\mathrm{col}}
:=\operatorname*{ess\,sup}_{v\in I}
\left(\int_I|G(u,v)|_{\mathrm F}^2du\right)^{1/2}.
\]

\item[$\bullet$]For an integrable state process $X$ and a uniform label $U$, we write $\bar X_s(u):=\E[X_s\mid\mathcal F_s^0,U=u]$ for a jointly measurable version. The space $\mathcal P_2^{\lambda}(I\times\mathbb R^d)$ consists of probability measures on $I\times\mathbb R^d$ whose first marginal is Lebesgue measure and whose second moment is finite. Conditional laws are denoted by $\mathcal L(\,\cdot\mid\mathcal F_s^0)$.

\item[$\bullet$]For $N\ge1$, set $h_N=N^{-1}$, $u_i^N=i/N$, and $I_i^N=((i-1)/N,i/N]$. The state and control spaces are $\mathbb H_N=(\mathbb R^d)^N$ and $\mathbb U_N=(\mathbb R^m)^N$. For $\mathbf x^N=(x_i)_{i=1}^N\in\mathbb H_N$ and $\mathbf a^N=(a_i)_{i=1}^N\in\mathbb U_N$, set
\[
|\mathbf x^N|_N^2=h_N\sum_{i=1}^N|x_i|^2,\qquad
|\mathbf a^N|_N^2=h_N\sum_{i=1}^N|a_i|^2.
\]
For any finite-dimensional Hilbert space $E$, the step embedding is
\[
\mathsf E_N:E^N\longrightarrow L^2(I;E),\qquad
(\mathsf E_N\mathbf x^N)(u)
:=\sum_{i=1}^Nx_i\mathbf1_{I_i^N}(u).
\]
It is an isometry when $E^N$ is equipped with the weighted norm above. The orthogonal projection onto cellwise constant functions is
\[
\begin{aligned}
\mathsf P_N:L^2(I;E)&\longrightarrow L^2(I;E),\quad
(\mathsf P_Nf)(u)
:=\sum_{i=1}^N
\left(h_N^{-1}\int_{I_i^N}f(v)\,dv\right)\mathbf1_{I_i^N}(u).
\end{aligned}
\]
Its two-variable counterpart, 
\[(\mathsf P_N^{(2)}G)(u,v)
:=\sum_{i,j=1}^N
\left(h_N^{-2}\int_{I_i^N}\int_{I_j^N}G(x,y)\,dy\,dx\right)
\mathbf1_{I_i^N}(u)\mathbf1_{I_j^N}(v), G\in L^2(I^2;E)
\]
For a common-noise coefficient $\Sigma^{0,N}=(\Sigma_i^{0,N})_{i=1}^N$, we set $\|\Sigma^{0,N}\|_{N,\mathrm{HS}}^2 :=\sum_{k=1}^{d_0}|\Sigma^{0,N,k}|_N^2 =h_N\sum_{i=1}^N|\Sigma_i^{0,N}|_{\mathrm F}^2$, with $\Sigma_i^{0,N}\in\mathbb R^{d\times d_0}$. Scalar quadratic terms with common-noise coefficients gives $\langle\Sigma^0,A\Sigma^0\rangle_{\mathrm F} =\operatorname{Tr}\big((\Sigma^0)^\top A\Sigma^0\big)$. 

\item[$\bullet$]Define
\[
\mathcal D_N:=\bigcup_{i=1}^NI_i^N\times I_i^N,
\qquad
\Pi_N^\circ G:=\mathbf1_{\mathcal D_N^c}G,
\]
The set $\mathcal D_N$ is the union of the $N$ diagonal cell blocks
and has measure $h_N$. The map $\Pi_N^\circ$ removes these blocks.
This separates the diagonal Riccati blocks, encoded by $K^N$,
from the off-diagonal blocks, encoded by $\bar K^N$.\\
For a step kernel, $(\operatorname{diag}_NG^N)(u):=G_{ii}^N$ on $I_i^N$. The discrete operator associated with the blocks $G_{ij}^N$ is
\[
(\mathsf G^Nx^N)^i:=h_N\sum_{j=1}^NG_{ij}^Nx^j.
\]
Under the step embedding it agrees with $T_{G^N}$. We shall also use
the column quadratic norm
\[
a_i^N(G):=h_N^2\sum_{\ell=1}^N|G_{\ell i}^N|_{\mathrm F}^2,
\qquad
\sum_{i=1}^Na_i^N(G)=\|G^N\|_{L^2(I^2)}^2.
\]

\item[$\bullet$] For the common Brownian motion $W^0=(W^{0,1},\ldots,W^{0,d_0})$, we identify $\mathcal L_2(\mathbb R^{d_0};E)$ with $E^{d_0}$ by writing $Z= (Z^1,\ldots,Z^{d_0})$, with $\|Z\|_{\mathcal L_2(\mathbb R^{d_0};E)}^2 =\sum_{k=1}^{d_0}\|Z^k\|_E^2$. In particular, $\Sigma_s^0(u)\in\mathbb R^{d\times d_0}$. Products involving common-noise coordinates are summed over $k$:
\[
Z_s^K(u)\Sigma_s^0(u)
:=\sum_{k=1}^{d_0}Z_s^{K,k}(u)\Sigma_s^{0,k}(u)\in\mathbb R^d,
\qquad
\langle Z_s^Y(u),\Sigma_s^0(u)\rangle_{\mathrm F}
:=\sum_{k=1}^{d_0}
\langle Z_s^{Y,k}(u),\Sigma_s^{0,k}(u)\rangle\in\mathbb R.
\]
The same convention applies to $Z^{\bar K}$ and to the finite system. 

Constants denoted by $C$ may change from line to line but never depend
on $N$; their dependence on fixed model parameters is suppressed. 

\end{enumerate}

\section{Problem formulation of the continuum problem}\label{sec:problem_discret}

We introduce the continuum problem on a complete probability space $(\Omega,\mathcal F,\mathbb P)$ rich enough to support mutually independent random objects: a real-valued Brownian motion $W$, an $\mathbb R^{d_0}$ valued Brownian motion $W^0$, a label $U\sim\mathcal U(I)$, and an auxiliary random variable $\vartheta\sim\mathcal U([0,1])$. We set $\mathcal F_t^0 := \sigma(W_s^0:s\le t)^{\mathbb P}$, and $\mathcal F_t := \sigma \big( U,\vartheta,W_s,W_s^0:s\le t \big)^{\mathbb P}$. The auxiliary random variable $\vartheta$ is used only to generate the initial condition and is not observed separately by the controller. For an initial time $t\in[0,T]$ and an admissible $\mathbb R^d$-valued initial state $\xi$ (defined below), we write $\mathbb G^{t,\xi} := \big(\mathcal G_s^{t,\xi}\big)_{s\in[t,T]}$ and the relevant observation filtration on $[t,T]$ is $\mathcal G_s^{t,\xi} := \sigma\big( U,\xi, W_{r\wedge s},W^0_{r\wedge s}:r\in[0,T] \big)^{\mathbb P}$, $s\in[t,T]$.

\paragraph{State equation}
Given $s \in [t,T]$, an admissible initial condition $\xi$ and a control $\alpha \in \Ac_t$, we consider the controlled state equation
\begin{align}\label{eq:state_equations}
\begin{cases}
        \d X_s &= \big[ A_s(U) + B_s(U) X_s +  T_{G_s^B}(\bar{X}_s)(U) + C_s(U) \alpha_s \big] \d s  \\
    &\qquad + \big[ D_s(U) + E_s(U) X_s + T_{G_s^E}(\bar{X}_s)(U) + F_s(U) \alpha_s \big] \d W_s +\Sigma_s^0(U) \d W_s^0, \\
    X_t &= \xi,
\end{cases}
\end{align}
where we use the notation $\bar X_s(u):=\E[X_s\mid \Fc_s^0,U=u]$, $s\in[t,T]$, $u\in I$, and where for every $s \in [t,T]$ we defined 
\begin{align}
\begin{cases}
       T_{G_s^B}(\bar{X}_s)(U) := \int_I G_s^B(U,v) \E[X_s | \Fc^0_s, U=v] \d v, \quad \P-\text{a.s}, \\
       T_{G_s^{E}}(\bar{X}_s)(U) :=  \int_I G_s^E(U,v) \E[X_s | \Fc^0_s, U=v] \d v, \quad \P-\text{a.s}.
\end{cases}
\end{align}

\noindent The interaction terms may also be expressed through the $\mathbb F^0$-adapted conditional-law process $\mu_s^0:=\mathcal L((U,X_s)\mid\mathcal F_s^0) \in\mathcal P_2^\lambda(I\times\mathbb R^d)$, $s\in[t,T]$.

We now impose the following assumptions on the forward coefficients and the main structure which ensures well-posedness of the system (\ref{eq:state_equations}).

\begin{Assumption}\label{ass:forward_coeff}
The forward coefficients satisfy:
\begin{enumerate}
    \item[(i)] $A\in S^2_{\F^0}(L^2(I;\R^d)),
    \qquad
    D\in S^2_{\mathbb F^0}(L^2(I;\mathbb R^d)),
    \qquad
    \Sigma^0 \in S_{\mathbb F^0}^2 \big(L^2(I;\mathbb R^{d\times d_0})\big) \cap L^\infty_{\mathcal P^0\otimes\mathcal B(I)} \big( \Omega\times[0,T]\times I; \mathbb R^{d\times d_0} \big)$,
    \item[(ii)] $B\in L^\infty_{\F^0}(I\times[0,T];\R^{d\times d})$,  $E\in L^\infty_{\F^0}(I\times[0,T];\R^{d\times d})$,
    and $C\in L^\infty_{\F^0}(I\times[0,T];\R^{d\times m})$,  $F\in L^\infty_{\F^0}(I\times[0,T];\R^{d\times m})$,
    \item[(iii)] $G^B,\;G^E \in S^2_{\F^0}(L^2(I\times I;\R^{d\times d}))$,
    and  $T_{G^B},\,T_{G^E}
    \in
    L^\infty\big(\Omega;C([0,T];\mathcal L(L^2(I;\R^d)))\big)$.
\end{enumerate}
\end{Assumption}

\begin{Remark}[Multidimensional idiosyncratic noise]
\label{rem:multidimensional_private_noise}
For readability, the idiosyncratic Brownian motion is taken to be one-dimensional. The results extend to a fixed dimension $d_1\ge1$. In that case the diffusion coefficients are written $(D^a,E^a,F^a,G^{E,a})_{a=1}^{d_1}$ and every quadratic diffusion term is summed over $a$. For example, $O=R+\sum_{a=1}^{d_1}(F^a)^\top KF^a$. The definitions of $\Psi$, $M$, and the scalar driver we will see later are modified in the same way, using Hilbert--Schmidt contractions in the private-noise index. In the finite system, each agent carries $d_1$ private Brownian coordinates and the corresponding global sums acquire this additional index. No new compactness, Riccati-stability, or propagation-of-chaos argument is required; only the notation and constants change.
\end{Remark}

\begin{Definition}[Admissible condition]
    \begin{enumerate}
        \item[(i)]
        An $\mathbb R^d$-valued random variable $\xi$ is called an admissible initial condition at time $t$ if there exists a Borel measurable map $\Xi: [0,T]\times I\times[0,1] \times C([0,T];\mathbb R) \times C([0,T];\mathbb R^{d_0}) \longrightarrow\mathbb R^d$
        such that
        \[
        \xi
        =
        \Xi
        \big(
        t,U,\vartheta,
        W_{\cdot\wedge t},
        W^0_{\cdot\wedge t}
        \big),
        \qquad \mathbb P\text{-a.s.},
        \quad
        and
        \quad
        \mathbb E|\xi|^2<\infty.
        \]
        We denote by $\Ic_t$ the set of admissible initial conditions at time $t$.
        \item[(ii)] An $\mathbb R^m$-valued process $\alpha=(\alpha_s)_{s\in[t,T]}$ is admissible if there exists a jointly Borel measurable non-anticipative map $\mathfrak a:[t,T]\times[0,T]\times I\times\mathbb R^d \times C([0,T];\mathbb R)\times C([0,T];\mathbb R^{d_0}) \longrightarrow\mathbb R^m$, \[ \alpha_s=\mathfrak a(s,t,U,\xi,W_{\cdot\wedge s},W^0_{\cdot\wedge s}),  \quad ds\otimes d\P\text{-a.e.}, \quad \textit{and} \quad \E\int_t^T|\alpha_s|^2ds<\infty. \] We denote this class by $\mathcal A_t$
    \end{enumerate}
\end{Definition}
\paragraph{Well-posedness of the state equation.}
Under Assumption \ref{ass:forward_coeff}, the state equation \eqref{eq:state_equations} is well posed in the following sense.

\begin{Theorem}\label{thm:wellposed_limit_state}
Let $t\in[0,T]$, let $\xi\in \Ic_t$, and $\alpha\in \Ac_t$. Then the state equation admits a unique solution $X^\alpha\in S_{\mathbb G^{t,\xi}}^2([t,T];\mathbb R^d)$ and  that the system \eqref{eq:state_equations} holds $\P$-a.s. for every $s\in[t,T]$.

\noindent Moreover, the conditional mean-field term $\bar X_s^\alpha(u):=\E[X_s^\alpha\mid \Fc_s^0,U=u], (s,u)\in[t,T]\times I$,
admits a jointly measurable version on $[t,T]\times I\times \Omega$, and the corresponding conditional marked law $\mu_s^{0,\alpha}:=\Lc\big((U,X_s^\alpha)\mid \Fc_s^0\big)$
is well defined as a $\Pc_2^\lambda(I\times \R^d)$-valued $\F^0$-adapted process.

\noindent In addition, there exists a constant $C_T>0$, depending only on the data of the problem, such that
\[
\E\left[ \sup_{t\le s\le T}|X_s^\alpha|^2 \,\middle|\,\mathcal F_t^0 \right] \le C_T\E\left[ |\xi|^2 +\int_t^T\Big( |\alpha_s|^2 +\|A_s\|_{\mathsf H}^2 +\|D_s\|_{\mathsf H}^2 +\|\Sigma_s^0\|_{L^2(I;\mathbb R^{d\times d_0})}^2 \Big)ds \,\middle|\,\mathcal F_t^0 \right], \qquad \P\text{-a.s.}
\]
Moreover, combined with the measurable representation result for the frozen equation established in Proposition \ref{prop:measurable_rpz}, it yields a measurable representation of the actual solution $X^\alpha$ as a functional of the initial inputs.
\end{Theorem}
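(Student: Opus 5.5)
We would prove Theorem~\ref{thm:wellposed_limit_state} by a fixed-point argument on the conditional mean field. The key idea is to \emph{freeze} the interaction term and solve a standard linear SDE. Fix an $\mathbb F^0$-progressive field $m=(m_s)_{s\in[t,T]}$ with values in $\mathsf H$ and $\E\int_t^T\|m_s\|_{\mathsf H}^2ds<\infty$. Replace $T_{G^B_s}(\bar X_s)(U)$ and $T_{G^E_s}(\bar X_s)(U)$ by $T_{G^B_s}(m_s)(U)$ and $T_{G^E_s}(m_s)(U)$. The resulting equation is a linear SDE in $\mathbb R^d$ with the following properties:
\begin{itemize}
\item the coefficients $B,E,C,F$ are bounded by Assumption~\ref{ass:forward_coeff}(ii);
\item the inhomogeneities $A_s(U)$, $D_s(U)$, $T_{G_s^B}(m_s)(U)$, $T_{G_s^E}(m_s)(U)$, $C_s(U)\alpha_s$, $F_s(U)\alpha_s$ are square integrable.
\end{itemize}
Square integrability holds because $\E[|f(U)|^2\mid\mathcal F^0_s]=\|f\|_{\mathsf H}^2$ for $\mathcal F^0_s$-measurable $f$, by independence of $U$ and $W^0$. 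Classical Lipschitz SDE theory on $\mathbb G^{t,\xi}$ gives a unique solution $X^{m}\in S^2_{\mathbb G^{t,\xi}}$. The measurable-representation statement for this frozen equation is Proposition~\ref{prop:measurable_rpz}, which we invoke. It also provides a jointly measurable version of $u\mapsto \E[X^m_s\mid\mathcal F^0_s,U=u]$. This version is obtained by integrating the representing functional against the laws of $(\vartheta,W)$, which are independent of $(U,W^0)$.

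Next we define $\Phi(m)_s:=\bar X^m_s$ and show that $\Phi$ is a contraction for a suitable weighted norm, e.g.\ $\|m\|_\beta^2:=\E\int_t^T e^{-\beta s}\|m_s\|_{\mathsf H}^2ds$. Take two inputs $m,m'$ and set $\delta X=X^m-X^{m'}$. Then $\delta X$ solves a linear SDE with zero initial value and forcing terms $T_{G^B}(m-m')(U)$ and $T_{G^E}(m-m')(U)$.

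The step we expect to be the main obstacle is the conditional estimate. We apply Itô's formula to $|\delta X_s|^2$ and take $\E[\cdot\mid\mathcal F^0_s,U=u]$, after integrating in $u$. The stochastic integral against $W$ vanishes. The integral against $W^0$ has zero $\mathcal F^0$-conditional expectation because $\Sigma^0$ cancels in the difference. The operator bounds $\|T_{G^B_s}\|_{\mathrm{op}},\|T_{G^E_s}\|_{\mathrm{op}}\le C$ come from Assumption~\ref{ass:forward_coeff}(iii) and hold uniformly in $(s,\omega)$. Together with Jensen, $\|\bar{\delta X}_s\|_{\mathsf H}^2\le \E[|\delta X_s|^2\mid\mathcal F^0_s]$, these give
\[
\E\big[|\delta X_s|^2\mid\mathcal F^0_s\big]\le C\int_t^s\E\big[|\delta X_r|^2\mid\mathcal F^0_r\big]dr+C\int_t^s\|m_r-m'_r\|_{\mathsf H}^2dr
\]
after conditioning. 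Justifying this requires care, because conditioning on $\mathcal F^0_s$ versus $\mathcal F^0_r$ is where the tower property and the independence of the increments of $W^0$ are used. The cleaner route is to take full expectations in the $\beta$-weighted norm. Choosing $\beta$ large then yields a contraction, and Banach's theorem gives a unique fixed point. Uniqueness of solutions of \eqref{eq:state_equations} follows, since any solution's conditional mean is a fixed point.

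Finally, we prove the conditional a priori bound. We apply Itô's formula to $|X_s|^2$ and use Burkholder--Davis--Gundy conditionally on $\mathcal F^0_t$, which is legitimate because the remaining noise increments after $t$ are independent of $\mathcal F^0_t$ given the past. We then bound $\|\bar X_s\|_{\mathsf H}^2\le\E[|X_s|^2\mid\mathcal F^0_s]$ and close with Gronwall. The joint measurability of $\bar X$ and the $\mathcal P^\lambda_2$-valued adaptedness of $\mu^{0,\alpha}_s$ follow from the representation of the fixed point as a measurable functional of $(U,\xi,W,W^0)$. Its first marginal is Lebesgue because $U$ is independent of $\mathcal F^0_s$.
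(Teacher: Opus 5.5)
Your proposal is correct and follows essentially the same route as the paper's proof in Appendix~B: freeze the conditional mean field $m\in H^2_{\mathbb F^0}([t,T];\mathsf H)$, solve the linear frozen SDE, obtain the fixed point of $\Phi(m)=\bar X^m$ by a contraction estimate combined with Jensen's inequality, and close the conditional a priori bound with BDG and Gronwall. The differences are minor. Your $e^{-\beta s}$-weighted norm gives a global contraction, whereas the paper contracts on short intervals and then concatenates. You also make the jointly measurable version of $\bar X$ explicit by integrating the representing functional over the independent pair $(\vartheta,W)$, where the paper instead cites the disintegration theorem.
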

\begin{proof}
The proof of Theorem \ref{thm:wellposed_limit_state} relies on a standard fixed-point argument on the conditional flow $(\mu_s^{0,\alpha})_{s\in[t,T]}$, combined with the measurable representation result for the frozen equation established later on. For readability, it is postponed to the Appendix \ref{app:P1} (see also \cite{DeFeoMekkaoui2025,DeCrescenzoDeFeoPham2026}).
\end{proof}

\paragraph{Conditional cost function} Given $t \in [0,T], \xi \in \Ic_t,$ we define the conditional functional that we have to minimize over all $\alpha \in \Ac_t$ with the state process \eqref{eq:state_equations}
\begin{align}
J(t,\xi,\alpha)
:=
\E\Bigg[
&\int_t^T
\Big(
\langle X_s,Q_s(U)X_s\rangle
+
\big\langle \bar X_s(U),\;(T_{G_s^Q}\bar X_s)(U)\big\rangle
+
\langle \alpha_s+\iota_s(U),R_s(U)(\alpha_s+\iota_s(U))\rangle
\Big)\,ds
\notag\\
&\qquad
+
\langle X_T,H(U)X_T\rangle
+
\big\langle \bar X_T(U),\;(T_{G^H}\bar X_T)(U)\big\rangle
\;\Big|\;\Fc_t^0
\Bigg].
\label{eq:conditional_cost}
\end{align}
The associated value function is $V_t(\xi):=\operatorname*{ess\,inf}_{\alpha \in \Ac_t} J(t,\xi,\alpha)$. We now impose the standing assumptions on the quadratic cost coefficients.
\begin{Assumption}\label{ass:cost_coeff}
The cost coefficients satisfy:
\begin{enumerate}
    \item[(i)] $Q\in S^2_{\F^0}(L^2(I;\S_+^d))\cap L^\infty(I\times\Omega\times[0,T];\mathbb S^d)$, $H\in L^2(\Omega\times I;\mathbb S_+^d)\cap L^\infty(I\times\Omega;\mathbb S^d)$,
    \item[(ii)] $G^Q\in S^2_{\F^0}(L^2(I\times I;\R^{d\times d}))$, $G^H\in L^2(\Omega;L^2(I\times I;\R^{d\times d}))$, with $G_s^Q(u,v)^\top=G_s^Q(v,u)$,  $G^H(u,v)^\top=G^H(v,u)$ and\\
    $T_{G^Q}\in L^\infty\big(\Omega;C([0,T];\mathcal L(L^2(I;\R^d)))\big)$,
    $T_{G^H}\in L^\infty\big(\Omega;\mathcal L(L^2(I;\R^d))\big)$ 
    \item[(iii)] $R\in S^2_{\F^0}(L^2(I;\mathbb S_{++}^m))\cap L^\infty(I\times\Omega\times[0,T];\mathbb S^m)$,
    $\iota\in S^2_{\F^0}(L^2(I;\R^m))$,
    and there exists a constant $c_R>0$ such that
    \[
    R_s(u)\succeq c_R I_m,
    \qquad du\otimes d\P\text{-a.e., for every } s\in[0,T].
    \]
    \item[(iv)] For every $X\in S^2([t,T];\R^d)$, we have $\forall s\in[t,T]$, \\
    $\E\Big[
    \langle X_s,Q_s(U)X_s\rangle
    +
    \langle \bar X_s(U),(T_{G_s^Q}\bar X_s)(U)\rangle
    \,\big|\,\Fc_t^0
    \Big]\ge 0,
    \quad 
    \E\Big[
    \langle X_T,H(U)X_T\rangle
    +
    \langle \bar X_T(U),(T_{G^H}\bar X_T)(U)\rangle
    \,\big|\,\Fc_t^0
    \Big]\ge 0,
    \qquad \P\text{-a.s.}$
    Moroevoer, the terminal fields $H$ and $G^H$ are respectively $\mathcal F_T^0\otimes\mathcal B(I)$- and $\mathcal F_T^0\otimes\mathcal B(I^2)$-measurable
\end{enumerate}
\end{Assumption}

\section{Problem formulation of the finite-N state equation}\label{sec:problem_discrete}

We now introduce the finite population system which will later be compared to the continuum problem. Fix $N\ge1$ and let $W^{1,N},\dots,W^{N,N}$ be independent real-valued Brownian motions, independent of the common noise $W^0$. In addition to that, we introduce independent random variables $\vartheta^{1,N},\ldots,\vartheta^{N,N}$, uniformly distributed on $[0,1]$, independent of all Brownian motions. We denote by $\mathcal F_s^N := \sigma\Big( \xi^N, W_{\cdot\wedge s}^{1,N},\ldots,W_{\cdot\wedge s}^{N,N}, W_{\cdot\wedge s}^0 \Big)^{\mathbb P}$, the natural filtration of the finite system. All running coefficients introduced below are $\mathbb F^0$-progressively measurable, while terminal coefficients are $\mathcal F_T^0$ measurable. 

\begin{Definition}[Finite admissible initial conditions and controls]
Fix $N\ge1$ and $t\in[0,T]$.

\begin{enumerate}
\item[(i)] An $\mathbb H_N$-valued random variable $\xi^N$ is called an admissible initial condition at time $t$ if there exists a Borel measurable map $\Xi^N: [0,T]\times[0,1]^N \times C([0,T];\mathbb R)^N \times C([0,T];\mathbb R^{d_0}) \longrightarrow\mathbb H_N$
such that 
\[
\xi^N = \Xi^N \big( t,\vartheta^{1,N},\ldots,\vartheta^{N,N}, W_{\cdot\wedge t}^{1,N},\ldots,W_{\cdot\wedge t}^{N,N}, W_{\cdot\wedge t}^0 \big),\quad \P\text{-a.s.}\quad\textit{and}\quad \E|\xi^N|_N^2<\infty
\]
We denote the set of such initial conditions by $\Ic_t^N$.

\item[(ii)] An $\mathbb U_N$-valued process $\alpha^N=(\alpha_s^N)_{s\in[t,T]}$ is called an admissible control on $[t,T]$ if there exists a family of Borel measurable non-anticipative maps $\mathfrak a^N: [t,T]\times \mathbb H_N \times C([0,T];\R)^N \times C([0,T];\R^{d_0}) \to \mathbb U_N$ such that 
\[
\alpha_s^N = \mathfrak a^N \big(s,\xi^N, W^{1,N}_{\cdot\wedge s},\ldots,W^{N,N}_{\cdot\wedge s}, W^0_{\cdot\wedge s})\quad ds\otimes d\mathbb P\hbox{-a.e.}\quad \textit{and}\quad\E\int_t^T|\alpha_s^N|_N^2\,ds<\infty
\]
We denote the set of controls by $\Ac_t^N$.
\end{enumerate}
\end{Definition}

\begin{Remark}[Centralized and representative information structures]
\label{rem:centralized_vs_decentralized}
The discrete admissible class $\mathcal A^N$ is formu-lated as a centralized social planner problem: controls $\alpha^N=(\alpha^{1,N},\ldots,\alpha^{N,N})$ are adapted to the full filtration generated by the common noise, all idiosyncratic Brownian motions, and all initial conditions. Hence the optimal finite-dimensional
feedback may depend on the full state vector $\mathbf X_t^N= (X_t^{1,N},\ldots,X_t^{N,N})$.
This should not be confused with a decentralized finite-\(N\) information
structure in which agent \(i\) only observes its own state, its own
idiosyncratic noise, and the common noise. 
\noindent In contrast, the continuum feedback obtained in Section~\ref{sec:solution_continuum} has a representative-agent form. It
depends on the label $U$, the individual state $X_t$, and the
common-noise conditional aggregate field. In this sense, the limiting
control is naturally implementable under decentralized information. A fully decentralized finite-$N$ control problem would be a different problem under restricted information; in the LQG setting, decentralized $\varepsilon$-Nash strategies for large populations of nonuniform agents were constructed in \cite{HuangCainesMalhame2007,HuangMalhameCaines2006}, and would here require additional filtering or dynamic-team arguments.
\end{Remark}

\noindent Given $\xi^N$ and $\alpha^N\in\Ac_t^N$, we consider the finite population state equation

\begin{equation}\label{eq:finite_N_state}
\left\{
\begin{aligned}
dX_s^{i,N}
&=
\Big[
A_s^{i,N}
+
B_s^{i,N}X_s^{i,N}
+
\frac1N\sum_{j=1}^N G_{B,s}^N(u_i^N,u_j^N)X_s^{j,N}
+
C_s^{i,N}\alpha_s^{i,N}
\Big]\,ds
\\
&\quad+
\Big[
D_s^{i,N}
+
E_s^{i,N}X_s^{i,N}
+
\frac1N\sum_{j=1}^N G_{E,s}^N(u_i^N,u_j^N)X_s^{j,N}
+
F_s^{i,N}\alpha_s^{i,N}
\Big]\,dW_s^{i,N}
\\
&\quad+
\Sigma_{i,s}^{0,N}\,dW_s^0,
\qquad i=1,\dots,N,\quad s\in[t,T],
\\
X_t^{i,N}&=\xi^{i,N}.
\end{aligned}
\right.
\end{equation}

\noindent For the local coefficients, define $\mathsf M_{B_s^N},\mathsf M_{E_s^N}:\mathbb H_N\to\mathbb H_N$, and $\mathsf M_{C_s^N},\mathsf M_{F_s^N}:\mathbb U_N\to\mathbb H_N$, by $(\mathsf M_{\beta_s^N}\mathbf z)^i=\beta_s^{i,N}z_i$ for $\beta\in\{B,C,E,F\}$ and a vector $\mathbf z=(z_i)_{i=1}^N$ in the corresponding domain. The state equation is equivalently
\begin{equation}\label{eq:finite_N_vector_state}
\left\{
\begin{aligned}
dX_s^N
&=
\Big[
A_s^N
+
\mathsf M_{B_s^N} X_s^N
+
\mathsf G_{B,s}^N X_s^N
+
\mathsf M_{C_s^N}\alpha_s^N
\Big]\,ds
\\
&\quad+
\Big[
D_s^N
+
\mathsf M_{E_s^N} X_s^N
+
\mathsf G_{E,s}^N X_s^N
+
\mathsf M_{F_s^N}\alpha_s^N
\Big]\,dW_s^N
+
\Sigma_s^{0,N}\,dW_s^0,
\\
X_t^N&=\xi^N.
\end{aligned}
\right.
\end{equation}
Here the notation in the stochastic integral with respect to $W^N=(W^{1,N},\dots,W^{N,N})$ is understood componentwise.

\begin{Assumption}\label{ass:finite_forward_coeff}
For every $N\ge1$, the finite-dimensional coefficients satisfy the following conditions.

\begin{enumerate}
    \item[(i)] The additive coefficients are $\mathbb F^0$-progressively measurable and
    satisfy $A^N \in H_{\mathbb F^0}^2([t,T];\mathbb H_N)$, $D^N\in H_{\mathbb F^0}^2 \left( [t,T]; (\mathbb R^d)^N \right)$, $\Sigma^{0,N} \in H_{\mathbb F^0}^2 \left( [t,T]; (\mathbb R^{d\times d_0})^N \right)$ and there exists a deterministic constant $C_\Sigma>0$, independent of $N$, such that $\sup_{N\ge1} \operatorname*{ess\,sup}_{(t,\omega)} \max_{1\le i\le N} |\Sigma_{i,t}^{0,N}(\omega)| \le C_\Sigma$.
    \item[(ii)] The local coefficient processes $B^N,C^N,E^N,F^N$ are $\mathbb F^0$-progressively measurable, and there exists a deterministic constant $L>0$, independent of $N$, such that $\sup_{1\leq i\leq N} \Big( |B_s^{i,N}|+|C_s^{i,N}|+|E_s^{i,N}|+|F_s^{i,N}| \Big) \leq L$, $ds\otimes d\P\text{-a.e.}$
    \item[(iii)] The interaction kernels $G_B^N$ and $G_E^N$ are $\mathbb F^0$-progressively measurable. Moreover, there exists a deterministic constant $L_G>0$, independent of $N$, such that $\|\mathsf G_{B,s}^N\|_{\mathcal L(\mathbb H_N)} + \|\mathsf G_{E,s}^N\|_{\mathcal L(\mathbb H_N)} \leq L_G$, $ds\otimes d\P\text{-a.e.}$
\end{enumerate}
\end{Assumption}

\begin{Definition}[Solution of the finite population system]\label{def:solution_discret}
Let $N\ge1$, $t\in[0,T]$, $\xi^N\in \mathcal I_t^N$ and $\alpha^N\in\Ac_t^N$. We say that an $\mathbb H_N$-valued process $X^N=(X^{1,N},\dots,X^{N,N})$ is a solution to \eqref{eq:finite_N_state} on $[t,T]$ if:
\begin{enumerate}
    \item each $X^{i,N}$ is continuous and $\F^N$-adapted;
    \item $X^N\in S^2_{\mathbb F^N}([t,T];\mathbb H_N)$, i.e. $\E\left[\sup_{t\le s\le T}|X_s^N|_N^2\right] <\infty$;
    \item the equations in \eqref{eq:finite_N_state} hold $\P$-a.s. for every $i=1,\dots,N$ and every $s\in[t,T]$.
\end{enumerate}
\end{Definition}

\begin{Theorem}[Well-posedness of the finite population system]\label{thm:wellposed_finite_state}
Let $N\ge1$, $t\in[0,T]$, $\xi^N\in\mathcal I_t^N$, and $\alpha^N\in\Ac_t^N$. Under Assumption \ref{ass:finite_forward_coeff}, the finite-dimensional system \eqref{eq:finite_N_state} admits a unique strong solution $X^N=(X^{1,N},\dots,X^{N,N})$  $\in S^2_{\F^N}([t,T];\mathbb H_N)$ in the sense of Definition \ref{def:solution_discret} .
Moreover, there exists a constant $C_T>0$, independent of $N$, such that
\[
\begin{aligned}
\E\left[\sup_{t\le s\le T}|X_s^N|_N^2\right]
\le
C_T
\Bigg(
\E|\xi^N|_N^2
+
\E\int_t^T |A_s^N|_N^2\,ds
+
\E\int_t^T |D_s^N|_N^2\,ds
+
\E\int_t^T \|\Sigma_s^{0,N}\|_{N,\mathrm{HS}}^2\,ds
+
\E\int_t^T |\alpha_s^N|_N^2\,ds
\Bigg).
\end{aligned}
\]
Finally, the solution $X^N$ may be represented as a measurable functional of the input data $\big(\xi^N,W^{1,N},\dots,W^{N,N},W^0\big)$.
\end{Theorem}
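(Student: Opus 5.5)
The plan is to treat \eqref{eq:finite_N_vector_state} as a finite-dimensional linear SDE on $\mathbb H_N$ with random, $\mathbb F^0$-progressive, bounded coefficients and apply the standard Lipschitz SDE theory. The drift and diffusion are affine in $X^N$. Under Assumption~\ref{ass:finite_forward_coeff}(ii)--(iii), the linear parts $\mathsf M_{B_s^N}+\mathsf G_{B,s}^N$ and $\mathsf M_{E_s^N}+\mathsf G_{E,s}^N$ have operator norm on $(\mathbb H_N,|\cdot|_N)$ bounded by $L+L_G$, uniformly in $(s,\omega)$ and in $N$. This holds because $\mathsf M_\beta$ acts blockwise and $|\beta^{i,N}_s|\le L$. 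The inhomogeneous terms $A^N_s+\mathsf M_{C^N_s}\alpha^N_s$, $D^N_s+\mathsf M_{F^N_s}\alpha^N_s$ and $\Sigma^{0,N}_s$ lie in $H^2$ by Assumption~\ref{ass:finite_forward_coeff}(i) and the admissibility of $\alpha^N$. Since the diffusion is componentwise, the quadratic variation of the $W^N$-integral in the weighted norm is $h_N\sum_i|\cdot^{i}|^2=|\cdot|_N^2$, so the isometry holds in the weighted norm with no $N$-dependence.

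\textbf{Existence and uniqueness.} First I run a Picard iteration (or a contraction argument) in $S^2_{\mathbb F^N}([t,T];\mathbb H_N)$ with the exponentially weighted norm $\E\sup_s e^{-\lambda s}|X_s|_N^2$. Applying BDG and Cauchy--Schwarz to the difference of two iterates gives a contraction for $\lambda$ large, depending only on $L,L_G,T$. Uniqueness follows from the same estimate together with Gronwall. Continuity and $\mathbb F^N$-adaptedness are inherited from the iterates, and the equation holds for every $s$ almost surely by continuity of both sides.

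\textbf{The a priori bound.} I apply Itô's formula to $|X_s^N|_N^2$, or equivalently estimate $\sup_{r\le s}|X_r^N|_N^2$ directly from the integral form. For the drift I use Cauchy--Schwarz in time. For the $dW^N$ and $dW^0$ integrals I use BDG in the weighted norm; the common-noise term contributes $\E\int\|\Sigma^{0,N}_s\|_{N,\mathrm{HS}}^2ds$. The control terms are bounded by $L^2\E\int|\alpha^N_s|_N^2ds$. This yields
\[
\phi(s)\le C\Big(\E|\xi^N|_N^2+\E\int_t^T(|A^N_r|_N^2+|D^N_r|_N^2+\|\Sigma^{0,N}_r\|_{N,\mathrm{HS}}^2+|\alpha^N_r|_N^2)dr\Big)+C\int_t^s\phi(r)dr,
\]
where $\phi(s)=\E\sup_{t\le r\le s}|X^N_r|_N^2$. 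Gronwall then gives the claim with $C_T$ depending only on $T,L,L_G$. The main point to check carefully is this $N$-independence: it rests on the interaction being controlled through the operator-norm bound in (iii) rather than through the kernel entries, and on working throughout in the normalized norm $|\cdot|_N$. To make Gronwall rigorous I would localize with stopping times so that $\phi$ is finite, and then let the localization go.

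\textbf{Measurable representation.} Each Picard iterate is a measurable functional of $(\xi^N,W^{1,N},\dots,W^{N,N},W^0)$. This holds because the coefficients are $\mathbb F^0$-progressive, hence functionals of $W^0$; $\alpha^N$ is given by the map $\mathfrak a^N$; and stochastic integrals of such integrands admit measurable versions. Indeed, the stochastic integral can be realized as an almost sure limit along a subsequence of Riemann sums, which are measurable functionals, and the same is done for the iterate limit. Alternatively, pathwise uniqueness together with the Yamada--Watanabe theorem gives a measurable strong-solution map, in the same way as in Proposition~\ref{prop:measurable_rpz} for the frozen continuum equation. I expect no serious obstacle here beyond this bookkeeping.
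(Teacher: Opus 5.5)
Your argument is essentially the paper's proof in Appendix~\ref{app:P2}. Both build a Picard/fixed-point solution in $S^2_{\mathbb F^N}([t,T];\mathbb H_N)$. Both get $N$-independent constants from the operator bounds of Assumption~\ref{ass:finite_forward_coeff} and from working in the weighted norm $|\cdot|_N$. Both use Gronwall for the a priori bound, and obtain the measurable representation from measurable iterates plus an a.s.\ convergent subsequence.

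\textbf{The weighted-norm contraction.} This step is not justified as written. In the norm $\E\sup_s e^{-\lambda s}|X_s|_N^2$, BDG and Cauchy--Schwarz do not give a contraction factor that vanishes as $\lambda\to\infty$. The drift term gains a factor $\lambda^{-1}$, but the stochastic integral $M$ does not. BDG on $[t,u]$ controls $e^{-\lambda u}\sup_{s\le u}|M_s|_N^2$, which is smaller than $\sup_{s\le u}e^{-\lambda s}|M_s|_N^2$. The natural It\^o/BDG estimate for $e^{-\lambda s}|M_s|_N^2$ only yields $C\,\E\int_t^T e^{-\lambda r}|\Delta Y_r|_N^2\,dr$. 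That quantity is merely bounded by $CT$ times the weighted sup norm of $\Delta Y$, with no decay in $\lambda$.

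The repair uses an inequality you essentially already have:
\[
\E\sup_{q\le s}|\Delta X_q|_N^2\le C\int_t^s\E\sup_{\ell\le r}|\Delta Y_\ell|_N^2\,dr .
\]
You can restrict to intervals of length $\delta$ with $C\delta<1$ and concatenate, as the paper does. Alternatively, iterate it to get the summable bound $(C(s-t))^n/n!$ on successive Picard differences.

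\textbf{Two minor points.} For merely progressive integrands such as $A^N$, $D^N$ and $\alpha^N$, left-point Riemann sums need not converge. The measurable version of the stochastic integral should instead come from simple predictable approximations, as in Appendix~\ref{app:P0}. Also, no localization is needed for Gronwall, since the fixed point already lies in $S^2$.
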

\begin{proof}
The proof is standard and follows from a Picard fixed-point argument for finite-dimensional SDEs with globally Lipschitz coefficients. We provide the details in Appendix \ref{app:P2}.
\end{proof}

\subsection{Conditional cost function}

We now define the finite population cost functional. For $\bx^N=(x^1,\ldots,x^N)\in\mathbb H_N$ and $a^N=(a^1,\ldots,a^N)\in\mathbb U_N$, we introduce the quadratic forms
\[
\begin{aligned}
\mathfrak q_s^N(\bx^N)
&:=
\frac1N\sum_{i=1}^N \langle x^i,Q_s^{i,N}x^i\rangle
+
\frac1{N^2}\sum_{i,j=1}^N
\left\langle x^i,G_{Q,s}^N(u_i^N,u_j^N)x^j\right\rangle,\qquad
\mathfrak r_s^N(a^N)
:=
\frac1N\sum_{i=1}^N
\left\langle a^i+\iota_s^{i,N},R_s^{i,N}(a^i+\iota_s^{i,N})\right\rangle,
\\
\mathfrak h^N(\bx^N)
&:=
\frac1N\sum_{i=1}^N \langle x^i,H^{i,N}x^i\rangle
+
\frac1{N^2}\sum_{i,j=1}^N
\left\langle x^i,G_H^N(u_i^N,u_j^N)x^j\right\rangle.
\end{aligned}
\]

\noindent Given $t\in[0,T]$, $\xi^N\in\Ic_t^N$ and $\alpha^N\in\Ac_t^N$, we define the $\mathcal F_t^0$-measurable random cost functional by
\begin{align}
J^N(t,\xi^N,\alpha^N)
:=
\E\left[
\int_t^T
\Big(
\mathfrak q_s^N(X_s^N)
+
\mathfrak r_s^N(\alpha_s^N)
\Big)\,ds
+
\mathfrak h^N(X_T^N)
\;\middle|\;\Fc_t^0
\right].
\label{eq:finite_N_cost}
\end{align}

\noindent The finite population value function is
\[
V_t^N(\xi^N):=
\operatorname*{ess\,inf}_{\alpha^N\in\Ac_t^N}
J^N(t,\xi^N,\alpha^N).
\]

\begin{Assumption}\label{ass:finite_cost_coeff}
$\forall N\in\N^{\star}$, the finite-dimensional cost coefficients are $\mathbb F^0$-progressively measurable and the terminal cost coefficients are $\mathcal F_T^0$-measurable. They satisfy: 
\begin{enumerate}
    \item[(i)]  For $1\le i\le N$, $Q_s^{i,N}\in\mathbb S_+^d$ $ds\otimes d\mathbb P\text{-a.e.}$;  $H^{i,N}\in\mathbb S_+^d $, $\mathbb P\text{-a.s.}$, and\\ 
    $\sup_{N\ge1}\max_{1\le i\le N} \operatorname {ess\,sup}_{(s,\omega)}|Q_s^{i,N}|_{\mathrm F} + \sup_{N\ge1}\max_{1\le i\le N} \operatorname*{ess\,sup}_{\omega}|H^{i,N}|_{\mathrm F} <\infty$.
    \item[(ii)] $R^{i,N}$ is symmetric and uniformly coercive: there exists $c_R>0$, independent of $i$ and $N$, such that $R_s^{i,N}\succeq c_R I_m$, $ds\otimes d\P\text{-a.e.}$. Moreover, $R^{i,N}$ is uniformly bounded.
    \item[(iii)] The kernels $G_Q^N$ and $G_H^N$ are symmetric in the sense that $G_Q^N(u_i^N,u_j^N)^\top=G_Q^N(u_j^N,u_i^N)$, and $G_H^N(u_i^N,u_j^N)^\top=G_H^N(u_j^N,u_i^N)$, and the associated discrete operators are uniformly bounded on $(\mathbb H_N,|\cdot|_N)$.
    \item[(iv)] The total running and terminal state costs are non-negative, namely $\mathfrak q_s^N(\bx^N)\ge0$, and $\mathfrak h^N(x^N)\ge0$, for every $\bx^N\in\mathbb H_N$, $ds\otimes d\P$-a.e.
    \item[(v)] The affine control-cost coefficient $\iota^N$ is $\mathbb F^0$-progressively measurable and satisfies $\iota^N \in H_{\mathbb F^0}^2([0,T];\mathbb U_N)$.
\end{enumerate}
\end{Assumption}

\begin{Remark}\label{rem:finite_cost_positive}
Under Assumption~\ref{ass:finite_cost_coeff}, the finite population cost is bounded from below. More precisely, since the state cost is non-negative and $R^{i,N}\succeq c_R I_m$, the control cost is uniformly convex in $\alpha^N$. This uniform convexity will be used in the completion of squares and in the uniqueness of the optimal feedback control.
\end{Remark}

\section{Measurable representation and preliminary structural results}
This section records the measurable representations needed to compare different realizations of the initial data. They are used in the fixed-point construction of the state equation and allow an admissible control rule to be transported between inputs with the same conditional law. The resulting law-invariance statements clarify the probabilistic meaning of the value functions; the quantitative estimates are proved separately in the subsequent sections.

Fix $t \in [0,T]$, an admissible control $\alpha \in \Ac_t$ and let $m=(m_s)_{s\in[t,T]}$ be an $\F^0$-adapted process, progressively measurable with values in $L^2(I,\R^d)$. We consider the linearized state equation :
\begin{equation}\label{eq:frozen_state_eq}
\left\{
\begin{aligned}
dX_s^{m,\alpha}
&=
\Big[
A_s(U)+B_s(U)X_s^{m,\alpha}+\int_I G_s^B(U,v)m_s(v)\,dv + C_s(U)\alpha_s
\Big]\,ds
\\
&\quad
+
\Big[
D_s(U)+E_s(U)X_s^{m,\alpha}+\int_I G_s^E(U,v)m_s(v)\,dv + F_s(U)\alpha_s
\Big]\,dW_s
+
\Sigma_s^0(U)\,dW_s^0,
\qquad s\in[t,T],
\\
X_t^{m,\alpha}&=\xi.
\end{aligned}
\right.
\end{equation}

\begin{Proposition}\label{prop:measurable_rpz}
Assume the coefficient satisfy Assumption\ref{ass:forward_coeff}. Let $t\in[0,T]$, $\xi\in\Ic_t$, $\alpha\in\Ac_t$, and $m=(m_s)_{s\in[t,T]}$ be an $\F^0$-adapted process in $L^2(I;\R^d)$ progressively measurable such that $\E\left[\int_t^T\|m_s\|^2_{L^2(I;\R^d)}\,ds\right]<\infty$. Then the equation \eqref{eq:frozen_state_eq} admits a unique strong solution $X^{m,\alpha}\in S^2([t,T];\R^d).$
Moreover, there exist a Borel measurable map $x^{m,\alpha}: I\times \R^d\times C([0,T];\R)\times C([0,T];\R^{d_0})\longrightarrow C([t,T];\R^d)$ such that 
\[
X^{m,\alpha}
=
x^{m,\alpha}(U,\xi,W,W^0),
\qquad \P\text{-a.s.}
\]
Equivalently, for every $s\in[t,T]$, there exists a Borel measurable map $x_s^{m,\alpha}: I\times \R^d\times C([0,T];\R)\times C([0,T];\R^{d_0}) \longrightarrow \R^d$
such that
\[
X_s^{m,\alpha}
=
x_s^{m,\alpha}(U,\xi,W_{.\wedge s},W^0_{.\wedge s}),
\qquad \P\text{-a.s.}
\]
\end{Proposition}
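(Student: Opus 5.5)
Once the process $m$ is frozen, \eqref{eq:frozen_state_eq} is a linear SDE with random coefficients. I would treat existence, uniqueness and the measurable representation together through a Picard scheme in which every iterate is built as an explicit Borel functional of $(U,\xi,W,W^0)$.

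\textbf{Step 1: Functional representation of the coefficients.} Every coefficient in Assumption~\ref{ass:forward_coeff} is $\mathcal P^0\otimes\mathcal B(I)$-measurable. By the Doob--Dynkin lemma, progressively measurable processes on the Wiener space of $W^0$ admit non-anticipative Borel versions. Hence there are Borel maps $b,\sigma$ on $[t,T]\times I\times\mathbb R^d\times\mathbb R^m\times C([0,T];\mathbb R^{d_0})$ such that
\[
b(s,u,x,a,w^0)=A_s(u)+B_s(u)x+\int_I G^B_s(u,v)m_s(v)\,dv+C_s(u)a
\]
evaluated along $w^0=W^0_{\cdot\wedge s}$, and similarly for $\sigma$ and for $\Sigma^0$. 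The integral term is jointly measurable in $(s,u,\omega)$ by Fubini, because $G^B\in S^2(L^2(I^2))$ and $m\in H^2(L^2(I))$. The control can be written as $\alpha_s=\mathfrak a(s,t,U,\xi,W_{\cdot\wedge s},W^0_{\cdot\wedge s})$ by the definition of $\mathcal A_t$. Substituting gives an SDE whose drift is affine in $X$, with slope $B_s(U)$ bounded in $L^\infty$, and whose diffusion has slope $E_s(U)$, also bounded. The free terms are square integrable: $\mathbb E\int\big|\int G^B_s(U,v)m_s(v)dv\big|^2ds\le \mathbb E\int\|G^B_s\|^2_{L^2(I^2)}\|m_s\|_{\mathsf H}^2ds$, and the same holds for $A_s(U)$, $D_s(U)$ and $\Sigma^0_s(U)$, since $U$ is independent of $\mathcal F^0$ and so $\mathbb E|A_s(U)|^2=\mathbb E\|A_s\|_{\mathsf H}^2$.

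\textbf{Step 2: Existence and uniqueness.} Since the coefficients are globally Lipschitz in $x$ with a deterministic constant $L$, standard results give a unique strong solution in $S^2$. I would prove this by Picard iteration, $X^{(0)}\equiv\xi$ and $X^{(n+1)}=\xi+\int b(\cdot,X^{(n)})ds+\int\sigma(\cdot,X^{(n)})dW+\int\Sigma^0dW^0$. The BDG inequality and Gronwall's lemma give the contraction estimate $\mathbb E\sup|X^{(n+1)}-X^{(n)}|^2\le (CT)^n/n!$.

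\textbf{Step 3: Measurable representation, which I expect to be the main obstacle.} The difficulty is that stochastic integrals are only defined up to null sets, so measurability of the solution in the path variables is not automatic. I would proceed by induction. Suppose $X^{(n)}=x^{(n)}(U,\xi,W,W^0)$ with $x^{(n)}$ Borel and non-anticipative. Then the integrands are Borel functionals of the inputs. The stochastic integral of such an integrand can be realized as a Borel map of the driving path, for example as the $\limsup$ of Riemann sums along a fixed subsequence that converges a.s. (a Stricker--Yor or Karandikar pathwise construction). This gives Borel maps $x^{(n+1)}$. Because the convergence in Step 2 is summable, $X^{(n)}\to X$ uniformly a.s. Setting $x^{m,\alpha}:=\lim x^{(n)}$ on the Borel set where the limit exists in $C([t,T];\mathbb R^d)$, and $0$ elsewhere, gives the desired Borel map with $X^{m,\alpha}=x^{m,\alpha}(U,\xi,W,W^0)$ a.s.

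Non-anticipativity is preserved at each step, since Riemann sums up to time $s$ only use the paths up to $s$. This yields the time-$s$ maps $x_s^{m,\alpha}(U,\xi,W_{\cdot\wedge s},W^0_{\cdot\wedge s})$. Finally, $\xi=\Xi(t,U,\vartheta,W_{\cdot\wedge t},W^0_{\cdot\wedge t})$ is itself a measurable input, so the composition remains Borel.
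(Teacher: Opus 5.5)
Your proposal is correct and follows essentially the same route as the paper's proof in Appendix~\ref{app:P0}. Both arguments take Borel non-anticipative versions of the coefficients, of $m$ and of the control rule, run a Picard scheme whose iterates are Borel functionals of $(U,\xi,W,W^0)$, and pass to an almost surely convergent limit that pathwise uniqueness identifies with the strong solution. Two small repairs are needed. First, $D_s(U)$, $E_s(U)$, $F_s(U)\alpha_s$ and $(T_{G^E_s}m_s)(U)$ are only progressively measurable in time, so left-point Riemann sums (and Karandikar's c\`adl\`ag construction) need not converge; realize each stochastic integral through simple predictable approximations, for instance of time-averaged integrands, as the paper does. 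Second, bound the free term $(T_{G^B_s}m_s)(U)$ using the operator-norm bound in Assumption~\ref{ass:forward_coeff}(iii), not the Hilbert--Schmidt norm, which is only in $S^2$ and need not be integrable against $\|m_s\|_{\mathsf H}^2$.
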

\begin{proof}
The result is a simplified version of the measurable representation property established in \cite[Theorem 3.5]{DeFeoMekkaoui2025} for the full heterogeneous mean-field system with common noise. In the present frozen setting, the proof follows from the same measurable-representation strategy, combining measurable versions of the coefficients on the canonical path space, measurable factorization of admissible controls, and a standard Euler approximation argument for the frozen SDE. A proof is given in Appendix~\ref{app:P0}.
\end{proof}

\subsection{Law invariance and marked conditional laws}

We now record law-invariance properties of the finite and continuum value functions. Since admissible control rules may depend on the past idiosyncratic noise, we retain this history as a mark when transporting the same rule between initial inputs. The resulting marked conditional law is a sufficient input for this argument. In the LQ setting, the explicit value formula below depends on this law only through its $(u,x)$-marginal, together with the common-noise history.

\paragraph{Continuum marked conditional law.}
For $\xi\in\Ic_t$, we define $\rho_t^\xi
:=
\Lc\big(U,\xi,W_{\cdot\wedge t}\mid \Fc_t^0\big)
\in
\Pc_2\big(I\times\R^d\times C([0,T];\R)\big)$.

\paragraph{Finite marked conditional law.}
For $\xi^N\in\Ic_t^N$, we define $\rho_t^{N,\xi}
:=
\Lc\Big(
\xi^N,
W_{\cdot\wedge t}^{1,N},\dots,W_{\cdot\wedge t}^{N,N}
\mid \Fc_t^0
\Big)$.
Equivalently, one may encode the finite initial configuration by the random marked empirical measure
\[
\mu_t^{N,\xi}
:=
\frac1N\sum_{i=1}^N
\delta_{(u_i^N,\xi^{i,N},W_{\cdot\wedge t}^{i,N})}.
\]
\begin{Remark}
Since the labels $u_1^N,\dots,u_N^N$ are distinct, the marked empirical 
measure $\mu_t^{N,\xi}$ uniquely determines the ordered vector 
$(\xi^{i,N}, W^{i,N}_{\cdot\wedge t})_{i=1}^N$, and conversely. Hence 
the law-invariance statement of Proposition\ref{prop:law_invariance}(ii) 
can equivalently be expressed in terms of $\mu_t^{N,\xi}$, which is 
the formulation used in the convergence analysis of Sections\ref{sec:convergence}.
\end{Remark}

\begin{Proposition}[Law invariance]\label{prop:law_invariance}
Assume that the state equations are well posed and that the corresponding solutions admit measurable representations with respect to their input data.

\begin{enumerate}
    \item[(i)] \textbf{Continuum case.}
    Let $\xi,\tilde\xi\in\Ic_t$. If $\Lc\big(U,\xi,W_{\cdot\wedge t}\mid \Fc_t^0\big) = \Lc\big(U,\tilde\xi,W_{\cdot\wedge t}\mid \Fc_t^0\big)$, $\P\text{-a.s.}$,
    then
    \[
    V_t(\xi)=V_t(\tilde\xi),
    \qquad \P\text{-a.s.}
    \]
    Under Assumptions~\ref{ass:forward_coeff} and~\ref{ass:cost_coeff}, there exists a Borel measurable functional $\mathfrak V_t$ such that $V_t(\xi) =\mathfrak V_t\big(W^0_{\cdot\wedge t},\rho_t^\xi\big)$, $\mathbb P\text{-a.s.}$. Its construction from the quadratic value formula is given in Remark~\ref{rem:construction_value_function}. The common-noise history cannot in general be omitted, because the coefficients and the backward fields may depend on it.
    \item[(ii)] \textbf{Finite population case.}
    Let $\xi^N,\tilde\xi^N\in\Ic_t^N$. If
    \[
    \Lc\Big(
    \xi^N,
    W_{\cdot\wedge t}^{1,N},\dots,W_{\cdot\wedge t}^{N,N}
    \mid \Fc_t^0
    \Big)
    =
    \Lc\Big(
    \tilde\xi^N,
    W_{\cdot\wedge t}^{1,N},\dots,W_{\cdot\wedge t}^{N,N}
    \mid \Fc_t^0
    \Big),
    \qquad \P\text{-a.s.},
    \]
    then
    \[
    V_t^N(\xi^N)=V_t^N(\tilde\xi^N),
    \qquad \P\text{-a.s.}
    \]
\end{enumerate}
\end{Proposition}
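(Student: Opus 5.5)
We prove the continuum case (i) in detail; case (ii) is the same argument with the $N$-tuple $(\vartheta^{i,N},W^{i,N})$ in place of $(U,\vartheta,W)$. The idea is to transport an admissible control rule from one initial input to the other. By symmetry it suffices to show $V_t(\tilde\xi)\le V_t(\xi)$ a.s. Fix $\alpha\in\mathcal A_t$ with rule $\mathfrak a$, so that $\alpha_s=\mathfrak a(s,t,U,\xi,W_{\cdot\wedge s},W^0_{\cdot\wedge s})$. Define $\tilde\alpha_s:=\mathfrak a(s,t,U,\tilde\xi,W_{\cdot\wedge s},W^0_{\cdot\wedge s})$. This is admissible for $\tilde\xi$: it is jointly measurable and non-anticipative by construction. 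Its square integrability, and the equality of all costs, follow from the distributional identity established in the next step.

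\textbf{Key distributional step.} The increments $(W_r-W_t)_{r\ge t}$ and $(W^0_r-W^0_t)_{r\ge t}$ are independent of $\mathcal F_t$. Together with the hypothesis $\mathcal L(U,\xi,W_{\cdot\wedge t}\mid\mathcal F_t^0)=\mathcal L(U,\tilde\xi,W_{\cdot\wedge t}\mid\mathcal F_t^0)$, this shows that
\[
\Big(U,\xi,W,W^0\Big)\quad\text{and}\quad\Big(U,\tilde\xi,W,W^0\Big)
\]
have the same conditional law given $\mathcal F_T^0$, hence also given $\mathcal F_s^0$ for every $s$. Concretely, the conditional law given $\mathcal F_T^0$ factorizes into three pieces: the conditional law of the time-$t$ data given $\mathcal F^0_t$, the Wiener law of the future increments of $W$, and the Dirac mass at the $W^0$ path.

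\textbf{Transfer of the state.} Next we apply the fixed-point construction of Theorem~\ref{thm:wellposed_limit_state} together with Proposition~\ref{prop:measurable_rpz}. For a fixed $\mathbb F^0$-adapted input flow $m$, the frozen solution is $x^{m,\alpha}(U,\xi,W,W^0)$, and the same Borel map applied to $(U,\tilde\xi,W,W^0)$ solves the frozen equation driven by $\tilde\alpha$. Here one must check that the frozen map built from the rule $\mathfrak a$ depends only on $(\mathfrak a,m)$ and not on the particular input, which is what the Euler-scheme construction gives. By the distributional step, the conditional means $\mathbb E[\,\cdot\mid\mathcal F^0_s,U=u]$ of the two frozen solutions coincide. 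The Picard map $m\mapsto\bar X^{m}$ is therefore the same for both inputs, so the fixed points coincide: $\bar X^{\alpha}=\bar X^{\tilde\alpha}$. Consequently $\tilde X^{\tilde\alpha}=x^{\bar X,\alpha}(U,\tilde\xi,W,W^0)$.

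\textbf{Transfer of the cost and value.} The integrand of $J$ is a measurable functional of $(U,X,\bar X,\alpha)$ and of the $\mathcal F^0$-measurable coefficients. Taking conditional expectations given $\mathcal F_t^0$ of the same functional evaluated at two inputs with equal conditional law gives $J(t,\tilde\xi,\tilde\alpha)=J(t,\xi,\alpha)$ a.s. Taking the essential infimum over $\alpha$ yields the desired inequality. The functional $\mathfrak V_t$ is then defined by factorizing through $(W^0_{\cdot\wedge t},\rho_t^\xi)$: by the previous argument, $V_t(\xi)$ depends on $\xi$ only through this pair. One can either apply the explicit quadratic formula referenced in Remark~\ref{rem:construction_value_function}, or use a Doob–Dynkin argument on a measurable selection.

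\textbf{Main obstacle.} The delicate point is the distributional step combined with measurability. First, equality of the time-$t$ conditional laws must be upgraded to equality of the conditional laws of the whole input including future noise, which requires the independence structure of $\mathcal F_t$. Second, the essential infimum over an uncountable class must be handled. For the latter I would use that $\{J(t,\xi,\alpha)\}_{\alpha}$ is directed downward, via pasting on $\mathcal F^0_t$-events, so the essential infimum is attained along a sequence $\alpha^n$. Applying the transfer to each $\alpha^n$ then gives the result.
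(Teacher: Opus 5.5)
Your proposal is correct and follows the paper's proof essentially step by step. The steps are: transport of the admissible rule; upgrading the time-$t$ conditional-law identity to the full input $(U,\xi,W,W^0)$ via independence of the future Brownian increments (you condition on $\mathcal F_T^0$, which puts a Dirac mass at the $W^0$-path, whereas the paper conditions on $\mathcal F_t^0$ and uses the concatenation formula with Wiener measure on both future increments); identification of the frozen conditional-mean maps and hence of the fixed points; rule-by-rule equality of costs; and symmetry.

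Two small remarks. First, the downward-directedness/pasting argument is unnecessary: $V_t(\tilde\xi)\le J(t,\tilde\xi,\tilde\alpha)=J(t,\xi,\alpha)$ for every $\alpha$ already makes $V_t(\tilde\xi)$ an a.s.\ lower bound of the family, hence $V_t(\tilde\xi)\le V_t(\xi)$. Second, for $\mathfrak V_t$ only the quadratic-formula route of Remark~\ref{rem:construction_value_function} (the paper's) gives a single jointly Borel functional, since a bare Doob--Dynkin argument only yields a factorization that depends on $\xi$.
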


\begin{proof}
We prove the continuum statement first. Let $\alpha\in\Ac_t$. By admissibility, there exists a jointly Borel non-anticipative map $a$ such that $\alpha_s=a(s,t,U,\xi,W_{\cdot\wedge s},W^0_{\cdot\wedge s})$ $ds\otimes d\mathbb P$-a.e. Define $\widetilde\alpha_s :=a(s,t,U,\widetilde\xi,W_{\cdot\wedge s},W^0_{\cdot\wedge s})$. Then $\tilde\alpha\in\Ac_t$. By assumption,
\[
\Lc(U,\xi,W_{\cdot\wedge t}\mid\Fc_t^0)
=
\Lc(U,\tilde\xi,W_{\cdot\wedge t}\mid\Fc_t^0).
\]

\noindent For $r\ge1$, let $\mathcal C_{t,0}^{r}:=\{z\in C([t,T];\mathbb R^r):z_t=0\}$,
and let $\mathcal W_t^{r}$ be Wiener measure on this space. For $w\in C([0,T];\mathbb R^r)$ and $z\in\mathcal C_{t,0}^{r}$, define $(w\oplus_tz)_s=w_s$ for $s\le t$ and $(w\oplus_tz)_s=w_t+z_s$ for $s>t$. Independence of the future Brownian increments from $\mathcal F_t$ and from each other gives, for every bounded Borel $\varphi$,
\[
\begin{aligned}
\mathbb E[\varphi(U,\xi,W,W^0)\mid\mathcal F_t^0]
={}&\int
\varphi\big(u,x,w\oplus_tz,
W^0_{\cdot\wedge t}\oplus_tz^0\big)
\,\rho_t^\xi(du,dx,dw)
\,\mathcal W_t^1(dz)\,\mathcal W_t^{d_0}(dz^0),
\quad\mathbb P\text{-a.s.}
\end{aligned}
\]
The right-hand side is a measurable function of $(W^0_{\cdot\wedge t},\rho_t^\xi)$. The same formula holds with $\widetilde\xi$ in place of $\xi$, using the same common-noise history. Hence this equality extends to the full input data :
\[
\Lc(U,\xi,W,W^0\mid\Fc_t^0)
=
\Lc(U,\tilde\xi,W,W^0\mid\Fc_t^0).
\]
Applying the same control rule gives equality of the conditional laws of the full input tuples, including the entire common-noise path. Fix now a progressively measurable common-noise field m in the frozen-equation space. The frozen equations for the two initial inputs have the same coefficients and the same rule a. Their measurable solution representation can therefore be chosen as the same map $x^{a,m}$. Hence
\[
\mathcal L(U,X^{\xi,a,m},\alpha,W^0\mid\mathcal F_t^0) =\mathcal L(U,X^{\widetilde\xi,a,m},\widetilde\alpha,W^0 \mid\mathcal F_t^0).
\]
Keeping $W^0$ in this identity and disintegrating with respect to $(U,W^0_{\cdot\wedge s})$ shows that the two frozen conditional-mean maps coincide, as elements of the progressively measurable $L^2$ field space. Their unique fixed points therefore coincide. The actual controlled states consequently have the same joint conditional law with the label, control and common-noise path. The cost functionals are thus equal rule by rule. Since the cost functional is a measurable function of the controlled state, the control and the common noise, it follows that
\[
J(t,\xi,\alpha)=J(t,\tilde\xi,\tilde\alpha),
\qquad \P\text{-a.s.}
\]
Taking the essential infimum over $\alpha\in\Ac_t$ yields $V_t(\tilde\xi)\le V_t(\xi)$. Exchanging the roles of $\xi$ and $\tilde\xi$ gives the reverse inequality, hence
\[
V_t(\xi)=V_t(\tilde\xi),
\qquad \P\text{-a.s.}
\]
This proves the continuum statement. The proof of the finite population statement is identical, replacing the input data $(U,\xi,W,W^0)$ by $(\xi^N,W^{1,N},\dots,W^{N,N},W^0)$.
Indeed, given any admissible control $\alpha^N$, choose a Borel non-anticipative rule $a^N$ such that 
$\alpha_s^N=a^N(s,\xi^N,W^{1,N}_{\cdot\wedge s},\dots,W^{N,N}_{\cdot\wedge s},W^0_{\cdot\wedge s})$.
And define similarly $\widetilde\alpha_s^N :=a^N(s,\widetilde\xi^N, W^{1,N}_{\cdot\wedge s},\dots,W^{N,N}_{\cdot\wedge s}, W^0_{\cdot\wedge s})$. The equality of conditional laws of the initial input data implies the equality of conditional laws of the full input data, and the measurable representation of the finite state equation gives
\[
J^N(t,\xi^N,\alpha^N)=J^N(t,\tilde\xi^N,\tilde\alpha^N).
\]
Passing to the essential infimum and then exchanging the roles of $\xi^N$ and $\tilde\xi^N$ proves
\[
V_t^N(\xi^N)=V_t^N(\tilde\xi^N).
\]
The proof is complete.
\end{proof}

\section{Solution of the continuum LQ control problem}\label{sec:solution_continuum}

In this section, we will work on the problem introduced in section \ref{sec:problem_discret}. We will introduce the associated system of Backward Stochastic Riccati Equation (BSRE) valued in the associated Hilbert space. Although this work is close to the one done in \cite{DeFeoMekkaoui2025} and \cite{DeCrescenzoDeFeoPham2026}, we will show the derivated results of our continuum system.

\noindent Throughout this section, we assume that Assumptions \ref{ass:forward_coeff} and \ref{ass:cost_coeff} hold. We also assume that the backward system introduced below admits a sufficiently integrable solution. Its solvability is proved in Theorem \ref{thm:solvability_continuum_backward_system}.

\subsection{Quadratic ansatz and backward Riccati system}

\paragraph{Quadratic Ansatz}

Let $X^\alpha$ be the state process associated with an admissible control $\alpha\in\Ac_t$, and define $\bar X_s^\alpha(u):=\E[X_s^\alpha\mid \Fc_s^0,U=u]$.
We look for the following real-valued adapted quadratic process :
\begin{align}
v_s(X^\alpha)
:=
&\;
\langle X_s^\alpha,K_s(U)X_s^\alpha\rangle
+
\left\langle
\bar X_s^\alpha(U),
(T_{\bar K_s}\bar X_s^\alpha)(U)
\right\rangle
+
2\langle Y_s(U),X_s^\alpha\rangle
+
\lambda_s,
\label{eq:continuum_quadratic_ansatz}
\end{align}
where $K_s(u)\in\mathbb S^d$, $\bar K_s(u,v)\in\R^{d\times d}$, $Y_s(u)\in\R^d$, $\lambda_s\in\R$. At the initial time, its conditional expectation gives the candidate value functional: $\mathcal V_t(\xi):= \E[v_t(X^\alpha)\mid\mathcal F_t^0]$. Since $X_t^\alpha=\xi$, this quantity does not depend on the choice of $\alpha$.\\
The terminal conditions are $K_T(u)=H(u)$, $\bar K_T(u,v)=G^H(u,v)$, $Y_T(u)=0$, $\lambda_T=0$.

\smallskip
\noindent We introduce the following coefficients, which arise in the completion of squares:
\begin{alignat}{2}
& O_s(u)
:=
R_s(u)+F_s(u)^\top K_s(u)F_s(u),
&\qquad
& U_s(u)
:=
C_s(u)^\top K_s(u)+F_s(u)^\top K_s(u)E_s(u),
\\
& V_s(u,v)
:=
C_s(u)^\top \bar K_s(u,v)
+F_s(u)^\top K_s(u)G_s^E(u,v),
&\qquad
& \Gamma_s(u)
:=
C_s(u)^\top Y_s(u)
+F_s(u)^\top K_s(u)D_s(u)
+R_s(u)\iota_s(u)
\end{alignat}

\noindent For multidimensional idiosyncratic noise, the above products are understood with the usual Hilbert--Schmidt contraction over the noise component.

\begin{Remark}\label{rem:O_inverse_continuum}
Whenever $K_s(u)\in\mathbb S_+^d$, by Assumption \ref{ass:cost_coeff}, there exists $\eta>0$ such that $O_s(u)\succeq \eta I_m$, and $O_s(u)$ is invertible $ds\otimes du\otimes d\P$-a.e. Moreover, $|O_s^{-1}(u)|_F\leq\sqrt{m}\eta^{-1}$, so it is uniformly invertible once K is known non-negative.
\end{Remark}

\paragraph{Standard BSRE}

\begin{Definition}\label{def:solution_K_continuum}
A solution of \eqref{eq:K_BSRE_continuum} is a pair of processes $(K,Z^K)=\big(K(u),Z^K(u)\big)_{u\in I}$ such that, for $du$-a.e. $u\in I$, the pair $(K(u),Z^K(u))$ solves \eqref{eq:K_BSRE_continuum} in the usual matrix-valued BSDE sense, and $(K,Z^K)\in S^2_{\mathbb F^0}(L^2(I;\mathbb S^d)) \times H^2_{\mathbb F^0} \big(L^2(I;\mathcal L_2(\mathbb R^{d_0};\mathbb S^d))\big)$.
Uniqueness of $K$ is understood up to indistinguishability in $L^2(I;\mathbb S^d)$, whereas uniqueness of $Z^K$ is understood up to equality $ds\otimes d\mathbb P\otimes du$-a.e.
\end{Definition}

\noindent The local Riccati BSDE is, for a.e. $u\in I$, $0\le s\le T$, 
\begin{equation}\label{eq:K_BSRE_continuum}
\left\{
\begin{aligned}
dK_s(u)
&=
-\Big[
Q_s(u)
+B_s(u)^\top K_s(u)+K_s(u)B_s(u)
+E_s(u)^\top K_s(u)E_s(u)
-U_s(u)^\top O_s(u)^{-1}U_s(u)
\Big]\,ds
+
Z_s^K(u)\,dW_s^0,
\\
K_T(u)&=H(u).
\end{aligned}
\right.
\end{equation}

\begin{Remark}
\label{rem:K_bounded_continuum}
The standard stochastic Riccati theory yields more than square-integrability.
Under the boundedness, non-negativity and coercivity assumptions on the
coefficients, the local Riccati component $K$ admits a bounded version
satisfying $\|K\|_{S^\infty(L^\infty(I))} := \esssup_{\omega\in\Omega} \sup_{t\in[0,T]} \esssup_{u\in I} \|K_t(u,\omega)\| \le C_T $. Indeed, the equation for $K_t(u)$ is the local stochastic Riccati equation associated with the homogeneous one-dimensional-in-label LQ problem. For a fixed label $u$, a fixed time $t$, and an initial state $x\in\mathbb R^d$, the homogeneous value is $V_t^{0,u}(x)=x^\top K_t(u)x$. By admissibility of the zero control, $V_t^{0,u}(x) \le J_t^{0,u}(x,0)$. The homogeneous state under zero control satisfies the standard estimate $\E\left[ \sup_{s\in[t,T]}|X_s^{t,u,x,0}|^2 \,\middle|\,\mathcal F_t^0 \right] \le C|x|^2$, where $C$ depends only on the uniform bounds of the state coefficients and on $T$, but not on $t,u$ or $x$. Since $Q$ and $H$ are uniformly bounded and non-negative, $0\le x^\top K_t(u)x = V_t^{0,u}(x) \le C|x|^2$. Since the constant $C$ is independent of $t$ and $u$, and since the BSRE solution $K$ admits a version with continuous time paths (by standard stochastic Riccati theory), the bound $\|K_t(u)\| \le C$ extends to all 
$t \in [0,T]$ simultaneously, $\P$-a.s., for a.e. $u \in I$. This yields 
the stated $S^\infty(L^\infty(I))$-bound.
\end{Remark}

\paragraph{Abstract BSRE}
Given a solution $(K,Z^K)$ to \eqref{eq:K_BSRE_continuum}, we have the  Riccati BSDE on $L^2(I\times I;\mathbb R^{d\times d})$, for $du\otimes dv$-a.e. $(u,v)\in I\times I$, 
\begin{equation}\label{eq:Kbar_BSRE_continuum}
d\bar K_s(u,v)
=
-\mathcal F(s,\bar K_s)(u,v)\,ds
+
Z_s^{\bar K}(u,v)\,dW_s^0,
\qquad
\bar K_T(u,v)=G^H(u,v),
\qquad 
0\le s \le T.
\end{equation}

\noindent where we have the random map $\mathcal F: \Omega\times[0,T]\times L^2(I\times I;\mathbb R^{d\times d}) \longrightarrow L^2(I\times I;\mathbb R^{d\times d})$ defined by,
\[
\mathcal F(s,\bar k)(u,v)
:=
\Psi(s,u,v;K_s,\bar k)
-
U_s(u)^\top O_s(u)^{-1}V(s,u,v;K_s,\bar k)
-
V(s,v,u;K_s,\bar k)^\top O_s(v)^{-1}U_s(v)
\]
\[
\qquad
-
\int_I
V(s,w,u;K_s,\bar k)^\top
O_s(w)^{-1}
V(s,w,v;K_s,\bar k)\,dw,
\]
and where $\forall s \in [0,T]$, $\bar k \in L^2(I \times I; R^{d\times d})$ we have :
\begin{align}
\Psi(s,u,v;K_s,\bar k)
:=
&\;
K_s(u)G_s^B(u,v)
+
G_s^B(v,u)^\top K_s(v)
+
E_s(u)^\top K_s(u)G_s^E(u,v)
+
G_s^E(v,u)^\top K_s(v)E_s(v)
\notag\\
&+
\int_I G_s^E(w,u)^\top K_s(w)G_s^E(w,v)\,dw
+
B_s(u)^\top \bar k(u,v)
+
\bar k(u,v)B_s(v)
\notag\\
&+
\int_I G_s^B(w,u)^\top \bar k(w,v)\,dw
+
\int_I \bar k(u,w)G_s^B(w,v)\,dw
+
G_s^Q(u,v).
\label{eq:Psi_continuum}
\end{align}

\begin{Definition}\label{def:solution_Kbar_continuum}
A solution of \eqref{eq:Kbar_BSRE_continuum} is a pair $(\bar K,Z^{\bar K})$
such that $(\bar K,Z^{\bar K})\in S^2_{\mathbb F^0}(L^2(I^2;\mathbb R^{d\times d})) \times H^2_{\mathbb F^0} \big(L^2(I^2;\mathcal L_2(\mathbb R^{d_0}; \mathbb R^{d\times d}))\big)$. Uniqueness of $\bar K$ is understood up to indistinguishability as an $\overline{\mathsf H}$-valued process; uniqueness of $Z^{\bar K}$ is understood up to equality $ds\otimes d\mathbb P$-a.e. in $\mathcal L_2(\mathbb R^{d_0};\overline{\mathsf H})$. The pair satisfies \eqref{eq:Kbar_BSRE_continuum} as a BSDE in $\overline{\mathsf H}$. 
\end{Definition}

\begin{Remark}\label{rem:F_properties_continuum}
The random map $\mathcal F$ satisfies the following properties, with $\overline{\mathsf H}:=L^2(I\times I;\mathbb R^{d\times d})$.
\begin{enumerate}
    \item[(i)] For any $\bar k\in \overline{\mathsf H}$, there exists a constant $C_T>0$ such that, for $ds\otimes d\P$-a.e. $(s,\omega)$ :
    \[
    \|T_{\mathcal F(s,\bar k)}\|_{\mathcal L(L^2(I;\mathbb R^d))}
    \le
    C_T
    \left(
    1+\|T_{\bar k}\|_{\mathcal L(L^2(I;\mathbb R^d))}
    +\|T_{\bar k}\|_{\mathcal L(L^2(I;\mathbb R^d))}^2
    \right),
    \]
    \item[(ii)] For any $\bar k,\bar \ell\in \overline{\mathsf H}$, there exists a constant $C_T>0$ such that, for $ds\otimes d\P$-a.e. $(s,\omega)$ :
    \[
    \|\mathcal F(s,\bar k)-\mathcal F(s,\bar \ell)\|_{\overline{\mathsf H}}
    \le
    C_T
    \left(
    1+\|T_{\bar k}\|_{\mathcal L(L^2(I;\R^d))}
    +
    \|T_{\bar \ell}\|_{\mathcal L(L^2(I;\R^d))}
    \right)
    \|\bar k-\bar \ell\|_{\overline{\mathsf H}},
    \]
    \item[(iii)] For every $\bar k\in\overline{\mathsf H}$, the process $s\mapsto\mathcal F(s,\bar k)$ is progressively measurable, and $\bar k\mapsto\mathcal F(s,\bar k)$ is continuous for $ds\otimes d\mathbb P$-a.e. $(s,\omega)$. Moreover, $\mathcal F$ preserves symmetric kernels: writing $\bar k^\dagger(u,v):=\bar k(v,u)^\top$, we have $\bar k=\bar k^\dagger \Longrightarrow \mathcal F(s,\bar k)=\mathcal F(s,\bar k)^\dagger$.
    \item[(iv)] We will show in the solvability proof below, that the solution actually satisfies the stronger estimate\\ 
    $T_{\bar K}\in L^\infty\big(\Omega;C([0,T];\mathcal L(L^2(I;\mathbb R^d)))\big)$, namely there exists a constant $C_T>0$ such that\\ 
    $\sup_{0\le s\le T} \|T_{\bar K_s}\|_{\mathcal L(L^2(I;\mathbb R^d))} \le C_T$, $\P\text{-a.s.}$
\end{enumerate}
\end{Remark}
\begin{proof}
The estimates follow from the boundedness of the coefficients, the uniform bound on $K$, the coercivity of $R$, and the submultiplicativity of the operator norm. The proof is based on the same operator estimates as in the abstract Riccati equation of \cite[Remark 4.4]{DeFeoMekkaoui2025} and \cite[Remark 4.6]{DeFeoMekkaoui2025}, adapted here to our additive common-noise structure.
\end{proof}

\paragraph{Linear Equations} Given $(K,Z^K)$ and $(\bar K, Z^{\bar K})$ respective solution of \eqref{eq:K_BSRE_continuum} and \eqref{eq:Kbar_BSRE_continuum}, we have the linear BSDE in $L^2(I;\R^d)$
\begin{equation}\label{eq:Y_BSDE_continuum}
\left\{
\begin{aligned}
dY_s(u)
=
-\Big[
&M_s(u)
+
B_s(u)^\top Y_s(u)
+
\int_I G_s^B(v,u)^\top Y_s(v)\,dv
\\
&-
U_s(u)^\top O_s(u)^{-1}\Gamma_s(u)
-
\int_I V_s(v,u)^\top O_s(v)^{-1}\Gamma_s(v)\,dv
\Big]ds
+
Z_s^Y(u)\,dW_s^0,
\\
Y_T(u)=0
\end{aligned}
\right.
\end{equation}
where we defined $\forall s \in [0,T]$, $du$ a.e. 
\begin{align}
M_s(u)
:=
&\;
K_s(u)A_s(u)
+
Z_s^K(u)\Sigma_s^0(u)
+
E_s(u)^\top K_s(u)D_s(u)
\notag\\
&+
\int_I G_s^E(v,u)^\top K_s(v)D_s(v)\,dv
+
\int_I \bar K_s(u,v)A_s(v)\,dv
+
\int_I Z_s^{\bar K}(u,v)\Sigma_s^0(v)\,dv.
\label{eq:M_continuum}
\end{align}

\begin{Definition}\label{def:solution_Y_continuum}
A solution of \eqref{eq:Y_BSDE_continuum} is a pair of process $(Y,Z^Y)$ such that it solves \eqref{eq:Y_BSDE_continuum} and $(Y,Z^Y)\in S^2_{\mathbb F^0}(L^2(I;\mathbb R^d)) \times H^2_{\mathbb F^0} \big(L^2(I;\mathcal L_2(\mathbb R^{d_0};\mathbb R^d))\big)$.Uniqueness of $Y$ is understood up to indistinguishability as an $\mathsf H$-valued process; uniqueness of $Z^Y$ is understood up to equality $ds\otimes d\mathbb P$-a.e. in $\mathcal L_2(\mathbb R^{d_0};\mathsf H)$.
\end{Definition}

\noindent To finish, we introduce the integrated scalar BSDE
\begin{equation}\label{eq:lambda_BSDE_continuum}
\left\{
\begin{aligned}
d\lambda_s
={}&
-\Bigg\{
\int_I
\Bigg[
    \langle D_s(u),K_s(u)D_s(u)\rangle_{\mathrm F}
    +
    \left\langle
        \Sigma_s^0(u),
        K_s(u)\Sigma_s^0(u)
    \right\rangle_{\mathrm F}
    +2A_s(u)^\top Y_s(u)
    +2\left\langle
        Z_s^Y(u),
        \Sigma_s^0(u)
    \right\rangle_{\mathrm F}
\\
&\qquad
    +\int_I
    \left\langle
        \Sigma_s^0(u),
        \bar K_s(u,v)\Sigma_s^0(v)
    \right\rangle_{\mathrm F}\,dv +\iota_s(u)^\top R_s(u)\iota_s(u)
    -\Gamma_s(u)^\top O_s(u)^{-1}\Gamma_s(u)
\Bigg]\,du
\Bigg\}\,ds
+Z_s^\lambda\,dW_s^0,
\\
\lambda_T={}&0.
\end{aligned}
\right.
\end{equation}
We denote by $\ell_s$ the scalar quantity inside braces in \eqref{eq:lambda_BSDE_continuum}. Thus, $d\lambda_s = -\ell_s\,ds + Z_s^\lambda\,dW_s^0$, $\lambda_T=0$.

\begin{Definition}
An integrable solution of \eqref{eq:lambda_BSDE_continuum} is a pair $(\lambda,Z^\lambda)$ such that $\lambda$ is a continuous $\mathbb F^0$-adapted process of class $(D)$, $Z^\lambda$ is predictable and locally square-integrable, and \eqref{eq:lambda_BSDE_continuum} holds.
\end{Definition}

\begin{Remark}\label{rem:triangular_structure_backward_system}
The backward system has a triangular structure. The equation for $K$ is independent of the interaction kernels $G^B,G^E,G^Q,G^H$ and of the additive coefficients $A,D,\Sigma^0,\iota$. Once $K$ is known, the equation for $\bar K$ is closed. Once $(K,\bar K)$ are known, the equation for $Y$ is linear. 
Once $(K,Z^K,\bar K,Z^{\bar K},Y,Z^Y)$ are known, the process $\ell$ is given and satisfies $\E\int_0^T|\ell_s|\,ds<\infty$. Indeed, this follows from the boundedness of $K$, $O^{-1}$ and $\Sigma^0$, the operator bound on $T_{\bar K}$, the coefficients $A,D,Y\in S^2_{\mathbb F^0}(\mathsf H)$ and $\iota\in S^2_{\mathbb F^0}(\mathsf U)$, and $Z^Y\in H^2_{\mathbb F^0} (\mathcal L_2(\mathbb R^{d_0};\mathsf H))$, using Cauchy--Schwarz. Consequently, the scalar component is uniquely determined by $\lambda_t = \E\left[ \int_t^T\ell_s\,ds \;\middle|\; \Fc_t^0 \right]$. Equivalently, setting $M_t := \E\left[ \int_0^T\ell_s\,ds \;\middle|\; \Fc_t^0 \right]$, one has $\lambda_t=M_t-\int_0^t\ell_s\,ds$.The martingale $M$ is uniformly integrable. By localization and the Brownian martingale representation theorem, there exists a predictable locally square-integrable process $Z^\lambda$ such that $M_t=M_0+\int_0^tZ_s^\lambda\,dW_s^0$. Hence $(\lambda,Z^\lambda)$ is the unique integrable solution of \eqref{eq:lambda_BSDE_continuum}, with $\lambda$ of class $(D)$.
\end{Remark}

\noindent We now prove the fundamental relation to decompose the conditional cost $J(t,\xi,\alpha)$ saw in \eqref{eq:conditional_cost}. For an initial condition $\xi\in\mathcal I_t$, we denote by $\bar\xi(u) := \E\left[ \xi \;\middle|\; \Fc_t^0,U=u \right]$ a jointly measurable version of its conditional mean field.

\begin{Proposition}[Fundamental relation for the continuum problem]
\label{prop:fundamental_relation_continuum}
Assume that the backward system
\eqref{eq:K_BSRE_continuum}--\eqref{eq:lambda_BSDE_continuum}
admits a solution $(K,Z^K,\bar K,Z^{\bar K},Y,Z^Y,\lambda,Z^\lambda)$ with the integrability and boundedness properties stated in Theorem~\ref{thm:solvability_continuum_backward_system}. Let $t\in[0,T]$, $\xi\in\Ic_t$, and let $\alpha\in\Ac_t$ be an admissible control, the following fundamental relation holds :
\begin{align}
J(t,\xi,\alpha)
=
\mathcal V_t(\xi)
+
\E\Big[
\int_t^T
\left\langle
\alpha_s-\alpha_s^{\star,\alpha},
O_s(U)(\alpha_s-\alpha_s^{\star,\alpha})
\right\rangle ds
\;\Big|\;\Fc_t^0
\Big],
\qquad \P\text{-a.s.}
\label{eq:fundamental_relation_continuum}
\end{align}

\noindent with $\mathcal V$ being $\mathcal V_t(\xi):=\E\Big[
\langle \xi,K_t(U)\xi\rangle
+
\left\langle
\bar \xi(U),
(T_{\bar K_t}\bar \xi)(U)
\right\rangle
+
2\langle Y_t(U),\xi\rangle
\;\Big|\;\Fc_t^0
\Big]+\lambda_t$, and where for any admissible control $\alpha$, we define the feedback expression evaluated along the controlled state $X^\alpha$ by
\begin{align}
\alpha_s^{\star,\alpha}
:=
-O_s(U)^{-1}
\left(
U_s(U)X_s^\alpha
+
\int_I V_s(U,v)\bar X_s^\alpha(v)\,dv
+
\Gamma_s(U)
\right).
\label{eq:feedback_along_alpha_continuum}
\end{align}
\end{Proposition}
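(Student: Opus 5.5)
The approach is the usual completion-of-squares argument for conditional mean-field LQ problems. Apply Itô's formula to the quadratic ansatz $v_s(X^\alpha)$ of \eqref{eq:continuum_quadratic_ansatz} on $[t,T]$ and take conditional expectations given $\mathcal F_t^0$. Because $v_T(X^\alpha)$ is the terminal cost (using $K_T=H$, $\bar K_T=G^H$, $Y_T=0$, $\lambda_T=0$), one obtains
\[
J(t,\xi,\alpha)-\mathcal V_t(\xi)=\E\Big[\int_t^T \big(\mathcal D_s+\text{running cost}_s\big)\,ds\;\Big|\;\mathcal F_t^0\Big],
\]
where $\mathcal D_s$ is the drift of $v_s(X^\alpha)$. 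The task is to show that, after conditional averaging, the integrand equals $\langle \alpha_s-\alpha_s^{\star,\alpha},O_s(U)(\alpha_s-\alpha_s^{\star,\alpha})\rangle$. I would treat the four pieces of the ansatz separately, in the order below.

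\textbf{Local term.} For $\langle X_s,K_s(U)X_s\rangle$, use Itô's product rule between the $\mathbb F^0$-semimartingale $K_s(u)$ evaluated at $u=U$ and $X_s$. This is legitimate because $U$ is $\mathcal G_t$-measurable and independent of $W^0$, so one can work with $u$ frozen and then substitute. The cross-variation with the common noise produces $2\langle Z^K_s(U)\Sigma^0_s(U),X_s\rangle$. The $dW$ quadratic variation produces $\langle (D+EX+T_{G^E}\bar X+F\alpha),K(D+EX+T_{G^E}\bar X+F\alpha)\rangle$, and the $dW^0$ quadratic variation produces $\langle\Sigma^0,K\Sigma^0\rangle_{\mathrm F}$.

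\textbf{Mean-field term.} For $\langle\bar X_s(U),(T_{\bar K_s}\bar X_s)(U)\rangle$, I would rewrite its conditional expectation as $\langle \bar X_s,T_{\bar K_s}\bar X_s\rangle_{\mathsf H}$. Here $\bar X$ solves the $\mathsf H$-valued conditional SDE
\[
d\bar X_s=\big(A+B\bar X+T_{G^B}\bar X+C\bar\alpha\big)\,ds+\Sigma^0\,dW^0,
\]
obtained by conditioning \eqref{eq:state_equations} on $\mathcal F^0_s$ and $U=u$; this step uses the measurable representation of Proposition~\ref{prop:measurable_rpz} and the independence of $W$. Itô's formula in $\mathsf H$ for the bilinear form then gives the $Z^{\bar K}$ cross term and the term $\int\!\!\int\langle\Sigma^0(u),\bar K(u,v)\Sigma^0(v)\rangle$.

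\textbf{Linear and scalar terms.} The linear term $2\langle Y_s(U),X_s\rangle$ and the scalar $\lambda$ are handled in the same way.

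\textbf{Regrouping.} After averaging over $U$, which is the key use of the tower property, every term that is quadratic in $\bar X$, for example $\E[\langle X,KG^E\bar X\rangle]=\langle\bar X,M_{E^\top K}T_{G^E}\bar X\rangle$, becomes an $\mathsf H$ bilinear form in $\bar X$. Using the drivers \eqref{eq:K_BSRE_continuum}, \eqref{eq:Kbar_BSRE_continuum}, \eqref{eq:Y_BSDE_continuum} and \eqref{eq:lambda_BSDE_continuum}, the coefficient $\Psi$ of \eqref{eq:Psi_continuum} and $M$ of \eqref{eq:M_continuum}, all $\alpha$-free terms cancel against the running cost. What remains is
\[
\langle\alpha,O\alpha\rangle+2\langle\alpha,UX+T_V\bar X+\Gamma\rangle+\text{the Riccati quadratic terms }-U^\top O^{-1}U,\;\ldots,\;-\Gamma^\top O^{-1}\Gamma .
\]
These are exactly the expansion of $\langle\alpha-\alpha^\star,O(\alpha-\alpha^\star)\rangle$. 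The one subtle point is that $\langle \alpha^\star,O\alpha^\star\rangle$ contains $\langle T_V\bar X,O^{-1}T_V\bar X\rangle$ evaluated at the individual level, whereas the $\bar K$ equation contains $\int V^\top O^{-1}V$, which corresponds to the conditional average. This needs the identity
\[
\E\big[\langle (T_V\bar X)(U),O^{-1}(U)(T_V\bar X)(U)\rangle\mid\mathcal F^0_s\big]=\langle\bar X,T_{\int V^\top O^{-1}V}\bar X\rangle_{\mathsf H},
\]
which holds because $T_V\bar X$ is $\mathcal F^0_s\vee\sigma(U)$-measurable. The cross terms $UX$ with $T_V\bar X$ and with $\Gamma$ reduce similarly, producing the two symmetric terms in $\mathcal F$ and in the $Y$-driver.

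\textbf{Main obstacle.} The hardest part is justifying that the stochastic integrals are true martingales, so that their conditional expectations vanish. The integrands include $Z^K X^2$, $Z^{\bar K}\bar X^2$ and $Z^Y X$, while $Z^K$ and $Z^{\bar K}$ are only in $H^2$ (or BMO) and not bounded. I would localize with stopping times $\tau_n$ reducing the local martingales, apply Itô up to $\tau_n$, and pass to the limit in $n$. Convergence of the left side uses the $S^\infty$ bounds on $K$ and $T_{\bar K}$ (Remark~\ref{rem:K_bounded_continuum} and Remark~\ref{rem:F_properties_continuum}(iv)), $X\in S^2$ from Theorem~\ref{thm:wellposed_limit_state}, and the class (D) property of $\lambda$ together with $Y\in S^2$. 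The drift terms converge by monotone or dominated convergence; in particular the square term is non-negative by Remark~\ref{rem:O_inverse_continuum}.
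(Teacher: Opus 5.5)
Your proposal is correct and follows the paper's proof in Appendix~\ref{app:fundamental_relation_continuum}: It\^o's formula on the ansatz, with the interaction part taken as the $\mathsf H$-bilinear form $\langle \bar X_s,T_{\bar K_s}\bar X_s\rangle_{\mathsf H}$, then cancellation through the four backward equations only after conditioning on $\mathcal F_s^0$ and integrating out the label, then completion of the square; the one difference is that, instead of localizing, the paper shows that the whole scalar semimartingale is of class (D) with drift of integrable total variation, so its local martingale part is a uniformly integrable martingale. If you keep the localization, pass to the limit by dominated convergence on the entire drift rather than by monotone convergence on the square, since the drift equals the completed square only in $\mathcal F_s^0$-conditional expectation and so cannot be replaced by it on $[t,\tau_n]$ for a $\mathbb G$-stopping time $\tau_n$; also note that uniform integrability of $\langle Y_{\tau_n}(U),X_{\tau_n}\rangle$ needs $\int_I\E\sup_s|Y_s(u)|^2\,du<\infty$, which does not follow from $Y\in S^2_{\mathbb F^0}(\mathsf H)$ and which the paper obtains from the pointwise representation of $Y$ and Doob's inequality.
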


\begin{proof}
The proof is postponed to Appendix \ref{app:fundamental_relation_continuum}.
\end{proof}

\subsection{Solvability of the continuum backward system}

We are here going to show the solvability of the Riccati BSDE system \eqref{eq:K_BSRE_continuum}, \eqref{eq:Kbar_BSRE_continuum}, \eqref{eq:lambda_BSDE_continuum}, and \eqref{eq:Y_BSDE_continuum}.

\begin{Theorem}[Solvability of the continuum backward system]
\label{thm:solvability_continuum_backward_system}
Assume that Assumptions \ref{ass:forward_coeff} and \ref{ass:cost_coeff} hold. 

\noindent Then the backward system \eqref{eq:K_BSRE_continuum}- \eqref{eq:lambda_BSDE_continuum} admits a solution $(K,Z^K,\bar K,Z^{\bar K},Y,Z^Y,\lambda,Z^\lambda)$ with the following integrability and boundedness properties. Uniqueness is asserted within this class:

\noindent $(K,Z^K)\in S^2_{\mathbb F^0}(L^2(I;\mathbb S^d)) \times H^2_{\mathbb F^0} \big(L^2(I;\mathcal L_2(\mathbb R^{d_0};\mathbb S^d))\big)$, $(\bar K,Z^{\bar K})\in S^2_{\mathbb F^0}(L^2(I^2;\mathbb R^{d\times d})) \times H^2_{\mathbb F^0} \big(L^2(I^2;\mathcal L_2(\mathbb R^{d_0}; \mathbb R^{d\times d}))\big)$,

\noindent $(Y,Z^Y)\in S^2_{\mathbb F^0}(L^2(I;\mathbb R^d)) \times H^2_{\mathbb F^0} \big(L^2(I;\mathcal L_2(\mathbb R^{d_0};\mathbb R^d))\big)$ and \eqref{eq:lambda_BSDE_continuum} admits a unique integrable solution $(\lambda,Z^\lambda)$, with $\lambda$ of class $(D)$.

\noindent The component $K$ is non negative and satisfies $K_s(u)\in\mathbb S_+^d$, $ds\otimes du\otimes d\P\text{-a.e.}$, and there exists a constant $C_T>0$ such that $\esssup_{(s,u,\omega)}|K_s(u,\omega)|\le C_T$.

\noindent The kernel $\bar K$ is symmetric and satisfies $T_{\bar K}\in L^\infty (\Omega;C([0,T];\mathcal L(\mathsf H)))$, that is, $\sup_{0\le s\le T}\|T_{\bar K_s}\|_{\mathcal L(L^2(I;\mathbb R^d))} \le C_T$, $\P\text{-a.s.}$.
\end{Theorem}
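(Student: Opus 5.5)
We follow the triangular structure of Remark~\ref{rem:triangular_structure_backward_system} and solve the four equations one after another, each in its own class.

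\textbf{Step 1: the local equation for $K$.} For a.e.\ fixed label $u$, \eqref{eq:K_BSRE_continuum} is a standard matrix-valued backward stochastic Riccati equation driven by $W^0$. Its coefficients $B(u),C(u),E(u),F(u)$ are bounded and $\mathbb F^0$-progressive, $Q(u),H(u)\succeq0$ are bounded, and $R(u)\succeq c_R I_m$. The theory of \cite{Tang2003GeneralLQ,Tang2015LQ} then gives a unique solution $(K(u),Z^K(u))$ with $K(u)\in\mathbb S_+^d$. Remark~\ref{rem:K_bounded_continuum} shows that $\|K(u)\|$ is bounded by a constant independent of $u$. We also need joint measurability in $u$. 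We obtain it by building $K(u)$ as the limit of a Picard or monotone approximation scheme whose coefficients are measurable in $u$; equivalently, we can view the family as a single BSDE in $L^2(I;\mathbb S^d)$ with a fibrewise, locally Lipschitz driver truncated at the a priori bound. The uniform bound on $K$ gives $O^{-1}$ bounded by Remark~\ref{rem:O_inverse_continuum}, and Itô's formula for $|K(u)|^2$ then gives $Z^K\in H^2$ and the BMO property.

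\textbf{Step 2: the abstract equation for $\bar K$.} With $K$ fixed, \eqref{eq:Kbar_BSRE_continuum} is a BSDE in the Hilbert space $\overline{\mathsf H}$. By Remark~\ref{rem:F_properties_continuum}(ii) its driver is only locally Lipschitz, with a constant controlled by the operator norm $\|T_{\bar k}\|_{\mathcal L(\mathsf H)}$. We proceed as follows.
\begin{enumerate}
\item[(a)] Truncate the quadratic term by replacing $\bar k$ with a radial cutoff in operator norm at level $R$. The truncated equation has a globally Lipschitz driver, and standard Hilbert-space BSDE theory gives a unique solution in $S^2\times H^2$.
\item[(b)] Identify the operator $\mathbb K_s:=M_{K_s}+T_{\bar K_s}$ with the homogeneous value of the lifted LQ problem on $\mathsf H$: for $x\in\mathsf H$, the quantity $\langle x,\mathbb K_s x\rangle$ equals the cost-to-go of the conditional-mean dynamics. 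We verify this via the fundamental relation (Proposition~\ref{prop:fundamental_relation_continuum}) applied with $A=D=\Sigma^0=\iota=0$.
\item[(c)] Compare with the zero control, using the positivity in Assumption~\ref{ass:cost_coeff}(iv) and the state estimate of Theorem~\ref{thm:wellposed_limit_state}. This gives $0\le\langle x,\mathbb K_sx\rangle\le C_T\|x\|^2$ with $C_T$ independent of $R$, hence $\|T_{\bar K_s}\|\le C_T+\|K\|_\infty$.
\item[(d)] Choose $R$ larger than this bound, so the truncation is inactive. The truncated solution is then a true solution, and local Lipschitz continuity gives uniqueness in the class $\{\sup_s\|T_{\bar K_s}\|\le C\}$.
\end{enumerate}
Symmetry follows from uniqueness: by Remark~\ref{rem:F_properties_continuum}(iii), $\bar K^\dagger$ solves the same equation with the same symmetric terminal datum $G^H$, so $\bar K=\bar K^\dagger$. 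Continuity of $s\mapsto T_{\bar K_s}$ in operator norm follows from $S^2(\overline{\mathsf H})$ continuity and $\|T_G\|_{\mathrm{op}}\le\|G\|_{L^2}$.

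\textbf{Step 3: the linear equations for $Y$ and $\lambda$.} The equation \eqref{eq:Y_BSDE_continuum} is a linear BSDE in $\mathsf H$ whose driver is bounded-Lipschitz in $Y$ (through $B$, $T_{G^B}$, and $U$, $V$, $O^{-1}$, which are now bounded operators). Its source term contains $Z^K\Sigma^0$ and $T_{Z^{\bar K}}\Sigma^0$. These lie in $H^2(\mathsf H)$ because $\Sigma^0$ is bounded and $Z^K$, $Z^{\bar K}$ are in $H^2$, and $A,D$ lie in $S^2$. Standard Hilbert BSDE existence and uniqueness therefore applies. Finally, $\lambda$ is obtained as in Remark~\ref{rem:triangular_structure_backward_system}, by conditional expectation and martingale representation.

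\textbf{Main obstacle.} The main difficulty is the a priori operator bound in Step 2(b)--(c), namely identifying the truncated Riccati operator with a value function of the truncated problem. We would first try to run the completion-of-squares argument directly on the lifted $\mathsf H$-valued system, taking the feedback generated by the truncated solution and showing by an Itô computation that $\langle x,\mathbb K_sx\rangle$ is bounded above by the zero-control cost. This avoids circular use of the full system.
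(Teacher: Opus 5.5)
Your Steps~1 and~3, and the core idea of Step~2 (an operator bound from the homogeneous value compared with the zero control), follow the paper. Step~2 as written, however, has two genuine gaps.

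\textbf{First gap: symmetry.} Remark~\ref{rem:F_properties_continuum}(iii) only says that $\mathcal F(s,\cdot)$ maps symmetric kernels to symmetric kernels. It does not give $\mathcal F(s,\bar k^\dagger)=\mathcal F(s,\bar k)^\dagger$, and that equivariance is false in general. The feedback part of $T_{\mathcal F(s,\bar k)}$ is $-M_{U^\top O^{-1}}T_V-T_V^*M_{O^{-1}U}-T_V^*M_{O^{-1}}T_V$ with $T_V=M_{C^\top}T_{\bar k}+M_{F^\top K}T_{G^E}$. Replacing $T_{\bar k}$ by $T_{\bar k}^*$ turns, for instance, $T_{\bar k}^*M_{CO^{-1}C^\top}T_{\bar k}$ into $T_{\bar k}M_{CO^{-1}C^\top}T_{\bar k}^*$. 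So $\bar K^\dagger$ does not solve the same equation, and symmetry cannot be recovered from uniqueness afterwards. You also need symmetry \emph{before} step~(c). The homogeneous value only controls the quadratic form $\langle x,T_{\bar K_s}x\rangle_{\mathsf H}$, which sees only the symmetric part of $T_{\bar K_s}$. Passing to $\|T_{\bar K_s}\|_{\mathrm{op}}\le C_T+\|K\|_\infty$ therefore requires self-adjointness. The remedy, as in the paper, is to run the whole construction in the closed subspace of symmetric kernels. That subspace is preserved by $\mathcal F$ and by the conditional-expectation map.

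\textbf{Second gap: the truncation.} Write $\pi_R\bar k:=\min\{1,R/\|T_{\bar k}\|_{\mathrm{op}}\}\,\bar k$. This map is not Lipschitz for the $\overline{\mathsf H}$-norm, because the Hilbert--Schmidt norm is not controlled by the operator norm. Take $\|T_{\bar\ell}\|_{\mathrm{op}}=2R$ and perturb by $\epsilon$ in the top eigendirection. The scaling factor changes by about $\epsilon/(4R)$, so $\pi_R\bar\ell$ moves by about $\epsilon\|\bar\ell\|_{\overline{\mathsf H}}/(4R)$. Hence $\mathcal F(s,\pi_R\,\cdot)$ is not globally Lipschitz, and step~(a) has no standard theory behind it. Spectral clipping of $T_{\bar k}$ to $[-R,R]$ on symmetric kernels would instead be $1$-Lipschitz in Hilbert--Schmidt norm.

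Even with that repair, (b)--(d) is circular. Identifying $\langle x,(M_{K_s}+T_{\bar K^R_s})x\rangle_{\mathsf H}$ with the homogeneous value requires $\bar K^R$ to solve the \emph{untruncated} equation on all of $[s,T]$ for a.e.\ $\omega$. Otherwise the It\^o computation you propose leaves the residual $\langle m,T_{\mathcal F(s,\bar K^R_s)-\mathcal F(s,\pi_R\bar K^R_s)}m\rangle_{\mathsf H}$, which has no sign. So you cannot first derive an $R$-independent bound and then choose $R$. What closes the loop is continuation in deterministic time steps:
\begin{itemize}
\item Once the bound $C_*$ holds on $[b,T]$, conditional Jensen and Remark~\ref{rem:F_properties_continuum}(i) give $\|T_{\bar K^R_q}\|_{\mathrm{op}}\le C_*+(b-q)C_T(1+R+R^2)$ for $q<b$.
\item This extends the inactive region by a deterministic length. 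Pathwise continuity alone would only give a random extension.
\end{itemize}
This is exactly the paper's argument. It runs a Picard iteration on the set $\mathcal B(r,S)$ of symmetric kernels with $\sup_s\|T_{\bar k_s}\|_{\mathrm{op}}\le r$. That set is closed in $S^2_{\mathbb F^0}([S,T];\overline{\mathsf H})$, and on it Remark~\ref{rem:F_properties_continuum}(ii) gives a Lipschitz constant depending only on $r$. The iteration contracts on intervals of deterministic length $\delta(r)$. The homogeneous verification bound $C_*$ is applied to each concatenated solution, with $r=C_*+1$ fixed in advance, so no truncation is needed.
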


\begin{proof}
We proceed in three steps.

\medskip
\noindent
\textbf{Step 1: Solvability of the standard Riccati BSDE for $K$.}

For $du$-a.e. $u\in I$, equation \eqref{eq:K_BSRE_continuum} is a finite-dimensional matrix-valued stochastic Riccati equation driven by the common Brownian motion $W^0$. Its coefficients are $\F^0$-adapted and uniformly bounded, the terminal condition $H(u)$ is non-negative, and the control weight $R_s(u)$ is uniformly coercive.

\noindent Under these assumptions, the local LQ problem associated with the Riccati equation has a uniformly convex cost. Hence, by the standard solvability theory for stochastic Riccati equations with random coefficients, see \cite{Bismut1976LQ}, \cite{Tang2003GeneralLQ}, \cite{Pham2016LQConditional}, and \cite{BaseiPham2019WeakMartingale}, there exists a unique bounded non-negative solution $(K(u),Z^K(u)) \in S_{\F^0}^2(\mathbb S^d) \times H_{\F^0}^2(\mathcal L_2(\R^{d_0};\S^d))$ for $du$-a.e. $u\in I$. Moreover, $K_s(u)\in\mathbb S_+^d$, $ds\otimes d\P\text{-a.e.}$, and $\esssup_{s\in[0,T]}|K_s(u)|\le C_T$, where $C_T$ depends only on the uniform bounds of the model coefficients and not on $u$. Related solvability results for stochastic Riccati equations under indefinite weights can be found in \cite{HuZhou2003IndefiniteSRE}.

\smallskip
\noindent It remains to choose jointly measurable versions of this family. Let $C_*$ be a deterministic bound for the non-negative local Riccati solutions and let $\pi$ denote the orthogonal projection, for the Frobenius inner product, onto the closed convex set $\mathcal C_*:=\{k\in\mathbb S^d:0\preceq k\preceq C_*I_d\}$. In the local Riccati driver, replace every occurrence of $k$ by $\pi(k)$. Since $R_s(u)+F_s(u)^\top\pi(k)F_s(u)\succeq c_RI_m$, the resulting driver is defined on all of $\mathbb S^d$ and is globally Lipschitz, uniformly in $(s,u,\omega)$. Its coefficients are jointly measurable. The standard Picard construction for the corresponding BSDE in $L^2(I;\mathbb S^d)$ therefore gives jointly measurable versions of its solution and its martingale integrand. For almost every $u$, uniqueness for this Lipschitz BSDE identifies the solution with the bounded non-negative local Riccati solution, because $\pi(K_s(u))=K_s(u)$. Consequently, $(K,Z^K)\in S^2_{\mathbb F^0}(L^2(I;\mathbb S^d)) \times H^2_{\mathbb F^0} \big(L^2(I;\mathcal L_2(\mathbb R^{d_0};\mathbb S^d))\big)$.

\bigskip
\noindent
\textbf{Step 2: Solvability of the abstract Riccati BSDE for $\bar K$.}

We first derive the a priori operator estimate needed to iterate the local fixed-point construction.  Let $(\bar K,Z^{\bar K})$ be a local solution on $[T_0,T]$.  Consider the homogeneous problem obtained by setting the affine coefficients $A,D,\Sigma^0,\iota$ equal to zero. In that problem $Y=0$ and the scalar component is zero.  Applying Ito's formula only to the quadratic functional determined by $(K,\bar K)$ and completing the control square gives, for $x\in L^2(I;\mathbb R^d)$, $V_s^{\rm hom}(x)
=\langle x,M_{K_s}x\rangle_\mathsf H\ +\langle x,T_{\bar K_s}x\rangle_\mathsf H\ $. 

\smallskip
\noindent This calculation uses neither the equation for $Y$ nor the scalar BSDE. The non-negativity of the homogeneous cost and the admissibility of the zero control imply $0\le V_s^{\rm hom}(x)\le C_T\|x\|_{\mathsf H}^2 $. Since $K$ is uniformly bounded, there is a deterministic $C_T$ such that $|\langle x,T_{\bar K_s}x\rangle_\mathsf H| \le C_T\|x\|_{\mathsf H}^2 $. The kernel symmetry implies that $T_{\bar K_s}$ is self-adjoint; hence $\sup_{s\in[T_0,T]} \|T_{\bar K_s}\|_{\mathcal L(\mathsf H)}\le C_T$, $\mathbb P\text{-a.s.}$. The constant is independent of $T_0$.  It therefore prevents blow-up of the local solution and permits the local fixed-point intervals to be concatenated backwards until time zero; compare \cite[Proposition~4.13]{DeFeoMekkaoui2025}.

\medskip
\noindent We now construct a solution locally in time. Fix $S<T$ and consider the interval $[S,T]$. Let $M_{\mathsf H}:= \esssup_{\omega} \|T_{G^H(\omega)}\|_{\mathcal L(\mathsf H)}$. Choose $r>M_{\mathsf H}$ and define 

\[\mathcal B(r,S):= \left\{ \bar k\in S_{\mathbb F^0}^2([S,T];\overline{\mathsf H}): \bar k_s=\bar k_s^\dagger\ \text{for all }s\in[S,T],\quad \sup_{s\in[S,T]}\|T_{\bar k_s}\|_{\mathcal L(\mathsf H)}\le r, \quad\mathbb P\text{-a.s.} \right\}\]
This is a closed subset of $S_{\F^0}^2([S,T];\overline{\mathsf H})$, hence a complete metric space.

\smallskip
\noindent We also check square-integrability in the HS norm. Set $g_s:=\|G_s^B\|_{\overline{\mathsf H}} +\|G_s^E\|_{\overline{\mathsf H}} +\|G_s^Q\|_{\overline{\mathsf H}}$. The coefficient assumptions imply $\mathbb E\int_0^T g_s^2\,ds<\infty$. Using the deterministic operator bounds on the data with the inequality $\|AB\|_{\mathrm{HS}}\le \|A\|_{\mathrm{op}}\|B\|_{\mathrm{HS}}$, we obtain $\|\mathcal F(s,0)\|_{\overline{\mathsf H}}\le Cg_s$, with $\|\mathcal F(s,\bar k_s)\|_{\overline{\mathsf H}} \le Cg_s+C_r\|\bar k_s\|_{\overline{\mathsf H}}$, $\bar k\in\mathcal B(r,S)$. Hence $\mathcal F(\cdot,\bar k)\in H^2_{\mathbb F^0}([S,T];\overline{\mathsf H})$. Doob's inequality, applied to the conditional expectation of $G^H+\int_S^T\mathcal F(q,\bar k_q)\,dq$, shows that $\Phi\bar k\in S^2_{\mathbb F^0}([S,T];\overline{\mathsf H})$.

\medskip
\noindent For $\bar k\in\mathcal B(r,S)$, define $(\Phi\bar k)_s:=\E\left[G^H+\int_s^T\mathcal F(q,\bar k_q)\,dq \;\middle|\;\Fc_s^0 \right]$, $s\in[S,T]$. By conditional Jensen's inequality and the growth estimate on $\mathcal F$,
\[
\|T_{(\Phi\bar k)_s}\|_{\mathcal L(\mathsf H)}
\le
\E\left[
\|T_{G^H}\|_{\mathcal L(\mathsf H)}
+
\int_s^T
\|T_{\mathcal F(q,\bar k_q)}\|_{\mathcal L(\mathsf H)}\,dq
\;\middle|\;\Fc_s^0
\right].
\]
Thus, for $\bar k\in\mathcal B(r,S)$, $\sup_{s\in[S,T]} \|T_{(\Phi\bar k)_s}\|_{\mathcal L(\mathsf H)} \le M_{\mathsf H}+(T-S)C_T(1+r+r^2)$.
Therefore, if $\delta>0$ is chosen such that $\delta C_T(1+r+r^2)\le r-M_\mathsf H$, then $\Phi(\mathcal B(r,S))\subset\mathcal B(r,S)$ whenever $T-S\le\delta$.

\smallskip
\noindent We next prove that $\Phi$ is a contraction. Let $\bar k,\bar\ell\in\mathcal B(r,S)$. Then
\[
(\Phi\bar k)_s-(\Phi\bar\ell)_s
=
\E\left[
\int_s^T
\big(\mathcal F(q,\bar k_q)-\mathcal F(q,\bar\ell_q)\big)\,dq
\;\middle|\;\Fc_s^0
\right].
\]
The local Lipschitz estimate and Doob's $L^2$ inequality give $\|\Phi\bar k- \Phi\bar\ell\|_{S^2([S,T];\overline{\mathsf H})} \le 2C_r(T-S) \|\bar k-\bar\ell\|_{S^2([S,T];\overline{\mathsf H})}$.
Taking $\delta>0$ smaller if necessary so that $2C_r\delta<1$, the map $\Phi$ is a contraction on $\mathcal B(r,S)$ for every interval $[S,T]$ of length at most $\delta$ (and all iterates are symmetric, since the terminal kernel is symmetric and $\mathcal F$ preserves symmetric kernels).

\medskip
\noindent By Banach's fixed point theorem, there exists a unique $\bar K\in\mathcal B(r,S)$ such that $\bar K_s = \E\left[ G^H+\int_s^T\mathcal F(q,\bar K_q)\,dq \;\middle|\;\Fc_s^0 \right]$.
Define $M_s := \bar K_s+\int_S^s\mathcal F(q,\bar K_q)\,dq$.
Then $M$ is a square-integrable $\overline{\mathsf H}$-valued $\F^0$-martingale. Since $\overline{\mathsf H}$ is a separable Hilbert space and $\F^0$ is generated by the Brownian motion $W^0$, the Hilbert-valued martingale representation theorem yields a unique $Z^{\bar K} \in H_{\F^0}^2
([S,T];\mathcal L_2(\mathbb R^{d_0};\overline{\mathsf H}))$ such that $M_s=M_S+\int_S^s Z_q^{\bar K}\,dW_q^0$.
For the Hilbert-space martingale representation and linear BSDE arguments used here, see for instance \cite{FabbriGozziSwiech2017}. Therefore, $(\bar K,Z^{\bar K})$ solves the abstract Riccati BSDE on $[S,T]$.

\smallskip
\noindent Let $C_*$ be the deterministic a priori bound above, enlarged if necessary to dominate $\|T_{G^H}\|_{L^\infty(\Omega;\mathcal L(\mathsf H))}$, and choose $r=C_*+1$. Choose $\delta>0$ so that $\delta C_T(1+r+r^2)\le 1$, $2C_r\delta<1$. At the first step, the terminal kernel is $G^H$ at time $T$. If the solution has been constructed on $[b,T]$, repeat the same construction on $[a,b]$, where $b-a\le\delta$, with terminal kernel $\bar K_b$ in place of $G^H$. This terminal kernel is symmetric, belongs to $L^2(\Omega;\overline{\mathsf H})$, and satisfies $\|T_{\bar K_b}\|_{\mathrm{op}}\le C_*$. The two solutions concatenate continuously at $b$. Apply the homogeneous verification argument to the concatenated solution on $[a,T]$, with the original terminal cost at $T$. It yields the same bound $C_*$, independently of $a$. Thus finitely many steps cover $[0,T]$ and give $(\bar K,Z^{\bar K})\in S^2_{\mathbb F^0}(\overline{\mathsf H}) \times H^2_{\mathbb F^0} (\mathcal L_2(\mathbb R^{d_0};\overline{\mathsf H}))$, $T_{\bar K}\in L^\infty(\Omega;C([0,T];\mathcal L(\mathsf H)))$. The solution is symmetric by construction. Uniqueness among symmetric solutions with a deterministic uniform operator bound follows by applying the same local Lipschitz estimate backwards on a common finite partition.

\medskip
\noindent
\textbf{Step 3: Solvability of the linear BSDE for $Y$.}
The driver of \eqref{eq:Y_BSDE_continuum} can be written as $\mathcal A_sY_s+f_s$, where $\mathcal A_s$ is a uniformly bounded linear operator on $\mathsf H$. Moreover, the definitions of $M$, $V$ and $\Gamma$, together with the boundedness of $\Sigma^0$, give $\|f_s\|_{\mathsf H}\le C\Big( 1+\|A_s\|_{\mathsf H}+\|D_s\|_{\mathsf H}+\|\iota_s\|_{\mathsf U} +\|Z_s^K\|_{\mathcal L_2(\mathbb R^{d_0};L^2(I;\mathbb S^d))} +\|Z_s^{\bar K}\|_{\mathcal L_2(\mathbb R^{d_0};\overline{\mathsf H})} \Big)$. Therefore, $\E\int_0^T\|f_s\|_{\mathsf H}^2\,ds<\infty$. 

\smallskip
\noindent Hence \eqref{eq:Y_BSDE_continuum} is a linear BSDE in the separable Hilbert space $\mathsf H$. By the standard theory, see \cite[Proposition 6.20]{FabbriGozziSwiech2017}, it admits a unique solution $(Y,Z^Y)\in S^2_{\mathbb F^0}(L^2(I;\mathbb R^d)) \times H^2_{\mathbb F^0} \big(L^2(I;\mathcal L_2(\mathbb R^{d_0};\mathbb R^d))\big)$. Finally, the existence and uniqueness of the scalar component $(\lambda,Z^\lambda)$ follow from Remark~\ref{rem:triangular_structure_backward_system}.
\end{proof}

\begin{Remark}\label{rem:construction_value_function}
For completeness, the quadratic value formula also yields the measurable functional asserted in Proposition~\ref{prop:law_invariance}. For a marked input law $\rho$, let $\mu$ be its $(u,x)$-marginal. Choose a jointly Borel disintegration $\mu(du,dx)=du\,\mu^u(dx)$ and set $m_\mu(u)=\int x\,\mu^u(dx)$. Using Borel versions of the backward fields as functions of the common-noise past, replace the conditional expectations in
\eqref{eq:value_function_continuum_verification} by integration against $\mu$, and $\bar\xi$ by $m_\mu$. This defines a Borel functional of the common-noise past and $\rho$. The integrals are finite by the boundedness of $K$ and $T_{\bar K}$, the $L^2$-regularity of $Y$, and the second-moment condition on $\mu$. In particular, this functional depends on $\rho$ only through its $(u,x)$-marginal $\mu$.
\end{Remark}

\subsection{Verification theorem}

The fundamental relation in Proposition \ref{prop:fundamental_relation_continuum}
suggests considering the feedback control obtained by cancelling the non-negative quadratic term in
\eqref{eq:fundamental_relation_continuum}. More precisely, substituting the feedback expression into the state dynamics leads to the closed-loop equation :
\begin{equation}\label{eq:closed_loop_continuum}
\left\{
\begin{aligned}
d\widehat X_s
&=
\Big[
A_s(U)+B_s(U)\widehat X_s
+
T_{G_s^B}(\widehat{\bar X}_s)(U)
+
C_s(U)\widehat\alpha_s
\Big]ds
\\
&\quad
+
\Big[
D_s(U)+E_s(U)\widehat X_s
+
T_{G_s^E}(\widehat{\bar X}_s)(U)
+
F_s(U)\widehat\alpha_s
\Big]dW_s
+
\Sigma_s^0(U)dW_s^0,
\\
\widehat X_t&=\xi,
\end{aligned}
\right.
\end{equation}
with $\widehat{\bar X}_s(u):=\E[\widehat X_s\mid \Fc_s^0,U=u]$ and $\widehat\alpha$ being defined by
\begin{align}
\widehat \alpha_s
=
-O_s(U)^{-1}
\left(
U_s(U)\widehat X_s
+
\int_I V_s(U,v)\widehat{\bar X}_s(v)\,dv
+
\Gamma_s(U)
\right).
\label{eq:feedback_continuum_hat}
\end{align}

\begin{Theorem}[Verification theorem for the continuum problem]
\label{thm:verification_continuum}
Assume that Assumptions \ref{ass:forward_coeff} and \ref{ass:cost_coeff} hold, and let $(K,Z^K,\bar K,Z^{\bar K},Y,Z^Y,\lambda,Z^\lambda)$ be the unique solution of the backward system \eqref{eq:K_BSRE_continuum} - \eqref{eq:lambda_BSDE_continuum} given by Theorem \ref{thm:solvability_continuum_backward_system}.

\noindent Then, for every $t\in[0,T]$ and every $\xi\in\Ic_t$, the closed-loop equation \eqref{eq:closed_loop_continuum} admits a unique solution $\widehat X\in S_{\mathbb G^{t,\xi}}^2([t,T];\mathbb R^d)$.
Moreover, the feedback process $\widehat \alpha$ defined by
\eqref{eq:feedback_continuum_hat} is admissible, i.e. $\widehat\alpha\in\Ac_t$, and it is the unique optimal control.

\noindent The value function is given by
\begin{align}
V_t(\xi)
=
\E\Big[
&\langle \xi,K_t(U)\xi\rangle
+
\left\langle
\bar \xi(U),
(T_{\bar K_t}\bar \xi)(U)
\right\rangle
+
2\langle Y_t(U),\xi\rangle
\;\Big|\;\Fc_t^0
\Big]+\lambda_t,
\label{eq:value_function_continuum_verification}
\end{align}
Equivalently, $V_t(\xi)=\mathcal V_t(\xi)$, where $\mathcal V_t(\xi)$ is the quantity defined in Proposition \ref{prop:fundamental_relation_continuum} and $V_t$ in \eqref{eq:conditional_cost}.
\end{Theorem}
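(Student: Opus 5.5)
The proof has three steps: solve the closed-loop equation, show the feedback is admissible, and conclude optimality, uniqueness and the value formula from Proposition~\ref{prop:fundamental_relation_continuum}.

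\textbf{Well-posedness of the closed loop.} Substituting \eqref{eq:feedback_continuum_hat} into \eqref{eq:closed_loop_continuum} gives again a heterogeneous linear McKean--Vlasov equation of the form \eqref{eq:state_equations}, now without a control. Its coefficients are modified as follows:
\begin{itemize}
\item the local coefficient becomes $B-CO^{-1}U$;
\item the interaction kernel becomes $G^B-C(\cdot)O^{-1}(\cdot)V(\cdot,\cdot)$;
\item the additive term becomes $A-CO^{-1}\Gamma$;
\item the diffusion coefficients are modified in the same way through $F$.
\end{itemize}
By Theorem~\ref{thm:solvability_continuum_backward_system}, $K$ is bounded and non-negative, so $O^{-1}$ is uniformly bounded by Remark~\ref{rem:O_inverse_continuum}. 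Also $T_{\bar K}$ is uniformly bounded in operator norm, and $V=C^\top\bar K+F^\top KG^E$. Hence the new kernel operators are uniformly bounded on $\mathsf H$, and the new local coefficients lie in $L^\infty$. The new additive terms lie in $H^2_{\mathbb F^0}(\mathsf H)$, since $Y\in S^2(\mathsf H)$ and $D,\iota\in S^2$. These additive terms are only square integrable in time rather than in $S^2$, but the fixed-point argument of Theorem~\ref{thm:wellposed_limit_state} (Appendix~\ref{app:P1}) only uses $H^2$-integrability. Rerunning it, with Proposition~\ref{prop:measurable_rpz} for the frozen equation, yields a unique $\widehat X\in S^2_{\mathbb G^{t,\xi}}([t,T];\mathbb R^d)$ together with a measurable representation $\widehat X=\hat x(U,\xi,W,W^0)$.

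\textbf{Admissibility.} The conditional mean $\widehat{\bar X}_s$ is an $\mathbb F^0$-progressive $\mathsf H$-valued field, so it is a Borel functional of $W^0_{\cdot\wedge s}$. Therefore $\widehat\alpha_s$ is a jointly Borel non-anticipative function of $(s,t,U,\xi,W_{\cdot\wedge s},W^0_{\cdot\wedge s})$. Square integrability follows from the uniform bounds on $O^{-1}$, $U$ and $T_V$, together with $\E\int_t^T\|\Gamma_s\|^2_{\mathsf U}ds<\infty$. Thus $\widehat\alpha\in\mathcal A_t$.

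\textbf{Optimality, uniqueness and the value.} The process $\widehat\alpha$ generates the state $\widehat X$, so $\alpha^{\star,\widehat\alpha}_s=\widehat\alpha_s$, and the quadratic remainder in \eqref{eq:fundamental_relation_continuum} vanishes. This gives $J(t,\xi,\widehat\alpha)=\mathcal V_t(\xi)$. For any other $\alpha\in\mathcal A_t$, the remainder is non-negative because $O\succeq c_R I_m$, so $J(t,\xi,\alpha)\ge\mathcal V_t(\xi)$. Hence $V_t(\xi)=\mathcal V_t(\xi)$, which is \eqref{eq:value_function_continuum_verification}.

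For uniqueness, suppose $\alpha$ is optimal, so $J(t,\xi,\alpha)=\mathcal V_t(\xi)$. Coercivity then forces $\alpha=\alpha^{\star,\alpha}$ $ds\otimes d\P$-a.e. Consequently $X^\alpha$ solves the closed-loop equation \eqref{eq:closed_loop_continuum}, and by uniqueness from the first step $X^\alpha=\widehat X$, hence $\alpha=\widehat\alpha$. A pathwise identification is not needed here, because the ess-inf is attained, and uniqueness of a $\P$-a.s.\ minimiser follows by taking expectations in the fundamental relation.

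The main obstacle is the first step: checking that the closed-loop kernel operators, which combine multiplication operators with $T_{\bar K}$ and $T_{G^E}$, are uniformly bounded in operator norm, and that the unbounded-in-time additive terms $\Gamma$ and $Z$-type quantities still fit the fixed-point framework. I would handle this with the bound $\|M_aT_G\|_{\mathrm{op}}\le\|a\|_\infty\|T_G\|_{\mathrm{op}}$ and by working in the $H^2$ rather than the $S^2$ norm for the inputs.
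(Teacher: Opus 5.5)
Your proposal is correct and follows essentially the same route as the paper: the fundamental relation \eqref{eq:fundamental_relation_continuum} gives $J\ge\mathcal V_t(\xi)$ with equality at the feedback, and admissibility comes from well-posedness of the closed loop via the fixed-point argument behind Theorem~\ref{thm:wellposed_limit_state}. You spell out three things the paper's short proof only asserts: the modified closed-loop coefficients, the fact that the fixed-point argument needs only uniform operator bounds and $H^2$-integrability of the additive terms, and the uniqueness argument (coercivity forces $\alpha=\alpha^{\star,\alpha}$, then uniqueness of the closed loop gives $\alpha=\widehat\alpha$).
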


\begin{proof}
From the fundamental relation \eqref{eq:fundamental_relation_continuum} we have $ J(t,\xi,\alpha)\ge\mathcal V_t(\xi)$ $\forall \alpha \in \Ac_t$ and equality if $\alpha$ satisfies \eqref{eq:feedback_continuum_hat}. We also see that $\hat \alpha \in \Ac_t$ by Theorem\ref{thm:wellposed_limit_state} with existence/uniqueness of \eqref{eq:closed_loop_continuum} that gives the optimality of our unique control $\hat \alpha$ defined in \eqref{eq:feedback_continuum_hat}
\end{proof}

\section{Solution of the discrete problem}\label{sec:discrete_LQ}

In this section, we solve the finite-population LQ problem  \eqref{eq:finite_N_state}--\eqref{eq:finite_N_cost}  introduced in Section \ref{sec:problem_discrete}. Unlike the continuum case, this is a  genuinely finite-dimensional stochastic LQ problem in $\R^{Nd}$, which we solve through a quadratic ansatz on the global state  $\mathbf X^N := (X^{1,N},\dots,X^{N,N}) \in \R^{Nd}$. We then  exploit the block structure of the resulting backward objects to  prepare the discrete-to-continuum convergence analysis carried out in Section \ref{sec:convergence}.

Throughout this section, we assume that Assumptions 
\ref{ass:finite_forward_coeff} and \ref{ass:finite_cost_coeff} hold.

\subsection{Vector formulation and quadratic ansatz}

For notational convenience, we rewrite the finite-$N$ system 
\eqref{eq:finite_N_state} in vector form. Let 
$\boldsymbol\alpha_s^N := (\alpha_s^{1,N},\dots,\alpha_s^{N,N}) \in \R^{Nm}$
and define the global drift matrix $\mathbf B_s^N \in \R^{Nd\times Nd}$ 
block-wise by
\[
(\mathbf B_s^N)_{ij}
:=
\delta_{ij}B_s^{i,N} + \frac{1}{N}G_{B,s}^N(u_i^N,u_j^N),
\qquad i,j=1,\dots,N.
\]
The control matrix $\mathbf C_s^N \in \R^{Nd\times Nm}$ is 
block-diagonal with $(\mathbf C_s^N)_{ii} = C_s^{i,N}$. For each 
$r=1,\dots,N$, the diffusion matrix $\mathbf E_s^{r,N} \in \R^{Nd\times Nd}$ 
has its only non-zero block-row at index $r$, namely
\[
(\mathbf E_s^{r,N})_{rj}
:=
\delta_{rj}E_s^{r,N} + \frac{1}{N}G_{E,s}^N(u_r^N,u_j^N),
\]
and analogously $\mathbf F_s^{r,N} \in \R^{Nd\times Nm}$ has its only 
non-zero block at position $(r,r)$ equal to $F_s^{r,N}$. The additive 
vectors $\mathbf A^N, \mathbf D^{r,N} \in \R^{Nd}$, $\boldsymbol\Sigma^{0,N} \in \R^{Nd\times d_0}$ and $\boldsymbol\iota_s^N \in \R^{Nm}$ are defined componentwise in the obvious way.

\noindent Similarly, the cost matrices $\mathbf Q_s^N, \mathbf H^N \in \R^{Nd\times Nd}$ 
and $\mathbf R_s^N \in \R^{Nm\times Nm}$ are defined block-wise by
\[
(\mathbf Q_s^N)_{ij}
:= 
\delta_{ij}\frac{1}{N}Q_s^{i,N} + \frac{1}{N^2}G_{Q,s}^N(u_i^N,u_j^N),
\quad
(\mathbf H^N)_{ij}
:= 
\delta_{ij}\frac{1}{N}H^{i,N} + \frac{1}{N^2}G_H^N(u_i^N,u_j^N),
\]
and $\mathbf R_s^N$ is block-diagonal with $(\mathbf R_s^N)_{ii} 
= \frac{1}{N}R_s^{i,N}$. Under Assumption \ref{ass:finite_cost_coeff}, 
$\mathbf R_s^N \succeq \frac{c_R}{N}I_{Nm}$ and $\mathbf Q_s^N, 
\mathbf H^N \succeq 0$.

\noindent With these notations, the system \eqref{eq:finite_N_state} becomes
\begin{equation}\label{eq:discrete_global_state}
\left\{
\begin{aligned}
d\mathbf X_s^N
&=
\left[
\mathbf A_s^N+\mathbf B_s^N\mathbf X_s^N+\mathbf C_s^N\boldsymbol\alpha_s^N
\right]\,ds
+
\sum_{r=1}^N
\left[
\mathbf D_s^{r,N}
+
\mathbf E_s^{r,N}\mathbf X_s^N
+
\mathbf F_s^{r,N}\boldsymbol\alpha_s^N
\right]\,dW_s^{r,N}
+
\mathbf\Sigma_s^{0,N}\,dW_s^0,
\\
\mathbf X_t^N&=\xi^N.
\end{aligned}
\right.
\end{equation}
and the cost \eqref{eq:finite_N_cost} becomes
\begin{align}
J^N(t,\xi^N,\alpha^N)
=
\E\left[
\int_t^T
\left(
(\mathbf X_s^N)^\top\mathbf Q_s^N\mathbf X_s^N
+
(\boldsymbol\alpha_s^N+\boldsymbol{\iota}_s^N)^\top\mathbf R_s^N
(\boldsymbol\alpha_s^N+\boldsymbol\iota_s^N)
\right)\,ds
+
(\mathbf X_T^N)^\top\mathbf H^N\mathbf X_T^N
\;\middle|\;\Fc_t^0
\right].
\label{eq:discrete_cost_global}
\end{align}

\paragraph{Quadratic Ansatz}
As in the continuum case, we look for a value function of quadratic form
\begin{equation}\label{eq:discrete_quadratic_ansatz}
v_s^N(\mathbf X_s^N)
:=
(\mathbf X_s^N)^\top P_s^N\mathbf X_s^N + 2(p_s^N)^\top \mathbf X_s^N + q_s^N,
\end{equation}
with $\F^0$-adapted backward processes $P^N \in \mathbb S^{Nd}$, 
$p^N \in \R^{Nd}$, $q^N \in \R$, satisfying
\begin{equation}\label{eq:discrete_backward_dynamics}
dP_s^N = -\mathscr F_s^{P,N}ds + Z_s^{P,N}dW_s^0,
\quad
dp_s^N = -\mathscr F_s^{p,N}ds + Z_s^{p,N}dW_s^0,
\quad
dq_s^N = -\mathscr F_s^{q,N}ds + Z_s^{q,N}dW_s^0,
\end{equation}
with terminal conditions $P_T^N = \mathbf H^N$, $p_T^N = 0$, $q_T^N = 0$.

\noindent The completion-of-squares coefficients are
\begin{align}
\mathcal O_s^N &:= \mathbf R_s^N + \sum_{r=1}^N (\mathbf F_s^{r,N})^\top P_s^N \mathbf F_s^{r,N},
\label{eq:def_O_discrete}\\
\mathcal U_s^N &:= (\mathbf C_s^N)^\top P_s^N + \sum_{r=1}^N (\mathbf F_s^{r,N})^\top P_s^N \mathbf E_s^{r,N},
\label{eq:def_U_discrete}\\
\boldsymbol\Gamma_s^N &:= (\mathbf C_s^N)^\top p_s^N + \sum_{r=1}^N (\mathbf F_s^{r,N})^\top P_s^N \mathbf D_s^{r,N} + \mathbf R_s^N \boldsymbol \iota_s^N.
\label{eq:def_Gamma_discrete}
\end{align}

\begin{Remark}\label{rem:O_inverse_discrete}
Under Assumption \ref{ass:finite_cost_coeff}, if $P_s^N \succeq 0$, 
then block-wise $\mathcal O_s^N$ is block-diagonal with $(\mathcal O_s^N)_{ii} = \frac{1}{N}R_s^{i,N} + (F_s^{i,N})^\top P_{ii,s}^N F_s^{i,N}$, and we have $(\mathcal O_s^N)_{ii} \succeq \frac{c_R}{N}I_m$. Therefore $\mathcal O_s^N$ is invertible and $|(\boldsymbol{\mathcal O}^N)^{-1}|_{F}\leq \sqrt{m} N^{3/2}/c_R$, $\P$-a.s. The factor $N$ is consistent with the $1/N$ scaling of the cost. In the convergence analysis, this apparent growth will be compensated by the natural scaling of the blocks of $P^N$ and $p^N$; see Section~\ref{sec:convergence}.
\end{Remark}

\subsection{Discrete Backward Riccati system}

\paragraph{Riccati BSDE for $P^N$.}
Applying It\^o's formula to the ansatz \eqref{eq:discrete_quadratic_ansatz} 
and matching the quadratic terms in $\mathbf X^N$ with those of the running 
cost yields the Riccati BSDE
\begin{equation}\label{eq:discrete_Riccati_P}
\left\{
\begin{aligned}
dP_s^N &= -\Big[
\mathbf Q_s^N + (\mathbf B_s^N)^\top P_s^N + P_s^N\mathbf B_s^N
+\sum_{r=1}^N(\mathbf E_s^{r,N})^\top P_s^N\mathbf E_s^{r,N}
- (\mathcal U_s^N)^\top (\mathcal O_s^N)^{-1}\mathcal U_s^N
\Big]\,ds + Z_s^{P,N}\,dW_s^0,
\\
P_T^N &= \mathbf H^N.
\end{aligned}
\right.
\end{equation}

\paragraph{Linear BSDE for $p^N$.}
Matching the linear terms in $\mathbf X^N$ yields
\begin{equation}\label{eq:discrete_linear_p}
\left\{
\begin{aligned}
dp_s^N &= -\Big[
P_s^N\mathbf A_s^N + Z_s^{P,N}\boldsymbol\Sigma_s^{0,N}
+ (\mathbf B_s^N)^\top p_s^N
+\sum_{r=1}^N(\mathbf E_s^{r,N})^\top P_s^N\mathbf D_s^{r,N}
- (\mathcal U_s^N)^\top (\mathcal O_s^N)^{-1}\boldsymbol\Gamma_s^N
\Big]\,ds + Z_s^{p,N}\,dW_s^0,
\\
p_T^N &= 0.
\end{aligned}
\right.
\end{equation}

\paragraph{Scalar BSDE for $q^N$.}
Matching the constant terms gives
\begin{equation}\label{eq:discrete_scalar_q}
\left\{
\begin{aligned}
dq_s^N &= -\Big[
2(\mathbf A_s^N)^\top p_s^N
+
\sum_{r=1}^N
\left\langle
\mathbf D_s^{r,N},
P_s^N\mathbf D_s^{r,N}
\right\rangle_{\mathrm F}
+
\left\langle
\boldsymbol\Sigma_s^{0,N},
P_s^N\boldsymbol\Sigma_s^{0,N}
\right\rangle_{\mathrm F}
+
2\left\langle
Z_s^{p,N},
\boldsymbol\Sigma_s^{0,N}
\right\rangle_{\mathrm F}
+ (\boldsymbol \iota_s^N)^\top\mathbf R_s^N\boldsymbol \iota_s^N
- (\boldsymbol\Gamma_s^N)^\top(\mathcal O_s^N)^{-1}\boldsymbol\Gamma_s^N
\Big]\,ds\\ 
&+ Z_s^{q,N}\,dW_s^0,
\\
q_T^N &= 0.
\end{aligned}
\right.
\end{equation}

\begin{Remark}\label{rem:triangular_discrete}
The discrete backward system has the same triangular structure as  the continuum system: equation \eqref{eq:discrete_Riccati_P} for  $P^N$ is independent of the additive coefficients  $\mathbf A^N, \mathbf D^{r,N}, \boldsymbol\Sigma^{0,N}, \mathbf \iota^N$; once $P^N$ is known, \eqref{eq:discrete_linear_p} is a linear BSDE for $(p^N,Z^{p,N})$; once $(P^N,p^N,Z^{p,N})$ are known, the driver of \eqref{eq:discrete_scalar_q} is prescribed and the scalar equation is solved by conditional expectation.
\end{Remark}

\subsection{Fundamental relation, solvability, and verification}

We will here prove the fundamental relation associated to our discrete system, along the solvability of and the verification theorem.

\begin{Proposition}[Fundamental relation, discrete case]
\label{prop:fundamental_relation_discrete}
Assume that the discrete backward system 
\eqref{eq:discrete_Riccati_P}--\eqref{eq:discrete_scalar_q} 
admits a solution $(P^N,Z^{P,N},p^N,Z^{p,N},q^N,Z^{q,N})$ in the solution class stated in Theorem~\ref{thm:solvability_discrete}. Let $t\in[0,T]$, $\xi^N\in\Ic_t^N$, 
and $\alpha^N\in\Ac_t^N$. Then
\begin{equation}\label{eq:fundamental_relation_discrete}
J^N(t,\xi^N,\alpha^N)
=
\mathcal V_t^N(\xi^N)
+
\E\Big[
\int_t^T(\boldsymbol\alpha_s^N-\widehat{\boldsymbol\alpha}_s^{N,\alpha})^\top\mathcal O_s^N(\boldsymbol\alpha_s^N-\widehat{\boldsymbol\alpha}_s^{N,\alpha})\,ds
\;\Big|\;\Fc_t^0
\Big],
\quad \P\text{-a.s.},
\end{equation}
where 
\[
\mathcal V_t^N(\xi^N) := \E\left[ (\xi^N)^\top P_t^N\xi^N + 2(p_t^N)^\top\xi^N \;\middle|\; \Fc_t^0 \right] + q_t^N
\] 
and the feedback expression along $\alpha$ is
\[
\widehat{\boldsymbol\alpha}_s^{N,\alpha}
:=
-(\mathcal O_s^N)^{-1}\big(\mathcal U_s^N \mathbf X_s^N + \boldsymbol\Gamma_s^N\big).
\]
\end{Proposition}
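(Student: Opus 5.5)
We apply Itô's formula to the process $v_s^N(\mathbf X_s^N)=(\mathbf X_s^N)^\top P_s^N\mathbf X_s^N+2(p_s^N)^\top\mathbf X_s^N+q_s^N$ on $[t,T]$, where $\mathbf X^N$ is the state driven by the given admissible $\alpha^N$ (well defined by Theorem~\ref{thm:wellposed_finite_state}). The differential has three sources: the drift of $P^N,p^N,q^N$ from \eqref{eq:discrete_backward_dynamics}, the drift and diffusion of $\mathbf X^N$ from \eqref{eq:discrete_global_state}, and the cross-variations.

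The quadratic-variation terms are as follows.
\begin{enumerate}
\item[(a)] The idiosyncratic brackets $d\langle W^{r,N},W^{r',N}\rangle=\delta_{rr'}ds$ give $\sum_r(\mathbf D^{r,N}+\mathbf E^{r,N}\mathbf X+\mathbf F^{r,N}\boldsymbol\alpha)^\top P^N(\cdots)$.
\item[(b)] The common-noise bracket gives $\langle\boldsymbol\Sigma^{0,N},P^N\boldsymbol\Sigma^{0,N}\rangle_{\mathrm F}$.
\item[(c)] The cross terms between $dP^N,dp^N$ and $d\mathbf X^N$ through $W^0$ give $2\mathbf X^\top Z^{P,N}\boldsymbol\Sigma^{0,N}+2\langle Z^{p,N},\boldsymbol\Sigma^{0,N}\rangle_{\mathrm F}$.
\end{enumerate}
Here we use the independence of $W^0$ from the $W^{r,N}$ and the fact that $(P,p,q)$ are driven only by $W^0$.

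We then add the running cost $\mathbf X^\top\mathbf Q\mathbf X+(\boldsymbol\alpha+\boldsymbol\iota)^\top\mathbf R(\boldsymbol\alpha+\boldsymbol\iota)$ and collect terms. The part quadratic in $\boldsymbol\alpha$ is $\boldsymbol\alpha^\top\mathcal O^N\boldsymbol\alpha$. The part linear in $\boldsymbol\alpha$ is $2\boldsymbol\alpha^\top(\mathcal U^N\mathbf X+\boldsymbol\Gamma^N)$, by the definitions \eqref{eq:def_O_discrete}--\eqref{eq:def_Gamma_discrete}. The remaining $\mathbf X$-quadratic, $\mathbf X$-linear and constant terms are exactly cancelled by the drivers of \eqref{eq:discrete_Riccati_P}, \eqref{eq:discrete_linear_p} and \eqref{eq:discrete_scalar_q}, up to the subtracted Schur-complement terms. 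Completing the square with $\mathcal O^N$ invertible (Remark~\ref{rem:O_inverse_discrete}, since $P^N\succeq0$ in the solution class) gives
\[
d v_s^N+\big(\mathbf X^\top\mathbf Q\mathbf X+(\boldsymbol\alpha+\boldsymbol\iota)^\top\mathbf R(\boldsymbol\alpha+\boldsymbol\iota)\big)ds=(\boldsymbol\alpha-\widehat{\boldsymbol\alpha}^{N,\alpha})^\top\mathcal O^N(\boldsymbol\alpha-\widehat{\boldsymbol\alpha}^{N,\alpha})\,ds+d\mathcal M_s,
\]
where $\mathcal M$ collects the stochastic integrals against $W^{r,N}$ and $W^0$. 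The Schur terms account for the identities $(\mathcal U^N)^\top(\mathcal O^N)^{-1}\mathcal U^N$, $(\mathcal U^N)^\top(\mathcal O^N)^{-1}\boldsymbol\Gamma^N$ and $(\boldsymbol\Gamma^N)^\top(\mathcal O^N)^{-1}\boldsymbol\Gamma^N$. In the linear identity we use the symmetry of $P^N$ to combine $\mathbf X^\top(\mathbf B^\top p+P\mathbf A)$ with its transpose. This is purely finite-dimensional algebra.

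Integrating from $t$ to $T$ and using $v_T^N=(\mathbf X_T)^\top\mathbf H^N\mathbf X_T$ together with $v_t^N$ evaluated at $\xi^N$ yields \eqref{eq:fundamental_relation_discrete} after taking $\E[\cdot\mid\mathcal F_t^0]$, provided the local martingale $\mathcal M$ has vanishing conditional expectation. This is the main obstacle, since $Z^{P,N},Z^{p,N},Z^{q,N}$ are only square-integrable or BMO and $\mathbf X$ is only in $S^2$.

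To handle $\mathcal M$, we localize with stopping times $\tau_n\uparrow T$ that reduce $\mathcal M$. We then pass $n\to\infty$ using the following bounds:
\begin{enumerate}
\item[(i)] $P^N$ is bounded, as in the solution class of Theorem~\ref{thm:solvability_discrete}, and $\mathbf X\in S^2$, so $\mathbf X_{\tau_n}^\top P_{\tau_n}\mathbf X_{\tau_n}$ is dominated by $C\sup|\mathbf X|^2$.
\item[(ii)] $p^N\in S^2$, so Cauchy--Schwarz controls $p^\top\mathbf X$.
\item[(iii)] $q^N$ is of class (D), and $\mathcal O^N$ is nonnegative, so monotone convergence applies to the square term.
\item[(iv)] The running cost is integrable since $\alpha^N\in\mathcal A_t^N$.
\end{enumerate}
Dominated and monotone convergence then give the identity.
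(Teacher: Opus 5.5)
Your proposal is correct and follows the paper's proof essentially step for step. Both apply Itô's formula to the quadratic ansatz plus the accumulated running cost, identify the $\boldsymbol\alpha$-dependent terms through $\mathcal O^N,\mathcal U^N,\boldsymbol\Gamma^N$, complete the square, and cancel the remaining quadratic, linear and constant terms with the drivers of the three backward equations.

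The only difference is in how the local martingale is removed, and it is a matter of presentation. The paper asserts that the process is of class (D) and concludes that the local martingale is uniformly integrable. You make this explicit with a localizing sequence and dominated/monotone convergence, using the same bounds: $P^N$ bounded, $p^N,\mathbf X^N\in S^2$, $q^N$ of class (D), and integrable running cost.
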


\begin{proof}
For $s \in [t,T]$, let $\mathcal S_s^{N,\alpha}:= (\mathbf X_s^N)^\top P_s^N\mathbf X_s^N + 2(p_s^N)^\top\mathbf X_s^N + q_s^N + \int_t^s \left[ (\mathbf X_r^N)^\top\mathbf Q_r^N\mathbf X_r^N + (\boldsymbol\alpha_r^N+\boldsymbol \iota_r^N)^\top\mathbf R_r^N
(\boldsymbol\alpha_r^N+\boldsymbol\iota_r^N) \right]dr.$
Applying It\^o's formula applied to $\mathcal S_s^{N,\alpha}$, and using \eqref{eq:discrete_global_state}, square completion in the 
control, and the fact that $(P^N, p^N, q^N)$ solve the backward 
system \eqref{eq:discrete_Riccati_P}--\eqref{eq:discrete_scalar_q} yields to the result.
The argument is the finite-dimensional analogue of the proof of 
Proposition \ref{prop:fundamental_relation_continuum}, and is detailed in Appendix \ref{app:fundamental_relation_discrete}.
\end{proof}

In this theorem, we are gonna prove the solvability of our discrete backwad system arising from our optimal control problem.

\begin{Theorem}[Solvability of the discrete backward system]
\label{thm:solvability_discrete}
Under Assumptions \ref{ass:finite_forward_coeff} and 
\ref{ass:finite_cost_coeff}, for every $N\ge1$, the discrete 
backward system \eqref{eq:discrete_Riccati_P}--\eqref{eq:discrete_scalar_q} 
admits a unique solution $(P^N,Z^{P,N},p^N,Z^{p,N},q^N,Z^{q,N})$ with
\[
P^N \in S^\infty_{\F^0}(\mathbb S_+^{Nd}),
\quad
Z^{P,N} \in H^2_{\F^0}(\mathcal L_2(\mathbb R^{d_0};\mathbb S^{Nd})),
\quad
(p^N,Z^{p,N}) \in S^2_{\F^0}(\R^{Nd})\times H^2_{\F^0}((\R^{Nd})^{d_0})
\]
Moreover, the scalar equation \eqref{eq:discrete_scalar_q} admits a unique integrable solution $(q^N,Z^{q,N})$, where $q^N$ is of class $(D)$ and $Z^{q,N}$ is locally square-integrable.
\end{Theorem}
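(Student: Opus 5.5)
We follow the triangular structure of Remark~\ref{rem:triangular_discrete}: first the Riccati equation for $P^N$, then the linear equation for $p^N$, then the scalar equation for $q^N$. Throughout, $N$ is fixed, so constants may depend on $N$. The problem is a standard finite-dimensional stochastic LQ problem in $\mathbb R^{Nd}$ with $\mathbb F^0$-progressive bounded coefficients, driven by the $N$ idiosyncratic Brownian motions and by $W^0$. Only the common noise enters the backward equations, since all coefficients are $\mathbb F^0$-adapted.

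\textbf{Step 1: the Riccati equation for $P^N$.} Under Assumptions~\ref{ass:finite_forward_coeff} and~\ref{ass:finite_cost_coeff}, the global matrices $\mathbf B^N,\mathbf C^N,\mathbf E^{r,N},\mathbf F^{r,N}$ are bounded, $\mathbf Q^N,\mathbf H^N\succeq0$ are bounded, and $\mathbf R^N\succeq (c_R/N) I_{Nm}$. This is exactly the setting of the solvability theory for BSREs with random coefficients cited in Step~1 of Theorem~\ref{thm:solvability_continuum_backward_system} (\cite{Tang2003GeneralLQ}, \cite{Bismut1976LQ}). We invoke it to obtain a unique solution with $P^N\in S^\infty(\mathbb S_+^{Nd})$ and $Z^{P,N}\in H^2$.

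To make the argument self-contained, we would reproduce the truncation device from Step~1 of Theorem~\ref{thm:solvability_continuum_backward_system}. First, the homogeneous value $x^\top P_t^Nx$ is bounded between $0$ and $x^\top(\text{cost of zero control})x\le C_N|x|^2$. Second, we replace $P$ by its projection onto $\{0\preceq P\preceq C_*I\}$, which makes the driver globally Lipschitz because $\mathcal O^N\succeq (c_R/N)I$. Third, we solve the resulting Lipschitz BSDE. Fourth, we use the a priori bounds to remove the truncation.

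The delicate point, and the main obstacle, is establishing the a priori bound and positivity for a genuine solution of the untruncated equation. We would obtain it by a verification argument on the homogeneous problem: set $\mathbf A^N=\mathbf D^{r,N}=\boldsymbol\Sigma^{0,N}=\boldsymbol\iota^N=0$ and use the homogeneous analogue of Proposition~\ref{prop:fundamental_relation_discrete}. This identifies $x^\top P_t^N x$ with a value function, which is nonnegative by Assumption~\ref{ass:finite_cost_coeff}(iv) and dominated by the zero-control cost. The dominating state moment comes from Theorem~\ref{thm:wellposed_finite_state}. Uniqueness then follows because any two bounded nonnegative solutions coincide with the truncated Lipschitz solution.

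\textbf{Step 2: the linear equation for $p^N$.} Given $P^N$, the driver of \eqref{eq:discrete_linear_p} has the form $\mathcal A_s^Np+f_s^N$, where
\[
\mathcal A_s^N=(\mathbf B_s^N)^\top-(\mathcal U_s^N)^\top(\mathcal O_s^N)^{-1}(\mathbf C_s^N)^\top .
\]
This operator is bounded by Remark~\ref{rem:O_inverse_discrete} and the bound on $P^N$. The remainder $f^N$ collects $P^N\mathbf A^N$, $Z^{P,N}\boldsymbol\Sigma^{0,N}$, the $\mathbf D$-terms and the $\boldsymbol\iota$-terms. It lies in $H^2$ because $\boldsymbol\Sigma^{0,N}$ is bounded, $Z^{P,N}\in H^2$, and $\mathbf A^N,\mathbf D^{r,N},\boldsymbol\iota^N\in H^2$. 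Standard linear BSDE theory with bounded linear coefficients and an $H^2$ source then gives a unique $(p^N,Z^{p,N})\in S^2\times H^2$.

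\textbf{Step 3: the scalar equation for $q^N$.} Given the previous components, the driver of \eqref{eq:discrete_scalar_q} is a given process $\ell^N$. It is integrable by Cauchy--Schwarz, using the bounds on $P^N$ and $(\mathcal O^N)^{-1}$, $p^N\in S^2$, $Z^{p,N}\in H^2$, and the $H^2$ bounds on the additive data. We then argue exactly as in Remark~\ref{rem:triangular_structure_backward_system}. Define $q_t^N=\E[\int_t^T\ell^N_s\,ds\mid\mathcal F_t^0]$, obtain $Z^{q,N}$ from the Brownian martingale representation after localization, and note that class $(D)$ follows from uniform integrability of the closed martingale. Uniqueness among class-$(D)$ solutions holds because the difference of two solutions is a class-$(D)$ local martingale with zero terminal value, hence zero.
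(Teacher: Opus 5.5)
Your proposal follows the paper's proof essentially step for step: cite the standard BSRE theory (Bismut, Tang, Yong--Zhou) for $P^N\in S^\infty_{\F^0}(\mathbb S_+^{Nd})$, use linear BSDE theory with a bounded operator coefficient and an $H^2$ source for $(p^N,Z^{p,N})$, and set $q^N_t=\E[\int_t^T\ell^N_s\,ds\mid\mathcal F_t^0]$, with $Z^{q,N}$ obtained by martingale representation after localization. One caution about your optional self-contained sketch: the verification bound $0\preceq P^N\preceq C_*I$ holds only for genuine solutions, so it cannot by itself show that the truncated Lipschitz solution stays in the truncation region, and it therefore yields uniqueness but not existence, which still rests on the cited theory (the paper uses the projection device only after existence is known, to select measurable versions; a self-contained existence argument would instead use local solvability continued backwards with that a priori bound, as in Step~2 of Theorem~\ref{thm:solvability_continuum_backward_system}).
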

\begin{proof}
\noindent \textbf{Solvability of $P^N$ :}
Equation \eqref{eq:discrete_Riccati_P} is a finite-dimensional 
matrix-valued backward stochastic Riccati equation of the type studied 
in \cite{Bismut1976LQ,Tang2003GeneralLQ,Tang2015LQ,YongZhou1999}. 
Under the boundedness of the coefficients (Assumption \ref{ass:finite_forward_coeff}), 
the non-negativity of $\mathbf Q_s^N$ and $\mathbf H^N$, and the 
uniform coercivity $\mathbf R_s^N \succeq (c_R/N) I_{Nm}$ 
(Assumption \ref{ass:finite_cost_coeff}), the standard theory yields 
existence and uniqueness of a non-negative bounded solution 
$P^N \in S^\infty_{\F^0}(\mathbb S_+^{Nd})$, together with 
$Z^{P,N} \in H^2(\mathcal L_2(\R^{d_0};\mathbb S^{Nd}))$.

\medskip
\noindent \textbf{Solvability of $p^N$ :}
Once $P^N$ is known, the equation \eqref{eq:discrete_linear_p} for 
$p^N$ becomes a linear BSDE in $\R^{Nd}$ with bounded linear coefficient and 
square-integrable source term. By the classical theory of linear 
BSDEs \cite{PardouxPeng1990,ElKarouiPengQuenez1997,FuhrmanTessitore2002,GuatteriTessitore2005}, it admits a 
unique solution $(p^N, Z^{p,N}) \in S^2_{\F^0}(\R^{Nd})\times H^2(\mathcal L_2(\R^{d_0};\R^{Nd}))$.

\medskip
\noindent\textbf{Solvability of $q^N$ :}
Let $\ell^N$ denote the driver of \eqref{eq:discrete_scalar_q}. Once $(P^N,p^N,Z^{p,N})$ are known, the assumptions imply $\E\int_0^T|\ell_s^N|\,ds<\infty$. Therefore, $q_t^N =\E\left[ \int_t^T\ell_s^N\,ds \;\middle|\; \Fc_t^0 \right]$ defines a process of class $(D)$. The Brownian martingale representation theorem, applied after localization, yields a predictable locally square-integrable process $Z^{q,N}$. Uniqueness follows from the conditional representation.
\end{proof}

\subsection{Verification theorem and optimal feedback}

The fundamental relation \eqref{eq:fundamental_relation_discrete} 
suggests considering the feedback control obtained by cancelling the 
quadratic term, which leads to the closed-loop equation
\begin{equation}\label{eq:discrete_closed_loop}
\left\{
\begin{aligned}
d\widehat{\mathbf X}_s^N
&=
\left[
\mathbf A_s^N+\mathbf B_s^N\widehat{\mathbf X}_s^N
+\mathbf C_s^N\widehat{\boldsymbol\alpha}_s^N
\right]ds+
\sum_{r=1}^N
\left[
\mathbf D_s^{r,N}
+
\mathbf E_s^{r,N}\widehat{\mathbf X}_s^N
+
\mathbf F_s^{r,N}\widehat{\boldsymbol\alpha}_s^N
\right]dW_s^{r,N}
+
\mathbf\Sigma_s^{0,N}\,dW_s^0,
\\
\widehat{\mathbf X}_t^N&=\xi^N.
\end{aligned}
\right.
\end{equation}
with feedback
\begin{equation}\label{eq:discrete_optimal_feedback}
\widehat{\boldsymbol\alpha}_s^N 
= 
-(\mathcal O_s^N)^{-1}\big(\mathcal U_s^N\widehat{\mathbf X}_s^N + \boldsymbol\Gamma_s^N\big).
\end{equation}

\begin{Theorem}[Verification, discrete case]\label{thm:verification_discrete}
Under Assumptions \ref{ass:finite_forward_coeff} and 
\ref{ass:finite_cost_coeff}, let $(P^N, p^N, q^N)$ be the unique 
solution of the discrete backward system given by 
Theorem \ref{thm:solvability_discrete}. Then, for every $t\in[0,T]$ 
and $\xi^N\in\Ic_t^N$, the closed-loop equation 
\eqref{eq:discrete_closed_loop} admits a unique solution 
$\widehat{\mathbf X}^N \in S^2_{\F^N}([t,T];\R^{Nd})$. The feedback 
$\widehat{\boldsymbol\alpha}^N$ defined by \eqref{eq:discrete_optimal_feedback} 
is admissible and is the unique optimal control. The value function is
\begin{equation}
\label{eq:discrete_value_function}
V_t^N(\xi^N)
=
\E\left[
(\xi^N)^\top P_t^N\xi^N
+
2(p_t^N)^\top\xi^N
\;\middle|\;
\Fc_t^0
\right]
+
q_t^N,
\qquad
\P\text{-a.s.}
\end{equation}
\end{Theorem}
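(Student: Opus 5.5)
The plan follows the continuum verification argument (Theorem~\ref{thm:verification_continuum}), which is simpler here because the problem is finite-dimensional. First, I show that the closed-loop equation \eqref{eq:discrete_closed_loop} is well posed. By Theorem~\ref{thm:solvability_discrete}, $P^N\in S^\infty_{\F^0}(\mathbb S_+^{Nd})$, and Remark~\ref{rem:O_inverse_discrete} then gives $(\mathcal O_s^N)^{-1}$ bounded by a deterministic constant, for fixed $N$. Since $\mathbf C^N,\mathbf E^{r,N},\mathbf F^{r,N}$ are bounded, $\mathcal U_s^N$ is a bounded $\F^0$-progressive matrix process. Hence the feedback $-(\mathcal O_s^N)^{-1}\mathcal U_s^N\mathbf x$ is linear in $\mathbf x$ with bounded random coefficient. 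The affine part $-(\mathcal O_s^N)^{-1}\boldsymbol\Gamma_s^N$ lies in $H^2_{\F^0}$, because $p^N\in S^2$, $\mathbf D^{r,N}\in H^2$, $\boldsymbol\iota^N\in H^2$ and $P^N$ is bounded. Substituting into \eqref{eq:discrete_global_state} gives a linear SDE in $\R^{Nd}$ with bounded progressive coefficients and square-integrable inhomogeneities. The Picard argument of Theorem~\ref{thm:wellposed_finite_state} (Appendix~\ref{app:P2}) then yields a unique solution $\widehat{\mathbf X}^N\in S^2_{\F^N}([t,T];\R^{Nd})$.

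Second, I check admissibility of $\widehat{\boldsymbol\alpha}^N$. The square-integrability $\E\int_t^T|\widehat{\boldsymbol\alpha}_s^N|_N^2ds<\infty$ follows from the bounds above together with $\widehat{\mathbf X}^N\in S^2$. For the non-anticipative Borel representation, the backward fields $(\mathcal O^N)^{-1}\mathcal U^N$ and $(\mathcal O^N)^{-1}\boldsymbol\Gamma^N$ are $\F^0$-progressive, so they are Borel functionals of $W^0_{\cdot\wedge s}$ (Doob--Dynkin on path space). The measurable representation in Theorem~\ref{thm:wellposed_finite_state}, applied to the closed-loop linear SDE, writes $\widehat{\mathbf X}^N_s$ as a Borel functional of $(\xi^N,W^{1,N}_{\cdot\wedge s},\dots,W^{N,N}_{\cdot\wedge s},W^0_{\cdot\wedge s})$. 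Composing the two gives the required map $\mathfrak a^N$. \emph{I expect this measurability step to be the only delicate point.} The closed-loop equation has random coefficients, so I would redo the Euler/Picard representation as in Appendix~\ref{app:P2}, feeding in the coefficients as measurable path functionals of $W^0$.

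Third, I apply the fundamental relation, Proposition~\ref{prop:fundamental_relation_discrete}. For any $\alpha^N\in\Ac_t^N$, the correction term is conditionally nonnegative because $\mathcal O_s^N\succeq (c_R/N)I$. Hence $J^N(t,\xi^N,\alpha^N)\ge\mathcal V_t^N(\xi^N)$, and taking the essential infimum gives $V_t^N\ge\mathcal V_t^N$. For $\alpha^N=\widehat{\boldsymbol\alpha}^N$ the state is $\widehat{\mathbf X}^N$, so $\widehat{\boldsymbol\alpha}_s^{N,\alpha}=\widehat{\boldsymbol\alpha}^N_s$. The correction vanishes and $J^N=\mathcal V_t^N$, which proves optimality and \eqref{eq:discrete_value_function}.

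Finally, uniqueness. Suppose $\alpha^N$ is optimal. Then the correction term has zero expectation, and coercivity of $\mathcal O^N$ forces $\alpha^N_s=-(\mathcal O_s^N)^{-1}(\mathcal U_s^N\mathbf X_s^N+\boldsymbol\Gamma_s^N)$ $ds\otimes d\P$-a.e. along its own state. Consequently $\mathbf X^N$ solves \eqref{eq:discrete_closed_loop}, so $\mathbf X^N=\widehat{\mathbf X}^N$ by uniqueness of the closed-loop solution, and therefore $\alpha^N=\widehat{\boldsymbol\alpha}^N$.
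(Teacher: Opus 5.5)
Your proposal is correct and follows the same route as the paper. You get well-posedness of the closed loop from the Picard argument of Theorem~\ref{thm:wellposed_finite_state} applied to the feedback-modified bounded coefficients, admissibility from $\widehat{\mathbf X}^N\in S^2$, and optimality, the value formula and uniqueness from the fundamental relation \eqref{eq:fundamental_relation_discrete} combined with $\mathcal O_s^N\succeq (c_R/N)I_{Nm}$. Your explicit treatment of the non-anticipative Borel representation and of the uniqueness step (an optimal control equals the feedback along its own state, so its state solves the closed-loop equation) fills in details the paper leaves implicit, without changing the argument.
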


\begin{proof}
The proof is analogous to the continuum case (Theorem \ref{thm:verification_continuum}): 
well-posedness of the closed-loop equation follows from  Theorem \ref{thm:wellposed_finite_state} applied to the modified  coefficients (which satisfy Assumption \ref{ass:finite_forward_coeff}  since the feedback coefficients are progressively measurable and satisfy the required square-integrability estimates for fixed $N$);  admissibility of $\widehat{\boldsymbol\alpha}^N$ follows from  $\widehat{\mathbf X}^N \in S^2$; optimality and uniqueness follow  from the fundamental relation \eqref{eq:fundamental_relation_discrete} combined with the strict positive definiteness of $\mathcal O_s^N$.
\end{proof}

\begin{Proposition}[Full-state feedback structure of the optimal discrete control]
\label{prop:markovian_feedback}
The optimal discrete control $\widehat\alpha^N$ obtained in Theorem~\ref{thm:verification_discrete} admits the closed-loop representation $\widehat{\boldsymbol\alpha}_t^N =-(\mathcal O_t^N)^{-1} \big(\mathcal U_t^N\mathbf X_t^N+\boldsymbol\Gamma_t^N\big)$, or componentwise, $\widehat\alpha_t^{i,N}=-(O_{i,t}^N)^{-1} \left(\widehat U_{i,t}^NX_t^{i,N} +h_N\sum_{j\ne i}V_{ij,t}^NX_t^{j,N}+\Gamma_{i,t}^N\right)$, $i=1,\ldots,N$.

\noindent The operators $O_t^N,U_t^N$ and the vector $\Gamma_t^N$ are constructed from the backward Riccati objects $(P^N,p^N)$ and are $\mathcal F_t^0$-adapted. Consequently, $\widehat\alpha_t^N$ is measurable with respect to $\sigma(\mathbf X_t^N)\vee\mathcal F_t^0$.In particular, $\widehat\alpha^N$ is admissible for the centralized finite-dimensional problem.
\end{Proposition}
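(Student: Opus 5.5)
The plan is to read off the componentwise form directly from the global feedback \eqref{eq:discrete_optimal_feedback} of Theorem~\ref{thm:verification_discrete}, using the block structure of $\mathcal O^N$, $\mathcal U^N$ and $\boldsymbol\Gamma^N$. First I will write out the blocks. By Remark~\ref{rem:O_inverse_discrete}, $\mathcal O_s^N$ is block-diagonal: $\mathbf R_s^N$ is block-diagonal and each $\mathbf F_s^{r,N}$ has a single nonzero block at $(r,r)$, so $(\mathbf F_s^{r,N})^\top P_s^N\mathbf F_s^{r,N}$ has only the $(r,r)$ block $(F_s^{r,N})^\top P^N_{rr,s}F_s^{r,N}$. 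Hence $(\mathcal O_s^N)^{-1}$ is block-diagonal with blocks $(O_{i,s}^N)^{-1}$, where $O_{i,s}^N:=\frac1N R_s^{i,N}+(F_s^{i,N})^\top P^N_{ii,s}F_s^{i,N}\succeq \frac{c_R}{N}I_m$ since $P^N\succeq0$ by Theorem~\ref{thm:solvability_discrete}.

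Next I compute the $(i,j)$ block of $\mathcal U_s^N$. The term $(\mathbf C_s^N)^\top P_s^N$ contributes $(C_s^{i,N})^\top P^N_{ij,s}$. In the sum over $r$, only $r=i$ survives, because $(\mathbf F_s^{r,N})^\top$ has its only nonzero block in block-row $r$. That term contributes $(F_s^{i,N})^\top P^N_{ii,s}(\mathbf E_s^{i,N})_{ij}=(F_s^{i,N})^\top P^N_{ii,s}\big(\delta_{ij}E_s^{i,N}+\frac1N G^N_{E,s}(u_i^N,u_j^N)\big)$. I then define $\widehat U^N_{i,s}$ as the full diagonal block $(i,i)$, which includes the $\frac1N G_E$ diagonal remainder. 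For $j\neq i$, I set $V^N_{ij,s}:=h_N^{-1}(\mathcal U_s^N)_{ij}$; the $h_N$ rescaling is only a normalization, so the off-diagonal sum reads $h_N\sum_{j\ne i}V_{ij}X^j$. Likewise $\Gamma^N_{i,s}$ is the $i$th block of $\boldsymbol\Gamma_s^N$, namely $(C_s^{i,N})^\top p^N_{i,s}+(F_s^{i,N})^\top P^N_{ii,s}D_s^{i,N}+\frac1N R_s^{i,N}\iota_s^{i,N}$, again using that only $r=i$ contributes. Multiplying by the block-diagonal inverse then gives the stated componentwise formula.

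For measurability, $P^N$ and $p^N$ are $\mathbb F^0$-adapted, and the coefficients are $\mathbb F^0$-progressive. So $O^N_t$, $\mathcal U^N_t$ and $\Gamma^N_t$ are $\mathcal F_t^0$-measurable, after choosing progressive versions, which suffices $dt\otimes d\mathbb P$-a.e. The control $\widehat{\boldsymbol\alpha}_t^N$ is therefore a Borel function of $(\mathbf X_t^N,\omega^0)$, hence $\sigma(\mathbf X^N_t)\vee\mathcal F^0_t$-measurable. Admissibility for the centralized problem is already part of Theorem~\ref{thm:verification_discrete}. Alternatively, it follows by composing with the measurable representation of $\widehat{\mathbf X}^N$ from Theorem~\ref{thm:wellposed_finite_state}, which yields a non-anticipative map of $(\xi^N,W^{\cdot,N},W^0)$.

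The main (mild) obstacle is bookkeeping. I must check that the sum over $r$ collapses to $r=i$ in both $\mathcal O^N$ and $\mathcal U^N$. I must also make sure the diagonal correction $\frac1N(F^{i})^\top P_{ii}G_E(u_i,u_i)$, together with $(C^i)^\top P_{ii}$, is absorbed into $\widehat U^N_{i}$ rather than the off-diagonal sum.
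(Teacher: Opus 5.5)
Your proposal is correct and follows the paper's route. The formula is the completed-square feedback from Theorem~\ref{thm:verification_discrete}, and measurability comes from the $\mathbb F^0$-adaptedness of $(P^N,p^N)$ and of the coefficients. Your block bookkeeping is the same as in the proof of Proposition~\ref{prop:exact_embedded_riccati}: $\mathcal O^N$ is block-diagonal, the $r$-sums collapse to $r=i$, and the term $\frac1N G^N_{E}(u_i^N,u_i^N)$ is absorbed into the diagonal gain.

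The only difference is normalization. Your $O_i^N,\widehat U_i^N,V_{ij}^N,\Gamma_i^N$ are $h_N$ times the paper's rescaled gains, which are built from $K^N_i=h_N^{-1}P^N_{ii}$, $\bar K^N_{ij}=h_N^{-2}P^N_{ij}$ and $Y^N_i=h_N^{-1}p^N_i$. Since this factor is common to all four quantities, it cancels in the componentwise formula.
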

\begin{proof}
The feedback formula is obtained in the verification argument by completing
the square in the control variable. The backward coefficients
\((P^N,p^N)\), and hence $O^N,U^N,\Gamma^N$, are driven only by the
common noise and are therefore $\mathbb F^0$-adapted. The remaining
randomness in the feedback enters through the current full state vector
$\mathbf X_t^N$. Hence the feedback is measurable with respect to
$\sigma(\mathbf X_t^N)\vee\mathcal F_t^0$, and therefore progressively
measurable with respect to the centralized filtration. This proves
admissibility in the centralized problem. Optimality follows from the
verification theorem.
\end{proof}

\begin{Remark}\label{rem:discrete_blocks}
The decomposition introduced in Definition~\ref{def:embeddings} is exactly $P_{ii,t}^N=h_NK_{i,t}^N$, $P_{ij,t}^N=h_N^2\bar K_{ij,t}^N$, $i\neq j$. Since the interaction embedding is restricted to the off-diagonal blocks and $\bar K_{ii}^N=0$ by convention, the interaction contributions carried by the diagonal blocks of the global Riccati equation are not discarded. They appear as explicit diagonal remainders in the equation satisfied by $K^N$. These reminders must be retained in the discrete-to-continuum comparison.
\end{Remark}
 
    \section{Discrete-to-continuum convergence of the Riccati systems}
\label{sec:convergence}

In this section, we establish the quantitative convergence of the discrete backward objects $(K^N, \bar K^N, Y^N, q^N)$ to the continuum objects $(K, \bar K, Y, \lambda)$ as $N \to \infty$. The argument proceeds in three stages. We first introduce, in §\ref{subsec:setup}, the regularity framework and aggregate error rates that quantify the discrepancy between the discrete and continuum models. We then derive, in §\ref{subsec:apriori}, a set of uniform-in-$N$ a priori bounds on the discrete backward objects. Building on these, we prove in §\ref{subsec:stability} an abstract stability lemma for Hilbert-valued quadratic backward equations, and combine it with a decomposition of the discrete drivers to obtain the main convergence theorem.
The state consistency estimates used below are related to classical propagation-of-chaos arguments \cite{Sznitman1991,Meleard1996}, but the label-dependent non-exchangeable structure requires estimates in graphon-type norms.

\subsection{Setup, embeddings and assumptions}
\label{subsec:setup}

We work with the uniform grid $u_i^N$, the grid mesh $h_N$  and the partition $I_i^N$ introduced in Section~\ref{sec:notations}. The standing assumptions of the continuum and finite problems remain in force throughout this section, we now impose additional regularity and consistency conditions for the quantitative comparison. Throughout this section, every discrete object $(\cdot)^N$ indexed by $i$ (or by $(i,j)$ for two-index objects) is identified with its step-function embedding $K_t^N(u) := K_{i,t}^N$ for $u \in I_i^N$, and for two-index objects $\bar K_t^N(u,v) := \bar K_{ij,t}^N$ for $(u,v) \in I_i^N \times I_j^N$,
with the analogous embeddings for $Y^N, Z^{K,N}, Z^{\bar K,N}, Z^{Y,N}$ and for the discrete coefficients and kernels. We point out that, with these embeddings,
\[
\|\bx^N\|^2_{L^2(I; \R^d)} = \frac{1}{N}\sum_{i=1}^N |x_i^N|^2,
\qquad
\|M^N\|^2_{L^2(I^2; \R^{d \times d})} = \frac{1}{N^2}\sum_{i,j=1}^N |M_{ij}^N|^2,
\]
so that the natural $L^2$ norms of the embedded objects coincide with the rescaled $\ell^2$ norms used in the discrete vector formulation of Section~\ref{sec:problem_discrete}.

\begin{Definition}[Backward Riccati objects in continuum scale]
\label{def:embeddings}
Recalling the rescalings introduced in Section~\ref{sec:problem_discrete}, we set, for $1 \leq i, j \leq N$,
\[
K_{i,t}^N:=h_N^{-1}P_{ii,t}^N,
\qquad
\bar K_{ij,t}^N:=h_N^{-2}P_{ij,t}^N,\quad i\neq j,
\qquad
Y_{i,t}^N:=h_N^{-1}p_{i,t}^N.
\]
The corresponding martingale integrands are rescaled by
\[
Z_{i,t}^{K,N}:=h_N^{-1}Z_{ii,t}^{P,N},
\qquad
Z_{ij,t}^{\bar K,N}:=h_N^{-2}Z_{ij,t}^{P,N},\quad i\neq j,
\qquad
Z_{i,t}^{Y,N}:=h_N^{-1}Z_{i,t}^{p,N}.
\]
We set $\bar K_{ii}^N=0$ and  $Z_{ii}^{\bar K,N}=0$ by convention. The corresponding step-function embeddings are
\[
K_t^N(u)
=
\sum_{i=1}^N K_{i,t}^N\mathbf 1_{I_i^N}(u),
\qquad
\bar K_t^N(u,v)
=
\sum_{i\neq j}\bar K_{ij,t}^N
\mathbf 1_{I_i^N}(u)\mathbf 1_{I_j^N}(v),
\qquad
Y_t^N(u)
=
\sum_{i=1}^NY_{i,t}^N\mathbf 1_{I_i^N}(u),
\]
with analogous embeddings for $Z^{K,N},Z^{\bar K,N},Z^{Y,N}$. The scalar component is unchanged, $q^N$ denotes the constant term of the discrete value function obtained in Theorem~\ref{thm:verification_discrete}.
\end{Definition}

\paragraph{Exact diagonal/off-diagonal decomposition.}

For step functions $k=(k_i)_{i=1}^N$ with $k_i\in\mathbb S^d$ and $\bar k=(\bar k_{ij})_{i\neq j}$, with $\bar k_{ii}=0$, let $P^N[k,\bar k]$ be the block matrix defined by
$P_{ij}^N[k,\bar k]
=
h_N k_i\mathbf 1_{\{i=j\}}
+
h_N^2\bar k_{ij}\mathbf 1_{\{i\neq j\}}$. We denote by $\mathcal K_N^+ := \left\{ k=(k_i)_{i=1}^N: k_i\in\mathbb S_+^d \text{ for every }i \right\}$. Whenever the rescaled gains below are used, we take $k\in\mathcal K_N^+$. 

\noindent For $k\in\mathcal K_N^+$ and $1\le i\le N$, set
\[
\widehat E_i^N
:=
E_i^N+h_NG_{ii}^{E,N},
\quad
O_i^N(k)
:=
R_i^N+(F_i^N)^\top k_iF_i^N,
\quad
U_i^N(k)
:=
(C_i^N)^\top k_i+(F_i^N)^\top k_iE_i^N,
\quad
\widehat U_i^N(k)
:=
(C_i^N)^\top k_i+(F_i^N)^\top k_i\widehat E_i^N.
\]
For $i\neq j$, define $V_{ij}^N(k,\bar k) := (C_i^N)^\top\bar k_{ij} + (F_i^N)^\top k_iG_{ij}^{E,N}$ and for later use, we also introduce the rescaled affine gain $\Gamma_i^N(k,y) := (C_i^N)^\top y_i + (F_i^N)^\top k_iD_i^N + R_i^N\iota_i^N$. The global affine gain satisfies $(\boldsymbol\Gamma^N)_i = h_N\Gamma_i^N(K^N,Y^N)$.

\begin{Proposition}[Exact embedded quadratic Riccati equations]
\label{prop:exact_embedded_riccati}
Let $\mathscr F^{P,N}(t,P)$ denote the global Riccati driver in \eqref{eq:discrete_Riccati_P} and $(k,\bar k)$ be such that $k\in\mathcal K_N^+$ and $\bar k_{ji}=\bar k_{ij}^{\top}$. Define, for $i\neq j$, $
\mathfrak F_{ij}^{\bar K,N}(t,k,\bar k)
:=
h_N^{-2}
\big[
\mathscr F^{P,N}(t,P^N[k,\bar k])
\big]_{ij}$ and $\mathfrak F_{ii}^{\bar K,N}(t,k,\bar k):=0$. Then the embedded quadratic objects satisfy
\[
\left\{
\begin{aligned}
dK_{i,t}^N
&=
-\Big[
\mathfrak F_i^{K,\mathrm{loc},N}(t,K_t^N)
+
\mathfrak d_{i,t}^{K,N}(K_t^N,\bar K_t^N)
\Big]dt
+
Z_{i,t}^{K,N}\,dW_t^0,
\\
K_{i,T}^N
&=
H_i^N+h_NG_{ii}^{H,N},
\end{aligned}
\right.
\]
and, for \(i\neq j\),
\[
\left\{
\begin{aligned}
d\bar K_{ij,t}^N
&=
-\mathfrak F_{ij}^{\bar K,N}
(t,K_t^N,\bar K_t^N)\,dt
+
Z_{ij,t}^{\bar K,N}\,dW_t^0,
\\
\bar K_{ij,T}^N
&=
G_{ij}^{H,N}.
\end{aligned}
\right.
\]
Here $\mathfrak F_i^{K,\mathrm{loc},N}(t,k)
=
Q_i^N
+
(B_i^N)^\top k_i+k_iB_i^N
+
(E_i^N)^\top k_iE_i^N
-
U_i^N(k)^\top O_i^N(k)^{-1}U_i^N(k)$, and $\mathfrak d^{K,N}$ is the diagonal interaction remainder given explicitly below :

\begin{align}
\mathfrak d_i^{K,N}(k,\bar k)
={}
h_NG_{ii}^{Q,N}
+
h_N\Big[
(G_{ii}^{B,N})^\top k_i
+
k_iG_{ii}^{B,N}
\Big]
\notag
+
h_N^2
\sum_{\ell\neq i}
\Big[
(G_{\ell i}^{B,N})^\top\bar k_{\ell i}
+
\bar k_{i\ell}G_{\ell i}^{B,N}
\Big]
\notag+
(\widehat E_i^N)^\top k_i\widehat E_i^N
-
(E_i^N)^\top k_iE_i^N
\notag\\
+
h_N^2
\sum_{\ell\neq i}
(G_{\ell i}^{E,N})^\top k_\ell G_{\ell i}^{E,N}
-
\Big[
\widehat U_i^N(k)^\top O_i^N(k)^{-1}\widehat U_i^N(k)
-
U_i^N(k)^\top O_i^N(k)^{-1}U_i^N(k)
\Big]
-
h_N^2
\sum_{\ell\neq i}
V_{\ell i}^N(k,\bar k)^\top
O_\ell^N(k)^{-1}
V_{\ell i}^N(k,\bar k).
\label{eq:exact_diagonal_remainder}
\end{align}

\noindent Then the discrete BSRE becames :
\[
\left\{
\begin{aligned}
dK_t^N
&=
-\Big[
\mathfrak F^{K,\mathrm{loc},N}(t,K_t^N)
+
\mathfrak d_t^{K,N}(K_t^N,\bar K_t^N)
\Big]dt
+
Z_t^{K,N}\,dW_t^0,
\\
K_T^N
&=
H^N+h_N\operatorname{diag}_NG^{H,N},
\end{aligned}
\right.
\quad
\left\{
\begin{aligned}
d\bar K_t^N
&=
-\Pi_N^\circ
\mathfrak F^{\bar K,N}
(t,K_t^N,\bar K_t^N)\,dt
+
Z_t^{\bar K,N}\,dW_t^0,
\\
\bar K_T^N
&=
\Pi_N^\circ G^{H,N}.
\end{aligned}
\right.
\]
\end{Proposition}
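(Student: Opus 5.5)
The identity is purely algebraic, and the plan is to verify it block by block. Write $P^N=P^N[K^N,\bar K^N]$, which is legitimate by Definition~\ref{def:embeddings} and the convention $\bar K^N_{ii}=0$. The solution of Theorem~\ref{thm:solvability_discrete} is non-negative and symmetric, so $K^N_i=h_N^{-1}P^N_{ii}\in\mathbb S^d_+$ and $\bar K^N_{ji}=(\bar K^N_{ij})^\top$. Taking the $(i,i)$ block of \eqref{eq:discrete_Riccati_P} and multiplying by $h_N^{-1}$ gives the $K^N$ equation, with $Z^{K,N}_i=h_N^{-1}Z^{P,N}_{ii}$. Taking the $(i,j)$ block for $i\neq j$ and multiplying by $h_N^{-2}$ gives the $\bar K^N$ equation, and this step is tautological because $\mathfrak F^{\bar K,N}_{ij}$ is defined as exactly this rescaled block. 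The terminal conditions follow from $(\mathbf H^N)_{ii}=h_NH^{i,N}+h_N^2G^{H,N}_{ii}$ and $(\mathbf H^N)_{ij}=h_N^2G^{H,N}_{ij}$. The step-function forms then follow from the definitions of $\operatorname{diag}_N$ and $\Pi^\circ_N$.

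The real content is identifying $h_N^{-1}[\mathscr F^{P,N}(t,P^N[k,\bar k])]_{ii}$ with $\mathfrak F^{K,\mathrm{loc},N}_i+\mathfrak d^{K,N}_i$. I will expand each term of the global driver in the $(i,i)$ block.

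\textbf{Linear terms.} The term $\mathbf Q^N$ contributes $h_NQ_i+h_N^2G^{Q}_{ii}$. The term $(\mathbf B^N)^\top P+P\mathbf B^N$ splits into the local part $h_N(B_i^\top k_i+k_iB_i)$, the diagonal-kernel part $h_N^2(G^{B\top}_{ii}k_i+k_iG^B_{ii})$, and the off-diagonal sum $h_N^3\sum_{\ell\ne i}(G^{B\top}_{\ell i}\bar k_{\ell i}+\bar k_{i\ell}G^B_{\ell i})$. Here I will check the index placement against \eqref{eq:exact_diagonal_remainder}.

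\textbf{Diffusion term.} The sum $\sum_r(\mathbf E^{r})^\top P\mathbf E^{r}$ only involves $P_{rr}=h_Nk_r$, because $\mathbf E^r$ has a single block row. Its $(i,i)$ block is $h_N\widehat E_i^\top k_i\widehat E_i$ from $r=i$, plus $h_N^3\sum_{\ell\ne i}G^{E\top}_{\ell i}k_\ell G^E_{\ell i}$ from $r=\ell\neq i$. Subtracting the local $E_i^\top k_iE_i$ produces the corresponding remainder terms.

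\textbf{Gain blocks.} $\mathcal O^N$ is block diagonal with blocks $h_NO_i(k)$, by Remark~\ref{rem:O_inverse_discrete}. The $(i,i)$ block of $\mathcal U^N$ is $h_N\widehat U_i(k)$, since $(\mathbf C^{\top}P)_{ii}=h_NC_i^\top k_i$ and $(\mathbf F^{i\top}P\mathbf E^i)_{ii}=h_NF_i^\top k_i\widehat E_i$. For $\ell\neq i$, the $(\ell,i)$ block is $h_N^2V_{\ell i}(k,\bar k)$.

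\textbf{Quadratic term.} The $(i,i)$ block of $(\mathcal U^N)^\top(\mathcal O^N)^{-1}\mathcal U^N$ is $\sum_\ell(\mathcal U_{\ell i})^\top(h_NO_\ell)^{-1}\mathcal U_{\ell i}$. This equals $h_N\widehat U_i^\top O_i^{-1}\widehat U_i+h_N^3\sum_{\ell\ne i}V_{\ell i}^\top O_\ell^{-1}V_{\ell i}$. Dividing everything by $h_N$ and adding and subtracting $U_i^\top O_i^{-1}U_i$ yields exactly \eqref{eq:exact_diagonal_remainder}.

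I expect the main obstacle to be bookkeeping rather than analysis. Three points need care. First, the powers of $h_N$ must be tracked consistently; in particular $\frac1N G$ in $\mathbf B^N$ combines with the $h_N^2$ weight on the off-diagonal blocks of $P$. Second, the transposes and index orders in terms like $\bar k_{i\ell}G^B_{\ell i}$ versus $G^{B\top}_{\ell i}\bar k_{\ell i}$ must match the stated formula. Third, the $(i,i)$ diffusion contribution must be correctly assembled into $\widehat E_i$ rather than $E_i$. Symmetry of $\bar k$ is used to rewrite $(P\mathbf B)_{ii}$ and to confirm that the resulting remainder is symmetric. No integrability issue arises, because all objects are the rescaled blocks of the unique solution of Theorem~\ref{thm:solvability_discrete}.
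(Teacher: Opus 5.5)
Your proposal is correct and takes the same route as the paper's proof. Both identify $(\mathcal O^N)_{ii}=h_NO_i^N(k)$, $(\mathcal U^N)_{ii}=h_N\widehat U_i^N(k)$ and $(\mathcal U^N)_{\ell i}=h_N^2V_{\ell i}^N(k,\bar k)$ for $\ell\neq i$, rescale the $(i,i)$ block of the global Riccati equation by $h_N^{-1}$ and the $(i,j)$ blocks by $h_N^{-2}$, and read off the terminal conditions from $\mathbf H^N$; your explicit term-by-term expansion, which the paper only summarizes, matches \eqref{eq:exact_diagonal_remainder} exactly (the symmetry of $\bar k$ is not actually needed for the $(i,i)$ identity, since $\bar k_{i\ell}G^{B,N}_{\ell i}$ comes directly from $(P\mathbf B^N)_{ii}$).
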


\begin{proof}
By the block definitions of the global gains, one has $(\mathcal O^N)_{ii} = h_NO_i^N(k)$, $(\mathcal U^N)_{ii} = h_N\widehat U_i^N(k)$, and, for $i\neq j$, $(\mathcal U^N)_{ij} = h_N^2V_{ij}^N(k,\bar k)$. Taking the $(i,i)$-block of the global Riccati equation \eqref{eq:discrete_Riccati_P} and multiplying it by $h_N^{-1}$ gives the local Riccati driver $\mathfrak F_i^{K,\mathrm{loc},N}$. The remaining terms are exactly: the diagonal entries of the interaction costs and dynamics, the off-diagonal column contributions, the correction $\widehat E_i^\top k_i\widehat E_i-E_i^\top k_iE_i$, and the difference between the full and local quadratic feedback terms. Collecting them gives \eqref{eq:exact_diagonal_remainder}. For $i\neq j$, taking the $(i,j)$-block and multiplying by $h_N^{-2}$ gives $\mathfrak F_{ij}^{\bar K,N} = h_N^{-2} \big[ \mathscr F^{P,N}(t,P^N[k,\bar k]) \big]_{ij}$. By definition, the diagonal extension of this driver is zero, and therefore its step-function embedding is $\Pi_N^\circ\mathfrak F^{\bar K,N}$. Finally, the terminal blocks satisfy $P_{ii,T}^N = h_NH_i^N+h_N^2G_{ii}^{H,N}$, $P_{ij,T}^N = h_N^2G_{ij}^{H,N}$, $i\neq j$. Dividing respectively by $h_N$ and $h_N^2$ yields $K_{i,T}^N = H_i^N+h_NG_{ii}^{H,N}$, $\bar K_{ij,T}^N = G_{ij}^{H,N}$. This proves both embedded equations.
\end{proof}

We now state the structural assumption that controls the discrepancy between discrete and continuum coefficients.

\begin{Assumption}[Regularity and discretization consistency]
\label{ass:reg_consistency}
There exist constants $C,L_{\mathrm{spat}}>0$ and sequences $\eta_N\to0$, $\varepsilon_N^{\mathrm{ker}}\to0$ and $\varepsilon_N^\Sigma\to0$ such that the following holds.
\begin{enumerate}
\item[\textup{(A-block)}] \emph{Piecewise Lipschitz regularity.} There exists a finite partition $0 = a_0 < a_1 < \cdots < a_M = 1$, with $M \geq 1$ fixed independently of $N$, such that the coefficients $A, B, C, D, E, F, \Sigma^0, \iota, R, Q, H$ are uniformly bounded, $R$ is uniformly coercive with constant $c_R > 0$ (same for $R^N$), and each coefficient is Lipschitz in the label variable $u$ on every block $J_m := (a_{m-1}, a_m]$ with constant $L_{\mathrm{spat}}$, uniformly in $(s, \omega)$. The kernels $G^B, G^E, G^Q, G^H$ are uniformly bounded and Lipschitz on every rectangle $J_m \times J_n$ with the same constant.
\item[\textup{(A-coeff)}] \emph{Coefficient and kernels consistency.} The discrete local coefficients satisfy, for $\P$-a.e.\ $\omega$ and all $t \in [0,T]$,
\[
\|A_t^N - A_t\|_{L^2(I)} 
+\|B^N_t-B_t\|_{L^2(I)}
+\|C^N_t-C_t\|_{L^2(I)}
+\|D^N_t-D_t\|_{L^2(I)}
+\|E^N_t-E_t\|_{L^2(I)}
+\|F^N_t-F_t\|_{L^2(I)}
\]
\[
+\|\iota^N_t-\iota_t\|_{L^2(I)}
+\|R^N_t-R_t\|_{L^2(I)}
+\|Q^N_t-Q_t\|_{L^2(I)}
+\|H^N-H\|_{L^2(I)}
\le \eta_N+\sqrt{h_N},
\]
the discrete kernels satisfy, for $\P$-a.e.\ $\omega$ and all $t \in [0,T]$,
\[
\|G^{B,N}_t-G^B_t\|_{L^2(I^2)}
+\|G^{E,N}_t-G^E_t\|_{L^2(I^2)}
+\|G^{Q,N}_t-G^Q_t\|_{L^2(I^2)}
+\|G^{H,N}-G^H\|_{L^2(I^2)}
\le \varepsilon_N^{\mathrm{ker}}.
\]
Here $\eta_N$ measures the discrepancy between the discrete coefficients and the sampled continuum coefficients $\Pi_NB,\ldots,\Pi_NH$ such as $(\Pi_N f)(u) := \sum_{i=1}^N f(u_i^N)\mathbf1_{I_i^N}(u)$ and for a kernel $(\Pi_N^{(2)}G)(u,v):= \sum_{i,j} G(u_i^N,u_j^N) \mathbf1_{I_i^N}(u)\mathbf1_{I_j^N}(v)$. The additional term $\sqrt{h_N}$ is the deterministic projection error caused by the finitely many grid cells crossing block interfaces.

Moreover, the discrete local coefficients $A^N, B^N, C^N, D^N, E^N, F^N, \Sigma^{0,N}, \iota^N, R^N, Q^N, H^N$ are uniformly bounded and the discrete kernels $G^{B,N}, G^{E,N}, G^{Q,N}, G^{H,N}$ are uniformly bounded in $L^2(I^2)$, with constants independent of $N$. 

\item[\textup{(A-Sigma)}]
\emph{Mixed-norm consistency of the common-noise coefficient.}
The common-noise coefficients satisfy
\[
\int_I
\operatorname*{ess\,sup}_{(t,\omega)\in[0,T]\times\Omega}
\left|
\Sigma_t^{0,N}(u,\omega)-\Sigma_t^0(u,\omega)
\right|^2du
\leq
\big(\varepsilon_N^\Sigma\big)^2.
\]
The essential supremum is taken with respect to
$dt\otimes d\P$.

\item[\textup{(A-row/col)}] \emph{Continuum row/column regularity} We denote by $\mathcal B_N$ the union of grid cells whose interior contains a block interface:
\[ 
\mathcal B_N := \bigcup \left\{ \operatorname{int}(I_i^N):\ \operatorname{int}(I_i^N)\cap\{a_1,\ldots,a_{M-1}\}\neq\varnothing \right\}.
\]
Since the number of block interfaces is finite, there exists a constant $C_M$, independent of $N$, such that $|\mathcal B_N|\le C_M h_N$.
The limiting interaction Riccati kernel satisfies
\[
\E\left[
\sup_{t\in[0,T]}
\left(
\esssup_{u\in I}\int_I|\bar K_t(u,v)|^2\,dv
+
\esssup_{v\in I}\int_I|\bar K_t(u,v)|^2\,du
\right)
\right]\le C.
\]
Moreover,
\[
\E\left[
\sup_{t\in[0,T]}\|Y_t\|_{L^\infty(I)}^2
\right]\le C.
\]  
\end{enumerate}
\end{Assumption}

\begin{Remark}[Role of the limiting $Y$-regularity]
\label{rem:Y_limit_regularity}
The assumption on $Y$ is an $S^2(L^\infty(I))$-estimate and not a deterministic essential bound in $(t,\omega)$. This is sufficient for the quantitative driver comparison because Lemma~\ref{lem:K_uniform} below yields the stronger deterministic estimate $\|K^N-K\|_{S^\infty_{\F^0}(L^2(I))} \leq C\delta_N^K$, together with analogous estimates for the local feedback gains. Consequently, products between $Y$ and the local backward differences can be estimated without any independence assumption or localization.
\end{Remark}

\begin{Remark}
\label{rem:assumption_reg}

\textup{(i)} Assumption \ref{ass:reg_consistency} \textup{(A-block)} covers the practically important case of non-exchangeable systems with finitely many sub-populations: within each block $J_m$, the coefficients vary smoothly, while across blocks they may jump. Global Lipschitz regularity in $u$ is recovered as the special case $M = 1$.

\textup{(ii)} Since the grid $\{u_i^N\}$ is uniform and the partition points $\{a_k\}$ need not coincide with grid points, at most $M-1$ cells $I_i^N$ may straddle two adjacent blocks. On these straddling cells the local Lipschitz argument fails and only an $O(1)$ pointwise bound is available. As we shall see, this loss leads to an $L^2(I)$-projection error of order $\sqrt{h_N}$ rather than $h_N$.

\textup{(iii)} Assumption~\ref{ass:reg_consistency} \textup{(A-row/col)} imposes additional spatial estimates on the limiting backward solution. These estimates are stronger than its natural Hilbert-space bounds and are used to control products of local coefficient errors with $\bar K$ and $Y$ in the driver comparison. They are not inferred here from \textup{(A-block)} in general. In finite-type models, they follow directly from the finite-dimensional block structure and the $S^2$ estimates.

\textup{(iv)} The uniform boundedness of the limiting kernels in 
\textup{(A-block)} is a convenient, non-minimal sufficient condition: 
it is natural in graphon-based models \cite{CainesHuang2021GraphonGames} 
and consistent with \cite{DeFeoMekkaoui2025,DeCrescenzoDeFeoPham2026}. The proof uses it to bound products of local differences with limiting kernels and the mass of the limiting kernels on the diagonal band, $\|\mathbf 1_{\mathcal D_N}G\|_{L^2(I^2)}\le \|G\|_{L^{\infty}(I^2)}\sqrt{h_N}$, used in Appendix~\ref{app:R1}, which row/column bounds alone would not give. By contrast, the vanishing of $d_N^{\mathrm{diag}}$ only uses the natural $S^2\times H^2$ regularity of $(\bar K,Z^{\bar K})$; see Lemma~\ref{lem:diagonal_band_modulus}.
\end{Remark}

\begin{Remark}[Finite-type models]\label{rem:finite_type_models}
Suppose that all local coefficients and kernels are constant on a fixed finite block partition, that the grids are block-compatible, and that the discrete data are obtained by exact sampling. Then $\eta_N=\varepsilon_N^{\rm ker}=\varepsilon_N^\Sigma=0$ and there is no interface error. By uniqueness, $\bar K$ and $Z^{\bar K}$ are constant in the label variables on each of the finitely many block rectangles. The corresponding finite family of $S^2\times H^2$ estimates gives $\E\sup_{t\le T}\|\bar K_t\|_{L^\infty(I^2)}^2 +\E\int_0^T\|Z_t^{\bar K}\|_{L^\infty(I^2)}^2dt\le C$. The same block structure holds for $Y$. Thus \textup{(A-row/col)} follows from the Hilbert-space $S^2$ bounds in this class. Since $|\mathcal D_N|=h_N$, it follows that $d_N^{\rm diag}\le C\sqrt{h_N}$. Consequently $r_N\le C\sqrt{h_N}$, and the non-squared estimates of Theorem~\ref{thm:convergence} have order $N^{-1/2}$.
\end{Remark}

\begin{Lemma}[Vanishing of the diagonal-band modulus]
\label{lem:diagonal_band_modulus}
We define
\[
\big(d_N^{\mathrm{diag}}\big)^2
:={}
\E\left[
\sup_{t\in[0,T]}
\|
\mathbf 1_{\mathcal D_N}\bar K_t
\|_{L^2(I^2)}^2
\right]
+
\E\int_0^T
\|
\mathbf 1_{\mathcal D_N}Z_t^{\bar K}
\|_{L^2(I^2)}^2dt
\]
Under the solvability assumptions of Theorem~\ref{thm:solvability_continuum_backward_system}, $d_N^{\mathrm{diag}}\longrightarrow0$.
\end{Lemma}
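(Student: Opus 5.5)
The plan is a dominated convergence argument in which only the shrinking measure of $\mathcal D_N$ matters; no rate is claimed. By Theorem~\ref{thm:solvability_continuum_backward_system}, $\bar K\in S^2_{\mathbb F^0}(\overline{\mathsf H})$ and $Z^{\bar K}\in H^2_{\mathbb F^0}(\mathcal L_2(\mathbb R^{d_0};\overline{\mathsf H}))$. The integrand defining $(d_N^{\mathrm{diag}})^2$ is dominated by $\sup_t\|\bar K_t\|_{L^2(I^2)}^2$ and $\int_0^T\|Z^{\bar K}_t\|^2dt$, which are integrable. It therefore suffices to show pointwise (in $\omega$, and in $t$ for the $Z$ term) convergence to zero, and then apply dominated convergence. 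The one point needing care is the supremum in $t$ inside the first expectation.

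\textbf{The $Z^{\bar K}$ term.} For $ds\otimes d\mathbb P$-a.e. $(t,\omega)$, the function $g:=|Z_t^{\bar K}(\cdot,\cdot)|^2$ lies in $L^1(I^2)$, and $|\mathcal D_N|=h_N\to0$. Absolute continuity of the Lebesgue integral gives $\int_{\mathcal D_N}g\to0$; this holds for any sets of vanishing measure, and nestedness of $\mathcal D_N$ is not required. Dominated convergence in $dt\otimes d\mathbb P$, with dominating function $\|Z_t^{\bar K}\|^2$, then gives convergence of the second term.

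\textbf{The $\bar K$ term (main obstacle).} Here one needs $\sup_{t\in[0,T]}\|\mathbf 1_{\mathcal D_N}\bar K_t\|_{L^2}^2\to0$ for a.e. $\omega$. Pointwise convergence at each $t$ is not enough, and I would use uniform integrability of a compact family. For a.e. $\omega$, the path $t\mapsto\bar K_t(\omega)\in\overline{\mathsf H}$ is continuous, so its image $\mathcal K_\omega$ is compact in $L^2(I^2)$. Fix $\varepsilon>0$ and choose a finite $\varepsilon$-net $\{k_1,\dots,k_p\}\subset\mathcal K_\omega$. Then
\[
\sup_t\|\mathbf 1_{\mathcal D_N}\bar K_t\|_{L^2}\le \varepsilon+\max_{1\le l\le p}\|\mathbf 1_{\mathcal D_N}k_l\|_{L^2}.
\]
Each of the finitely many terms $\|\mathbf 1_{\mathcal D_N}k_l\|_{L^2}$ tends to zero by absolute continuity, as in the $Z^{\bar K}$ step. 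Hence $\limsup_N\sup_t\|\mathbf 1_{\mathcal D_N}\bar K_t\|\le\varepsilon$, and letting $\varepsilon\downarrow0$ gives the claim. Equivalently, the family $\{|k|^2:k\in\mathcal K_\omega\}$ is uniformly integrable on $I^2$.

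Finally, dominated convergence in $\mathbb P$, with dominating variable $\sup_t\|\bar K_t\|^2\in L^1(\mathbb P)$, gives convergence of the first term. Adding the two terms yields $d_N^{\mathrm{diag}}\to0$. As noted in Remark~\ref{rem:assumption_reg}(iv), this argument uses only the $S^2\times H^2$ regularity and gives no rate, since the modulus of absolute continuity of $|\bar K_t|^2$ is not quantified.
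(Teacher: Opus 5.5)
Your proposal is correct and follows essentially the same route as the paper: dominated convergence for both terms, with the supremum in $t$ handled through compactness in $L^2(I^2)$ of the continuous path $\{\bar K_t(\omega):t\in[0,T]\}$. Your finite $\varepsilon$-net argument is an explicit proof of the fact the paper invokes, namely that the contractions given by multiplication by $\mathbf 1_{\mathcal D_N}$, which converge strongly to zero, converge uniformly on compact sets; and for $Z^{\bar K}$ the paper uses a.e.\ pointwise convergence of $\mathbf 1_{\mathcal D_N}|Z^{\bar K}|^2$ on the product space, where you use absolute continuity at fixed $(t,\omega)$, which is only a cosmetic difference.
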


\begin{proof}
Let $P_N$ denote multiplication by $\mathbf 1_{\mathcal D_N}$ on $L^2(I^2)$. Since
$|\mathcal D_N|=h_N\to0$, the operators $P_N$ are contractions and converge strongly to zero on $L^2(I^2)$. For $\P$-a.e.\ $\omega$, the map $t\mapsto\bar K_t(\omega)$ is continuous in $L^2(I^2)$. Hence its image $\{\bar K_t(\omega):t\in[0,T]\}$ is compact in $L^2(I^2)$. Since a uniformly bounded sequence of operators converging strongly to zero converges uniformly on compact sets, $\sup_{t\in[0,T]} \|P_N\bar K_t(\omega)\|_{L^2(I^2)} \longrightarrow0$. Moreover, $\sup_t\|P_N\bar K_t\|_{L^2(I^2)}^2 \leq \sup_t\|\bar K_t\|_{L^2(I^2)}^2$, whose expectation is finite. Dominated convergence therefore gives the first convergence. For the martingale integrand, $\mathbf 1_{\mathcal D_N}(u,v) |Z_t^{\bar K}(u,v)|^2 \longrightarrow0$ for $dt\otimes du\otimes dv\otimes d\P$-a.e.\ $(t,u,v,\omega)$, and it is dominated by $|Z_t^{\bar K}(u,v)|^2$. Another application of dominated convergence gives $\E\int_0^T \| P_NZ_t^{\bar K} \|_{L^2(I^2)}^2dt \longrightarrow0$.
\end{proof}

We aggregate the discretization errors into a single rate that will appear throughout the section.

\begin{Definition}[Aggregate error rate]
\label{def:rate}
We set
\[
\delta_N^K := \eta_N + \sqrt{h_N},
\qquad
r_N := \delta_N^K + \varepsilon_N^{\mathrm{ker}} + \varepsilon_N^\Sigma + d_N^{\mathrm{diag}}.
\]
\end{Definition}

\begin{Remark}
\label{rem:rate_block_compatible}
Under Assumption~\ref{ass:reg_consistency} and Lemma~\ref{lem:diagonal_band_modulus}, one has $r_N\to0$. If the partition points $\{a_k\}$ are grid points, the cells crossing the block interfaces disappear. Hence the interface contribution to the local coefficient projection improves from $\sqrt{h_N}$ to $h_N$. This does not automatically improve the complete rate $r_N$, because the exact diagonal remainder and the diagonal-band modulus are independent of the block partition. Accordingly, block compatibility alone does not imply an $O(h_N)$ rate for the complete backward system in the present $L^2(I^2)$ topology. Such an improvement additionally requires algebraic control of the diagonal contributions.
\end{Remark}

\subsection{A priori bounds}
\label{subsec:apriori}

We now establish a set of uniform-in-$N$ bounds on the discrete backward objects. The strategy is triangular: we first isolate the homogeneous component of the value function, then derive a pointwise bound on the local Riccati component $K^N$ via a value-function representation, then operator and $L^2$-bounds on the interaction component $\bar K^N$, and finally the $L^2$-bounds on the linear and constant terms $Y^N, q^N$. We close the subsection with an \(L^2\)-in-label convergence estimate for the local Riccati component \(K^N\), which will be the key technical input in §\ref{subsec:stability}.
\medskip

We begin with a structural observation that decouples the quadratic part of the discrete value function from the affine and constant terms.

\begin{Lemma}[Homogeneous decoupling]
\label{lem:decoupling}
Let $P^N$ denote the quadratic component of the discrete value function $V^N$ obtained in Theorem~\ref{thm:verification_discrete}. Then $P^N$ depends only on the coefficients $B^N, C^N, E^N, F^N, R^N, Q^N, H^N, G^{B,N}, G^{E,N}, G^{Q,N}, G^{H,N}$ and is independent of the affine coefficients $A^N, D^N, \Sigma^{0,N}, \iota^N$. The value function is then $V_t^{N,0}(\bx^N)=(\bx^N)^\top P_t^N\bx^N$
\end{Lemma}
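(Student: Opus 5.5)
The plan is to read the statement off the triangular structure recorded in Remark~\ref{rem:triangular_discrete}, and then identify $(\bx^N)^\top P_t^N\bx^N$ as the value of the homogeneous problem by applying the verification theorem to a modified problem. First, inspect the driver of \eqref{eq:discrete_Riccati_P}. It involves only $\mathbf Q^N,\mathbf B^N,\mathbf E^{r,N}$ and the gains $\mathcal O^N,\mathcal U^N$ defined in \eqref{eq:def_O_discrete}--\eqref{eq:def_U_discrete}. By construction these matrices are built from $B^N,C^N,E^N,F^N,R^N,Q^N$ and the kernels $G^{B,N},G^{E,N},G^{Q,N}$. The terminal condition $\mathbf H^N$ is built from $H^N,G^{H,N}$. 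None of $\mathbf A^N,\mathbf D^{r,N},\boldsymbol\Sigma^{0,N},\boldsymbol\iota^N$ appears: these enter only through $\boldsymbol\Gamma^N$, and hence only the equations for $p^N$ and $q^N$. By the uniqueness part of Theorem~\ref{thm:solvability_discrete}, in the class $S^\infty_{\F^0}(\mathbb S_+^{Nd})\times H^2$, two data sets sharing the quadratic coefficients produce the same $(P^N,Z^{P,N})$. This gives the independence claim.

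Next, consider the homogeneous problem, obtained by setting $A^N=D^N=\Sigma^{0,N}=\iota^N=0$ while keeping all other coefficients. The zero processes satisfy these assumptions trivially, so Assumptions~\ref{ass:finite_forward_coeff} and~\ref{ass:finite_cost_coeff} still hold. Its Riccati component is the same $P^N$ by the first step. Its linear equation \eqref{eq:discrete_linear_p} has zero terminal value, and its source terms $P^N\mathbf A^N$, $Z^{P,N}\boldsymbol\Sigma^{0,N}$, $\sum_r(\mathbf E^{r,N})^\top P^N\mathbf D^{r,N}$ and the $\boldsymbol\Gamma^N$ contribution all vanish. The reason is that $\boldsymbol\Gamma^N=(\mathbf C^N)^\top p^N$ once $\mathbf D,\boldsymbol\iota$ are zero, which makes the equation linear homogeneous in $p^N$. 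Uniqueness of linear BSDEs then gives $(p^N,Z^{p,N})=(0,0)$. Substituting into \eqref{eq:discrete_scalar_q}, the driver is identically zero, so $q^N\equiv0$.

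Finally, apply Theorem~\ref{thm:verification_discrete} to the homogeneous problem, using a deterministic initial condition $\xi^N=\bx^N$, which is admissible. Formula \eqref{eq:discrete_value_function} then gives $V_t^{N,0}(\bx^N)=(\bx^N)^\top P_t^N\bx^N$.

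The only point requiring care is the identification $p^N=0$. One must check that no hidden affine contribution survives, in particular the term $Z^{P,N}\boldsymbol\Sigma^{0,N}$ and the $\mathbf R^N\boldsymbol\iota^N$ part of $\boldsymbol\Gamma^N$. Both vanish once the affine data are zero, so no genuine obstacle is expected.
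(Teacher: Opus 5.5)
Your proposal is correct and follows the same route as the paper, which simply observes that the Riccati equation \eqref{eq:discrete_Riccati_P} contains no affine data and then sets $A^N,D^N,\Sigma^{0,N},\iota^N$ to zero. You additionally spell out three points the paper leaves implicit: why $(p^N,Z^{p,N})=(0,0)$ and $q^N\equiv0$ in the homogeneous problem, and why a deterministic initial condition is admissible in the verification theorem.
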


\begin{proof}
Immediate as the BSRE verified by $P_t^N$ is independant of $A^N, D^N, \Sigma^{0,N}, \iota^N$ and set all these coefficients to $0$.
\end{proof}

\noindent The decoupling reduces the study of $K^N$ and $\bar K^N$ to the homogeneous LQ problem, which we exploit in the following pointwise bound.

\begin{Lemma}[Pointwise uniform bound on $K^N$]
\label{lem:K_pointwise}
Under Assumption~\ref{ass:reg_consistency}, there exists a constant $C>0$, independent of $N$, such that
\[
\sup_{N \geq 1} \, \sup_{1 \leq i \leq N} \, \sup_{t \in [0,T]} \, \big\|K_{i,t}^N\big\| \leq C, \qquad \P\text{-a.s.}
\]
\end{Lemma}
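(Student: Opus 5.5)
We bound $K^N_{i,t}$ through the homogeneous value function of Lemma~\ref{lem:decoupling}: $P^N$ is both the quadratic form of a non-negative cost and dominated by the cost of the zero control. Since $K^N_{i,t}=h_N^{-1}P^N_{ii,t}$ is the diagonal block, it suffices to test the quadratic form on vectors concentrated on a single agent.

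\textbf{Test vector and lower bound.} Fix $N$, $i$, $t$, and $x\in\mathbb R^d$, and let $\mathbf x^N=x\,e_i$ be the vector with $x$ in slot $i$ and zero elsewhere. Then $(\mathbf x^N)^\top P_t^N\mathbf x^N=\langle x,P^N_{ii,t}x\rangle=h_N\langle x,K^N_{i,t}x\rangle$. By Lemma~\ref{lem:decoupling} and Theorem~\ref{thm:verification_discrete}, applied with the affine data set to zero and deterministic initial condition $\xi^N=\mathbf x^N\in\mathcal I_t^N$, this quantity equals $V_t^{N,0}(\mathbf x^N)$. Assumption~\ref{ass:finite_cost_coeff}(iv) gives $V_t^{N,0}\ge0$, so $K^N_{i,t}\succeq0$.

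\textbf{Upper bound via the zero control.} Admissibility of $\alpha^N=0$ gives $V_t^{N,0}(\mathbf x^N)\le J^{N,0}(t,\mathbf x^N,0)$. The uncontrolled homogeneous state solves a linear SDE whose operator norms on $(\mathbb H_N,|\cdot|_N)$ are bounded by $L+L_G$ uniformly in $N$ (Assumption~\ref{ass:finite_forward_coeff}). Itô's formula on $|X_s|_N^2$ and Gronwall, applied conditionally on $\mathcal F_t^0$, give
\[
\E\Big[\sup_{s\in[t,T]}|X_s^N|_N^2\,\Big|\,\mathcal F_t^0\Big]\le C|\mathbf x^N|_N^2=Ch_N|x|^2 .
\]
This is the conditional version of Theorem~\ref{thm:wellposed_finite_state}, with $C$ independent of $N$, $i$, $t$. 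The cost forms $\mathfrak q^N_s$ and $\mathfrak h^N$ are bounded by $C|\cdot|_N^2$ uniformly in $N$: the local part uses the uniform bounds on $Q^{i,N}$ and $H^{i,N}$, and the interaction part uses the uniform operator bounds on $\mathsf G_Q^N$ and $\mathsf G_H^N$ from Assumption~\ref{ass:finite_cost_coeff}(iii). Hence $J^{N,0}(t,\mathbf x^N,0)\le Ch_N|x|^2$.

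\textbf{Conclusion.} Dividing by $h_N$ gives $0\le\langle x,K^N_{i,t}x\rangle\le C|x|^2$ a.s. Since $K^N_{i,t}$ is symmetric non-negative, $\|K^N_{i,t}\|\le C$.

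\textbf{Main obstacle: uniformity in $t$ on a single null set.} The inequality above holds a.s. for each fixed $(t,x)$. First take $x$ in a countable dense set and $t$ rational, so that one null set serves all of them. Then use continuity of $t\mapsto P^N_t$, which holds because $P^N\in S^\infty$ solves a BSDE, to extend the bound to all $t\in[0,T]$. The constant must not depend on $N$. This holds because every estimate was made in the weighted norm $|\cdot|_N$, in which Assumptions~\ref{ass:finite_forward_coeff} and~\ref{ass:finite_cost_coeff} (or equivalently the uniform bounds in Assumption~\ref{ass:reg_consistency}) are stated uniformly.
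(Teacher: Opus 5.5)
Your proposal is correct and follows essentially the same route as the paper's proof. You test the homogeneous value function $V_t^{N,0}(\mathbf x^N)=(\mathbf x^N)^\top P_t^N\mathbf x^N=h_N\langle x,K^N_{i,t}x\rangle$ on a vector concentrated on agent $i$, bound it between $0$ and the zero-control cost $Ch_N|x|^2$ using the $N$-uniform conditional state estimate in $|\cdot|_N$, and divide by $h_N$. Your additional step—restricting to rational $t$ and a countable dense set of $x$, then using path continuity of $P^N$ to obtain a single null set—is a useful refinement of a point the paper's proof passes over without comment.
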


\begin{proof}
By Lemma~\ref{lem:decoupling}, $P^N$ is the quadratic component of the homogeneous discrete value function. Fix $i\in\llbracket 1,n \rrbracket$ and $z\in\mathbb R^d$, and consider the initial state $\bx^N=(0,\ldots,0,z,0,\ldots,0)$, where the only non-zero component is the $i$-th one. Then $|\bx^N|_N^2=h_N|z|^2$ and, by the block decomposition of $P^N$, $V_t^{N,0}(\bx^N) = z^\top P_{ii,t}^Nz = h_Nz^\top K_{i,t}^Nz$. The non-negativity of the homogeneous cost and the admissibility of the zero control yield $0 \leq V_t^{N,0}(\bx^N) \leq J_t^{N,0}(\bx^N,0)$. By the standard conditional estimate for the homogeneous state equation and the uniform bounds on the local coefficients and the discrete kernel operators, $J_t^{N,0} (\bx^N,0) \leq C|\bx^N|_N^2 = Ch_N|z|^2$, $\P\text{-a.s.}$. Consequently, $0 \leq z^\top K_{i,t}^Nz \leq C|z|^2$. Since $K_{i,t}^N$ is symmetric, taking the supremum over $|z|=1$ gives the result uniformly in $N,i$ and $t$.
\end{proof}

\begin{Remark}
\label{rem:K_continuum_bound}
\textup{(i)} The analogous pointwise bound holds for the continuum local Riccati
component. By Remark~\ref{rem:K_bounded_continuum}, up to choosing the
bounded version of the solution, there exists $C>0$ such that $\sup_{t\in[0,T]}\esssup_{u\in I}\|K_t(u)\|\le C$, $\P\text{-a.s.}$

\textup{(ii)} Throughout this subsection we make repeated use of the following standard estimate. Let $E$ be a separable Hilbert space, and let $(X,Z)\in S^2_{\mathbb F^0}([0,T];E) \times H^2_{\mathbb F^0} ([0,T];\mathcal L_2(\mathbb R^{d_0};E))$ solve the backward equation $X_t=\xi+\int_t^T F(s,X_s)\,ds-\int_t^T Z_s\,dW_s^0$, with terminal condition $\xi\in L^2(\Omega;E)$ and a driver $F$ satisfying the affine growth condition $\|F(t,x)\|_E\leq\kappa(1+\|x\|_E)$ for some $\kappa > 0$. Then there exists a constant $C = C(T, \kappa) > 0$ such that
\[
\mathbb E\left[\sup_{t\in[0,T]}\|X_t\|_E^2 +\int_0^T\|Z_t\|_{\mathcal L_2(\mathbb R^{d_0};E)}^2\,dt\right] \leq C\big(1+\mathbb E\|\xi\|_E^2\big)
\]
The proof is by Itô's formula on $\|X_t\|^2_E$ combined with the BDG inequality and a backward Grönwall argument; see, e.g., \cite{FuhrmanTessitore2002,ElKarouiPengQuenez1997} for details.
\end{Remark}

We now turn to the interaction Riccati component. The pointwise approach used for $K^N$ is no longer available: $\bar K^N$ enters the system through an interaction kernel, and its natural bounds are of operator and $L^2$ type.

\begin{Lemma}[Uniform estimates on the interaction Riccati components]
\label{lem:Kbar_bound}
Under Assumption~\ref{ass:reg_consistency}, there exists a deterministic
constant $C>0$, independent of $N$, such that:

\begin{enumerate}
\item[\textup{(a)}] \emph{Operator bounds.}
\[
\sup_{t\in[0,T]}
\|T_{\bar K_t}\|_{\mathcal L(\mathsf H)}
+
\sup_{N\geq1}\sup_{t\in[0,T]}
\|T_{\bar K_t^N}\|_{\mathcal L(\mathsf H)}
\leq C,
\qquad
\P\text{-a.s.}
\]

\item[\textup{(b)}] \emph{Pathwise Hilbert--Schmidt bounds.}
\[
\|\bar K\|_{S^\infty_{\F^0}(\overline{\mathsf H})}
+
\sup_{N\geq1}
\|\bar K^N\|_{S^\infty_{\F^0}(\overline{\mathsf H})}
\leq C.
\]

\item[\textup{(c)}] \emph{BMO bounds.}
\[
\|Z^{\bar K}\|_{\mathrm{BMO}(\overline{\mathsf H})}
+
\sup_{N\geq1}
\|Z^{\bar K,N}\|_{\mathrm{BMO}(\overline{\mathsf H})}
\leq C.
\]
\end{enumerate}
\end{Lemma}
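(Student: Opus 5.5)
For (a), I would use the homogeneous value-function representation, exactly as in Step 2 of Theorem~\ref{thm:solvability_continuum_backward_system} and Lemma~\ref{lem:K_pointwise}. For the continuum, the bound $\sup_t\|T_{\bar K_t}\|_{\mathcal L(\mathsf H)}\le C$ is already part of Theorem~\ref{thm:solvability_continuum_backward_system}.

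For the discrete system, Lemma~\ref{lem:decoupling} gives $V_t^{N,0}(\bx^N)=(\bx^N)^\top P_t^N\bx^N$. Under the step embedding this equals $\langle \mathsf E_N\bx^N,(M_{K_t^N}+T_{\bar K_t^N})\mathsf E_N\bx^N\rangle_{\mathsf H}$, because $P^N_{ii}=h_NK^N_i$, $P^N_{ij}=h_N^2\bar K^N_{ij}$ and $\bar K^N_{ii}=0$. Non-negativity of the homogeneous cost and admissibility of the zero control give
\[
0\le V_t^{N,0}(\bx^N)\le C|\bx^N|_N^2 .
\]
The constant $C$ is uniform in $N$ by Assumption~\ref{ass:finite_forward_coeff}(ii)--(iii) and the uniform operator bounds on $\mathsf G_Q^N,\mathsf G_H^N$. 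Subtracting the diagonal part, bounded by Lemma~\ref{lem:K_pointwise}, yields $|\langle f,T_{\bar K^N_t}f\rangle|\le C\|f\|_{\mathsf H}^2$ for cellwise constant $f$. Since $\bar K^N$ is a symmetric step kernel, $T_{\bar K^N_t}=\mathsf P_NT_{\bar K^N_t}\mathsf P_N$ is self-adjoint, so its operator norm equals the supremum of its quadratic form. The estimate holds a.s.\ for all rational $t$ and extends to all $t$ by path continuity.

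For (b), the operator bound does not control the HS norm, so I would work from the backward equation. Write $\bar K_t=\E[G^H+\int_t^T\mathcal F(s,\bar K_s)ds\mid\mathcal F_t^0]$. Using (a), the boundedness of $K$ and $O^{-1}$, and $\|AB\|_{\mathrm{HS}}\le\|A\|_{\mathrm{op}}\|B\|_{\mathrm{HS}}$, I get $\|\mathcal F(s,\bar K_s)\|_{\overline{\mathsf H}}\le C(g_s+\|\bar K_s\|_{\overline{\mathsf H}})$. Here $g_s$ collects the HS norms of the kernels. By (A-block) these are bounded uniformly in $(s,\omega)$, and by (A-coeff) the discrete kernels are also uniformly bounded. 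This gives
\[
\|\bar K_t\|_{\overline{\mathsf H}}\le C+C\int_t^T\E[\|\bar K_s\|_{\overline{\mathsf H}}\mid\mathcal F^0_t]\,ds .
\]
Iterating this conditional Gr\"onwall inequality gives an essential sup bound. For the discrete case I would apply the same argument to the equation of Proposition~\ref{prop:exact_embedded_riccati}, with driver $\Pi_N^\circ\mathfrak F^{\bar K,N}$. The only extra point is checking that the block driver obeys the same affine HS bound with $N$-uniform constants. The factor $h_N^2\sum_\ell$ produces integral operators of the embedded kernels, and $\Pi_N^\circ$ is an HS contraction.

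For (c), I would apply It\^o's formula to $\|\bar K_s\|_{\overline{\mathsf H}}^2$ between a stopping time $\tau$ and $T$, then take $\E[\cdot\mid\mathcal F_\tau^0]$. This gives
\[
\E\Big[\int_\tau^T\|Z^{\bar K}_s\|^2ds\,\Big|\,\mathcal F^0_\tau\Big]\le \E\Big[\|\bar K_T\|^2+2\int_\tau^T\|\bar K_s\|\,\|\mathcal F(s,\bar K_s)\|ds\,\Big|\,\mathcal F^0_\tau\Big]\le C .
\]
The bound follows from (b) and the driver bound, and the stochastic integral is a true martingale because $\bar K\in S^\infty$ and $Z\in H^2$. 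The discrete case is identical, with the $N$-uniform bounds from (a)--(b).

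I expect the main obstacle to be checking that the exact discrete driver, including the quadratic term $h_N^2\sum_\ell V_{\ell i}^\top O_\ell^{-1}V_{\ell j}$ and the $G^E$ terms, is HS-affine in $\bar K^N$ uniformly in $N$. If needed, I would bound these through the operator norms from (a) and through $\|O^N_\ell(k)^{-1}\|\le c_R^{-1}$.
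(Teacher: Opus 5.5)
Your proposal is correct and follows the paper's proof. Part (a) uses the zero-control value bound with the diagonal multiplication part subtracted. Part (b) uses the affine Hilbert--Schmidt driver estimate via $\|AB\|_{\mathrm{HS}}\le\|A\|_{\mathrm{op}}\|B\|_{\mathrm{HS}}$ together with the operator bound from (a). Part (c) uses It\^o's formula on $\|\bar K_s\|_{\overline{\mathsf H}}^2$ between a stopping time and $T$.

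The differences are minor. You obtain the $S^\infty$ bound from the conditional-expectation representation and a conditional Gr\"onwall argument, whereas the paper gets it from the It\^o identity. The discrete quadratic term you flag is handled in the paper through the full kernel $\widetilde V^N=(C^N)^\top\bar K^N+(F^N)^\top K^NG^{E,N}$. Its rescaled contribution is $h_N\sum_\ell(\widetilde V^N_{\ell i})^\top(O^N_\ell)^{-1}\widetilde V^N_{\ell j}$, with weight $h_N$ rather than the $h_N^2$ you wrote. This is exactly the kernel of $T_{\widetilde V^N}^*M_{(O^N)^{-1}}T_{\widetilde V^N}$.
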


\begin{proof}
\textit{item(a) :} The continuum operator bound was obtained in Theorem~\ref{thm:solvability_continuum_backward_system}. For the discrete problem, let $\mathsf P_t^N$ denote the operator on the space of step functions corresponding to the rescaled homogeneous quadratic value. For every step function $x$, $\langle x,\mathsf P_t^Nx\rangle_{\mathsf H} = V_t^{N,0}(x)$. The zero-control estimate gives $0\leq \langle x,\mathsf P_t^Nx\rangle_{\mathsf H} \leq C\|x\|_H^2$. Since $\mathsf P_t^N$ is self-adjoint and non-negative, $\|\mathsf P_t^N\|_{\mathcal L(\mathsf H)} \leq C$ on the step-function subspace. The exact block decomposition gives, on that subspace,
\[
\mathsf P_t^N
=
M_{K_t^N}
+
T_{\bar K_t^N}.
\]
By Lemma~\ref{lem:K_pointwise}, $\|M_{K_t^N}\|_{\mathcal L(\mathsf H)} \leq \|K_t^N\|_{L^\infty(I)} \leq C$. Therefore, $\|T_{\bar K_t^N}\|_{\mathcal L(\mathsf H)}
\leq C$. For a general $f\in \mathsf H$, the step kernel satisfies $T_{\bar K_t^N}f = T_{\bar K_t^N}\mathsf P_Nf$, where $\mathsf P_N$ is the orthogonal cell-average projection ont the space of step functions. Hence the same operator bound holds on all of $\mathsf H$.

\medskip
\textit{item(b) :} Along the continuum and discrete solutions, the interaction drivers satisfy $\|\mathcal F_s(\bar K_s)\|_{\overline{\mathsf H}} \leq C\big(1+\|\bar K_s\|_{\overline{\mathsf H}}\big)$, and $\left\| \Pi_N^\circ \mathfrak F_s^{\bar K,N}(K_s^N,\bar K_s^N) \right\|_{\overline{\mathsf H}} \leq C\big(1+\|\bar K_s^N\|_{\overline{\mathsf H}}\big)$. Indeed, all affine terms are controlled by the bounded local coefficients and kernel operators. For the quadratic feedback term,
\[
\|T_V^\ast M_{O^{-1}}T_V\|_{\mathrm{HS}}
\leq
\|T_V\|_{\mathrm{op}}
\|M_{O^{-1}}\|_{\mathrm{op}}
\|T_V\|_{\mathrm{HS}}.
\]
The first two factors are uniformly bounded by item~(a) and coercivity, while $\|T_V\|_{\mathrm{HS}} \leq C\big(1+\|\bar K\|_{\overline{\mathsf H}}\big)$. For the discrete driver, the corresponding full kernel is $\widetilde V_s^N(u,v) :=(C_s^N(u))^\top\bar K_s^N(u,v) +(F_s^N(u))^\top K_s^N(u)G_s^{E,N}(u,v)$. Its operator norm is uniformly bounded by item (a), the local coefficient bounds and the operator bound on $G^{E,N}$, and
\[
\|\widetilde V_s^N\|_{L^2(I^2)}
\le C\big(\|\bar K_s^N\|_{\overline{\mathsf H}}
+\|G_s^{E,N}\|_{L^2(I^2)}\big).
\]
Applying the same Hilbert--Schmidt ideal inequality to $T_{\widetilde V_s^N}^{*}M_{(O_s^N)^{-1}}T_{\widetilde V_s^N}$, and using that $\Pi_N^\circ$ is a contraction in $L^2(I^2)$, gives the stated discrete driver bound. 

\medskip
\textit{item(c):}
Let $(\bar K^\bullet,Z^{\bar K,\bullet})$ denote either the continuum solution or one of the discrete solutions. Applying It\^o's formula to $\|\bar K_s^\bullet\|_{\overline{\mathsf H}}^2$ between an arbitrary stopping time $\tau$ and $T$, followed by conditional expectation and Young's inequality, gives
\[
\begin{aligned}
\|\bar K_\tau^\bullet\|_{\overline{\mathsf H}}^2
&+
\frac12
\E\left[
\int_\tau^T
\|Z_s^{\bar K,\bullet}\|_{\mathcal L_2(\mathbb R^{d_0};\overline{\mathsf H})}^2\,ds
\;\middle|\;
\Fc_\tau^0
\right]
\leq
C
+
C\E\left[
\int_\tau^T
\|\bar K_s^\bullet\|_{\overline{\mathsf H}}^2\,ds
\;\middle|\;
\Fc_\tau^0
\right].
\end{aligned}
\]
The terminal conditions are uniformly bounded in $\overline{\mathsf H}$ under Assumption~\ref{ass:reg_consistency}. The standard conditional backward Gr\"onwall estimate therefore yields $\operatorname*{ess\,sup}_{(\omega,t)} \|\bar K_t^\bullet(\omega)\|_{\overline{\mathsf H}} \leq C$, uniformly in $N$. Substituting this estimate back into the preceding conditional inequality gives $\sup_{\tau\in\mathcal T_{0,T}} \left\| \E\left[ \int_\tau^T \|Z_s^{\bar K,\bullet}\|_{\mathcal L_2(\mathbb R^{d_0};\overline{\mathsf H})}^2\,ds \;\middle|\; \Fc_\tau^0 \right] \right\|_{L^\infty} \leq C$. This proves all the assertions.
\end{proof}

\begin{Lemma}[Bounds on the exact diagonal remainder]
\label{lem:diagonal_remainder_bounds}
Under Assumption~\ref{ass:reg_consistency}, there exists a deterministic
constant $C>0$, independent of $N$, such that
\[
\operatorname*{ess\,sup}_{(t,\omega)}
\|\mathfrak d_t^{K,N}\|_{L^\infty(I)}
\leq C,
\quad
\operatorname*{ess\,sup}_{(t,\omega)}
\|\mathfrak d_t^{K,N}\|_{L^2(I)}^2
\leq
Ch_N.
\]
Moreover,
\[
\left\|
h_N\operatorname{diag}_NG^{H,N}
\right\|_{L^2(I)}^2
\leq
Ch_N,
\qquad
\P\text{-a.s.}
\]
\end{Lemma}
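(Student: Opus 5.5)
The plan is to bound each of the terms in \eqref{eq:exact_diagonal_remainder} pointwise in $i$ by $C$ times a quantity that is $O(1)$ uniformly, and at the same time to show that its square, summed with weight $h_N$ over $i$, is $O(h_N)$. Since $\|\mathfrak d^{K,N}_t\|_{L^2(I)}^2=h_N\sum_i|\mathfrak d^{K,N}_{i,t}|^2$, the $L^2$ statement reduces to $h_N\sum_i|\mathfrak d_{i}^{K,N}|^2\le Ch_N$, i.e.\ $\sum_i|\mathfrak d_i^{K,N}|^2\le C$. The inputs are Lemma~\ref{lem:K_pointwise} ($|K^N_{i,t}|\le C$), the coercivity $O_i^N(k)\succeq c_RI_m$ (hence $|O_i^N(k)^{-1}|\le C$), the uniform bounds on local coefficients, Lemma~\ref{lem:Kbar_bound}(b) ($\sum_{i\ne\ell}h_N^2|\bar K^N_{i\ell}|^2\le C$), and uniform boundedness of the discrete kernels. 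I take the kernels to be uniformly bounded pointwise, $|G^{\cdot,N}_{ij}|\le C$, as inherited from the sampled bounded limiting kernels in (A-block). If only $L^2(I^2)$ bounds are available, the diagonal entries $h_NG_{ii}$ still satisfy $h_N^2\sum_i|G_{ii}|^2\le C$ only if the diagonal itself is controlled, so pointwise boundedness is the assumption I will use.

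First I group the terms. The ``pure diagonal'' terms are $h_NG_{ii}^{Q,N}$, $h_N[(G_{ii}^{B,N})^\top k_i+k_iG^{B,N}_{ii}]$, $(\widehat E_i)^\top k_i\widehat E_i-E_i^\top k_iE_i=h_N(G_{ii}^{E})^\top k_iE_i+h_NE_i^\top k_iG^E_{ii}+h_N^2(G^E_{ii})^\top k_iG^E_{ii}$, and the feedback difference $\widehat U^\top O^{-1}\widehat U-U^\top O^{-1}U$ with $\widehat U_i-U_i=h_N(F_i)^\top k_iG^E_{ii}$. Each of these is bounded by $Ch_N$ pointwise, so its contribution to $\sum_i|\cdot|^2$ is at most $CNh_N^2=Ch_N$.

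The column sums are handled next. For $h_N^2\sum_{\ell\ne i}(G^{E}_{\ell i})^\top k_\ell G^E_{\ell i}$ the bound is $C h_N^2\sum_\ell|G^E_{\ell i}|^2=Ca_i^N(G^E)\le Ch_N$ pointwise, and its square summed over $i$ is $\le Ch_N\sum_ia_i^N\le Ch_N$. For the mixed term $h_N^2\sum_{\ell\ne i}(G^{B}_{\ell i})^\top\bar k_{\ell i}$ I apply Cauchy--Schwarz: it is at most $(h_N^2\sum_\ell|G^B_{\ell i}|^2)^{1/2}(h_N^2\sum_\ell|\bar k_{\ell i}|^2)^{1/2}=a_i(G^B)^{1/2}a_i(\bar k)^{1/2}$. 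With $a_i(G^B)\le Ch_N$ this gives a pointwise bound of $C\sqrt{h_N}\,a_i(\bar k)^{1/2}$, and summing the squares gives $Ch_N\sum_ia_i(\bar k)=Ch_N\|\bar K^N\|^2\le Ch_N$. The term $\bar k_{i\ell}G_{\ell i}$ is treated the same way using rows. For the $L^\infty$ bound, $a_i(\bar k)\le C$ pointwise is needed. I would obtain it from the operator bound in Lemma~\ref{lem:Kbar_bound}(a) by testing $T_{\bar K^N}$ on the indicator $h_N^{-1/2}\mathbf 1_{I_i^N}$, which gives $\|T_{\bar K^N}(h_N^{-1/2}\mathbf 1_{I_i})\|^2=h_N^{-1}\cdot h_N^2\cdot h_N\sum_\ell|\bar k_{\ell i}|^2\cdot h_N^{-1}$, which after checking the scaling should be $a_i(\bar k)\le C$.

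The last term is $h_N^2\sum_{\ell\ne i}V_{\ell i}^\top O_\ell^{-1}V_{\ell i}$, bounded by $Ch_N^2\sum_\ell(|\bar k_{\ell i}|^2+|G^E_{\ell i}|^2)=C(a_i(\bar k)+a_i(G^E))$. This is pointwise $O(1)$ by the column bound above. I expect its squared sum to be the main obstacle: $\sum_i a_i(\bar k)^2\le\max_ia_i(\bar k)\sum_ia_i(\bar k)$, and this is only $O(1)$, not $O(h_N)$, unless $\max_ia_i(\bar k)\le Ch_N$. That stronger bound is exactly a pointwise-in-entries bound $|\bar K^N_{\ell i}|\le C$. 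To get it I would try the two-agent homogeneous value, with initial state supported on agents $i,\ell$ as in Lemma~\ref{lem:K_pointwise}: $V^{N,0}=h_N z^\top K_iz+h_N w^\top K_\ell w+2h_N^2z^\top\bar k_{i\ell}w\le Ch_N(|z|^2+|w|^2)$. This only yields $h_N|\bar k_{i\ell}|\le C$, which is too weak. So I would instead propagate an entrywise bound through the $\bar K^N$ equation, using bounded kernels and a Grönwall argument in the max-entry norm, in which the quadratic term $h_N\sum_w V^\top O^{-1}V$ is controlled by the row/column quantities already bounded. This yields $\sup_{i\ne\ell}|\bar K^N_{i\ell}|\le C$ and hence $a_i\le Ch_N$. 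Finally, the terminal bound follows directly: $\|h_N\operatorname{diag}_NG^{H,N}\|^2=h_N\sum_ih_N^2|G^{H,N}_{ii}|^2\le Ch_N^2\le Ch_N$.
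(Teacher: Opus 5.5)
Your strategy of bounding each term of \eqref{eq:exact_diagonal_remainder} through the column quantities $a_i^N(\cdot)$ and Cauchy--Schwarz is the one the paper uses. However, the step you flag as the main obstacle rests on a bookkeeping error, and the repair you propose is neither needed nor established.

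You correctly reduced the $L^2$ claim to $\sum_i|\mathfrak d_{i,t}^{K,N}|^2\le C$. For the quadratic feedback column term you then obtain $\sum_i a_i^N(\bar K^N)^2\le\max_i a_i^N(\bar K^N)\sum_i a_i^N(\bar K^N)\le C$, which is exactly that requirement. You judged it insufficient only because you were implicitly aiming at $\sum_i|\mathfrak d_{i}^{K,N}|^2=O(h_N)$, i.e.\ $\|\mathfrak d^{K,N}_t\|_{L^2(I)}^2=O(h_N^2)$, which your other terms happened to give under pointwise kernel bounds. The paper closes this term in one line: $h_N\sum_i a_i^N(V^N)^2\le h_N\big(\sum_i a_i^N(V^N)\big)^2=h_N\|V^N\|_{L^2(I^2)}^4\le Ch_N$, with $\|V^N\|_{L^2(I^2)}\le C$ from Lemma~\ref{lem:Kbar_bound}. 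Likewise $a_i^N(\bar K^N)\le\|\bar K^N\|_{L^2(I^2)}^2$ gives the pointwise bound directly; your indicator test of $T_{\bar K^N}$ also works but is not needed.

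The detour through an entrywise bound $\sup_{i\ne\ell}|\bar K^N_{i\ell}|\le C$ should be dropped. It is the kind of discrete non-concentration estimate that the paper treats as an additional hypothesis (Remark~\ref{rem:K_uniform_block_compatible}), and your sketch does not prove it. In the max-entry norm the term $h_N\sum_w\widetilde V_{wi}^\top(O_w^N)^{-1}\widetilde V_{wj}$ is quadratic. The $L^2$ column bounds you have only control it by $h_N^{-1}\sqrt{a_i^N a_j^N}$, which is not uniform in $N$. So a linear Gr\"onwall argument does not close; for a BSDE it would in any case have to be run on conditional expectations. As written, your $L^2$ estimate for this term therefore depends on an unproved claim, even though the inequality you had already derived suffices.

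You also import a hypothesis the lemma does not have. (A-coeff) gives only $L^2(I^2)$ bounds on the discrete kernels, while the pointwise bounds in (A-block) concern the limiting kernels. Your reason for needing pointwise bounds is also incorrect: the diagonal entries are part of the double sum, so $h_N^2\sum_i|G_{ii}^N|^2\le\|G^N\|_{L^2(I^2)}^2$. This gives $h_N|G_{ii}^N|\le C$ for the $L^\infty$ bound. It gives $h_N^3\sum_i|G_{ii}^N|^2\le h_N\|G^N\|_{L^2(I^2)}^2$ for the $L^2$ bound, including the terminal term $h_N\operatorname{diag}_NG^{H,N}$. It also gives $h_N\sum_i(h_N^2|G_{ii}^N|^2)^2\le h_N\|G^N\|^4_{L^2(I^2)}$ for quadratic diagonal pieces such as $h_N^2(G^{E,N}_{ii})^\top k_iG^{E,N}_{ii}$. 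Similarly, replace $a_i^N(G)\le Ch_N$ by $a_i^N(G)\le\|G^N\|^2_{L^2(I^2)}$ and $\sum_ia_i^N(G)^2\le\|G^N\|^4_{L^2(I^2)}$. With these substitutions your termwise argument proves the lemma under the stated assumptions and coincides with the paper's proof.
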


\begin{proof}
For a typical bilinear column term, $\left| h_N^2\sum_{\ell\neq i} (G_{\ell i}^{B,N})^\top\bar K_{\ell i}^N \right| \leq \sqrt{ a_i^N(G^{B,N}) a_i^N(\bar K^N) }$ with $a_i$ the operator defined in \ref{sec:notations} . It is therefore uniformly bounded pointwise in $i$. Moreover,
\[
\begin{aligned}
&h_N\sum_{i=1}^N
\left|
h_N^2\sum_{\ell\neq i}
(G_{\ell i}^{B,N})^\top\bar K_{\ell i}^N
\right|^2
\leq
h_N
\sum_{i=1}^N
a_i^N(G^{B,N})a_i^N(\bar K^N)
\leq
h_N
\|G^{B,N}\|_{L^2(I^2)}^2
\|\bar K^N\|_{L^2(I^2)}^2.
\end{aligned}
\]
For the quadratic feedback column term, $\left| h_N^2\sum_{\ell\neq i} (V_{\ell i}^N)^\top (O_\ell^N)^{-1} V_{\ell i}^N \right| \leq C a_i^N(V^N)$, and $h_N\sum_{i=1}^N|a_i^N(V^N)|^2 \leq h_N \left( \sum_{i=1}^Na_i^N(V^N) \right)^2$\\ 
\noindent $= h_N\|V^N\|_{L^2(I^2)}^4$. Lemma~\ref{lem:Kbar_bound} and the definition of $V^N$ imply $\sup_{N,t} \|V_t^N\|_{L^2(I^2)} \leq C$,$\P\text{-a.s.}$. The terms containing diagonal kernel entries are controlled by $h_N^3\sum_{i=1}^N|G_{ii}^N|^2 \leq h_N\|G^N\|_{L^2(I^2)}^2$, and, for the corresponding quadratic terms, $h_N\sum_{i=1}^N \big(h_N^2|G_{ii}^N|^2\big)^2 \leq h_N\|G^N\|_{L^2(I^2)}^4$. Applying these estimates to every term of \eqref{eq:exact_diagonal_remainder} proves $\|\mathfrak d_t^{K,N}\|_{L^\infty(I)} \leq C$, and $\|\mathfrak d_t^{K,N}\|_{L^2(I)}^2 \leq Ch_N$. Finally, $\left\| h_N\operatorname{diag}_NG^{H,N} \right\|_{L^2(I)}^2 =
h_N^3\sum_{i=1}^N|G_{ii}^{H,N}|^2 \leq h_N\|G^{H,N}\|_{L^2(I^2)}^2 \leq Ch_N$.
\end{proof}

\begin{Lemma}[BMO estimates for the local Riccati martingales]
\label{lem:ZK_BMO}
Under Assumption~\ref{ass:reg_consistency}, there exists a deterministic constant $C>0$, independent of $N$, such that
\[
\operatorname*{ess\,sup}_{u\in I}
\|Z^K(u)\|_{\mathrm{BMO}}^2
+
\sup_{N\geq1}
\max_{1\leq i\leq N}
\|Z_i^{K,N}\|_{\mathrm{BMO}}^2
\leq C.
\]
Consequently,
\[
\|Z^K\|_{\mathrm{BMO}(L^2(I))}
+
\sup_{N\geq1}
\|Z^{K,N}\|_{\mathrm{BMO}(L^2(I))}
\leq C.
\]
\end{Lemma}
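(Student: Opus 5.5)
The approach is the classical BMO argument for bounded BSDE solutions. Apply It\^o's formula to the squared Frobenius norm of the local Riccati component between an arbitrary stopping time and $T$, then combine it with the deterministic pointwise bounds already obtained. For the continuum component we fix $u$ outside a null set. By Remark~\ref{rem:K_continuum_bound}(i), $|K_s(u)|\le C$ for all $s$, $\P$-a.s. For the discrete component we fix $N$ and $i$. By Lemma~\ref{lem:K_pointwise}, $|K^N_{i,s}|\le C$ for all $s$, $\P$-a.s., with $C$ independent of $N$ and $i$.

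The first step is to show that the drivers are bounded along these bounded solutions. In the continuum equation \eqref{eq:K_BSRE_continuum}, $Q,B,E$ are bounded and $K$ is bounded. The gain $U_s(u)$ is bounded, and Remark~\ref{rem:O_inverse_continuum} gives $|O_s(u)^{-1}|\le C$ because $K\succeq0$ and $R\succeq c_R I_m$. Hence the full driver $f_s(u)$ satisfies $|f_s(u)|\le C$, $ds\otimes d\P$-a.e., uniformly in $u$. In the discrete equation of Proposition~\ref{prop:exact_embedded_riccati}, the driver is $\mathfrak F_i^{K,\mathrm{loc},N}(t,K^N_t)+\mathfrak d^{K,N}_{i,t}$. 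The local part is bounded for the same reasons: $O_i^N(K^N)\succeq c_R I_m$ by Assumption~\ref{ass:finite_cost_coeff}, and the local coefficients are bounded uniformly by Assumption~\ref{ass:reg_consistency}. The remainder is bounded by $\|\mathfrak d^{K,N}_t\|_{L^\infty(I)}\le C$ from Lemma~\ref{lem:diagonal_remainder_bounds}. The terminal values $H$ and $H^N_i+h_N G^{H,N}_{ii}$ are also bounded. The discrete terminal value is bounded because $K^N_{i,T}$ is itself bounded by Lemma~\ref{lem:K_pointwise} at $t=T$, so no separate control of $h_N G^{H,N}_{ii}$ is needed.

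The second step is the It\^o computation. For a stopping time $\tau$, write $\kappa$ for either $K(u)$ or $K^N_i$ and $f$ for its driver. It\^o's formula gives
\[
|\kappa_\tau|^2+\int_\tau^T|Z_s|^2ds=|\kappa_T|^2+2\int_\tau^T\langle\kappa_s,f_s\rangle_{\mathrm F}ds-2\int_\tau^T\langle\kappa_s,Z_s\,dW^0_s\rangle.
\]
The stochastic integral is a true martingale, since $\kappa$ is bounded and $Z\in H^2$. For the discrete component this holds because $Z^{P,N}\in H^2$ by Theorem~\ref{thm:solvability_discrete}; for the continuum component it holds for a.e. $u$ by Fubini. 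Taking $\E[\cdot\mid\mathcal F^0_\tau]$ then gives $\E[\int_\tau^T|Z_s|^2ds\mid\mathcal F^0_\tau]\le C^2+2TC^2$. The constant is deterministic and independent of $\tau$, $u$, $N$ and $i$, which proves the first display.

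The second display follows by integrating in the label. For the discrete component, the step embedding gives
\[
\E\Big[\int_\tau^T\|Z^{K,N}_s\|^2_{L^2(I)}ds\,\Big|\,\mathcal F^0_\tau\Big]=h_N\sum_i\E\Big[\int_\tau^T|Z^{K,N}_{i,s}|^2ds\,\Big|\,\mathcal F^0_\tau\Big]\le C.
\]
For the continuum component, conditional Fubini gives $\int_I\E[\int_\tau^T|Z^K_s(u)|^2ds\mid\mathcal F^0_\tau]\,du\le C$.

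The main obstacle is measurability in the continuum case. One must justify the exchange of the conditional expectation with $du$, and ensure that the $\P$-null exceptional sets do not depend on $\tau$ uncountably. I would handle this with the jointly measurable versions constructed in Step~1 of Theorem~\ref{thm:solvability_continuum_backward_system}. Jointly measurable versions allow the conditional Fubini theorem to apply directly. They also allow the inequality to be applied, for each fixed $\tau$, outside a $du\otimes d\P$-null set; the uniform bound over $\tau$ then follows because the bound for each $\tau$ is the same deterministic constant.
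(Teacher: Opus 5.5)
Your proposal is correct and follows the paper's own route: It\^o's formula for the squared norm of $K(u)$, respectively $K_i^N$, between an arbitrary stopping time and $T$, using the uniform pointwise bounds on the solutions and on their drivers (including the diagonal remainder controlled by Lemma~\ref{lem:diagonal_remainder_bounds}), followed by integration in the label or summation with weight $h_N$. The points you spell out in addition, namely the true-martingale property of the stochastic integral and joint measurability for the conditional Fubini step, are details the paper leaves implicit rather than a different argument.
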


\begin{proof}
For a.e.$u\in I$, the continuum process $K(u)$ is uniformly bounded and its local Riccati driver is uniformly bounded. Applying It\^o's formula to $|K_s(u)|^2$ between an arbitrary stopping time $\tau$ and $T$, then taking conditional expectation, gives $\E\left[\int_\tau^T|Z_s^K(u)|^2\,ds \;\middle|\; \Fc_\tau^0\right] \leq C$, uniformly in $u$. For the discrete equation, $K_i^N$ is uniformly bounded by Lemma~\ref{lem:K_pointwise}. Its driver is the sum of the uniformly 
bounded local Riccati driver and $\mathfrak d_i^{K,N}$, which is uniformly bounded by Lemma~\ref{lem:diagonal_remainder_bounds}. The same conditional It\^o estimate yields $\E\left[ \int_\tau^T|Z_{i,s}^{K,N}|^2\,ds \;\middle|\; \Fc_\tau^0 \right] \leq C$, uniformly in $N$ and $i$. The aggregate estimates follow by integrating with respect to $u$, or equivalently by multiplying the discrete estimates by $h_N$ and summing over $i$.
\end{proof}

The next lemma collects the analogous $L^2$-bounds on the linear term $Y^N$ and the constant term $q^N$.

\begin{Lemma}[Uniform bounds on the affine and scalar components]
\label{lem:remaining_bounds}
Under Assumption~\ref{ass:reg_consistency}, there exists a deter-ministic constant $C>0$, independent of $N$, such that
\[
\|Y\|_{S^\infty_{\F^0}(\mathsf H)}
+
\sup_{N\geq1}
\|Y^N\|_{S^\infty_{\F^0}(\mathsf H)}
\leq C,
\quad
\|Z^Y\|_{\mathrm{BMO}(\mathsf H)}
+
\sup_{N\geq1}
\|Z^{Y,N}\|_{\mathrm{BMO}(\mathsf H)}
\leq C.
\]
Moreover,
\[
\|\lambda\|_{S^\infty_{\F^0}(\mathbb R)}
+
\sup_{N\geq1}
\|q^N\|_{S^\infty_{\F^0}(\mathbb R)}
\leq C,
\quad
\|Z^\lambda\|_{\mathrm{BMO}(\mathbb R)}
+
\sup_{N\geq1}
\|Z^{q,N}\|_{\mathrm{BMO}(\mathbb R)}
\leq C.
\]
\end{Lemma}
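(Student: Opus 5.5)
The plan is to treat all four components through their conditional (stopping-time) representations. We work on a generic pair: either the continuum pair $(Y,Z^Y)$ or the rescaled discrete pair $(Y^N,Z^{Y,N})$ of Definition~\ref{def:embeddings}. We then run the same It\^o--BMO argument for both. The key observation is that under (A-block) and the uniform bounds in (A-coeff), the additive data $A,D,\iota,\Sigma^0$ and their discrete counterparts are essentially bounded in $(t,\omega,u)$. The only unbounded inputs to the $Y$-driver are therefore the martingale integrands $Z^K,Z^{\bar K}$ (resp.\ $Z^{K,N},Z^{\bar K,N}$). These enter multiplied by the bounded $\Sigma^0$, and they are controlled in BMO by Lemmas~\ref{lem:Kbar_bound}(c) and~\ref{lem:ZK_BMO}.

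\textbf{Step 1: the affine equations in continuum scale.} We write the $Y$-equation \eqref{eq:Y_BSDE_continuum} as $dY_s=-(\mathcal A_sY_s+f_s)\,ds+Z_s^Y\,dW_s^0$, $Y_T=0$. Here $\mathcal A_s=M_{B_s^\top}+T_{G_s^B}^*-T_{V_s}^*M_{O_s^{-1}}M_{C_s^\top}$ (plus the local $U^\top O^{-1}C^\top$ term) has operator norm bounded by a deterministic constant. This uses Remark~\ref{rem:K_continuum_bound}, Lemma~\ref{lem:Kbar_bound}(a) and coercivity of $R$.

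The source $f_s$ collects $M_s$ and the $\Gamma$-terms without $Y$. It satisfies
\[
\|f_s\|_{\mathsf H}\le C\big(1+\|Z_s^K\|_{\mathcal L_2(\mathbb R^{d_0};L^2(I))}+\|Z_s^{\bar K}\|_{\mathcal L_2(\mathbb R^{d_0};\overline{\mathsf H})}\big),
\]
because $\|\int_I Z^{\bar K}(\cdot,v)\Sigma^0(v)\,dv\|_{\mathsf H}\le\|Z^{\bar K}\|_{\overline{\mathsf H}}\|\Sigma^0\|_{L^\infty}$.

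For the discrete system, we multiply the $i$-th block of \eqref{eq:discrete_linear_p} by $h_N^{-1}$, exactly as in Proposition~\ref{prop:exact_embedded_riccati}. This gives the same structure with the following identifications:
\begin{itemize}
\item $h_N^{-1}(Z^{P,N}\boldsymbol\Sigma^{0,N})_i$ becomes $Z_i^{K,N}\Sigma_i^{0,N}+h_N\sum_{j\neq i}Z_{ij}^{\bar K,N}\Sigma_j^{0,N}$, that is $M_{Z^{K,N}}\Sigma^{0,N}+T_{Z^{\bar K,N}}\Sigma^{0,N}$;
\item $(\mathcal U^N)^\top(\mathcal O^N)^{-1}\boldsymbol\Gamma^N$ becomes a combination of $\widehat U_i^N$, the off-diagonal $V^N$ (i.e.\ $T_{\widetilde V^N}^*$) and $(O_i^N)^{-1}\Gamma_i^N(K^N,Y^N)$.
\end{itemize}
The $N^{3/2}$ growth of $(\mathcal O^N)^{-1}$ noted in Remark~\ref{rem:O_inverse_discrete} is exactly cancelled by the $h_N$, $h_N^2$ scalings of the blocks. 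The resulting operator $\mathcal A_s^N$ is bounded uniformly in $N$, using Lemmas~\ref{lem:K_pointwise}, \ref{lem:Kbar_bound}(a) and Assumption~\ref{ass:finite_forward_coeff}(iii). I expect this bookkeeping to be the main obstacle. One must check that no diagonal block produces an unbounded factor; the diagonal pieces $h_NG_{ii}$ are harmless by the same estimates as in Lemma~\ref{lem:diagonal_remainder_bounds}.

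\textbf{Step 2: $S^\infty$ and BMO bounds for $Y$ and $Y^N$.} Fix a stopping time $\tau$ and apply It\^o's formula to $\|Y_s\|_{\mathsf H}^2$ on $[\tau,T]$. We take conditional expectation and use Young's inequality $2\langle Y,f\rangle\le\|Y\|^2+\|f\|^2$. This gives
\[
\|Y_\tau\|_{\mathsf H}^2+\E\Big[\int_\tau^T\|Z_s^Y\|^2ds\Big|\mathcal F_\tau^0\Big]\le C\,\E\Big[\int_\tau^T\big(\|Y_s\|_{\mathsf H}^2+\|f_s\|_{\mathsf H}^2\big)ds\Big|\mathcal F_\tau^0\Big].
\]
By the BMO lemmas, $\E[\int_\tau^T\|f_s\|^2ds\mid\mathcal F_\tau^0]\le C$. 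We then apply the conditional backward Gr\"onwall argument used in Lemma~\ref{lem:Kbar_bound}(c): first at deterministic times, then in conditional form. This yields $\operatorname*{ess\,sup}\|Y_t\|_{\mathsf H}\le C$. Substituting back gives the BMO bound on $Z^Y$. The identical computation for $(Y^N,Z^{Y,N})$ gives constants independent of $N$.

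\textbf{Step 3: scalar components.} By Remark~\ref{rem:triangular_structure_backward_system}, $\lambda_\tau=\E[\int_\tau^T\ell_s\,ds\mid\mathcal F_\tau^0]$. Using the bounds on $K$, $T_{\bar K}$ and $O^{-1}$ together with bounded data, we get
\[
|\ell_s|\le C\big(1+\|Y_s\|_{\mathsf H}^2+\|Z_s^Y\|_{\mathcal L_2(\mathbb R^{d_0};\mathsf H)}\big).
\]
Hence $|\lambda_\tau|\le C\big(1+\|Y\|_{S^\infty}^2+\sqrt T\,\|Z^Y\|_{\mathrm{BMO}}\big)\le C$ by conditional Cauchy--Schwarz.

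For $q^N$, we rewrite the driver of \eqref{eq:discrete_scalar_q} in continuum scale using the following identities:
\begin{itemize}
\item $(\boldsymbol\Gamma^N)^\top(\mathcal O^N)^{-1}\boldsymbol\Gamma^N=h_N\sum_i(\Gamma_i^N)^\top(O_i^N)^{-1}\Gamma_i^N+{}$off-diagonal-free terms;
\item $\langle Z^{p,N},\boldsymbol\Sigma^{0,N}\rangle_{\mathrm F}=h_N\sum_i\langle Z_i^{Y,N},\Sigma_i^{0,N}\rangle_{\mathrm F}$;
\item the $P^N$-quadratic terms equal $\langle\Sigma^{0,N},(M_{K^N}+T_{\bar K^N})\Sigma^{0,N}\rangle$, up to a diagonal $O(h_N)$ correction.
\end{itemize}
The same bound then follows uniformly in $N$.

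Finally, It\^o's formula on $\lambda_s^2$ (resp.\ $(q_s^N)^2$), after localization, gives
\[
\E\Big[\int_\tau^T|Z_s^\lambda|^2ds\Big|\mathcal F_\tau^0\Big]\le\|\lambda\|_{S^\infty}^2+2\|\lambda\|_{S^\infty}\E\Big[\int_\tau^T|\ell_s|ds\Big|\mathcal F_\tau^0\Big]\le C,
\]
which is the claimed BMO bound.
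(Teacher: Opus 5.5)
Your proposal is correct and follows essentially the same route as the paper: both write the linear equations as $dY^\bullet=-(\mathcal A^\bullet Y^\bullet+f^\bullet)\,ds+Z^{Y,\bullet}dW^0$ with $\mathcal A^\bullet$ uniformly bounded, control $f^\bullet$ in conditional $L^2$ through the BMO bounds of Lemmas~\ref{lem:Kbar_bound}(c) and~\ref{lem:ZK_BMO}, obtain the $S^\infty(\mathsf H)$ bound and then the BMO bound on $Z^Y$ by It\^o's formula, and treat $\lambda$ and $q^N$ via their conditional representations followed by It\^o's formula on the square. The only differences are cosmetic. The paper quotes the linear-BSDE estimate $\|Y_\tau\|_{\mathsf H}\le C\,\E[\int_\tau^T\|f_s\|_{\mathsf H}\,ds\mid\mathcal F_\tau^0]$ where you use It\^o plus a conditional backward Gr\"onwall argument, and your rescaled identities for the $q^N$-driver in fact hold exactly, with no diagonal correction, since $P_{ii}^N=h_NK_i^N$, $\bar K_{ii}^N=0$ and $\mathcal O^N$ is block-diagonal.
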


\begin{proof}
The continuum and discrete linear equations can be written as $dY_s^\bullet = -\big(\mathcal A_s^\bullet Y_s^\bullet + f_s^\bullet \big)\,ds + Z_s^{Y,\bullet}\,dW_s^0$, $Y_T^\bullet=0$, where $\bullet$ denotes either the continuum system or a finite-$N$ system. The operator bounds on $K^\bullet$, $T_{\bar K^\bullet}$, $O^{-1,\bullet}$ and $T_{V^\bullet}$ imply $\|\mathcal A_s^\bullet\|_{\mathcal L(\mathsf H)} \leq C$. Moreover, the boundedness of the affine coefficients under Assumption~\ref{ass:reg_consistency}, together with the definitions of $M$ and $\Gamma$, gives $\|f_s^\bullet\|_{\mathsf H} \le C\left(1+ \|Z_s^{K,\bullet}\|_{\mathcal L_2(\mathbb R^{d_0};L^2(I;\mathbb S^d))} + \|Z_s^{\bar K,\bullet}\|_{\mathcal L_2(\mathbb R^{d_0};\overline{\mathsf H})} \right)$. Lemmas~\ref{lem:Kbar_bound} and \ref{lem:ZK_BMO} therefore imply $\sup_{\tau\in\mathcal T_{0,T}} \left\| \E\left[ \int_\tau^T \|f_s^\bullet\|_{\mathsf H}^2\,ds \;\middle|\; \Fc_\tau^0 \right] \right\|_{L^\infty} \leq C$. The standard conditional estimate for a linear Hilbert-space BSDE with uniformly bounded operator coefficient yields
\[
\begin{aligned}
\|Y_\tau^\bullet\|_\mathsf H
&\leq
C
\E\left[
\int_\tau^T
\|f_s^\bullet\|_\mathsf H\,ds
\;\middle|\;
\Fc_\tau^0
\right]
\leq
C\sqrt{T}
\left(
\E\left[
\int_\tau^T
\|f_s^\bullet\|_{\mathsf H}^2\,ds
\;\middle|\;
\Fc_\tau^0
\right]
\right)^{1/2}
\leq C.
\end{aligned}
\]
Thus, $\|Y^\bullet\|_{S^\infty(\mathsf H)} \leq C$. Applying It\^o's formula to $\|Y_s^\bullet\|_{\mathsf H}^2$ between $\tau$ and $T$, and using the preceding $S^\infty(\mathsf H)$-bound, gives $\E\left[ \int_\tau^T \|Z_s^{Y,\bullet}\|_{\mathcal L_2(\mathbb R^{d_0};\mathsf H)}^2\,ds \;\middle|\; \Fc_\tau^0 \right] \leq C$. This proves the BMO estimates for $Z^Y$ and $Z^{Y,N}$. Let $\ell$ and $\ell^N$ denote the scalar drivers of $\lambda$ and $q^N$. The exact formulas, together with the estimates already proved, give $|\ell_s|^2 \leq C\big(1+\|Z_s^Y\|_{\mathcal L_2(\mathbb R^{d_0};\mathsf H)}^2\big)$ and $|\ell_s^N|^2\leq C\big(1+\|Z_s^{Y,N}\|_{\mathcal L_2(\mathbb R^{d_0};\mathsf H)}^2\big)$. Hence $\sup_{\tau} \left\| \E\left[ \int_\tau^T |\ell_s^\bullet|^2\,ds \;\middle|\; \Fc_\tau^0 \right] \right\|_{L^\infty} \leq C$. Using $\lambda_t = \E\left[ \int_t^T\ell_s\,ds \;\middle|\; \Fc_t^0 \right]$ and $q_t^N = \E\left[ \int_t^T\ell_s^N\,ds \;\middle|\; \Fc_t^0 \right]$, conditional Cauchy--Schwarz gives $\|\lambda\|_{S^\infty} + \sup_N\|q^N\|_{S^\infty} \leq C$. Finally, applying It\^o's formula to $|\lambda|^2$ and $|q^N|^2$ gives the BMO estimates for their martingale integrands.
\end{proof}

\begin{Lemma}[Strong local Riccati consistency]
\label{lem:K_uniform}
Under Assumption~\ref{ass:reg_consistency}, there exists a deterministic constant $C>0$, independent of $N$, such that
\[
\|K^N-K\|_{S^\infty_{\F^0}(L^2(I))}
+
\|Z^{K,N}-Z^K\|_{\mathrm{BMO}(L^2(I))}
\leq
C\delta_N^K.
\]
In particular,
\[
\E\left[
\sup_{t\in[0,T]}
\|K_t^N-K_t\|_{L^2(I)}^2
+
\int_0^T
\|Z_t^{K,N}-Z_t^K\|_{L^2(I)}^2\,dt
\right]
\leq
C(\delta_N^K)^2.
\]

Moreover,
\[
\begin{aligned}
&
\operatorname*{ess\,sup}_{(s,\omega)}\|(O^N_s)^{-1}-O_s^{-1}\|_{(L^2(I))}
+
\operatorname*{ess\,sup}_{(s,\omega)}\|U^N_s-U_s\|_{(L^2(I))}
+
\operatorname*{ess\,sup}_{(s,\omega)}\|\widehat U^N_s-U_s\|_{(L^2(I))}
\leq
C\delta_N^K.
\end{aligned}
\]
\end{Lemma}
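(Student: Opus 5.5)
The plan is to treat $\Delta_t:=K_t^N-K_t$ as the solution of a \emph{linear} matrix BSDE driven by $W^0$, label by label, and to estimate it by a conditional Itô argument between an arbitrary stopping time $\tau$ and $T$. The estimate is pointwise in $\omega$ and uses only the deterministic bounds from Lemmas~\ref{lem:K_pointwise}, \ref{lem:diagonal_remainder_bounds} and \ref{lem:ZK_BMO}.

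\textbf{Step 1: the equation for $\Delta$.} By Proposition~\ref{prop:exact_embedded_riccati}, for $u\in I_i^N$,
\[
d\Delta_t(u)=-\Big[\mathfrak F_i^{K,\mathrm{loc},N}(t,K^N_t)-\mathfrak F^{K,\mathrm{loc}}(t,K_t)(u)+\mathfrak d_{i,t}^{K,N}\Big]dt+(Z^{K,N}_t-Z^K_t)(u)\,dW^0_t .
\]
The terminal value is $\Delta_T=H^N-H+h_N\operatorname{diag}_NG^{H,N}$. Write the local driver as $f^N(t,u,k)$ with discrete coefficients and $f(t,u,k)$ with continuum ones. Split the driver difference as $[f^N(t,u,K^N)-f^N(t,u,K)]+[f^N(t,u,K)-f(t,u,K)]$.
\begin{itemize}
\item \emph{First bracket.} Both arguments lie in the convex set $\{0\preceq k\preceq C_*I_d\}$. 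There $O^{-1}$ is uniformly bounded by coercivity, so the map $k\mapsto f^N(t,u,k)$ is Lipschitz with a deterministic constant. Hence this bracket equals $\mathcal L_t(u)\Delta_t(u)$ with $|\mathcal L|\le C$.
\item \emph{Second bracket.} Since $K$ and $O^{-1}$ are bounded, it is pointwise bounded by $C\big(|B^N-B|+|C^N-C|+|E^N-E|+|F^N-F|+|R^N-R|+|Q^N-Q|\big)(t,u)$. By (A-coeff) its $L^2(I)$ norm is at most $C\delta_N^K$ for every $(t,\omega)$.
\end{itemize}
By Lemma~\ref{lem:diagonal_remainder_bounds}, $\|\mathfrak d^{K,N}_t\|_{L^2(I)}\le C\sqrt{h_N}\le C\delta_N^K$ and $\|\Delta_T\|_{L^2(I)}\le C\delta_N^K$, both deterministically.

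\textbf{Step 2: the conditional energy estimate.} Apply Itô's formula to $|\Delta_s(u)|^2$, integrate in $u$, and take $\mathbb E[\cdot\mid\mathcal F^0_\tau]$. The stochastic integral is a true martingale because $\Delta$ is bounded and $Z^K,Z^{K,N}$ are BMO (Lemma~\ref{lem:ZK_BMO}). With Young's inequality this gives
\[
\|\Delta_\tau\|^2_{L^2}+\mathbb E\Big[\int_\tau^T\|Z^{K,N}_s-Z^K_s\|^2_{L^2}ds\,\Big|\,\mathcal F^0_\tau\Big]\le C(\delta_N^K)^2+C\,\mathbb E\Big[\int_\tau^T\|\Delta_s\|^2_{L^2}ds\,\Big|\,\mathcal F^0_\tau\Big].
\]
Set $\phi(t):=\operatorname{ess\,sup}_\omega\sup_{\tau\ge t}\|\Delta_\tau\|^2$ and apply backward Gronwall to $\phi$; this gives $\operatorname{ess\,sup}\|\Delta_\tau\|^2\le C(\delta_N^K)^2$. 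Here $\phi$ is finite because $\|\Delta\|_{L^\infty}\le C$. Continuity of paths in $L^2(I)$ turns the stopping-time bound into the $S^\infty$ bound. Substituting back gives the BMO bound on $Z^{K,N}-Z^K$. The squared $L^2(\Omega)$ statement follows immediately from the $S^\infty$ and BMO bounds (take $\tau=0$ and expectations).

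\textbf{Step 3: the gains.} Write
\[
(O^N)^{-1}-O^{-1}=(O^N)^{-1}(O-O^N)O^{-1},
\qquad O-O^N=R-R^N+F^\top KF-(F^N)^\top K^NF^N .
\]
Both inverses are bounded by $c_R^{-1}$, and $K$, $K^N$, $F$, $F^N$ are bounded, so
\[
|O-O^N|\le C\big(|R-R^N|+|F-F^N|+|\Delta|\big).
\]
Taking $L^2(I)$ norms and using Step 2 together with (A-coeff) gives the bound $C\delta_N^K$ for every $(s,\omega)$. The same multilinear expansion handles $U^N-U$. For $\widehat U^N-U$ there is one extra term, $(F^N)^\top K^N h_N G^{E,N}_{ii}$, whose $L^2(I)$ norm is bounded by $C\sqrt{h_N}$ exactly as in Lemma~\ref{lem:diagonal_remainder_bounds}.

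\textbf{Main obstacle.} The delicate point is making Step 2 hold $\omega$-wise with a deterministic constant rather than only in expectation. I would handle it with the stopping-time formulation, using that every source term is bounded \emph{uniformly in $(t,\omega)$}: this is why (A-coeff) is imposed $\mathbb P$-a.s. for all $t$. For the Lipschitz step, I would pass through the projected driver with $\pi(k)$ from the proof of Theorem~\ref{thm:solvability_continuum_backward_system}. That makes the linearization coefficient $\mathcal L$ globally bounded without needing a priori knowledge that the intermediate points stay in $\mathcal C_*$.
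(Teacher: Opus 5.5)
Your proposal is correct and follows essentially the same route as the paper: the same linearization of the local Riccati driver on the bounded PSD set containing $K^N$ and $K$, the same coefficient-consistency remainder, Lemma~\ref{lem:diagonal_remainder_bounds} for $\mathfrak d^{K,N}$ and the terminal correction, a conditional It\^o argument between a stopping time and $T$ for the BMO bound, and the same resolvent-identity expansion for the gains. The only difference is minor: the paper quotes the standard conditional estimate $\|\Delta K_t^N\|_{L^2(I)}\le C\,\mathbb E\big[\|\Delta K_T^N\|_{L^2(I)}+\int_t^T(\|\rho_s^{K,N}\|_{L^2(I)}+\|\mathfrak d_s^{K,N}\|_{L^2(I)})\,ds\mid\mathcal F_t^0\big]$ for linear BSDEs with bounded operator coefficient, whereas you obtain the deterministic bound directly from the conditional energy inequality by backward Gronwall applied to $t\mapsto\operatorname{ess\,sup}_\omega\sup_{r\ge t}\|\Delta_r\|_{L^2(I)}^2$, which is a valid, self-contained justification of that step.
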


\begin{proof}
Set $\Delta K^N:=K^N-K$,and $\Delta Z^{K,N}:=Z^{K,N}-Z^K$. 

\noindent The exact embedded equation gives $dK_t^N = -\Big[ \mathfrak F^{K,\mathrm{loc},N}(t,K_t^N) + \mathfrak d_t^{K,N} \Big]dt + Z_t^{K,N}\,dW_t^0$, with $K_T^N = H^N + h_N\operatorname{diag}_NG^{H,N}$. On the deterministic bounded ball containing $K^N$ and $K$, the local Riccati driver is uniformly Lipschitz. By the standard mean-value linearization, there exists an $\mathbb F^0$-progressively measurable
bounded linear operator $\mathcal L_t^N$ on $L^2(I;\mathbb S^d)$ such that $\mathfrak F^{K,\mathrm{loc},N}(t,K_t^N) - \mathfrak F^{K,\mathrm{loc},N}(t,K_t) = \mathcal L_t^N\Delta K_t^N$, and $\|\mathcal L_t^N\|_{\mathcal L(L^2(I))} \leq C$.\\ 
Define the coefficient-consistency remainder by $\rho_t^{K,N} := \mathfrak F^{K,\mathrm{loc},N}(t,K_t) - \mathcal F^K(t,K_t)$. The pathwise coefficient-consistency assumptions and the boundedness of $K$ yield $\operatorname {ess\,sup}_{(t,\omega)} \|\rho_t^{K,N}\|_{L^2(I)} \leq C\delta_N^K$. Lemma~\ref{lem:diagonal_remainder_bounds} gives $\operatorname {ess\,sup}_{(t,\omega)} \|\mathfrak d_t^{K,N}\|_{L^2(I)} \leq C\sqrt{h_N} \leq C\delta_N^K$. Moreover, $\|\Delta K_T^N\|_{L^2(I)} \leq \|H^N-H\|_{L^2(I)} + \left\| h_N\operatorname{diag}_NG^{H,N} \right\|_{L^2(I)} \leq C\delta_N^K$, $\P\text{-a.s.}$. 

\noindent Consequently, $(\Delta K^N,\Delta Z^{K,N})$ satisfies the linear
Hilbert-valued BSDE $d\Delta K_t^N = -\Big[ \mathcal L_t^N\Delta K_t^N + \rho_t^{K,N} + \mathfrak d_t^{K,N} \Big]dt + \Delta Z_t^{K,N}\,dW_t^0$, with a terminal condition bounded by $C\delta_N^K$. The standard conditional estimate for linear BSDEs with bounded operator coefficient therefore gives
\[
\begin{aligned}
\|\Delta K_t^N\|_{L^2(I)}
&\leq
C
\E\left[
\|\Delta K_T^N\|_{L^2(I)}
+
\int_t^T
\big(
\|\rho_s^{K,N}\|_{L^2(I)}
+
\|\mathfrak d_s^{K,N}\|_{L^2(I)}
\big)ds
\;\middle|\;
\Fc_t^0
\right]
\leq
C\delta_N^K.
\end{aligned}
\]

\noindent The right-hand side is deterministic. Hence $\|K^N-K\|_{S^\infty(L^2(I))} \leq C\delta_N^K$. Applying It\^o's formula to $\|\Delta K_t^N\|_{L^2(I)}^2$ between an arbitrary stopping time $\tau$ and $T$, and using the preceding bound, yields $\E\left[ \int_\tau^T \|\Delta Z_s^{K,N}\|_{L^2(I)}^2\,ds \;\middle|\; \Fc_\tau^0\right] \leq C(\delta_N^K)^2$. Thus $\|Z^{K,N}-Z^K\|_{\mathrm{BMO (L^2(I))}} \leq C\delta_N^K$.

\noindent For the gains, the definitions and the boundedness of the coefficients gives 
\[
\|O_t^N-O_t\|_{L^2(I)}
\leq
C\left(
\|R_t^N-R_t\|_{L^2(I)}
+
\|F_t^N-F_t\|_{L^2(I)}
+
\|K_t^N-K_t\|_{L^2(I)}
\right).
\]
Hence $\operatorname*{ess\,sup}_{(s,\omega)}\|O^N_s-O_s\|_{L^2(I)} \leq C\delta_N^K$. The resolvent identity and uniform coercivity give the same estimate for the inverses. The estimate for $U^N-U$ follows by expanding $(C^N)^\top K^N-C^\top K$ and $(F^N)^\top K^NE^N-F^\top KE$. 

\noindent Finally, $\widehat U_i^N-U_i^N = h_N(F_i^N)^\top K_i^NG_{ii}^{E,N}$, and $\|\widehat U^N-U^N\|_{L^2(I)}^2 \leq Ch_N$. Combining these estimates proves the result.
\end{proof}

\begin{Remark}[Why $S^2(L^\infty)$ is sufficient for $Y$]
\label{rem:Y_product_closure}
The deterministic $S^\infty(L^2)$ estimate of Lemma~\ref{lem:K_uniform} removes the potential correlation problem between the limiting process $Y$ and the local Riccati differences. Indeed,
\[
\begin{aligned}
\E\int_0^T
\|(K_t^N-K_t)Y_t\|_{L^2(I)}^2dt
&\leq
\|K^N-K\|_{S^\infty(L^2(I))}^2
\E\int_0^T\|Y_t\|_{L^\infty(I)}^2dt
\leq
C(\delta_N^K)^2.
\end{aligned}
\]
The same argument applies to the differences of $U$ and $O^{-1}$. On the other hand, terms in which an interaction-kernel difference acts on $Y$ are controlled using the deterministic $S^\infty(L^2(I))$-bound on $Y$ from Lemma~\ref{lem:remaining_bounds}. Thus no deterministic $L^\infty(I)$-bound on $Y$, no higher probabilistic moment, and no localization argument are required.
\end{Remark}

\begin{Remark}
\label{rem:K_uniform_block_compatible}
On a block-compatible grid, the interface part of the local coefficient error improves from $\sqrt{h_N}$ to $h_N$. However, the exact diagonal remainder remains present. Hence the complete estimate of Lemma~\ref{lem:K_uniform} does not automatically improve to $O(h_N)$ without an additional non-concentration estimate on the discrete diagonal contributions.
\end{Remark}

\subsection{Stability of Hilbert-valued BSREs and convergence of the Riccati systems}
\label{subsec:stability}

We now bring together the a priori bounds of Subsection~\ref{subsec:apriori} and prove the main convergence theorem. The argument rests on two ingredients. First, an abstract stability estimate for Hilbert-valued backward equations reduces convergence to a direct estimate of the driver difference along the two solutions.
Second, Lemma~\ref{lem:R1} provides such estimates for the exact embedded discrete drivers, including the diagonal correction in the local Riccati equation and the off-diagonal projection in the interaction Riccati equation. The triangular structure of the backward system then yields convergence successively for $K$, $\bar K$, $Y$, the scalar component and yields the main convergence Theorem~\ref{thm:convergence}.

\medskip

\noindent We start with the abstract stability lemma.

\begin{Lemma}[Stability of Hilbert-valued BSREs along solutions]
\label{lem:stability}
For each $N \geq 1$, let $E$ be a separable Hilbert space and  $(X^N, Z^N), (X, Z) \in S^2_{\F^0}([0,T]; E) \times H^2_{\mathbb F^0}([0,T]; \mathcal L_2(\R^{d_0}; E))$ satisfy the backward equations
\[
X_t^N = \xi^N + \int_t^T F^N(s, X_s^N)\,ds - \int_t^T Z_s^N\,dW^0_s,
\qquad
X_t = \xi + \int_t^T F(s, X_s)\,ds - \int_t^T Z_s\,dW^0_s.
\]
Assume that there exists a constant $L > 0$, independent of $N$, and a non-negative progressively measurable process $\rho^N \in H^2_{\Fc^0}([0,T]; \R)$ such that the Lipschitz stability along the limiting solution and the discretization error gives, 
\[
\|F^N(t,X_t^N)-F(t,X_t)\|_E
\le L\|X_t^N-X_t\|_E+\rho_t^N,
\qquad dt\otimes d\mathbb P\text{-a.e.}
\]
Then there exists a constant $C_L > 0$, depending only on $L$ and $T$, such that
\[
\E\!\left[ \sup_{t \in [0,T]} \|X_t^N - X_t\|^2_E + \int_0^T \|Z_t^N - Z_t\|^2_{\mathcal L_2}\,dt \right]
\leq C_L\left( \E\|\xi^N - \xi\|^2_E + \E\int_0^T |\rho_t^N|^2\,dt \right).
\]
\end{Lemma}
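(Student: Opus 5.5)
The proof is the standard a priori estimate for BSDEs in a separable Hilbert space, applied to the differences. Set $\Delta X:=X^N-X$, $\Delta Z:=Z^N-Z$ and $\Delta\xi:=\xi^N-\xi$. Subtracting the two equations gives
\[
\Delta X_t=\Delta\xi+\int_t^T\delta F_s\,ds-\int_t^T\Delta Z_s\,dW^0_s,
\qquad
\delta F_s:=F^N(s,X_s^N)-F(s,X_s).
\]
By hypothesis, $\|\delta F_s\|_E\le L\|\Delta X_s\|_E+\rho_s^N$. The estimate is needed only along the two given solutions, so no global Lipschitz property of $F^N$ or $F$ is used. Since $(X^N,Z^N)$ and $(X,Z)$ lie in $S^2\times H^2$ and $\rho^N\in H^2$, every quantity below is finite. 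This finiteness is what allows us to take expectations of stochastic integrals without localization.

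\textbf{Step 1: It\^o's formula and Gr\"onwall.} Apply It\^o's formula in $E$ to $e^{\beta t}\|\Delta X_t\|_E^2$, with $\beta>0$ to be chosen:
\[
e^{\beta t}\|\Delta X_t\|_E^2+\int_t^T e^{\beta s}\big(\beta\|\Delta X_s\|_E^2+\|\Delta Z_s\|_{\mathcal L_2}^2\big)ds
= e^{\beta T}\|\Delta\xi\|_E^2+2\int_t^T e^{\beta s}\langle\Delta X_s,\delta F_s\rangle_E\,ds-2\int_t^T e^{\beta s}\langle\Delta X_s,\Delta Z_s\,dW^0_s\rangle_E .
\]
Young's inequality bounds the drift term by
\[
2\langle\Delta X,\delta F\rangle\le(2L+1)\|\Delta X\|^2+|\rho^N|^2 .
\]
Choosing $\beta=2L+1$ absorbs the $\|\Delta X\|^2$ contribution. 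The stochastic integral is a true martingale, because $\E\big(\int_0^T\|\Delta X_s\|^2\|\Delta Z_s\|^2ds\big)^{1/2}\le\E\big[\sup_s\|\Delta X_s\|\,(\int_0^T\|\Delta Z_s\|^2ds)^{1/2}\big]<\infty$. Taking expectations at $t=0$ therefore gives
\[
\E\int_0^T\|\Delta Z_s\|^2ds\le C_L\Big(\E\|\Delta\xi\|^2+\E\int_0^T|\rho_s^N|^2ds\Big),
\]
and taking expectations at each fixed $t$ gives the same bound for $\sup_t\E\|\Delta X_t\|^2$.

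\textbf{Step 2: moving the supremum inside the expectation.} Take the supremum over $t$ in the identity of Step 1 before taking expectations, and bound the martingale term with the Burkholder--Davis--Gundy inequality:
\[
\E\sup_t\Big|\int_t^T e^{\beta s}\langle\Delta X_s,\Delta Z_s\,dW_s^0\rangle\Big|
\le C\,\E\Big[\sup_s\|\Delta X_s\|\Big(\int_0^T\|\Delta Z_s\|^2ds\Big)^{1/2}\Big]
\le \tfrac14\E\sup_s\|\Delta X_s\|^2+C\,\E\int_0^T\|\Delta Z_s\|^2ds .
\]
The $\tfrac14$ term is absorbed into the left-hand side, which is legitimate because $\E\sup\|\Delta X\|^2<\infty$. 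Combining this with the bound on $\Delta Z$ from Step 1 yields the claimed estimate, with a constant depending only on $L$ and $T$ (through $e^{\beta T}$).

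\textbf{Main obstacle.} The only delicate point is the Hilbert-space setting: we need It\^o's formula for $\|\cdot\|_E^2$ of an $E$-valued It\^o process driven by the finite-dimensional $W^0$, with $\mathcal L_2(\mathbb R^{d_0};E)$-valued integrand. This is standard (see \cite{FabbriGozziSwiech2017}). Note also that the argument never uses uniform-in-$N$ properties of $F^N$ beyond the stated one-sided bound, which is why it applies directly to the embedded discrete drivers later.
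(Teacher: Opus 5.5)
Your proof is correct and follows the same route as the paper. Both arguments apply It\^o's formula to the squared norm of the difference, use Young's inequality with the one-sided driver bound, take expectations to control $\Delta Z$, and then use BDG with a $\tfrac14$-absorption to handle the supremum. The differences are minor: you absorb the $(2L+1)\|\Delta X\|_E^2$ term through the exponential weight $e^{(2L+1)t}$ rather than the paper's backward Gr\"onwall step, and you explicitly justify that the stochastic integral is a true martingale, which the paper uses without comment.
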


\begin{proof}
Set $\Delta X := X^N - X$, $\Delta Z := Z^N - Z$, $\Delta \xi := \xi^N - \xi$, and $\Delta F_t := F^N(t, X_t^N) - F(t, X_t)$. By assumption with the triangular inequality,
\begin{equation}\label{eq:driver_decomp}
\|\Delta F_t\|_E \leq L\,\|\Delta X_t\|_E + \rho_t^N, \qquad dt \otimes d\P\text{-a.e.}
\end{equation}

\noindent Applying It\^o's formula to $\|\Delta X_t\|^2_E$ on a separable Hilbert space (see, e.g., \cite[Theorem~4.32]{DaPratoZabczyk2014} or \cite[Proposition~1.122]{FabbriGozziSwiech2017}) yields, for $t \in [0,T]$, $\|\Delta X_t\|^2_E + \int_t^T \|\Delta Z_s\|^2_{\mathcal L_2}\,ds
= \|\Delta \xi\|^2_E + 2\int_t^T \langle \Delta X_s, \Delta F_s\rangle_E\,ds - 2\int_t^T \langle \Delta X_s, \Delta Z_s\,dW^0_s\rangle_E$.
By Cauchy--Schwarz and Young's inequality combined with \eqref{eq:driver_decomp} on the drift part, taking expectation and using that the stochastic integral has zero mean,
\[
\E\|\Delta X_t\|^2_E + \E\int_t^T \|\Delta Z_s\|^2_{\mathcal L_2}\,ds
\leq \E\|\Delta \xi\|^2_E + \E\int_0^T |\rho_s^N|^2\,ds + (2L+1)\,\E\int_t^T \|\Delta X_s\|^2_E\,ds.
\]
The backward Grönwall lemma then yields, for all $t \in [0,T]$,
\begin{equation}\label{eq:stab_L2}
\E\|\Delta X_t\|^2_E + \E\int_0^T \|\Delta Z_s\|^2_{\mathcal L_2}\,ds
\leq C_L\left( \E\|\Delta \xi\|^2_E + \E\int_0^T |\rho_s^N|^2\,ds \right).
\end{equation}

\noindent Returning to the It\^o expansion, taking the supremum in $t \in [0,T]$, applying the BDG inequality to the martingale term, bounding $\sup_s \|\Delta X_s\|_E$ outside the integral and then applying Young's inequality with parameter $\eta = 1/4$ yields to $\E\sup_t \bigg|\int_t^T \langle \Delta X_s, \Delta Z_s\,dW^0_s\rangle_E\bigg| \leq \tfrac{1}{4}\,\E\sup_t \|\Delta X_t\|^2_E + C\,\E\int_0^T \|\Delta Z_s\|^2_{\mathcal L_2}\,ds$.
Combining this with the It\^o expansion and absorbing $\frac{1}{4}\,\E\sup_t \|\Delta X_t\|^2_E$ into the left-hand side gives
\[
\E\sup_t \|\Delta X_t\|^2_E
\leq C_L\bigg( \E\|\Delta \xi\|^2_E + \E\int_0^T |\rho_s^N|^2\,ds + \E\int_0^T \|\Delta X_s\|^2_E\,ds + \E\int_0^T \|\Delta Z_s\|^2_{\mathcal L_2}\,ds \bigg).
\]
The two integrals on the right are already controlled by \eqref{eq:stab_L2}, and the conclusion follows.
\end{proof}

\begin{Remark}
\label{rem:stability_along_solutions}
Lemma~\ref{lem:stability} only requires a direct driver-difference estimate along the two processes being compared. No global Lipschitz property of the Riccati drivers is asserted. In the applications below, the constant multiplying the backward difference is deterministic and uniform in $N$, as a consequence of the operator and $S^\infty$-bounds established in Subsection~\ref{subsec:apriori}. All coefficient, kernel, diagonal-band, and mixed common-noise errors are collected in the process $\rho^N$.
\end{Remark}

\medskip

\noindent \noindent For later comparison, we write $\mathcal F^{\bar K}(t,k,\bar k)$ for the expression defining the driver in \eqref{eq:Kbar_BSRE_continuum}, with every occurrence of $K_t$ in $O$, $U$, $V$ and $\Psi$ replaced by $k$. Thus, $\mathcal F(t,\bar k) = \mathcal F^{\bar K}(t,K_t,\bar k)$. 

\smallskip
\noindent For the interaction component, we introduce the projected limiting processes  $\bar K_t^{[N]}:=\Pi_N^\circ\bar K_t$ and $Z_t^{\bar K,[N]}:=\Pi_N^\circ Z_t^{\bar K}$. Applying the operator $\Pi_N^\circ$ to the continuum equation gives $d\bar K_t^{[N]} = -\Pi_N^\circ \mathcal F^{\bar K}(t,K_t,\bar K_t)\,dt + Z_t^{\bar K,[N]}\,dW_t^0$, $\bar K_T^{[N]} = \Pi_N^\circ G^H$. 

\smallskip
\noindent We also introduce the exact embedded local driver $\mathfrak F_t^{K,\mathrm{tot},N} := \mathfrak F^{K,\mathrm{loc},N}(t,K_t^N) + \mathfrak d_t^{K,N}(K_t^N,\bar K_t^N)$.

\smallskip
\noindent Furthermore, if $u\in I_i^N$, we denote by $\mathfrak F_t^{Y,N}(u) := h_N^{-1}\big[\mathscr F_t^{p,N}\big]_i$ the exact rescaled driver of the embedded process $Y^N$, and by $\mathcal F_t^Y$ the driver inside the brackets in \eqref{eq:Y_BSDE_continuum}. Finally, $\ell_t^N:=\mathscr F_t^{q,N}$, whereas $\ell_t$ denotes the already integrated scalar driver defined after \eqref{eq:lambda_BSDE_continuum}.

\medskip
\noindent We now state the decomposition lemma. It is the technical core of the section: each discrete driver is shown to coincide with the corresponding continuum driver evaluated at the discrete objects, up to a discretization remainder controlled by the aggregate error rate $r_N$.

\begin{Lemma}[Decomposition and stability of the discrete drivers]
\label{lem:R1}
Under Assumption~\ref{ass:reg_consistency}, there exist non-negative progressively measurable processes $\rho^{K,N}$, $\rho^{\bar K,N}$, $\rho^{Y,N}$, $\rho^{q,N}$, and a deterministic constant $C>0$, independent of $N$, such that
\begin{equation}
\label{eq:R1_remainder_bound}
\E\int_0^T
\left(
|\rho_t^{K,N}|^2
+
|\rho_t^{\bar K,N}|^2
+
|\rho_t^{Y,N}|^2
+
|\rho_t^{q,N}|^2
\right)dt
\leq
Cr_N^2.
\end{equation}
Moreover, the following estimates hold $dt\otimes d\P$-a.e.

\begin{enumerate}
\item[\textup{(i)}]
\emph{Exact local Riccati driver}
\[
\left\|
\mathfrak F_t^{K,\mathrm{tot},N}
-
\mathcal F^K(t,K_t)
\right\|_{L^2(I)}
\leq
C\|K_t^N-K_t\|_{L^2(I)}
+
\rho_t^{K,N}.
\]

\item[\textup{(ii)}]
\emph{Projected interaction Riccati driver}
\begin{align*}
&
\left\|
\Pi_N^\circ
\mathfrak F^{\bar K,N}
(t,K_t^N,\bar K_t^N)
-
\Pi_N^\circ
\mathcal F^{\bar K}
(t,K_t,\bar K_t)
\right\|_{L^2(I^2)}
\leq
C\|\bar K_t^N-\bar K_t^{[N]}\|_{L^2(I^2)}
+
C\|K_t^N-K_t\|_{L^2(I)}
+
\rho_t^{\bar K,N}.
\end{align*}

\item[\textup{(iii)}]
\emph{Linear driver.}
\begin{align*}
\|\mathfrak F_t^{Y,N}-\mathcal F_t^Y\|_{L^2(I)}
\leq{}&
C\|Y_t^N-Y_t\|_{L^2(I)}
+
C\|K_t^N-K_t\|_{L^2(I)}
+
C\|Z_t^{K,N}-Z_t^K\|_{L^2(I)}
+
C\|\bar K_t^N-\bar K_t\|_{L^2(I^2)}\\
&
+C\|Z_t^{\bar K,N}-Z_t^{\bar K}\|_{L^2(I^2)}
+
\rho_t^{Y,N}.
\end{align*}

\item[\textup{(iv)}]
\emph{Integrated scalar driver}
\begin{align*}
|\ell_t^N-\ell_t|
\leq{}&
C\|K_t^N-K_t\|_{L^2(I)}
+
C\|\bar K_t^N-\bar K_t\|_{L^2(I^2)}
+
C\|Y_t^N-Y_t\|_{L^2(I)}
+
C\|Z_t^{Y,N}-Z_t^Y\|_{L^2(I)}
+
\rho_t^{q,N}.
\end{align*}
\end{enumerate}
\end{Lemma}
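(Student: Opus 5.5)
The proof is a term-by-term comparison of the exact embedded drivers with the continuum drivers. The guiding rule is a three-way split. Every term is written as the continuum expression evaluated at the discrete objects, plus a Lipschitz difference in the backward unknowns, plus a discretization remainder. The remainder collects four kinds of error: coefficient and kernel differences $X^N-X$ for $X\in\{B,C,E,F,R,Q,G^\bullet,\ldots\}$, diagonal and interface corrections, and diagonal-band masses. In each product $a^Nb^N-ab$ I use the telescoping identity $(a^N-a)b^N+a(b^N-b)$. The factor that is bounded pointwise goes in $L^\infty$: the local coefficients by Assumption~\ref{ass:reg_consistency} (A-block), $K,K^N$ by Lemma~\ref{lem:K_pointwise} and Remark~\ref{rem:K_continuum_bound}, and $O^{-1}$ by coercivity. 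The remaining factor goes in $L^2(I)$ or $L^2(I^2)$. Integral terms are handled with $\|T_G\|_{\mathrm{op}}\le\|G\|_{L^2(I^2)}$ and the ideal inequality $\|AB\|_{\mathrm{HS}}\le\|A\|_{\mathrm{op}}\|B\|_{\mathrm{HS}}$, with the operator norms supplied by Lemma~\ref{lem:Kbar_bound}(a). Every constant multiplying a backward difference is then deterministic and uniform in $N$, and everything else is put into $\rho^{\cdot,N}$.

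\emph{Step (i), local driver.} I write $\mathfrak F^{K,\mathrm{tot},N}_t-\mathcal F^K(t,K_t)$ as the sum of three pieces: $[\mathfrak F^{K,\mathrm{loc},N}(t,K^N_t)-\mathfrak F^{K,\mathrm{loc},N}(t,K_t)]$, $[\mathfrak F^{K,\mathrm{loc},N}(t,K_t)-\mathcal F^K(t,K_t)]$ and $\mathfrak d^{K,N}_t$. This is exactly the decomposition used in the proof of Lemma~\ref{lem:K_uniform}. The first piece is at most $C\|K^N_t-K_t\|_{L^2}$. The second is at most $C\delta^K_N$ by (A-coeff). The third is at most $C\sqrt{h_N}$ by Lemma~\ref{lem:diagonal_remainder_bounds}. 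So $\rho^{K,N}\le C\delta_N^K$ deterministically.

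\emph{Step (ii), interaction driver.} On the off-diagonal cells I expand $h_N^{-2}[\mathscr F^{P,N}(P^N[k,\bar k])]_{ij}$ block by block. Products such as $(\mathbf B^N)^\top P^N$, $\sum_r(\mathbf E^{r,N})^\top P^N\mathbf E^{r,N}$ and $(\mathcal U^N)^\top(\mathcal O^N)^{-1}\mathcal U^N$ produce sums over an intermediate index $\ell$. Those sums are Riemann-type integrals of step kernels, and they coincide exactly with $\mathcal F^{\bar K}(t,K^N,\bar K^N_{\mathrm{full}})$ restricted to $\mathcal D_N^c$. Here $\bar K^N_{\mathrm{full}}$ is the step kernel with its diagonal blocks $\bar K^N_{ii}=0$ set to zero, except for two corrections:
\begin{itemize}
\item terms with $\ell\in\{i,j\}$, where a diagonal block $h_NK_\ell$ appears in place of $h_N^2\bar k_{\ell\ell}$;
\item the $G^{E,N}_{ii}$ corrections inside $\widehat U$.
\end{itemize}
These terms carry one spare factor $h_N$ together with a single-column sum. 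By the column estimate with $a^N_i$, as in Lemma~\ref{lem:diagonal_remainder_bounds}, their $L^2(I^2)$ norm is $O(\sqrt{h_N})$.

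Next I compare $\Pi^\circ_N\mathcal F^{\bar K}(t,K^N,\bar K^N)$ with $\Pi^\circ_N\mathcal F^{\bar K}(t,K,\bar K)$ using Remark~\ref{rem:F_properties_continuum}(ii) and the operator bounds. Where the integrals in $\mathcal F^{\bar K}$ see the full kernel $\bar K$, I write $\bar K=\bar K^{[N]}+\mathbf 1_{\mathcal D_N}\bar K$. The first part contributes the term $C\|\bar K^N-\bar K^{[N]}\|$ in (ii). The band part contributes $C\|\mathbf 1_{\mathcal D_N}\bar K_t\|_{L^2(I^2)}$, whose square-integral is $\le (d^{\mathrm{diag}}_N)^2$ by Lemma~\ref{lem:diagonal_band_modulus}. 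Coefficient and kernel differences multiplying $\bar K$ are bounded via the row/column norms of (A-row/col). A typical example is $(B^N-B)(u)^\top\bar K(u,v)$, whose $L^2(I^2)$ norm is at most $\|B^N-B\|_{L^2}\|\bar K\|_{\mathrm{row}}$. Terms such as $T_{G^{B,N}-G^B}$ are bounded by $\varepsilon^{\mathrm{ker}}_N\|\bar K\|_{\mathrm{op}}$.

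\emph{Steps (iii)--(iv), linear and scalar drivers.} The linear driver is affine in $Y$. I therefore telescope each product and treat the pieces as follows:
\begin{itemize}
\item Terms with $Y$ multiplied by a local difference are controlled via Remark~\ref{rem:Y_product_closure}, using the deterministic bound of Lemma~\ref{lem:K_uniform} and $\E\sup\|Y\|^2_{L^\infty}\le C$.
\item Terms in $Z^K\Sigma^0$ and $\int Z^{\bar K}\Sigma^0$ give $\|Z^{K,N}-Z^K\|$ and $\|Z^{\bar K,N}-Z^{\bar K}\|$ times $\|\Sigma^0\|_\infty$.
\item The mixed term $Z^{K,N}(\Sigma^{0,N}-\Sigma^0)$ is bounded pointwise in $u$ by $|Z^{K,N}_t(u)|\,\sup_{t,\omega}|\Sigma^{0,N}-\Sigma^0|(u)$. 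After integrating in $t$, the conditional BMO bound of Lemma~\ref{lem:ZK_BMO}, applied cellwise, gives $C(\varepsilon_N^\Sigma)^2$. This is exactly why (A-Sigma) is posed in mixed norm.
\item For $Z^{\bar K,N}$, the band part again enters through $d^{\mathrm{diag}}_N$.
\end{itemize}
The scalar driver is handled the same way, after integrating over $u$, using Cauchy--Schwarz and the $S^\infty$ and BMO bounds of Lemma~\ref{lem:remaining_bounds}. The diagonal $\langle\Sigma^0,\bar K\Sigma^0\rangle$ band is again absorbed into $d^{\mathrm{diag}}_N$.

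Summing the squared remainders then gives \eqref{eq:R1_remainder_bound}. The main obstacle is Step (ii). It requires an exact bookkeeping of which discrete sums reproduce the continuum integrals, and a check that every leftover diagonal term in the quadratic feedback $\int V^\top O^{-1}V$ really gains a factor $\sqrt{h_N}$ uniformly rather than only $O(1)$. I would first verify this on the quadratic term, splitting $\ell=i$, $\ell=j$ and $\ell\notin\{i,j\}$ explicitly.
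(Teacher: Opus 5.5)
Your treatment of (i), (iii) and (iv) follows the paper. You use the same three-piece split of the local driver, the deterministic $S^\infty(L^2)$ bound on $K^N-K$ against $\E\sup_t\|Y_t\|_{L^\infty(I)}^2$, labelwise BMO against the mixed norm for $\Sigma^{0,N}-\Sigma^0$, and row/column bounds for local differences multiplying $\bar K$. The gap is in Step (ii), exactly where you place the main obstacle, and your description of the leftover terms there is wrong.

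\textbf{The terms you call corrections are not small.} Consider the terms with $\ell\in\{i,j\}$ in which the diagonal block $P_{\ell\ell}=h_NK_\ell$ enters. They carry no spare factor $h_N$: after the $h_N^{-2}$ rescaling they are of order one per block. For example, $h_N^{-2}\cdot h_N(G^{B,N}_{ji})^\top\cdot h_NK_j^N=(G^{B,N}_{ji})^\top K_j^N$. These terms are precisely the continuum terms $G^B(v,u)^\top K(v)$, $E(u)^\top K(u)G^E(u,v)$ and $U(u)^\top O(u)^{-1}V(u,v)$ evaluated at step data, so no $O(\sqrt{h_N})$ bound can hold for them.

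\textbf{The pieces that do carry a factor $h_N$ cancel exactly.} These are $h_NG^{E,N}_{ii}$ inside $\widehat E_i^N$ and inside $\widehat U_i^N$ (you list only the latter). They are not remainders: they supply exactly the summands $\ell\in\{i,j\}$ that are missing from the discrete intermediate sums $h_N\sum_{\ell\neq i,j}$.

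\textbf{The missing idea is the paper's exact identity.} Keep the full auxiliary kernel $\widetilde V^N=(C^N)^\top\bar K^N+(F^N)^\top K^NG^{E,N}$, retaining its diagonal blocks $(F_i^N)^\top K_i^NG^{E,N}_{ii}$, and use $\widehat U_i^N=U_i^N+h_N\widetilde V^N_{ii}$. Then, for $i\neq j$,
\[
\mathfrak F^{\bar K,N}_{ij}=\Psi^N_{ij}-(U_i^N)^\top(O_i^N)^{-1}\widetilde V^N_{ij}-(\widetilde V^N_{ji})^\top(O_j^N)^{-1}U_j^N-h_N\sum_{\ell=1}^N(\widetilde V^N_{\ell i})^\top(O_\ell^N)^{-1}\widetilde V^N_{\ell j}.
\]
So the off-diagonal driver is exactly $\Pi_N^\circ$ of the continuum-form driver evaluated at the discrete step data, and no diagonal leftover remains to estimate. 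The same identity is what gives the discrete linear driver in (iii) exactly the continuum form.

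\textbf{Your fallback estimate would not work as stated.} The column trick of Lemma~\ref{lem:diagonal_remainder_bounds} yields $\sqrt{h_N}$ only for one-index quantities in $L^2(I)$, through $h_N\sum_i(a_i^N)^2\le h_N(\sum_ia_i^N)^2$. For a two-index term such as $h_N(\widetilde V^N_{ii})^\top(O_i^N)^{-1}\widetilde V^N_{ij}$, you only get the bound $C\|\mathbf 1_{\mathcal D_N}G^{E,N}\|_{L^2(I^2)}\|\widetilde V^N\|_{L^2(I^2)}$. This is $O(1)$ under uniform $L^2$ kernel bounds and becomes small only through $\varepsilon_N^{\mathrm{ker}}$ and the boundedness of the limiting $G^E$.

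\textbf{What remains after the identity.} The comparison is a three-term telescoping of $T_{\widetilde V^N}^*M_{(O^N)^{-1}}T_{\widetilde V^N}-T_V^*M_{O^{-1}}T_V$, and likewise for the mixed terms. Here $\|\widetilde V^N_t-V_t\|_{L^2(I^2)}$ is controlled by:
\begin{itemize}
\item $\|\bar K_t^N-\bar K_t^{[N]}\|_{L^2(I^2)}$;
\item the band mass $\|\mathbf 1_{\mathcal D_N}\bar K_t\|_{L^2(I^2)}$;
\item the kernel errors;
\item $\delta_N^K$ times the row norm of $\bar K_t$.
\end{itemize}
The multiplier difference must be bounded by $\|(O_t^N)^{-1}-O_t^{-1}\|_{L^2(I)}\|V_t\|_{\mathrm{row}}$, not through an operator-norm Lipschitz bound. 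Remark~\ref{rem:F_properties_continuum}(ii) does not suffice for this step, because it holds only for fixed coefficients and fixed $K$.
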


\begin{proof}
The proof is given in Appendix~\ref{app:R1}.
\end{proof}

\noindent We can now state and prove the main result of this section.

\begin{Theorem}[Discrete-to-continuum convergence of the Riccati systems]
\label{thm:convergence}
Under Assumption~\ref{ass:reg_consistency}, there exists a constant $C > 0$, independent of $N$, such that the following convergence estimates hold:
\begin{align}
\label{eq:T1}
\E\!\left[ \sup_{t \in [0,T]} \|K_t^N - K_t\|^2_{L^2(I)} + \int_0^T \|Z_t^{K,N} - Z_t^K\|^2_{L^2(I)}\,dt \right]
&\leq C\,r_N^2,\\
\label{eq:T2}
\E\!\left[ \sup_{t \in [0,T]} \|\bar K_t^N - \bar K_t\|^2_{L^2(I^2)} + \int_0^T \|Z_t^{\bar K,N} - Z_t^{\bar K}\|^2_{L^2(I^2)}\,dt \right]
&\leq C\,r_N^2,\\
\label{eq:T3}
\E\!\left[ \sup_{t \in [0,T]} \|Y_t^N - Y_t\|^2_{L^2(I)} + \int_0^T \|Z_t^{Y,N} - Z_t^Y\|^2_{L^2(I)}\,dt \right]
&\leq C\,r_N^2,\\
\label{eq:T4}
\E\!\left[ \sup_{t \in [0,T]} \bigg|q_t^N - \lambda_t\bigg|^2 + \int_0^T|Z_t^{q,N}-Z_t^\lambda|^2dt \right]
&\leq C\,r_N^2.
\end{align}
In particular, $(K^N, \bar K^N, Y^N, q^N)$ converges to $(K, \bar K, Y, \lambda)$ in the natural $S^2$-topologies as $N \to \infty$, with quantitative rate $r_N$.
\end{Theorem}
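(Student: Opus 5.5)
The proof follows the triangular structure of the two backward systems. Each component is handled by feeding the driver decomposition of Lemma~\ref{lem:R1} into the abstract stability estimate of Lemma~\ref{lem:stability}, in the order $K$, $\bar K$, $Y$, $q$. At every stage the differences already estimated are moved into the remainder process.

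\textbf{The local component $K$.} Estimate \eqref{eq:T1} already follows from Lemma~\ref{lem:K_uniform}, since $\delta_N^K\le r_N$. Alternatively, one applies Lemma~\ref{lem:stability} in $E=L^2(I;\mathbb S^d)$ with the driver bound of Lemma~\ref{lem:R1}(i). The terminal error is $\|H^N-H\|_{L^2}+\|h_N\operatorname{diag}_NG^{H,N}\|_{L^2}\le C\delta_N^K$ by Lemma~\ref{lem:diagonal_remainder_bounds}.

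\textbf{The interaction component $\bar K$.} I would not compare $\bar K^N$ with $\bar K$ directly. Instead I compare it in $\overline{\mathsf H}$ with the projected process $\bar K^{[N]}=\Pi_N^\circ\bar K$, which solves the projected continuum equation recorded before Lemma~\ref{lem:R1}.
\begin{itemize}
\item Lemma~\ref{lem:R1}(ii) gives a driver bound $L\|\bar K^N_t-\bar K^{[N]}_t\|+\tilde\rho_t$, with $\tilde\rho_t:=C\|K^N_t-K_t\|_{L^2(I)}+\rho^{\bar K,N}_t$. By \eqref{eq:T1} and \eqref{eq:R1_remainder_bound}, $\E\int_0^T\tilde\rho_t^2\,dt\le Cr_N^2$.
\item The terminal error is $\|\Pi_N^\circ(G^{H,N}-G^H)\|\le\varepsilon_N^{\rm ker}$.
\item Lemma~\ref{lem:stability} then bounds $\bar K^N-\bar K^{[N]}$ and $Z^{\bar K,N}-Z^{\bar K,[N]}$ by $Cr_N^2$.
\item The triangle inequality adds $\bar K^{[N]}-\bar K=-\mathbf 1_{\mathcal D_N}\bar K$ and the analogous term for $Z^{\bar K}$. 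Their squared $S^2\times H^2$ norm is exactly $(d_N^{\rm diag})^2\le r_N^2$ by Lemma~\ref{lem:diagonal_band_modulus}. This gives \eqref{eq:T2}.
\end{itemize}

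\textbf{The affine component $Y$.} Apply Lemma~\ref{lem:stability} in $\mathsf H$ using Lemma~\ref{lem:R1}(iii). The terms in $K$, $Z^K$, $\bar K$ and $Z^{\bar K}$ are collected into $\rho$; their time-integrated second moments are bounded by $Cr_N^2$ thanks to \eqref{eq:T1}–\eqref{eq:T2}. The sup terms are bounded by integrals over $[0,T]$ of the sup, at cost $T$. Since $Y^N_T=Y_T=0$, this gives \eqref{eq:T3}.

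\textbf{The scalar component $q$.} Apply the same argument in $E=\mathbb R$ with Lemma~\ref{lem:R1}(iv); the drivers do not depend on $q$, so $L=0$. The integrability hypothesis of Lemma~\ref{lem:stability} asks for $S^2\times H^2$. This holds here by the $S^\infty$/BMO bounds of Lemma~\ref{lem:remaining_bounds}: BMO implies $H^2$, so $q^N$ and $\lambda$ fall within the lemma despite being defined only as class-(D) solutions. This gives \eqref{eq:T4}.

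\textbf{Main obstacle.} The key point is the $\bar K$ step. The discrete kernel vanishes on the diagonal band $\mathcal D_N$ while the continuum kernel does not, so no direct Lipschitz comparison $\|\bar K^N-\bar K\|$ is available inside the driver. Routing the comparison through $\Pi_N^\circ$ isolates the band mass as the separate modulus $d_N^{\rm diag}$. The heavy lifting — the diagonal remainders and the $\Pi_N^\circ$ commutation inside $\mathcal F^{\bar K}$ — is delegated to Lemma~\ref{lem:R1}.
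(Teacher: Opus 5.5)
Your proposal is correct and follows the paper's proof essentially step for step. You use Lemma~\ref{lem:K_uniform} for $K$, compare $\bar K^N$ with the projected process $\Pi_N^\circ\bar K$ before adding the diagonal-band modulus, and apply Lemma~\ref{lem:stability} together with Lemma~\ref{lem:R1}(iii) for $Y$. The only deviation is the scalar step: the paper bounds $\sup_t|q_t^N-\lambda_t|$ through the conditional representations $q_t^N=\E[\int_t^T\ell_s^N\,ds\mid\Fc_t^0]$ and $\lambda_t=\E[\int_t^T\ell_s\,ds\mid\Fc_t^0]$ plus Doob's inequality, whereas you apply Lemma~\ref{lem:stability} with $L=0$ (legitimate given the $S^\infty$/BMO bounds of Lemma~\ref{lem:remaining_bounds}), which also yields the $Z^{q,N}-Z^\lambda$ part of \eqref{eq:T4} that the paper's Step~4 leaves implicit.
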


\begin{proof}
The proof proceeds in four steps, estimating successively $K^N-K$, $\bar K^N-\bar K$, $Y^N-Y$, and $q^N-\lambda$, following the triangular structure of the limiting system. The exact finite equation for $K^N$ still contains the diagonal remainder depending on $\bar K^N$, this remainder is controlled by the uniform estimates established above.

\medskip
\noindent
\textit{Step 1: convergence of the local Riccati component:} Lemma~\ref{lem:K_uniform} already gives the stronger estimate $\|K^N - K\|_{S^\infty_{\F^0}(L^2(I))} + \|Z^{K,N}-Z^K\|_{\mathrm{BMO}(L^2(I))} \leq C\delta_N^K$. Since $\delta_N^K\leq r_N$, this immediately implies \eqref{eq:T1}. Notice that this estimate includes both the exact diagonal driver $\mathfrak d^{K,N}$ and the terminal correction $h_N\operatorname{diag}_NG^{H,N}$.

\medskip
\noindent
\textit{Step 2: convergence of the interaction Riccati component:} We first compare $\bar K^N$ with the projected limiting process $\bar K^{[N]}=\Pi_N^\circ\bar K$. Their equations have respective drivers $\Pi_N^\circ \mathfrak F^{\bar K,N}(t,K_t^N,\bar K_t^N)$ and $\Pi_N^\circ \mathcal F^{\bar K}(t,K_t,\bar K_t)$. By Lemma~\ref{lem:R1}\textup{(ii)},
\begin{align*}
&
\left\|
\Pi_N^\circ
\mathfrak F^{\bar K,N}(t,K_t^N,\bar K_t^N)
-
\Pi_N^\circ
\mathcal F^{\bar K}(t,K_t,\bar K_t)
\right\|_{L^2(I^2)}
\leq
C\|\bar K_t^N-\bar K_t^{[N]}\|_{L^2(I^2)}
+
\widetilde\rho_t^{\bar K,N},
\end{align*}
where $\widetilde\rho_t^{\bar K,N} := C\|K_t^N-K_t\|_{L^2(I)} + \rho_t^{\bar K,N}$. Step~1 and \eqref{eq:R1_remainder_bound} imply $\E\int_0^T |\widetilde\rho_t^{\bar K,N}|^2dt \le Cr_N^2$. Moreover, $\|\bar K_T^N-\bar K_T^{[N]}\|_{L^2(I^2)}
= \|\Pi_N^\circ(G^{H,N}-G^H)\|_{L^2(I^2)} \leq \varepsilon_N^{\mathrm{ker}}$.
Lemma~\ref{lem:stability}, applied in $L^2(I^2)$, therefore gives
\begin{align}
\label{eq:projected_Kbar_convergence}
\E\Bigg[
\sup_{t\in[0,T]}
\|\bar K_t^N-\bar K_t^{[N]}\|_{L^2(I^2)}^2
+
\int_0^T
\|Z_t^{\bar K,N}-Z_t^{\bar K,[N]}\|_{L^2(I^2)}^2dt
\Bigg]
\leq
Cr_N^2.
\end{align}

\noindent Since $\bar K^N$ and $Z^{\bar K,N}$ disappear on $\mathcal D_N$, $\bar K^N-\bar K= \bar K^N-\bar K^{[N]} - \mathbf1_{\mathcal D_N}\bar K$, and similarly $Z^{\bar K,N}-Z^{\bar K} = Z^{\bar K,N}-Z^{\bar K,[N]} - \mathbf1_{\mathcal D_N}Z^{\bar K}$. Combining \eqref{eq:projected_Kbar_convergence} with Lemma~\ref{lem:diagonal_band_modulus} proves \eqref{eq:T2}.

\medskip
\noindent
\textit{Step 3: convergence of the linear component:} The terminal conditions satisfy $Y_T^N=Y_T=0$. By Lemma~\ref{lem:R1}\textup{(iii)}, \[\|\mathfrak F_t^{Y,N} - \mathcal F_t^Y\|_{L^2(I)} \leq C\|Y_t^N-Y_t\|_{L^2(I)} + \widetilde\rho_t^{Y,N},\] where
\begin{align*}
\widetilde\rho_t^{Y,N}
:={}
C\|K_t^N-K_t\|_{L^2(I)}
+
C\|Z_t^{K,N}-Z_t^K\|_{L^2(I)}
+
C\|\bar K_t^N-\bar K_t\|_{L^2(I^2)}
+
C\|Z_t^{\bar K,N}-Z_t^{\bar K}\|_{L^2(I^2)}
+
\rho_t^{Y,N}.
\end{align*}
Steps~1 and~2 and \eqref{eq:R1_remainder_bound} imply $\E\int_0^T|\widetilde\rho_t^{Y,N}|^2dt \leq Cr_N^2$. Applying Lemma~\ref{lem:stability} in $L^2(I;\mathbb R^d)$ gives \eqref{eq:T3}.

\medskip
\noindent
\textit{Step 4: convergence of the scalar component.} The conditional representations of the two scalar equations are $q_t^N = \E\left[ \int_t^T\ell_s^N\,ds \;\middle|\; \Fc_t^0 \right]$, and $\lambda_t = \E\left[ \int_t^T\ell_s\,ds \;\middle|\; \Fc_t^0 \right]$. Hence, setting $\Xi^N := \int_0^T|\ell_s^N-\ell_s|\,ds$, we have $|q_t^N-\lambda_t| \leq \E[\Xi^N\mid\Fc_t^0]$. Doob's $L^2$ inequality and Cauchy--Schwarz give \[\E\sup_{t\in[0,T]}|q_t^N-\lambda_t|^2 \leq 4\E|\Xi^N|^2\leq 4T\E\int_0^T|\ell_s^N-\ell_s|^2ds\] By Lemma~\ref{lem:R1} \textup{(iv)} and Steps~1--3, $\E\int_0^T|\ell_s^N-\ell_s|^2ds \leq Cr_N^2$. This proves \eqref{eq:T4}.
\end{proof}

\begin{Corollary}[Conditional algebraic convergence rates]
\label{cor:explicit_rates}
Let $(a_N)_{N\geq1}$ be a deterministic sequence converging to zero. If $\eta_N
+
\sqrt{h_N}
+
\varepsilon_N^{\mathrm{ker}}
+
\varepsilon_N^\Sigma
+
d_N^{\mathrm{diag}}
\leq
Ca_N$, then all the estimates in
\eqref{eq:T1}--\eqref{eq:T4} hold with right-hand side
$Ca_N^2$. In particular, for a uniform grid $h_N$, if $\eta_N
+
\varepsilon_N^{\mathrm{ker}}
+
\varepsilon_N^\Sigma
+
d_N^{\mathrm{diag}}
\leq
CN^{-1/2}$,
then the complete backward system converges with non-squared rate
$O(N^{-1/2})$.

\smallskip
\noindent An $O(N^{-1})$ non-squared rate is not a consequence of the aggregate rate $r_N$ as presently defined, since $r_N$ contains the generic contribution $\sqrt{h_N}$. Such an improvement requires rerunning the preceding estimates with refined $O(h_N)$ bounds for every occurrence of the interface and exact diagonal errors, together with together with $\eta_N+\varepsilon_N^{\mathrm{ker}} +\varepsilon_N^\Sigma+d_N^{\mathrm{diag}}=O(N^{-1})$. Under these additional targeted estimates, one may introduce the refined rate obtained by replacing the generic $\sqrt{h_N}$ contribution by $h_N$, and the conclusions of Theorem~\ref{thm:convergence} then hold with non-squared rate $O(N^{-1})$.
\end{Corollary}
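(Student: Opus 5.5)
The corollary is a direct consequence of Theorem~\ref{thm:convergence}: every estimate \eqref{eq:T1}--\eqref{eq:T4} has the form $\E[\cdots]\le C r_N^2$ with $C$ independent of $N$. So the task reduces to bounding $r_N$ by $C a_N$ and then reading off the two specializations.

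First, unfold Definition~\ref{def:rate}:
\[
r_N=\delta_N^K+\varepsilon_N^{\mathrm{ker}}+\varepsilon_N^\Sigma+d_N^{\mathrm{diag}}
=\eta_N+\sqrt{h_N}+\varepsilon_N^{\mathrm{ker}}+\varepsilon_N^\Sigma+d_N^{\mathrm{diag}}.
\]
Under the hypothesis this is at most $Ca_N$, so $r_N^2\le C^2a_N^2$. Substituting into \eqref{eq:T1}--\eqref{eq:T4} gives the right-hand side $Ca_N^2$ after enlarging $C$.

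The only point to check is that the constants in Theorem~\ref{thm:convergence} do not depend on the sequences themselves. This holds because the Lipschitz constants in Lemma~\ref{lem:R1} come from the uniform a priori bounds of Lemmas~\ref{lem:K_pointwise}--\ref{lem:remaining_bounds}, which are independent of $N$ and of the moduli.

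For the uniform grid, $h_N=N^{-1}$, so $\sqrt{h_N}=N^{-1/2}$. If the remaining four moduli are at most $CN^{-1/2}$, take $a_N=N^{-1/2}$; then the squared estimates are $O(N^{-1})$. The non-squared rate follows by Jensen's inequality: for example, $\E\sup_t\|K^N_t-K_t\|_{L^2(I)}\le (\E\sup_t\|K_t^N-K_t\|^2_{L^2(I)})^{1/2}\le CN^{-1/2}$. In the finite-type case, Remark~\ref{rem:finite_type_models} supplies this hypothesis.

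The final paragraph of the corollary is a qualification rather than a claim needing proof. Because $r_N\ge\sqrt{h_N}$, the generic bound can never fall below $N^{-1/2}$. The conditional $O(N^{-1})$ statement follows by rerunning the same argument. One defines a refined rate $r_N'$ with $\sqrt{h_N}$ replaced by $h_N$. Then one checks that every place $\sqrt{h_N}$ entered admits an $O(h_N)$ bound under the extra hypotheses: the interface projection error in (A-coeff) (cf.\ Remark~\ref{rem:rate_block_compatible}), the diagonal remainder $\|\mathfrak d^{K,N}\|_{L^2(I)}$ and terminal correction in Lemma~\ref{lem:diagonal_remainder_bounds}, and the band estimates inside Lemma~\ref{lem:R1}. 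Lemma~\ref{lem:stability} is linear in the remainder, so the triangular argument of Theorem~\ref{thm:convergence} then carries $r_N'$ through unchanged.

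The main obstacle is this bookkeeping: making sure no hidden $\sqrt{h_N}$ survives, in particular in Lemma~\ref{lem:K_uniform} and Appendix~\ref{app:R1}. That is why the paper states this part only under the assumed targeted estimates.
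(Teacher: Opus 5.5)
Your proposal is correct and follows the paper's route: the paper gives no separate proof because the corollary follows from Theorem~\ref{thm:convergence}. One unfolds $r_N=\eta_N+\sqrt{h_N}+\varepsilon_N^{\mathrm{ker}}+\varepsilon_N^\Sigma+d_N^{\mathrm{diag}}$ from Definition~\ref{def:rate}, inserts $r_N\le Ca_N$ into \eqref{eq:T1}--\eqref{eq:T4}, and specializes to $a_N=N^{-1/2}$ using $h_N=N^{-1}$, reading the $O(N^{-1})$ paragraph as conditional on the targeted refined estimates. The non-squared rate is simply the square root of the $S^2$-type left-hand sides, so your Jensen step is valid but not needed.
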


\begin{Remark}
Block compatibility removes the cells crossing the interfaces and
therefore improves the corresponding local projection error. It does
not remove the diagonal band $\mathcal D_N$, nor the exact discrete
diagonal corrections. Algebraic estimates on these terms may be added
as targeted stronger assumptions when an explicit $O(N^{-1})$ result
is desired.
\end{Remark}

\section{Convergence of feedback coefficients and stability of feedback evaluations}
\label{sec:feedback_convergence}

Theorem~\ref{thm:convergence} establishes the convergence of the discrete  Riccati objects towards their continuum counterparts. We now transfer this  convergence to the level of the optimal control laws. This is carried out 
in two steps. We first show, in Lemma~\ref{lem:gain_convergence}, that the local gains $O^N$, $(O^N)^{-1}$ and $\widehat U^N$ converge in the stronger $L^\infty(\Omega\times[0,T];L^2(I))$ topology, whereas the nonlocal and affine gains $V^N$ and $\Gamma^N$ converge in expected essential supremum norm. We then prove, in Theorem~\ref{thm:feedback_convergence}, a stability estimate for the associated feedback maps: whenever the discrete and continuum state  fields are close, the resulting controls are close. The convergence of the  optimal controls themselves stated in  Corollary~\ref{cor:feedback_evaluation} is then obtained under certain condition. Actual convergence of the optimal controls requires, in addition, a comparison of the finite closed-loop state with conditionally independent cellwise copies of the continuum optimal state; this is carried out in Section~\ref{sec:state_convergence}.

\smallskip
\noindent Throughout this section, we work under Assumption~\ref{ass:reg_consistency} 
and we freely use the a priori bounds and convergence estimates of 
Sections~\ref{sec:problem_discrete}--\ref{sec:convergence}.

\subsection{Embedded feedback maps and convergence of the feedback coefficients}

\noindent Recall that the exact off-diagonal discrete feedback kernel is $V_t^N(u,v) := \sum_{i\neq j} V_{ij,t}^N \mathbf 1_{I_i^N}(u) \mathbf 1_{I_j^N}(v)$. Equivalently, $V_t^N = \Pi_N^\circ \left[ (C_t^N)^\top\bar K_t^N + (F_t^N)^\top K_t^NG_t^{E,N} \right]$. In particular, $V_t^N$ vanishes on $\mathcal D_N$. For $x\in {\mathsf H}$, define
\begin{equation}
\label{eq:finite_feedback_map}
\Phi_t^N(x)
:=
-(O_t^N)^{-1}
\left(
\widehat U_t^Nx
+
T_{V_t^N}x
+
\Gamma_t^N
\right).
\end{equation}
\noindent If $x$ is a step function and $u\in I_i^N$, then $(T_{V_t^N}x)(u) = h_N\sum_{j\neq i}V_{ij,t}^Nx_j$, and therefore $\Phi_t^N(x)(u) = -(O_{i,t}^N)^{-1} \big( \widehat U_{i,t}^Nx_i +$\\  
$ h_N\sum_{j\neq i}V_{ij,t}^Nx_j + \Gamma_{i,t}^N\big)$. Thus \eqref{eq:finite_feedback_map} is exactly the step-function embedding of the centralized finite-dimensional optimal feedback. For $x,m\in \mathsf H$, define the two-argument continuum feedback map by $\Phi_t(x,m)
:=
-O_t^{-1}
\left(
U_tx
+
T_{V_t}m
+
\Gamma_t
\right)$. The continuum optimal control is obtained by evaluating this map at the
individual optimal state and its conditional mean: $\widehat\alpha_t^*
=
\Phi_t(X_t^*,\bar X_t^*)$.

\begin{Lemma}[Convergence of the feedback coefficients]
\label{lem:gain_convergence}
Under Assumption~\ref{ass:reg_consistency}, there exists a deterministic
constant $C>0$, independent of $N$, such that
\begin{align}
\label{eq:local_gain_convergence}
&
\operatorname*{ess\,sup}_{(s,\omega)}\|O^N_s-O_s\|_{L^2(I)}
+
\operatorname*{ess\,sup}_{(s,\omega)}\|(O^N_s)^{-1}-O^{-1}_s\|_{L^2(I)}
+
\operatorname*{ess\,sup}_{(s,\omega)}\|\widehat U^N_s-U_s\|_{L^2(I)}
\leq
C\delta_N^K,
\end{align}
and
\begin{equation}
\label{eq:nonlocal_gain_convergence}
\E\left[
\operatorname*{ess\,sup}_{t\in[0,T]}
\left(
\|V_t^N-V_t\|_{L^2(I^2)}^2
+
\|\Gamma_t^N-\Gamma_t\|_{L^2(I)}^2
\right)
\right]
\leq
Cr_N^2.
\end{equation}
\end{Lemma}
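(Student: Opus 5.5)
The local estimate \eqref{eq:local_gain_convergence} is essentially already contained in Lemma~\ref{lem:K_uniform}, so I would start by recalling that argument and making it explicit. Write $O^N-O=(R^N-R)+(F^N-F)^\top K^NF^N+F^\top(K^N-K)F^N+F^\top K(F^N-F)$. Since $K^N$, $K$, $F^N$, $F$ are bounded pointwise (Lemma~\ref{lem:K_pointwise}, Remark~\ref{rem:K_continuum_bound}, and (A-block)/(A-coeff)), each term is bounded in $L^2(I)$ by $C(\|R^N-R\|+\|F^N-F\|+\|K^N-K\|)$. Here $\|K^N-K\|_{S^\infty(L^2)}\le C\delta_N^K$ is deterministic, which gives $\operatorname*{ess\,sup}\|O^N-O\|_{L^2}\le C\delta_N^K$. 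For the inverses I would use the pointwise resolvent identity $(O^N)^{-1}-O^{-1}=(O^N)^{-1}(O-O^N)O^{-1}$ together with the uniform coercivity bound $|(O^N)^{-1}|,|O^{-1}|\le C$. For $\widehat U^N-U$ I would split $\widehat U^N-U=(U^N-U)+h_N(F^N)^\top K^NG^{E,N}_{\mathrm{diag}}$. The first piece is expanded like $O^N-O$, and the second satisfies $\|\cdot\|_{L^2}^2\le Ch_N^3\sum_i|G^{E,N}_{ii}|^2\le Ch_N$, as in Lemma~\ref{lem:diagonal_remainder_bounds}.

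For $V^N-V$ I would decompose as
\[
V^N-V=\Pi_N^\circ\big[(C^N)^\top\bar K^N-C^\top\bar K\big]+\Pi_N^\circ\big[(F^N)^\top K^NG^{E,N}-F^\top KG^E\big]-\mathbf 1_{\mathcal D_N}V .
\]
The band term is controlled by $\|\mathbf 1_{\mathcal D_N}C^\top\bar K\|\le C\|\mathbf 1_{\mathcal D_N}\bar K\|$, which is bounded by $d_N^{\mathrm{diag}}$ in the $\E\sup_t$ sense, plus $\|\mathbf 1_{\mathcal D_N}G^E\|\le C\sqrt{h_N}$ (Remark~\ref{rem:assumption_reg}(iv)). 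Within the first bracket, $(C^N)^\top(\bar K^N-\bar K)$ is bounded by $C\|\bar K^N-\bar K\|_{L^2(I^2)}$, and $\E\sup_t$ of its square is at most $Cr_N^2$ by \eqref{eq:T2}. The cross term $(C^N-C)^\top(u)\bar K(u,v)$ requires the row bound in (A-row/col):
\[
\int\!\!\int|C^N-C|^2(u)|\bar K(u,v)|^2dv\,du\le\|C^N-C\|_{L^2}^2\,\|\bar K\|_{\mathrm{row}}^2,
\]
whose $\E\sup_t$ is at most $C(\delta_N^K)^2$. The second bracket is handled in the same way. The $G^{E,N}-G^E$ difference costs $\varepsilon_N^{\mathrm{ker}}$. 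The local differences $(F^N-F)$ and $(K^N-K)$ multiply $G^E$, which is uniformly bounded by (A-block), so they cost $C\delta_N^K$.

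For $\Gamma^N-\Gamma=(C^N)^\top(Y^N-Y)+(C^N-C)^\top Y+[(F^N)^\top K^ND^N-F^\top KD]+[R^N\iota^N-R\iota]$, the terms involving $D$, $\iota$, $R$, $F$, $K$ are bounded pointwise, and their differences are deterministic $O(\delta_N^K)$ in $L^2$. I will need that $(F^N)^\top K^N D^N$ stays bounded; here $D$ and $\iota$ are uniformly bounded by (A-block). The term $(C^N-C)^\top Y$ is bounded by $\|C^N-C\|_{L^2}\|Y\|_{L^\infty}$, and its $\E\sup_t$ is at most $C(\delta_N^K)^2$ by (A-row/col), as in Remark~\ref{rem:Y_product_closure}. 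The term $(C^N)^\top(Y^N-Y)$ is controlled by \eqref{eq:T3}.

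I expect the main obstacle to be the passage from the $\E\sup_t$ bounds of Theorem~\ref{thm:convergence} to $\E\,\operatorname*{ess\,sup}_t$ of products. The coefficient differences are only controlled pathwise-uniformly in $t$, which is fine because they are deterministic bounds. The limiting factors $\bar K$ and $Y$ enter through $\E\sup_t$ of their row norms or $L^\infty$ norms, so each product must be arranged as (deterministic uniform bound) $\times$ (random $\sup_t$ factor). I would check carefully that every product term admits this arrangement, and that $\mathbf 1_{\mathcal D_N}V$ is handled with the $\sup_t$ part of $d_N^{\mathrm{diag}}$ rather than the $Z$ part.
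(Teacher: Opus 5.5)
Your proposal is correct and follows essentially the same route as the paper. The local gains come from Lemma~\ref{lem:K_uniform}, the resolvent identity and coercivity; $V^N-V$ splits into $\Pi_N^\circ$ of the auxiliary-kernel difference (four terms, using the row bound of (A-row/col) for $(C^N-C)^\top\bar K$ and the boundedness of $G^E$ for the local differences) plus the band term $\mathbf 1_{\mathcal D_N}V$; and $\Gamma^N-\Gamma$ is expanded into the same four pieces using \eqref{eq:T3} and the $S^2(L^\infty(I))$ bound on $Y$, with each product arranged as a deterministic coefficient bound times a random $\sup_t$ factor, exactly as in the paper.
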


\begin{proof}
The estimates for $O^N-O$, $(O^N)^{-1}-O^{-1}$ and $\widehat U^N-U$ follow from Lemma~\ref{lem:K_uniform}, the resolvent identity and the uniform coercivity of $O^N$ and $O$. For the interaction gain, set $\widetilde V_t^N
:=
(C_t^N)^\top\bar K_t^N
+
(F_t^N)^\top K_t^NG_t^{E,N}$ and $V_t^N=\Pi_N^\circ\widetilde V_t^N$. Since $V_t
=
C_t^\top\bar K_t
+
F_t^\top K_tG_t^E$, we have $V_t^N-V_t = \Pi_N^\circ(\widetilde V_t^N-V_t) - \mathbf 1_{\mathcal D_N}V_t$. Moreover,
\begin{align*}
\widetilde V_t^N-V_t
={}&
(C_t^N)^\top(\bar K_t^N-\bar K_t)
+
(C_t^N-C_t)^\top\bar K_t
+
(F_t^N)^\top K_t^N(G_t^{E,N}-G_t^E)
+
\big(
(F_t^N)^\top K_t^N-F_t^\top K_t
\big)G_t^E.
\end{align*}
The first term is controlled by
$\|\bar K_t^N-\bar K_t\|_{L^2(I^2)}$. For the second term,
Assumption~\ref{ass:reg_consistency}\textup{(A-row/col)} yields
\[
\|(C_t^N-C_t)^\top\bar K_t\|_{L^2(I^2)}^2
\leq
\|C_t^N-C_t\|_{L^2(I)}^2
\operatorname*{ess\,sup}_{u\in I}
\int_I|\bar K_t(u,v)|^2\,dv.
\]
The third term is controlled by the pointwise bound on $K^N$ and $\varepsilon_N^{\mathrm{ker}}$, while the fourth is controlled by $\|K^N-K\|_{L^2(I)}$, the local coefficient errors and the uniform boundedness of the limiting kernel $G^E$.

\smallskip
\noindent Finally, \[\|\mathbf 1_{\mathcal D_N}V_t\|_{L^2(I^2)}
\leq
C\|\mathbf 1_{\mathcal D_N}\bar K_t\|_{L^2(I^2)}
+
C\sqrt{h_N}\]
Combining these estimates with \eqref{eq:T1}, \eqref{eq:T2} and
Lemma~\ref{lem:diagonal_band_modulus} gives $\E\operatorname*{ess\,sup}_{t\in[0,T]}
\|V_t^N-V_t\|_{L^2(I^2)}^2
\leq
Cr_N^2$.
\noindent For the affine gain,
\begin{align*}
\Gamma_t^N-\Gamma_t
={}&
(C_t^N)^\top(Y_t^N-Y_t)
+
(C_t^N-C_t)^\top Y_t
+
\left[
(F_t^N)^\top K_t^ND_t^N
-
F_t^\top K_tD_t
\right]
+
(R_t^N\iota_t^N-R_t\iota_t).
\end{align*}
The first term is controlled by \eqref{eq:T3}. For the second term,
$\|(C_t^N-C_t)^\top Y_t\|_{L^2(I)}
\leq
\|C_t^N-C_t\|_{L^2(I)}
\|Y_t\|_{L^\infty(I)}$. The coefficient error is bounded deterministically by $C\delta_N^K$, while $\E\sup_t\|Y_t\|_{L^\infty(I)}^2\leq C$ by Assumption~\ref{ass:reg_consistency}\textup{(A-row/col)}. Hence this term contributes at most $C(\delta_N^K)^2$ after expectation. The two remaining differences are controlled by direct product expansions, the boundedness of the local coefficients, Lemma~\ref{lem:K_uniform} and Assumption~\ref{ass:reg_consistency}\textup{(A-coeff)}. Therefore,
$\E\operatorname*{ess\,sup}_{t\in[0,T]}
\|\Gamma_t^N-\Gamma_t\|_{L^2(I)}^2
\leq
Cr_N^2$.
\end{proof}

\subsection{Uniform operator bounds}
\label{subsec:operator_bounds}

The stability estimate for the feedback maps proved in the next subsection 
requires the integral operators $T_{V^N}$ and $T_V$ to be uniformly bounded from $\mathsf H$ to $\mathsf U$, uniformly in $N$. For the limiting operator $T_V$, the required operator bound follows from the continuum solvability theorem $T_{\bar K}\in L^\infty(\Omega;C([0,T];\mathcal L(L^2(I))))$. For the discrete operator $T_{V^N}$, we have the analogous bound with $T_{\bar{K}^N}$. We show that the required bound is a direct consequence of the  finite-dimensional LQ structure: the full discrete quadratic Riccati  operator is bounded by a value-function argument, and the interaction part is recovered by subtracting the local part. This is the content of the following lemma.

\begin{Lemma}[Uniform operator bounds for the feedback kernels]
\label{lem:operator_bounds}
There exists a deterministic constant $C>0$, independent of $N$, such
that
\[
\sup_{N\ge1}\operatorname*{ess\,sup}_{(t,\omega)}
\|T_{V_t^N}\|_{\mathcal L(\mathsf H;\mathsf U)}
+
\operatorname*{ess\,sup}_{(t,\omega)}
\|T_{V_t}\|_{\mathcal L(\mathsf H;\mathsf U)}
\le C.
\]
\end{Lemma}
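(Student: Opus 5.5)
We prove the bound by splitting each feedback kernel into its two summands and estimating each operator separately; no Hilbert--Schmidt estimate is needed beyond what is already available. In the continuum case,
\[
T_{V_t}=M_{C_t^\top}T_{\bar K_t}+M_{F_t^\top K_t}T_{G_t^E},
\]
so that
\[
\|T_{V_t}\|_{\mathcal L(\mathsf H;\mathsf U)}\le \|C\|_{L^\infty}\|T_{\bar K_t}\|_{\mathcal L(\mathsf H)}+\|F\|_{L^\infty}\|K_t\|_{L^\infty(I)}\|T_{G_t^E}\|_{\mathcal L(\mathsf H)}.
\]
The first product is bounded by Theorem~\ref{thm:solvability_continuum_backward_system}, or by Lemma~\ref{lem:Kbar_bound}(a). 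The second is bounded by Remark~\ref{rem:K_continuum_bound}(i) and Assumption~\ref{ass:forward_coeff}(iii). The resulting bound holds for every $t$, $\mathbb P$-a.s., hence in $ds\otimes d\mathbb P$-essential supremum.

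For the discrete kernel we use $V_t^N=\Pi_N^\circ\widetilde V_t^N$, where
\[
\widetilde V_t^N=(C_t^N)^\top\bar K_t^N+(F_t^N)^\top K_t^NG_t^{E,N}.
\]
Since $\bar K_t^N$ already vanishes on $\mathcal D_N$, we have $\Pi_N^\circ\big((C_t^N)^\top\bar K_t^N\big)=(C_t^N)^\top\bar K_t^N$. Its operator is $M_{(C_t^N)^\top}T_{\bar K_t^N}$, which is bounded by $L\cdot\sup_N\|T_{\bar K_t^N}\|$, and the latter is controlled by Lemma~\ref{lem:Kbar_bound}(a). For the second summand, write
\[
\Pi_N^\circ\big((F^N)^\top K^NG^{E,N}\big)=(F^N)^\top K^NG^{E,N}-\mathbf 1_{\mathcal D_N}(F^N)^\top K^NG^{E,N}.
\]
The full kernel gives the operator $M_{(F^N)^\top K^N}T_{G^{E,N}}$. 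Under the embedding, $T_{G^{E,N}}$ acts on step functions as $\mathsf G_E^N$, so Assumption~\ref{ass:finite_forward_coeff}(iii) and Lemma~\ref{lem:K_pointwise} bound it by $L\,C\,L_G$. For general $f\in\mathsf H$ we use $T_{G^{E,N}}f=T_{G^{E,N}}\mathsf P_Nf$ together with $\|\mathsf P_N\|\le1$.

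The key step, and the only delicate point, is the diagonal-block operator, which is not a priori controlled by the operator norm of the full kernel. This operator is block-diagonal: on $I_i^N$ it maps $f$ to $(F_i^N)^\top K_i^NG_{ii}^{E,N}\int_{I_i^N}f$. Its norm is therefore $\max_i h_N|(F_i^N)^\top K_i^NG_{ii}^{E,N}|\le C\max_i h_N|G^{E,N}_{ii}|$. To bound $h_N|G_{ii}^{E,N}|$ we test $\mathsf G_E^N$ on the vector $e_i\otimes z$, whose only nonzero component is the $i$-th, equal to $z$. The $i$-th component of the image is $h_NG_{ii}^{E,N}z$, so
\[
h_N^{1/2}\,h_N|G^{E,N}_{ii}z|\le |\mathsf G_E^N(e_i\otimes z)|_N\le L_G h_N^{1/2}|z|,
\]
which gives $h_N|G_{ii}^{E,N}|_{\mathrm{op}}\le L_G$. (Alternatively one could use the cruder bound $h_N|G_{ii}|\le \sqrt{h_N}\|G^{E,N}\|_{L^2(I^2)}$ from (A-coeff), but the operator-testing argument needs no additional assumption.)

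Summing the three pieces yields a deterministic bound independent of $N$, $t$ and $\omega$. This also confirms the paper's heuristic: the bound on the interaction part comes from the value-function bound on the full Riccati operator (Lemma~\ref{lem:Kbar_bound}(a)), with the local part subtracted via the pointwise bound on $K^N$.
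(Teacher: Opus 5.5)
Your proof is correct. It uses the same splitting as the paper, $T_{V_t^N}=M_{(C_t^N)^\top}T_{\bar K_t^N}+M_{(F_t^N)^\top K_t^N}T_{\Pi_N^\circ G_t^{E,N}}$, and handles the first piece and the continuum case essentially as the paper does.

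\textbf{Where the routes differ.} The difference is in the second discrete piece.

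The paper bounds it in one line by the Hilbert--Schmidt norm: $\|T_{\Pi_N^\circ G_t^{E,N}}\|_{\mathcal L(\mathsf H)}\le\|\Pi_N^\circ G_t^{E,N}\|_{L^2(I^2)}\le\|G_t^{E,N}\|_{L^2(I^2)}\le C$. Because $\Pi_N^\circ$ is a contraction on $L^2(I^2)$, the removed diagonal band costs nothing. The price is that this relies on the uniform $L^2(I^2)$ bound on the discrete kernels from (A-coeff).

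You instead use only the operator-norm bound $L_G$ of Assumption~\ref{ass:finite_forward_coeff}(iii). Deleting the diagonal blocks need not be a contraction in operator norm, so you must estimate the block-diagonal part separately. Your test-vector argument does this correctly. It is a hands-on instance of the pinching inequality $\|\sum_i\mathbf 1_{I_i^N}S\,\mathbf 1_{I_i^N}\|\le\|S\|$, applied to $S=M_{(F_t^N)^\top K_t^N}T_{G_t^{E,N}}$. Citing that inequality would shorten the step.

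\textbf{What each route buys.} Your route is slightly longer but needs only the standing finite-population operator bounds. It would therefore survive in regimes where $\|G^{E,N}\|_{L^2(I^2)}$ is not uniformly bounded while $\|\mathsf G_E^N\|_{\mathcal L(\mathbb H_N)}$ is. The paper's route is shorter once (A-coeff) is available.

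\textbf{A slip in your parenthetical aside.} The claimed cruder bound $h_N|G_{ii}^{E,N}|\le\sqrt{h_N}\,\|G^{E,N}\|_{L^2(I^2)}$ is false in general. Take a single nonzero entry $G_{ii}^{E,N}=N\,I_d$: the left side is of order one, while the right side is of order $N^{-1/2}$. From $\|G^{E,N}\|_{L^2(I^2)}^2=h_N^2\sum_{i,j}|G_{ij}^{E,N}|^2$ one only gets $h_N|G_{ii}^{E,N}|\le\|G^{E,N}\|_{L^2(I^2)}$. That still gives the uniform bound you need, and the aside is not used in your main argument, so nothing else is affected.
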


\begin{proof}
Since $\bar K^N$ vanishes on $\mathcal D_N$, $T_{V_t^N}
=
M_{(C_t^N)^\top}T_{\bar K_t^N}
+
M_{(F_t^N)^\top K_t^N}
T_{\Pi_N^\circ G_t^{E,N}}$. The multiplication operators are uniformly bounded by the pointwise bounds on $C^N,F^N,K^N$. Moreover, Lemma~\ref{lem:Kbar_bound} gives $\sup_{N,t}
\|T_{\bar K_t^N}\|_{\mathcal L(\mathsf H)}
\leq C$, and 

\noindent $\|T_{\Pi_N^\circ G_t^{E,N}}\|_{\mathcal L(\mathsf H)}
\leq
\|\Pi_N^\circ G_t^{E,N}\|_{L^2(I^2)}
\leq C$. This proves the discrete estimate. The continuum estimate follows in the same way from $T_{V_t}
=
M_{C_t^\top}T_{\bar K_t}
+
M_{F_t^\top K_t}T_{G_t^E}$ and the continuum operator bound in Lemma~\ref{lem:Kbar_bound}.
\end{proof}

\subsection{Stability and conditional convergence of feedback evaluations}
\label{subsec:feedback_maps}

We now turn to the feedback maps themselves. For $x\in L^2(I;\mathbb R^d)$, we have the finite-$N$ and continuum feedback maps \eqref{eq:finite_feedback_map} with the two-argument continuum map $\Phi_t(x,m)(u)$. When evaluated along the corresponding optimal closed-loop states, these maps give the optimal controls: $\widehat\alpha_t^{N,*}=\Phi_t^N(X_t^{N,*})$ and $\widehat\alpha_t^*=\Phi_t(X_t^*,\bar X_t^*)$. The result below is a conditional stability statement; it becomes convergence of the optimal controls only after the state and empirical-interaction errors have been estimated.

\noindent For random fields $X$ and $m$, set $\mathcal M_t(X,m) := U_tX_t+T_{V_t}m_t+\Gamma_t$ and we introduce the conditional non-concentration process
\[
\theta_t^{X,m}
:=
\operatorname*{ess\,sup}_{u\in I}
\E\left[
|X_t(u)|^2
+
|\mathcal M_t(X,m)(u)|^2
\;\middle|\;
\Fc_t^0
\right],
\quad
\textit{and}
\quad
\Theta_{X,m}
:=
\operatorname{ess\,sup}_{t\in[0,T]}\theta_t^{X,m}.
\]
Note that $\theta_t^{X,m}$ is $\mathcal F_t^0$-measurable for each $t$. We use jointly measurable versions of these conditional moments; the essential supremum in time is taken with respect to $dt$.

\begin{Theorem}[Stability and conditional convergence of the feedback evaluations]
\label{thm:feedback_convergence}
Assume the hypotheses of Theorem~\ref{thm:convergence}. Let
$X^N,X$ be square-integrable adapted state fields on a common probability
space and let $m$ be an $\mathbb F^0$-adapted $\mathsf H$-valued process. Assume \[\sup_N
\E\sup_{t\in[0,T]}\|X_t^N\|_{\mathsf H}^2
+
\E\sup_{t\in[0,T]}
\left(
\|X_t\|_H^2+\|m_t\|_{\mathsf H}^2
\right)
<\infty \quad \textit{and} \quad \E[\Theta_{X,m}]<\infty\]
Then there exists a constant $C>0$, independent of $N$, such that
\begin{align}
\label{eq:feedback_stability_two_arguments}
&
\E\int_0^T
\|
\Phi_t^N(X_t^N)-\Phi_t(X_t,m_t)
\|_\mathsf U^2dt
\leq
C\E\int_0^T
\|X_t^N-X_t\|_{\mathsf H}^2dt
+
C\E\int_0^T
\|T_{V_t^N}X_t^N-T_{V_t}m_t\|_\mathsf U^2dt
+
Cr_N^2.
\end{align}
The constant $C$ may depend on
$\E[\Theta_{X,m}]$, but not on $N$.

\smallskip
\noindent In particular, if $\E\int_0^T \|X_t^N-X_t\|_{\mathsf H}^2dt \longrightarrow0$ and $\E\int_0^T
\|T_{V_t^N}X_t^N-T_{V_t}m_t\|_\mathsf U^2dt
\longrightarrow0$, then
\[
\Phi^N(X^N)
\longrightarrow
\Phi(X,m)
\quad\text{in}\quad
L^2(\Omega\times[0,T];\mathsf U).
\]
\end{Theorem}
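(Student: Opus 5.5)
We prove \eqref{eq:feedback_stability_two_arguments} by splitting the control difference pointwise in $u$ and estimating each term with the gain convergence of Lemma~\ref{lem:gain_convergence}, the operator bounds of Lemma~\ref{lem:operator_bounds}, and the non-concentration process $\theta^{X,m}$.

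\textbf{Decomposition.} Write $\mathcal M_t^N:=\widehat U_t^NX_t^N+T_{V_t^N}X_t^N+\Gamma_t^N$ and $\mathcal M_t:=\mathcal M_t(X,m)$. Then
\[
\Phi_t^N(X_t^N)-\Phi_t(X_t,m_t)
=-(O_t^N)^{-1}\big(\mathcal M_t^N-\mathcal M_t\big)
-\big((O_t^N)^{-1}-O_t^{-1}\big)\mathcal M_t .
\]
Next split $\mathcal M_t^N-\mathcal M_t$ into four pieces:
\[
\widehat U_t^N(X_t^N-X_t),\qquad (\widehat U_t^N-U_t)X_t,\qquad T_{V_t^N}X_t^N-T_{V_t}m_t,\qquad \Gamma_t^N-\Gamma_t .
\]
The factor $(O_t^N)^{-1}$ is bounded pointwise by coercivity (Remark~\ref{rem:O_inverse_discrete} in rescaled form), and so is $\widehat U^N$, by Lemma~\ref{lem:K_pointwise} and the coefficient bounds. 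Consequently:
\begin{itemize}
\item the first piece gives $C\|X_t^N-X_t\|_{\mathsf H}^2$;
\item the third piece is kept as it appears in the statement;
\item the fourth piece gives $C\E\int_0^T\|\Gamma_t^N-\Gamma_t\|^2dt\le C T\,\E\operatorname*{ess\,sup}_t\|\Gamma^N_t-\Gamma_t\|^2\le Cr_N^2$, by \eqref{eq:nonlocal_gain_convergence}.
\end{itemize}

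\textbf{Local gain errors (the main obstacle).} The pieces $(\widehat U_t^N-U_t)X_t$ and $((O_t^N)^{-1}-O_t^{-1})\mathcal M_t$ are products of an error that is small only in $L^2(I)$ (Lemma~\ref{lem:gain_convergence} gives $C\delta_N^K$ deterministically) with random fields $X_t(u)$, $\mathcal M_t(u)$ that are only square integrable. Hölder in $u$ fails, because we control no $L^\infty(I)$ norm of $X$. This is exactly what $\theta^{X,m}$ is for. Since the gain errors are $\mathcal F_t^0$-measurable, we condition on $\mathcal F_t^0$ and apply Fubini:
\[
\E\big[\|(\widehat U_t^N-U_t)X_t\|_{\mathsf U}^2\big]
=\E\int_I|\widehat U_t^N(u)-U_t(u)|^2\,\E\big[|X_t(u)|^2\mid\mathcal F_t^0\big]\,du
\le \E\big[\theta_t^{X,m}\,\|\widehat U_t^N-U_t\|_{L^2(I)}^2\big].
\]
The right-hand side is at most $C(\delta_N^K)^2\,\E[\Theta_{X,m}]$. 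The same argument applies to $\mathcal M_t$ with $(O^N)^{-1}-O^{-1}$. Integrating over $[0,T]$ and using $\delta_N^K\le r_N$ gives $Cr_N^2$, with $C$ depending on $\E[\Theta_{X,m}]$.

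Here $X_t(u)$ denotes the field value at label $u$, i.e.\ the interpretation under which $\theta$ is defined. The one point needing care is the joint measurability and the interchange of conditional expectation with the $du$-integral. I would handle this with the jointly measurable versions fixed before the statement, and check that each gain error is $\mathcal F^0_t\otimes\mathcal B(I)$-measurable.

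\textbf{Conclusion.} Summing the squared pieces gives \eqref{eq:feedback_stability_two_arguments}; the moment assumptions ensure every term is finite. The final convergence assertion then follows at once, since $r_N\to0$ under Assumption~\ref{ass:reg_consistency} and Lemma~\ref{lem:diagonal_band_modulus}.
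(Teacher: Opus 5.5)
Your proposal is correct and follows the paper's proof essentially step by step. You use the same splitting of $\Phi_t^N(X_t^N)-\Phi_t(X_t,m_t)$ into a state/interaction part, bounded via coercivity of $O^N$ and boundedness of $\widehat U^N$, and local gain errors handled by conditioning on $\mathcal F_t^0$ against $\theta_t^{X,m}$ with the deterministic $C\delta_N^K$ bounds of Lemma~\ref{lem:gain_convergence}. As in the paper, the affine gain error comes from \eqref{eq:nonlocal_gain_convergence}. The only slip is that the first equality in your conditioning display should be an inequality, since $|(\widehat U_t^N(u)-U_t(u))X_t(u)|\le|\widehat U_t^N(u)-U_t(u)|_{\mathrm F}\,|X_t(u)|$; this does not affect the conclusion.
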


\begin{proof}
Write $\Phi_t^N(X_t^N)-\Phi_t(X_t,m_t)
=
A_t^N+B_t^N$, where
\[
A_t^N
:={}
-(O_t^N)^{-1}
\Big[
\widehat U_t^N(X_t^N-X_t)
+
T_{V_t^N}X_t^N-T_{V_t}m_t
\Big],
\quad
B_t^N:=
-\big((O_t^N)^{-1}-O_t^{-1}\big)
\mathcal M_t(X,m)
-
(O_t^N)^{-1}
\Big[
(\widehat U_t^N-U_t)X_t
+
(\Gamma_t^N-\Gamma_t)
\Big].
\]

\medskip
\noindent\textit{Step 1: Stability with respect to the state and the
interaction aggregate.} By uniform coercivity, $\operatorname*{ess\,sup}_{u\in I} |(O_t^N(u))^{-1}|_{\mathrm F}\le \sqrt m\,c_R^{-1}$, $dt\otimes d\mathbb P\text{-a.e.}$. Moreover, $\widehat U^N$ is uniformly bounded in $L^\infty(I)$. Consequently,
$\|A_t^N\|_\mathsf U^2
\leq
C\|X_t^N-X_t\|_{\mathsf H}^2
+
C\|T_{V_t^N}X_t^N-T_{V_t}m_t\|_\mathsf U^2$. After integration and expectation,
\begin{align}
\label{eq:feedback_A_estimate}
\E\int_0^T\|A_t^N\|_\mathsf U^2dt
\leq{}&
C\E\int_0^T\|X_t^N-X_t\|_{\mathsf H}^2dt
+
C\E\int_0^T
\|T_{V_t^N}X_t^N-T_{V_t}m_t\|_\mathsf U^2dt.
\end{align}

\medskip
\noindent\textit{Step 2: convergence of the local feedback gains.} Since $(O_t^N)^{-1}-O_t^{-1}$ is $\Fc_t^0$-measurable, conditioning on $\Fc_t^0$ gives
\begin{align*}
\E\left[
\left\|
\big((O_t^N)^{-1}-O_t^{-1}\big)
\mathcal M_t(X,m)
\right\|_\mathsf U^2
\right]
&
\leq
\E\left[
\int_I
\left|
(O_t^N(u))^{-1}-O_t(u)^{-1}
\right|^2
\E\left[
|\mathcal M_t(X,m)(u)|^2
\;\middle|\;
\Fc_t^0
\right]du
\right]
\\
&
\leq
\E\left[
\theta_t^{X,m}
\left\|
(O_t^N)^{-1}-O_t^{-1}
\right\|_{L^2(I)}^2
\right].
\end{align*}
Lemma~\ref{lem:gain_convergence} gives the deterministic essential-supremum estimate $\operatorname*{ess\,sup}_{(t,\omega)}
\left\|
(O_t^N)^{-1}-O_t^{-1}
\right\|_{L^2(I)}^2
\leq
C(\delta_N^K)^2$. Therefore,
\begin{align}
\label{eq:feedback_inverse_gain_product}
&
\E\int_0^T
\left\|
\big((O_t^N)^{-1}-O_t^{-1}\big)
\mathcal M_t(X,m)
\right\|_\mathsf U^2dt
\leq
C(\delta_N^K)^2
\E\int_0^T\theta_t^{X,m}dt
\leq
CT(\delta_N^K)^2
\E[\Theta_{X,m}]
\leq
Cr_N^2.
\end{align}

\noindent The same conditional argument applies to the local gain difference. Indeed,
\begin{align*}
&
\E\left[
\|(\widehat U_t^N-U_t)X_t\|_\mathsf U^2
\right]
\leq
\E\left[
\int_I
|\widehat U_t^N(u)-U_t(u)|^2
\E\left[
|X_t(u)|^2
\;\middle|\;
\Fc_t^0
\right]du
\right]
\leq
\E\left[
\theta_t^{X,m}
\|\widehat U_t^N-U_t\|_{L^2(I)}^2
\right].
\end{align*}
Using again Lemma~\ref{lem:gain_convergence}, we obtain
\begin{equation}
\label{eq:feedback_local_gain_product}
\E\int_0^T
\|(\widehat U_t^N-U_t)X_t\|_\mathsf U^2dt
\leq
Cr_N^2.
\end{equation} Finally, the affine gain difference does not multiply a state field. Uniform coercivity and Lemma~\ref{lem:gain_convergence} give
\begin{align}
\label{eq:feedback_affine_gain}
&
\E\int_0^T
\left\|
(O_t^N)^{-1}
(\Gamma_t^N-\Gamma_t)
\right\|_\mathsf U^2dt
\leq
C\E\int_0^T
\|\Gamma_t^N-\Gamma_t\|_\mathsf U^2dt
\leq
Cr_N^2.
\end{align}
Combining
\eqref{eq:feedback_inverse_gain_product},
\eqref{eq:feedback_local_gain_product} and
\eqref{eq:feedback_affine_gain}, we obtain $\E\int_0^T\|B_t^N\|_\mathsf U^2dt \leq Cr_N^2$. Together with \eqref{eq:feedback_A_estimate}, this proves \eqref{eq:feedback_stability_two_arguments}. The final convergence statement follows immediately from $r_N\to0$.
\end{proof}

\begin{Remark}[Conditional non-concentration and common noise]
\label{rem:non_concentration_common_noise}
The condition $\E[\Theta_{X,m}]<\infty$ is a conditional non-concentration condition in the label variable. It does not require the conditional second moments of the state to be bounded by a deterministic constant. The process $\theta_t^{X,m}$ is allowed to depend on the realization of the common noise. For example, in the scalar model $dX_t(u)=\sigma_0(u)\,dW_t^0$ with deterministic initial condition $\xi\in L^\infty(I)$, $\E\left[
|X_t(u)|^2
\;\middle|\;
\Fc_t^0
\right]
=
|\xi(u)+\sigma_0(u)W_t^0|^2$. This quantity is not uniformly bounded by a deterministic constant, but its supremum in time has finite moments under the standing boundedness assumptions. In Theorem~\ref{thm:feedback_convergence}, no localization is needed.
Indeed, Lemma~\ref{lem:gain_convergence} bounds $(O^N)^{-1}-O^{-1}$ and $\widehat U^N-U$ deterministically in $L^\infty(\Omega\times[0,T];L^2(I))$. Hence their products with the random state fields are controlled directly by conditioning on $\Fc_t^0$.

\noindent A localization argument may nevertheless be required later when estimating the interaction error $T_{V_t^N}X_t^N-T_{V_t}m_t$, because the available estimate for the interaction-gain error is $\mathbb E\left[\operatorname*{ess\,sup}_{t\in[0,T]} \|V_t^N- V_t\|_{L^2(I^2)}^2\right]\le Cr_N^2$, whereas a deterministic bound of order $r_N$ is not available. Any resulting degradation of the final convergence rate therefore comes from the empirical interaction comparison, not from the local feedback gains.
\end{Remark}

\begin{Corollary}[Quantitative feedback evaluation]
\label{cor:feedback_evaluation}
Under the assumptions of Theorem~\ref{thm:feedback_convergence}, set 

\[a_N^2 := \E\int_0^T \|X_t^N-X_t\|_{\mathsf H}^2dt, \qquad b_N^2 := \E\int_0^T \|T_{V_t^N}X_t^N-T_{V_t}m_t\|_{\mathsf U}^2dt \] Then
\[
\E\int_0^T
\|
\Phi_t^N(X_t^N)-\Phi_t(X_t,m_t)
\|_\mathsf U^2dt
\leq
C\left(
a_N^2+b_N^2+r_N^2
\right).
\]
Consequently, if $a_N\to0$ and $b_N\to0$, then
\[
\Phi^N(X^N)
\longrightarrow
\Phi(X,m)
\quad\text{in}\quad
L^2(\Omega\times[0,T];\mathsf U).
\]
More precisely, if $a_N+b_N\leq Cs_N$ for some deterministic sequence $s_N\to0$, then the feedback evaluations converge with non-squared rate $O(s_N+r_N)$.
\end{Corollary}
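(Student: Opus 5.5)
This corollary follows directly from Theorem~\ref{thm:feedback_convergence}, so the plan is to specialize estimate \eqref{eq:feedback_stability_two_arguments} and then read off the two consequences.

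\emph{Step 1.} By definition of $a_N$ and $b_N$, the first two terms on the right-hand side of \eqref{eq:feedback_stability_two_arguments} are exactly $Ca_N^2$ and $Cb_N^2$. The third term is $Cr_N^2$. This gives the displayed bound, with the same constant $C$. That constant depends on $\E[\Theta_{X,m}]$ and on the uniform bounds of Lemmas~\ref{lem:gain_convergence} and~\ref{lem:operator_bounds}, but not on $N$. No new estimate is needed here: all products of gain differences with random state fields were already handled there by conditioning on $\Fc_t^0$.

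\emph{Step 2.} For the qualitative convergence, recall that $r_N\to0$ by Remark~\ref{rem:rate_block_compatible}. That remark uses Assumption~\ref{ass:reg_consistency} and Lemma~\ref{lem:diagonal_band_modulus}. If in addition $a_N,b_N\to0$, the right-hand side of the bound tends to zero. This is precisely convergence of $\Phi^N(X^N)$ to $\Phi(X,m)$ in $L^2(\Omega\times[0,T];\mathsf U)$.

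\emph{Step 3.} For the rate, assume $a_N+b_N\le Cs_N$. Then
\[
a_N^2+b_N^2\le (a_N+b_N)^2\le C s_N^2.
\]
The bound therefore becomes $C(s_N^2+r_N^2)\le C(s_N+r_N)^2$. Taking square roots gives the non-squared rate $O(s_N+r_N)$ in the $L^2(\Omega\times[0,T];\mathsf U)$ norm.

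There is no genuine obstacle in the corollary itself. The only point to check is that the constant in Theorem~\ref{thm:feedback_convergence} is uniform in $N$ once $\E[\Theta_{X,m}]<\infty$ and the uniform moment bound on $X^N$ are fixed. Both are hypotheses inherited from the theorem. The substantive difficulty is deferred to Section~\ref{sec:state_convergence}: actually bounding $a_N$ and $b_N$ for the optimal closed-loop states, and in particular $b_N$, which involves the empirical interaction comparison.
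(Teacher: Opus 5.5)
Your proposal is correct and follows the paper's own argument exactly: substitute $a_N^2$ and $b_N^2$ into \eqref{eq:feedback_stability_two_arguments}, use $r_N\to0$ for the qualitative convergence, and take square roots for the rate. Your explicit inequality $a_N^2+b_N^2\le(a_N+b_N)^2\le Cs_N^2$ fills in the one-line step the paper leaves implicit.
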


\begin{proof}
The estimate follows directly from
\eqref{eq:feedback_stability_two_arguments}. The convergence statement
and the rate follow by taking square roots.
\end{proof}

\begin{Remark}[The remaining empirical interaction error]
\label{rem:empirical_feedback_error}
The term $T_{V_t^N}X_t^N-T_{V_t}m_t$ has intentionally been left explicit in Theorem~\ref{thm:feedback_convergence}. For the optimal systems, $m_t=\bar X_t^*$, and this term becomes $T_{V_t^N}X_t^{N,*}-T_{V_t}\bar X_t^*$. It cannot in general be bounded solely by $\|X_t^{N,*}-X_t^*\|_{L^2(I)}$, because $X_t^*$ is an individual random state whereas $\bar X_t^*$ is its conditional mean. Its estimate requires a conditional empirical-fluctuation argument, followed by the kernel and gain consistency estimates. This argument is carried out in Section~\ref{sec:state_convergence}.
\end{Remark}

\begin{Remark}[Centralized and decentralized feedback structures]
\label{rem:decentralized_link}
The exact finite-dimensional feedback $\Phi_t^N(x) = -(O_t^N)^{-1} \left( \widehat U_t^Nx + T_{V_t^N}x + \Gamma_t^N \right)$ is centralized. Indeed, if $u\in I_i^N$, then $(T_{V_t^N}x)(u) = h_N\sum_{j\neq i}V_{ij,t}^Nx_j$, so that the control of agent $i$ depends on the complete finite state  vector $(x_1,\ldots,x_N)$. This is consistent with the finite social planner problem, whose admissible centralized controls are adapted to the global finite-system filtration.

\smallskip
\noindent By contrast, the limiting feedback has the representative-agent form
$\Phi_t(x,m)(u)
=
-O_t(u)^{-1}
\left(
U_t(u)x(u)
+
T_{V_t}m(u)
+
\Gamma_t(u)
\right)$. When $m=\bar X_t^*$, it uses only the individual state, the label and an
$\Fc_t^0$-adapted conditional aggregate. It therefore suggests the decentralized lifted feedback $\Phi_t^{\mathrm{dec}}(u,x)
:=
-O_t(u)^{-1}
\left(
U_t(u)x
+
T_{V_t}\bar X_t^*(u)
+
\Gamma_t(u)
\right)$. 

\noindent For the finite population, Section~\ref{sec:decentralized_lift} constructs a decentralized control by taking cell averages of the limiting feedback coefficients. The resulting control uses each agent's own state and the common noise information. Its admissibility and asymptotic optimality are proved in that section. In particular, the comparison $V^{N,\mathrm{cent}}\le V^{N,\mathrm{dec}} \le J^N(\alpha^{N,\mathrm{lift}})$ relates the lifted strategy to both finite-population optimization problems.
\end{Remark}

\section{Convergence of the optimal trajectories and value functions}
\label{sec:state_convergence}

Theorem~\ref{thm:feedback_convergence} establishes the stability of the  feedback maps conditionally on the proximity of the state fields. In this  section we close the loop. A direct $L^2(I)$ comparison between the finite state vector and a continuum random field would require an uncountable family of independent idiosyncratic Brownian motions, which is not part of the representative-agent model. Instead, we use a cellwise coupling: for every finite agent, we construct a copy of the continuum optimal state whose label is uniformly distributed in the corresponding grid cell and which is driven by the same private Brownian motion as that agent. This gives the correct mean-square formulation of the conditional propagation-of-chaos estimate and preserves the exact cell-average identities for step kernels.

\smallskip
\noindent Throughout this section, we work under Assumption~\ref{ass:reg_consistency} 
and the standing assumptions of the previous sections. In particular, we 
recall that the coefficients and kernels, both discrete and continuum, are 
$\F^0$-adapted, so that they may be pulled out of conditional expectations 
given $\Fc_t^0$.

\subsection{Cellwise continuum copies and empirical fluctuations}

To lighten notation, throughout this section we set $X^*:=\widehat X$, $\bar X^*:=\widehat{\bar X}$, $\widehat\alpha^*:=\widehat\alpha$, for the continuum optimizer, and $X^{N,*}:=\widehat{\mathbf X}^{\,N}$, $\widehat\alpha^{N,*}:= \widehat{\boldsymbol\alpha}^{\,N}$, for the finite centralized optimizer. 

\noindent For $f\in \mathsf H$, recall the orthogonal cell-average projection, $(\mathsf P_N f)(u)$ and for $\mathbf x^N, \mathbf y^N\in\mathbb H_N$, set $\langle \mathbf x^N,\mathbf y^N\rangle_N:=h_N\sum_i\langle x^i,y^i\rangle$.

\begin{Assumption}[Cellwise coupling of the initial conditions]
\label{ass:initial_cellwise_coupling}
For every $N$, the probability space can be enlarged by independent labels $U_1^N,\ldots,U_N^N$, independent of the common noise, with $\mathcal L(U_i^N)(du) = h_N^{-1}\mathbf1_{I_i^N}(u)\,du$. Using independent private randomizers and the Brownian motions $W^{1,N},\ldots,W^{N,N}$, we construct cellwise copies $(\widetilde\xi_i^N,\widetilde X^{i,N,*}, \widetilde\alpha^{i,N,*})$ of the continuum optimal system. Conditional on $\Fc_T^0$, the full trajectories of the cellwise copies are independent. Moreover, by non-anticipativity, for each $t\in[0,T]$ the stopped variables $\big( \widetilde\xi_i^N, \widetilde X_{\cdot\wedge t}^{i,N,*}, \widetilde\alpha_{\cdot\wedge t}^{i,N,*} \big)$, $1\leq i\leq N$, are conditionally independent given $\Fc_t^0$. The $i$-th copy has the conditional law of the continuum optimal system when the label is uniformly distributed on $I_i^N$. The finite and continuum-copy initial conditions are coupled so that
\[
\varepsilon_{\xi,N}^2
:=
\E\left[
|\xi^N-\widetilde\xi^N|_N^2
\right]
\longrightarrow0,
\quad
\sup_{N\geq1}
\E\left[
|\xi^N|_N^2+|\widetilde\xi^N|_N^2
\right]
<\infty.
\]
\end{Assumption}

\noindent The preceding coupling assumption is compatible with a measurable representation of the continuum closed-loop equation. More explicitly, the $i$-th copy is driven by $(U_i^N,W^{i,N},W^0)$ and satisfies
\begin{align*}
d\widetilde X_t^{i,N,*}
={}&
\Big[
A_t(U_i^N)+B_t(U_i^N)\widetilde X_t^{i,N,*}
+(T_{G_t^B}\bar X_t^*)(U_i^N)
+C_t(U_i^N)\widetilde\alpha_t^{i,N,*}
\Big]dt
\\
&+
\Big[
D_t(U_i^N)+E_t(U_i^N)\widetilde X_t^{i,N,*}
+(T_{G_t^E}\bar X_t^*)(U_i^N)
+F_t(U_i^N)\widetilde\alpha_t^{i,N,*}
\Big]dW_t^{i,N}
+
\Sigma_t^0(U_i^N)dW_t^0,
\end{align*}
where
\[
\widetilde\alpha_t^{i,N,*}
=
-O_t(U_i^N)^{-1}
\Big[
U_t(U_i^N)\widetilde X_t^{i,N,*}
+(T_{V_t}\bar X_t^*)(U_i^N)
+\Gamma_t(U_i^N)
\Big].
\]
Set
\[
\bar x_{i,t}^N
:=
\E[\widetilde X_t^{i,N,*}\mid\Fc_t^0]
=
h_N^{-1}\int_{I_i^N}\bar X_t^*(u)\,du.
\]

\noindent We use the representative-state counterpart of the conditional moment process introduced in Section~\ref{subsec:feedback_maps}. Set
\[
\begin{aligned}
\theta_t^*
&:=\operatorname*{ess\,sup}_{u\in I}
\mathbb E\!\left[
|X_t^*|^2+
|\mathcal M_t(X^*,\bar X^*)(u)|^2
\mid\mathcal F_t^0,U=u\right],\\
\Theta_* 
&:=\max\Big\{
\operatorname*{ess\,sup}_{t\in[0,T]}\theta_t^*;\quad
\operatorname*{ess\,sup}_{u\in I}\mathbb E[|\xi|^2\mid U=u];\quad
\operatorname*{ess\,sup}_{u\in I}
\mathbb E[|X_T^*|^2\mid\mathcal F_T^0,U=u]
\Big\}.
\end{aligned}
\]
where
$\mathcal M_t(X^*,\bar X^*)(u) =U_t(u)X_t^*+(T_{V_t}\bar X_t^*)(u)+\Gamma_t(u)$. We choose jointly measurable versions of the conditional moments. The time essential supremum uses Lebesgue measure; the two endpoint terms retain the state-moment bounds needed for the initial and terminal costs. Uniform coercivity gives the corresponding conditional second-moment bound for $\widehat\alpha_t^*$, for $dt\otimes d\mathbb P$-almost every $(t,\omega)$.

\begin{Lemma}[Conditional empirical fluctuation]
\label{lem:conditional_empirical_fluctuation}
Let $L_t^N$ be step kernel and define $(\mathbb L_t^Nz^N)^i := h_N\sum_{j=1}^NL_{ij,t}^Nz^j$, $L_t^N$ being $\Fc_t^0$-measurable. Then
$\E\left[
\left|
\mathbb L_t^N
(\widetilde X_t^{N,*}-\bar x_t^N)
\right|_N^2
\,\middle|\,
\Fc_t^0
\right]
\leq
h_N\theta_t^*\|L_t^N\|_{L^2(I^2)}^2$, where $\theta_t^*$ is defined above and
$\theta_t^*\le\Theta_*$ for $dt\otimes d\mathbb P$-almost every
$(t,\omega)$. Moreover, for $u\in I_i^N$,
\begin{equation}
\label{eq:cell_average_identity}
h_N\sum_{j=1}^NL_{ij,t}^N\bar x_{j,t}^N
=
(T_{L_t^N}\bar X_t^*)(u).
\end{equation}
\end{Lemma}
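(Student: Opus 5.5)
The plan is to expand the conditional second moment of the weighted sum, using conditional independence of the cellwise copies given $\mathcal F_t^0$ to kill the cross terms. Set $\zeta_j:=\widetilde X_t^{j,N,*}-\bar x_{j,t}^N$. Since $\bar x_{j,t}^N=\E[\widetilde X_t^{j,N,*}\mid\mathcal F_t^0]$, each $\zeta_j$ is conditionally centred. By Assumption~\ref{ass:initial_cellwise_coupling}, the stopped variables $\widetilde X_t^{j,N,*}$, $1\le j\le N$, are conditionally independent given $\mathcal F_t^0$. Because $L_t^N$ is $\mathcal F_t^0$-measurable, it can be pulled out of the conditional expectation. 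Hence, for fixed $i$,
\[
\E\Big[\Big|h_N\sum_jL_{ij,t}^N\zeta_j\Big|^2\,\Big|\,\mathcal F_t^0\Big]
=h_N^2\sum_{j}\E\big[|L_{ij,t}^N\zeta_j|^2\mid\mathcal F_t^0\big]
\le h_N^2\sum_j|L_{ij,t}^N|_{\mathrm F}^2\,\E\big[|\zeta_j|^2\mid\mathcal F_t^0\big],
\]
because the off-diagonal terms $\E[\langle L_{ij}\zeta_j,L_{ik}\zeta_k\rangle\mid\mathcal F_t^0]$ factor and vanish for $j\ne k$.

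Next, each conditional variance is bounded by $\theta_t^*$. Write $\E[|\zeta_j|^2\mid\mathcal F_t^0]\le\E[|\widetilde X_t^{j,N,*}|^2\mid\mathcal F_t^0]$. The $j$-th copy has label $U_j^N$ uniform on $I_j^N$, independent of $W^0$, so disintegrating in the label gives
\[
\E[|\widetilde X_t^{j,N,*}|^2\mid\mathcal F_t^0]=h_N^{-1}\int_{I_j^N}\E[|X_t^*|^2\mid\mathcal F_t^0,U=u]\,du\le\theta_t^*.
\]
This step uses that the copy has the conditional law of the continuum optimizer given $\mathcal F_t^0$ and $U=u$, which is part of Assumption~\ref{ass:initial_cellwise_coupling}.

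Summing over $i$ with the weight $h_N$ then gives
\[
\E\big[|\mathbb L_t^N\zeta|_N^2\mid\mathcal F_t^0\big]\le\theta_t^*\,h_N\cdot h_N^2\sum_{i,j}|L_{ij,t}^N|^2_{\mathrm F}=h_N\theta_t^*\|L_t^N\|_{L^2(I^2)}^2,
\]
which is the claimed fluctuation bound.

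The cell-average identity \eqref{eq:cell_average_identity} is direct. For $u\in I_i^N$, the step kernel satisfies
\[
(T_{L_t^N}\bar X_t^*)(u)=\sum_jL_{ij,t}^N\int_{I_j^N}\bar X_t^*(v)\,dv=h_N\sum_jL_{ij,t}^N\bar x_{j,t}^N,
\]
using the stated formula for $\bar x_{j,t}^N$. That formula itself follows from the same disintegration applied to the first moment, together with $\bar X_t^*(u)=\E[X_t^*\mid\mathcal F_t^0,U=u]$.

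The main obstacle I expect is the measure-theoretic justification of the disintegration step. One must ensure that $\E[\,\cdot\mid\mathcal F_t^0]$ of a function of $(U_j^N,W^{j,N},W^0)$ equals the label average of the jointly measurable versions of the continuum conditional moments. My first approach is to use the measurable representation from Proposition~\ref{prop:measurable_rpz}, applied with the frozen field $m=\bar X^*$, together with the independence of $(U_j^N,W^{j,N})$ from $W^0$. This reduces the claim to a Fubini computation on the product space, and the $dt\otimes d\P$-a.e.\ qualifier absorbs the null sets.
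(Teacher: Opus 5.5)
Your proposal is correct and follows essentially the same route as the paper. Conditional centring and conditional independence of the cellwise copies given $\mathcal F_t^0$ kill the cross terms. Each conditional second moment is bounded by $\theta_t^*$ using the uniform cell law of $U_j^N$. The identity \eqref{eq:cell_average_identity} follows from the cell-average formula for $\bar x_{j,t}^N$ and the cellwise constancy of $L_t^N$.

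The disintegration step you flag as the main obstacle needs no separate argument. Assumption~\ref{ass:initial_cellwise_coupling} supplies it directly, since the $j$-th copy has the conditional law of the continuum optimizer with label uniform on $I_j^N$; this is also how the paper uses it. Finally, $\theta_t^*\le\Theta_*$ a.e.\ is immediate from the definition of $\Theta_*$.
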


\begin{proof}
Conditional on $\Fc_t^0$, the random variables $\widetilde X_t^{j,N,*}-\bar x_{j,t}^N$ are independent and centered. Consequently, all cross terms with different indices vanish, and
\[
\E\left[
\left|
\mathbb L_t^N
(\widetilde X_t^{N,*}-\bar x_t^N)
\right|_N^2
\,\middle|\,
\Fc_t^0
\right]
=
h_N^3\sum_{i,j=1}^N
\E\left[
\left|
L_{ij,t}^N
(\widetilde X_t^{j,N,*}-\bar x_{j,t}^N)
\right|^2
\,\middle|\,
\Fc_t^0
\right]
\leq
h_N^3\theta_t^*\sum_{i,j=1}^N|L_{ij,t}^N|^2
=
h_N\theta_t^*\|L_t^N\|_{L^2(I^2)}^2
\]
\noindent Finally, $h_N\sum_jL_{ij,t}^N\bar x_{j,t}^N = \sum_j\int_{I_j^N}L_t^N(u,v)\bar X_t^*(v)\,dv = (T_{L_t^N}\bar X_t^*)(u)$ with the definition of $\bar x_{j,t}^N$ and the fact that $L_t^N$ is constant on each cell gives the result.
\end{proof}

\subsection{Convergence of the centralized optimal state and control}

\begin{Theorem}[Cellwise convergence of the optimal closed-loop states]
\label{thm:state_convergence}
Assume the hypotheses of Theorem~\ref{thm:convergence} and
Assumption~\ref{ass:initial_cellwise_coupling}, and suppose that
$\E[\Theta_*]<\infty$. Then there exists a deterministic constant $C>0$,
independent of $N$ and $M$, such that, for every $M\geq1$,
\begin{align}
\label{eq:state_convergence_localized}
\E\left[
\sup_{t\in[0,T]}
|X_t^{N,*}-\widetilde X_t^{N,*}|_N^2
\right]
\leq
C\Big(
\varepsilon_{\xi,N}^2
+Mr_N^2
+\E[\Theta_*\mathbf1_{\{\Theta_*>M\}}]
\Big).
\end{align}
In particular, $X^{N,*}-\widetilde X^{N,*}\to0$ in empirical $S^2$ as
soon as $\varepsilon_{\xi,N}\to0$. If $\Theta_*\in L^2(\Omega)$, then
\begin{equation}
\label{eq:state_convergence_L2_rate}
\E\left[
\sup_{t\in[0,T]}
|X_t^{N,*}-\widetilde X_t^{N,*}|_N^2
\right]
\leq
C(\varepsilon_{\xi,N}^2+r_N).
\end{equation}
If $\Theta_*$ is essentially bounded by a deterministic constant, the sharper rate estimate $C(\varepsilon_{\xi,N}^2+r_N^2)$ is recovered.

\end{Theorem}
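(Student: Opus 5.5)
We compare the two closed-loop systems agent by agent. Set $\Delta_t^i:=X_t^{i,N,*}-\widetilde X_t^{i,N,*}$ and $\Delta_t:=(\Delta_t^i)_i\in\mathbb H_N$. Both processes are driven by the same $W^{i,N}$ and $W^0$, so $\Delta$ solves a linear SDE without additive common-noise term beyond $(\Sigma_i^{0,N}-\Sigma_t^0(U_i^N))\,dW^0$. We then apply It\^o's formula to $|\Delta_t|_N^2$, use BDG, and close with Gr\"onwall.

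The drift and diffusion differences are split into three groups.

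First, there are \emph{stability terms}, linear in $\Delta$. They come from $\mathsf M_{B^N},\mathsf M_{E^N},\mathsf G^N_{B},\mathsf G^N_{E}$ and from the finite feedback $\Phi^N$ applied to the difference. These are bounded by $C|\Delta_t|_N^2$ using Assumption~\ref{ass:finite_forward_coeff}, Lemma~\ref{lem:K_pointwise}, uniform coercivity and Lemma~\ref{lem:operator_bounds}.

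Second, there are \emph{local consistency terms}. Examples are $(B_i^N-B_t(U_i^N))\widetilde X^{i}$, the analogous terms for $C,E,F$, the gain differences $(O^N)^{-1}-O^{-1}$ and $\widehat U^N-U$, the difference $\Gamma^N-\Gamma$, and the common-noise difference. By the argument of Step~2 in Theorem~\ref{thm:feedback_convergence}, conditioning on $\mathcal F_t^0$ and using the deterministic $L^\infty(\Omega\times[0,T];L^2(I))$ bounds of Lemmas~\ref{lem:K_uniform} and \ref{lem:gain_convergence}, their contribution is at most $C(\delta_N^K)^2\E\int_0^T\theta_t^*dt+Cr_N^2$. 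For this we need a cellwise version of (A-coeff): the sampled-versus-uniform-in-cell label discrepancy is again controlled by the Lipschitz-on-blocks modulus plus the interface cells, which gives $\delta_N^K$. We use (A-Sigma) for the $dW^0$ term.

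Third, there are \emph{interaction terms}, for instance $h_N\sum_j G^{N}_{B,ij}\widetilde X^{j}-(T_{G^B}\bar X^*)(U_i^N)$, and similarly for $G^E$ and $V$. For each we insert $\bar x^N$ and write it as
\[
\mathbb L^N(\widetilde X^{N,*}-\bar x^N)+\big[T_{L^N}\bar X^*-T_{L}\bar X^*\big](U_i^N)+\big[T_L\bar X^*(\cdot)\text{ on the cell}\big]\text{-oscillation},
\]
using \eqref{eq:cell_average_identity}. The first piece is handled by Lemma~\ref{lem:conditional_empirical_fluctuation} and gives $h_N\theta_t^*\|L^N\|^2_{L^2(I^2)}\le Ch_N\theta_t^*$. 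The second is bounded by $\|L^N-L\|_{L^2(I^2)}\|\bar X^*_t\|_{\mathsf H}$. The third is bounded by the Lipschitz-on-blocks regularity of $G^B,G^E$ together with the row bounds (A-row/col) for the $\bar K$ part of $V$, again conditionally on $\mathcal F^0_t$. For $G^B,G^E$ the kernel error is deterministic, $\varepsilon_N^{\rm ker}$, so this is harmless. For $V$ it is the diagonal term $V$ versus $\Pi_N^\circ$ and the random error $\|V^N_t-V_t\|_{L^2(I^2)}$.

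This last product is the main obstacle. Lemma~\ref{lem:gain_convergence} gives $\E\operatorname{ess\,sup}_t\|V_t^N-V_t\|^2\le Cr_N^2$ only in expectation, so it cannot be pulled out of $\E[\,\cdot\,\theta^*]$ deterministically. We localize on $\{\Theta_*\le M\}$. On this event the conditional second moment of $\bar X^*$, which is dominated by $\theta^*$, is at most $M$, and the term contributes $M\,\E\operatorname{ess\,sup}_t\|V^N_t-V_t\|^2\le CMr_N^2$. On the complement we use the uniform operator bounds of Lemma~\ref{lem:operator_bounds} and $\E\|\bar X_t^*\|^2\le\theta^*$ to bound it by $C\E[\Theta_*\mathbf 1_{\{\Theta_*>M\}}]$. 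The same split is applied to $\Gamma^N-\Gamma$ and to the diagonal-band remainder $\mathbf 1_{\mathcal D_N}V$, whose mass is $\le C r_N$ by Lemma~\ref{lem:diagonal_band_modulus}. The $h_N\theta^*$ fluctuation term is absorbed since $h_N\le r_N^2$.

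Collecting terms, Gr\"onwall with BDG yields \eqref{eq:state_convergence_localized}, with $C$ independent of $M$. The qualitative convergence follows by letting $N\to\infty$ and then $M\to\infty$, using $\E\Theta_*<\infty$ and dominated convergence. If $\Theta_*\in L^2$, Chebyshev gives $\E[\Theta_*\mathbf 1_{\{\Theta_*>M\}}]\le \E\Theta_*^2/M$. Choosing $M=r_N^{-1}$ gives the rate $C(\varepsilon_{\xi,N}^2+r_N)$. If $\Theta_*\le M_0$ almost surely, taking $M=M_0$ kills the tail term and gives $C(\varepsilon_{\xi,N}^2+r_N^2)$.
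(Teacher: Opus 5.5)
Your proposal is correct and follows essentially the same route as the paper: a synchronous coupling estimated with BDG and Gr\"onwall, conditioning on $\mathcal F_t^0$ against the deterministic local-gain bounds, the cell-average identity together with Lemma~\ref{lem:conditional_empirical_fluctuation} for the empirical interaction terms, and localization on $\{\Theta_*\le M\}$ only for the random kernel error $V^N-V$, followed by the same choices $M=r_N^{-1}$ and $M$ equal to the deterministic bound on $\Theta_*$. Two of your extra steps are unnecessary. First, your third ``oscillation'' piece vanishes identically, because $T_{L^N}\bar X_t^*$ is constant on each cell when $L^N$ is a step kernel, so no label regularity of $\bar K$ is needed (and none is assumed). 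Second, (A-coeff) is already stated for the step embeddings, so conditioning on the cell-uniform label gives $\theta_t^*\|\beta_t^N-\beta_t\|_{L^2(I)}^2$ directly, without a separate cellwise refinement; likewise $\Gamma^N-\Gamma$ needs no localization, since it does not multiply a state.
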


\begin{proof}
Set
$\Delta X_t^N:=X_t^{N,*}-\widetilde X_t^{N,*}$ and
$\Delta\alpha_t^N:=\widehat\alpha_t^{N,*}
-\widetilde\alpha_t^{N,*}$, and define
$\phi_N(t):=\E[\sup_{s\leq t}|\Delta X_s^N|_N^2]$.

\medskip
\noindent\textit{Step 1: stability of the state equations.} Subtracting the two synchronously coupled equations and applying the finite-dimensional BDG and Young inequalities in the empirical norm gives
\begin{align}
\label{eq:sampled_state_stability}
\phi_N(t)
\leq
C\varepsilon_{\xi,N}^2
+C\E\int_0^t
\big(
|\Delta b_s^N|_N^2
+|\Delta\sigma_s^N|_{N,\mathrm{HS}}^2
+|\Delta\sigma_s^{0,N}|_{N,\mathrm{HS}}^2
\big)ds.
\end{align}
The local terms are decomposed like that : $B_t^{i,N}X_t^{i,N,*}  -B_t(U_i^N)\widetilde X_t^{i,N,*} = B_t^{i,N}\Delta X_t^{i,N} +\big(B_t^{i,N}-B_t(U_i^N)\big) \widetilde X_t^{i,N,*}$, with analogous decompositions for $C,E,F$. Since $U_i^N$ is uniform on $I_i^N$, conditioning on $\Fc_t^0$ yields, for any such local coefficient
$\beta$ and for $Z=X^*$ or $\widehat\alpha^*$, \[\E\left[
h_N\sum_i
|\beta_t^{i,N}-\beta_t(U_i^N)|^2
|\widetilde Z_t^{i,N}|^2
\right]
\leq
C\E\left[
\theta_t^*
\|\beta_t^N-\beta_t\|_{L^2(I)}^2
\right]
\leq
Cr_N^2\E[\Theta_*]\]

\noindent Here the last inequality uses the deterministic pathwise consistency bound from Assumption~\ref{ass:reg_consistency}. The additive errors $A^N-A$ and $D^N-D$ are simpler. Assumption~\ref{ass:reg_consistency}\textup{(A-Sigma)} also gives \[\E\int_0^T
h_N\sum_i
|\Sigma_t^{0,i,N}-\Sigma_t^0(U_i^N)|^2dt
\leq
T(\varepsilon_N^\Sigma)^2\].

\noindent For $G=G^B$ or $G^E$, the interaction difference is decomposed as
\begin{align}
\label{eq:sampled_interaction_decomposition}
\mathsf G_t^NX_t^{N,*}
-(T_{G_t}\bar X_t^*)(U^N)
={}
\mathsf G_t^N\Delta X_t^N
+\mathsf G_t^N(\widetilde X_t^{N,*}-\bar x_t^N)
+
\big(T_{G_t^N-G_t}\bar X_t^*\big)(U^N),
\end{align}
where \eqref{eq:cell_average_identity} was used. The first term is bounded by the uniform operator norm of $\mathsf G_t^N$, the second contributes $Ch_N\E[\Theta_*]$ by Lemma~\ref{lem:conditional_empirical_fluctuation}, and the last one is bounded by $\E\left[
\|T_{G_t^N-G_t}\bar X_t^*\|_{\mathsf H}^2
\right]
\leq
(\varepsilon_N^{\mathrm{ker}})^2
\E[\Theta_*]$. Since $h_N\leq r_N^2$, collecting the drift and diffusion estimates in
\eqref{eq:sampled_state_stability} gives
\begin{align}
\label{eq:state_before_feedback}
\phi_N(t)
\leq
C\varepsilon_{\xi,N}^2
+C\int_0^t\phi_N(s)\,ds
+C\E\int_0^t|\Delta\alpha_s^N|_N^2ds
+Cr_N^2.
\end{align}

\medskip
\noindent\textit{Step 2: sampled feedback comparison.}
The proof of Theorem~\ref{thm:feedback_convergence}, applied cellwise, gives the same local-gain estimates. The only new term is the empirical interaction in the feedback. It has the exact decomposition
\begin{align}
\label{eq:sampled_feedback_aggregate}
\mathbb V_t^NX_t^{N,*}
-(T_{V_t}\bar X_t^*)(U^N)
={}&
\mathbb V_t^N\Delta X_t^N
+\mathbb V_t^N(\widetilde X_t^{N,*}-\bar x_t^N)
+
\big(T_{V_t^N-V_t}\bar X_t^*\big)(U^N).
\end{align}
The first term is controlled by Lemma~\ref{lem:operator_bounds}. For the second term, Lemma~\ref{lem:Kbar_bound}(b), the local coefficient bounds, and Assumption~\ref{ass:reg_consistency} give \[\sup_{N\ge1}\operatorname {ess\,sup}_{(t,\omega)} \|V_t^N\|_{L^2(I^2)} \le C\sup_{N\ge1}\operatorname {ess\,sup}_{(t,\omega)} \big(\|\bar K_t^N\|_{L^2(I^2)} +\|G_t^{E,N}\|_{L^2(I^2)}\big) \le C\].

\noindent Hence Lemma~\ref{lem:conditional_empirical_fluctuation} yields \[\mathbb E\left| \mathbb V_t^N(\widetilde X_t^{N,*}-\bar x_t^N) \right|_N^2 \le h_N\mathbb E\!\left[\theta_t^* \|V_t^N\|_{L^2(I^2)}^2\right] \le Ch_N\mathbb E[\Theta_*]\quad \text{for a.e. }t\]. 

\noindent For the last term, on $\{\Theta_*\leq M\}$ by Lemma\ref{lem:Kbar_bound}(b) and \eqref{eq:nonlocal_gain_convergence} we have, 

\[\E\int_0^T \|T_{V_t^N-V_t}\bar X_t^*\|_\mathsf U^2 \mathbf1_{\{\Theta_*\leq M\}}dt \leq M\E\int_0^T\|V_t^N-V_t\|_{L^2(I^2)}^2dt \leq CMr_N^2\]

\noindent On $\{\Theta_*>M\}$, the uniform operator bounds on $T_{V^N}$ and $T_V$ give $\|T_{V_t^N-V_t}\bar X_t^*\|_\mathsf U^2
\leq
C\|\bar X_t^*\|_{\mathsf H}^2
\leq
C\Theta_*$, and therefore the complementary contribution is bounded by $CT\E[\Theta_*\mathbf1_{\{\Theta_*>M\}}]$.

\smallskip
\noindent The local-gain differences satisfy the deterministic bound \eqref{eq:local_gain_convergence} so their products with the sampled continuum state are controlled by conditioning, exactly as in Theorem~\ref{thm:feedback_convergence}. The affine gain
difference is controlled directly by \eqref{eq:nonlocal_gain_convergence}. We conclude that, for every
$t\in[0,T]$,
\begin{align}
\label{eq:sampled_control_estimate}
\E\int_0^t|\Delta\alpha_s^N|_N^2ds
\leq
C\int_0^t\phi_N(s)\,ds
+C\Big(
Mr_N^2
+\E[\Theta_*\mathbf1_{\{\Theta_*>M\}}]
\Big).
\end{align}

\medskip
\noindent\textit{Step 3: Gronwall argument.}
Substituting \eqref{eq:sampled_control_estimate} into \eqref{eq:state_before_feedback} and applying Gronwall's lemma proves \eqref{eq:state_convergence_localized}. For qualitative convergence, choose any $M_N\uparrow\infty$ such that $M_Nr_N^2\to0$ and use the uniform integrability of the single variable $\Theta_*$. If $\Theta_*\in L^2$, choose $M=r_N^{-1}$ and use $\E[\Theta_*\mathbf1_{\{\Theta_*>M\}}] \leq M^{-1}\E[\Theta_*^2]$. This gives \eqref{eq:state_convergence_L2_rate}. If $\Theta_*$ is essentially bounded, take $M$ larger than its deterministic bound to obtain the sharper rate estimate.
\end{proof}

\begin{Corollary}[Convergence of the centralized optimal controls]
\label{cor:optimal_controls}
Under the assumptions of Theorem~\ref{thm:state_convergence},
\begin{align}
\label{eq:optimal_control_convergence_localized}
\E\int_0^T
|\widehat\alpha_t^{N,*}-\widetilde\alpha_t^{N,*}|_N^2dt
\leq
C\Big(
\varepsilon_{\xi,N}^2
+Mr_N^2
+\E[\Theta_*\mathbf1_{\{\Theta_*>M\}}]
\Big).
\end{align}
Consequently the control difference converges to zero in empirical
$L^2(\Omega\times[0,T])$. Under $\Theta_*\in L^2$, the right-hand side is
$C(\varepsilon_{\xi,N}^2+r_N)$; under a deterministic bound on $\Theta_*$,
it is $C(\varepsilon_{\xi,N}^2+r_N^2)$.
\end{Corollary}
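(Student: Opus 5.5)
The estimate \eqref{eq:optimal_control_convergence_localized} is essentially a by-product of the proof of Theorem~\ref{thm:state_convergence}. I would not redo the Gronwall argument. Instead I would reuse the intermediate control estimate \eqref{eq:sampled_control_estimate}, evaluated at $t=T$:
\[
\E\int_0^T|\Delta\alpha_s^N|_N^2ds
\leq
C\int_0^T\phi_N(s)\,ds
+C\Big(Mr_N^2+\E[\Theta_*\mathbf1_{\{\Theta_*>M\}}]\Big),
\]
where $\Delta\alpha^N=\widehat\alpha^{N,*}-\widetilde\alpha^{N,*}$ and $\phi_N(s)=\E[\sup_{r\le s}|\Delta X_r^N|_N^2]$. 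This inequality was derived from three ingredients: the cellwise decomposition of the feedback difference (local-gain terms controlled by conditioning on $\mathcal F_t^0$ together with the deterministic bounds \eqref{eq:local_gain_convergence}); the empirical-interaction decomposition \eqref{eq:sampled_feedback_aggregate}; and the localization on $\{\Theta_*\le M\}$. None of these steps used the final Gronwall conclusion, so the inequality holds for every $M\ge1$.

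Next I would bound $\int_0^T\phi_N(s)\,ds\le T\phi_N(T)$ and insert \eqref{eq:state_convergence_localized}. This gives
\[
\E\int_0^T|\Delta\alpha_s^N|_N^2ds\le C\big(\varepsilon_{\xi,N}^2+Mr_N^2+\E[\Theta_*\mathbf1_{\{\Theta_*>M\}}]\big),
\]
with $C$ independent of $N$ and $M$. The only point to check is that \eqref{eq:sampled_control_estimate} is invoked with the \emph{same} truncation level $M$ as in \eqref{eq:state_convergence_localized}. Since both bounds are linear in the localized quantities, their sum has the stated form.

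The consequences then follow as in Theorem~\ref{thm:state_convergence}. For qualitative convergence, I would take $M_N\uparrow\infty$ with $M_Nr_N^2\to0$; the term $\E[\Theta_*\mathbf1_{\{\Theta_*>M_N\}}]$ vanishes by integrability of $\Theta_*$, and $\varepsilon_{\xi,N}\to0$ holds by Assumption~\ref{ass:initial_cellwise_coupling}. If $\Theta_*\in L^2$, I would choose $M=r_N^{-1}$ and use Markov's inequality in the form $\E[\Theta_*\mathbf1_{\{\Theta_*>M\}}]\le M^{-1}\E[\Theta_*^2]$, which yields $C(\varepsilon_{\xi,N}^2+r_N)$. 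If $\Theta_*$ is bounded by a deterministic constant, choosing $M$ above that bound makes the tail term vanish and gives $C(\varepsilon_{\xi,N}^2+r_N^2)$.

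The main obstacle is only bookkeeping: confirming that \eqref{eq:sampled_control_estimate} is an honest bound on $\E\int_0^T|\Delta\alpha|_N^2$ itself, and not merely an inequality used inside Gronwall. If it is not, I would rederive it from the pointwise split $|\Delta\alpha_t^N|_N^2\le C|\Delta X_t^N|_N^2+(\text{feedback remainders})$. This split follows from uniform coercivity of $O^N$ and Lemma~\ref{lem:operator_bounds}, after which the remainders are bounded term by term exactly as in Step~2 of the preceding proof.
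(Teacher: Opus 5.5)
Your proposal is correct and follows the paper's proof. The paper likewise evaluates \eqref{eq:sampled_control_estimate} at $t=T$ and inserts \eqref{eq:state_convergence_localized} with the same truncation level $M$, using $\int_0^T\phi_N(s)\,ds\le T\phi_N(T)$; the qualitative and rate consequences then follow from the same choices of $M$ as in Theorem~\ref{thm:state_convergence}.
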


\begin{proof}
Use \eqref{eq:sampled_control_estimate} with $t=T$ and insert
\eqref{eq:state_convergence_localized}.
\end{proof}

\begin{Remark}[Moment assumptions and algebraic rates]
\label{rem:state_rate_moments}
The localization loss is caused only by the product of the random kernel difference $V^N-V$ with the random conditional aggregate. If $\Theta_*\in L^p(\Omega)$ for some $p>1$, choosing $M\asymp r_N^{-2/p}$ gives a squared state-and-control error of order $r_N^{2(p-1)/p}$, apart from the initial coupling error. Hence the non-squared rate is $r_N^{(p-1)/p}$. The case $p=2$ gives the rate $r_N^{1/2}$, whereas an essential bound recovers the full rate $r_N$. This explicitly separates the deterministic discretization rate from the probabilistic moment loss.
\end{Remark}

\begin{Remark}[Meaning of the state convergence]
The preceding estimate does not identify a finite vector with an uncountable continuum state field. Each reference pair $(U_i^N,\widetilde X^{i,N,*},\widetilde\alpha^{i,N,*})$ has the continuum optimal law restricted to the $i$-th cell. Thus Theorem~\ref{thm:state_convergence} and Corollary~\ref{cor:optimal_controls} are a quantitative, cellwise conditional propagation-of-chaos statement for the centralized optimum. This interpretation is consistent with the classical coupling approach to propagation of chaos; see, for instance, \cite{Sznitman1991,Meleard1996}.
\end{Remark}

\subsection{Convergence of the value functions at the initial time}

\begin{Theorem}[Convergence of the centralized values]
\label{thm:value_convergence}
Under the assumptions of Theorem~\ref{thm:state_convergence}, there exists a constant $C>0$, independent of $N$, such that
\begin{equation}
\label{eq:value_convergence_rate}
\E\left[
|V_0^N(\xi^N)-V_0(\xi)|
\right]
\leq
C(\varepsilon_{\xi,N}+r_N).
\end{equation}
In particular, $V_0^N(\xi^N)\to V_0(\xi)$ whenever $\varepsilon_{\xi,N}\to0$.
\end{Theorem}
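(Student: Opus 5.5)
We will compare the two explicit value formulas, \eqref{eq:discrete_value_function} at $t=0$ and \eqref{eq:value_function_continuum_verification} at $t=0$, term by term. Both are written in the same continuum scale. Using the block decomposition of Definition~\ref{def:embeddings}, we have $(\xi^N)^\top P_0^N\xi^N=h_N\sum_i\langle\xi^i,K_{i,0}^N\xi^i\rangle+\langle \mathsf E_N\xi^N,T_{\bar K_0^N}\mathsf E_N\xi^N\rangle_{\mathsf H}$ and $2(p_0^N)^\top\xi^N=2h_N\sum_i\langle Y_{i,0}^N,\xi^i\rangle$. On the continuum side, the cellwise copies of Assumption~\ref{ass:initial_cellwise_coupling} give $\E[\langle\xi,K_0(U)\xi\rangle\mid\Fc_0^0]=\E[h_N\sum_i\langle\widetilde\xi_i^N,K_0(U_i^N)\widetilde\xi_i^N\rangle\mid\Fc_0^0]$, and similarly for the $Y$ term, because $U_i^N$ is uniform on $I_i^N$. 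For the interaction term we use $\langle\bar\xi,T_{\bar K_0}\bar\xi\rangle_{\mathsf H}$ with $\bar x^N_{i,0}=h_N^{-1}\int_{I_i^N}\bar\xi$. The value difference then splits into four pieces: $q_0^N-\lambda_0$, a local quadratic piece, an interaction piece, and an affine piece.

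\emph{Step 1 (scalar and local terms).} By \eqref{eq:T4}, $\E|q_0^N-\lambda_0|\le Cr_N$. In the local piece we first replace $\xi^N$ by $\widetilde\xi^N$. The resulting error is at most $C\E[|\xi^N-\widetilde\xi^N|_N(|\xi^N|_N+|\widetilde\xi^N|_N)]\le C\varepsilon_{\xi,N}$, using Lemma~\ref{lem:K_pointwise} and the uniform second moments. Next we replace $K^N_{i,0}$ by $K_0(U_i^N)$. Conditioning on $\Fc_0^0$ and using the endpoint bound $\operatorname*{ess\,sup}_u\E[|\xi|^2\mid U=u]\le\Theta_*$, this error is bounded by $\E[\Theta_*^{1/2}\cdot]$ times a Cauchy--Schwarz factor involving $\|K^N_0-K_0\|_{L^2(I)}$. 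That factor is deterministically $\le C\delta_N^K$ by Lemma~\ref{lem:K_uniform}. We will bound it more crudely by $C\delta_N^K(1+\E\Theta_*)$, using the conditional independence of the $\widetilde\xi_i^N$ given $\Fc_0^0$ only for measurability. The affine piece is handled the same way. After replacing $\xi^N$ by $\widetilde\xi^N$, we write $Y_0^N-Y_0$ evaluated at $U_i^N$. Conditioning gives $\E[\|Y^N_0-Y_0\|_{L^2}\,\Theta_*^{1/2}]\le Cr_N(\E\Theta_*)^{1/2}$, by \eqref{eq:T3} and Cauchy--Schwarz in $\omega$.

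\emph{Step 2 (interaction term; main obstacle).} Here $\mathsf E_N\widetilde\xi^N$ is a random individual field, whereas the limit involves only the conditional mean $\bar\xi$. We decompose $\widetilde\xi^N=\bar x_0^N+(\widetilde\xi^N-\bar x_0^N)$. The cross term $\langle\bar x_0^N,T_{\bar K_0^N}(\widetilde\xi^N-\bar x_0^N)\rangle$ has zero conditional mean, because $\bar K^N_0$ is $\Fc_0^0$-measurable. The fluctuation-squared term has conditional expectation bounded by $h_N\Theta_*\|\bar K^N_0\|_{L^2(I^2)}\cdot$const. The reason is that $\bar K^N_{ii}=0$ kills the diagonal, so only independent centred cross terms remain; this is exactly Lemma~\ref{lem:conditional_empirical_fluctuation} applied after Cauchy--Schwarz. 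Since $h_N\le r_N^2$, this contribution is negligible. Finally, by \eqref{eq:cell_average_identity}, $\langle\bar x^N_0,T_{\bar K^N_0}\bar x_0^N\rangle_N=\langle\mathsf P_N\bar\xi,T_{\bar K_0^N}\mathsf P_N\bar\xi\rangle_{\mathsf H}=\langle\bar\xi,T_{\bar K_0^N}\bar\xi\rangle_{\mathsf H}$, since $\bar K_0^N$ is a step kernel. Its difference from $\langle\bar\xi,T_{\bar K_0}\bar\xi\rangle$ is bounded by $\|\bar K^N_0-\bar K_0\|_{L^2(I^2)}\|\bar\xi\|_{\mathsf H}^2$. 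By \eqref{eq:T2} and Cauchy--Schwarz, this has expectation at most $Cr_N\,\E[\Theta_*^2]^{1/2}$. If only $\E\Theta_*<\infty$ is available, we will instead note that $\|\bar\xi\|_{\mathsf H}^2=\int\E[\xi\mid\Fc^0_0,U=u]^2du$. At time $0$, $\bar\xi$ is deterministic given the admissibility structure (since $\Fc_0^0$ is trivial), so it is bounded by $\E|\xi|^2$, and the term is $\le Cr_N$.

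We expect this Step~2 fluctuation and diagonal bookkeeping to be the delicate point. It requires checking that the diagonal blocks of $P^N$ are entirely carried by $K^N$, which produces no $O(1)$ self-interaction, and that $\Fc_0^0$ is trivial. The latter makes $\Theta_*$-type factors at $t=0$ deterministic and removes the need for localization. Summing Steps~1--2 gives \eqref{eq:value_convergence_rate}.
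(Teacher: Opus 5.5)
Your proposal is correct and follows essentially the same route as the paper. You replace $\xi^N$ by the cellwise copies $\widetilde\xi^N$ at cost $C\varepsilon_{\xi,N}$, compare the local and affine terms through the cellwise uniformity of $U_i^N$ and the deterministic time-zero estimates from Lemma~\ref{lem:K_uniform} and \eqref{eq:T3}, treat the interaction term via the vanishing diagonal of $\bar K_0^N$ and \eqref{eq:cell_average_identity}, and use \eqref{eq:T4} for the scalar part. One small sharpening: since $\bar K_{ii,0}^N=0$ and the centred fluctuations are conditionally independent, the fluctuation-squared term in your Step~2 vanishes identically, which is the paper's exact identity $\E[\langle\widetilde\xi^N,T_{\bar K_0^N}\widetilde\xi^N\rangle_N\mid\Fc_0^0]=\langle\bar\xi,T_{\bar K_0^N}\bar\xi\rangle_{\mathsf H}$. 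So the Cauchy--Schwarz route is unnecessary; it would in fact give $O(\sqrt{h_N})$ rather than the $O(h_N)$ you state, though this is harmless because $\sqrt{h_N}\le r_N$.
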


\begin{proof}
Using the exact block rescaling and the conditional value representation, at time zero
\begin{align*}
V_0^N(\xi^N)
={}&
\E\Big[
\langle\xi^N,M_{K_0^N}\xi^N\rangle_N
+\langle\xi^N,T_{\bar K_0^N}\xi^N\rangle_N
+2\langle Y_0^N,\xi^N\rangle_N
\Big]
+q_0^N.
\end{align*}
The pointwise bound on $K^N$, the operator bound on $T_{\bar K^N}$, the $S^\infty(\mathsf H)$-bound on $Y^N$, and Cauchy--Schwarz imply
\begin{align}
\label{eq:value_initial_coupling}
\left|
V_0^N(\xi^N)-V_0^N(\widetilde\xi^N)
\right|
\leq
C\varepsilon_{\xi,N}.
\end{align}

\noindent It remains to compare the value evaluated at the cellwise continuum samples with the continuum value. Since $\mathcal F_0^0$ is trivial, the initial Riccati coefficients are deterministic up to null sets. Moreover, $c_\xi:= \operatorname*{ess\,sup}_{u\in I} \mathbb E[|\xi|^2\mid U=u] \le \mathbb E[\Theta_*]<\infty$. The cellwise uniformity of $U_i^N$ therefore gives
\[
\begin{aligned}
&\left|
\mathbb E\langle\widetilde\xi^N,
M_{K_0^N}\widetilde\xi^N\rangle_N
-
\mathbb E\langle\xi,K_0(U)\xi\rangle
\right|\le
c_\xi\|K_0^N-K_0\|_{L^1(I)}
\le c_\xi\|K_0^N-K_0\|_{L^2(I)}
\le Cr_N.
\end{aligned}
\]
For the interaction part, conditional independence and the fact that $\bar K_0^N$ vanishes on the diagonal blocks give the exact identity
\begin{align*}
\E\left[
\langle\widetilde\xi^N,
T_{\bar K_0^N}\widetilde\xi^N\rangle_N
\,\middle|\,\Fc_0^0
\right]
=
\langle\mathsf P_N\bar\xi,
T_{\bar K_0^N}\mathsf P_N\bar\xi\rangle_{\mathsf H}
=
\langle\bar\xi,
T_{\bar K_0^N}\bar\xi\rangle_{\mathsf H}.
\end{align*}
The second equality follows because a step kernel depends only on cell
averages and its image is a step function. Hence
\[
\left|
\E\langle\widetilde\xi^N,
T_{\bar K_0^N}\widetilde\xi^N\rangle_N
-
\langle\bar\xi,T_{\bar K_0}\bar\xi\rangle_{\mathsf H}
\right|
\leq
C\|\bar K_0^N-\bar K_0\|_{L^2(I^2)}
\leq Cr_N.
\]
Similarly, $\left|
\E\langle Y_0^N,\widetilde\xi^N\rangle_N
-
\E\langle Y_0(U),\xi\rangle
\right|
\leq
C\|Y_0^N-Y_0\|_{\mathsf H}
\leq Cr_N$, and \eqref{eq:T4} gives $\E|q_0^N-\lambda_0|\leq Cr_N$. Combining these estimates with \eqref{eq:value_initial_coupling} proves \eqref{eq:value_convergence_rate}.
\end{proof}

\begin{Remark}
The value estimate is stated at time zero because the augmented Brownian sigma-field $\Fc_0^0$ is $\P$-trivial; the backward coefficients at time zero can therefore be separated from the initial conditional moments without an additional localization. A uniform-in-$t$ value estimate for random initial conditions can be obtained under a uniform conditional moment assumption, but it is not needed for convergence of the initial centralized social values.
\end{Remark}

\begin{Theorem}[Centralized discrete-to-continuum convergence]
\label{thm:centralized_complete_convergence}
Under Assumption~\ref{ass:reg_consistency}, Assumption~\ref{ass:initial_cellwise_coupling}, and $\E[\Theta_*]<\infty$, the backward Riccati objects converge as in Theorem~\ref{thm:convergence}, the optimal centralized state and control converge to their cellwise continuum copies as in Theorem~\ref{thm:state_convergence} and Corollary~\ref{cor:optimal_controls}, and the initial centralized values converge as in Theorem~\ref{thm:value_convergence}.
\end{Theorem}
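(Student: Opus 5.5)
This statement is a synthesis of results already established, so the plan is to check that its hypotheses imply those of each component result and then concatenate them. No new estimate is required.

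First, Assumption~\ref{ass:reg_consistency} together with the standing Assumptions~\ref{ass:forward_coeff}, \ref{ass:cost_coeff}, \ref{ass:finite_forward_coeff} and \ref{ass:finite_cost_coeff} is exactly the hypothesis set of Theorem~\ref{thm:convergence}. Under it, Theorem~\ref{thm:solvability_continuum_backward_system} and Theorem~\ref{thm:solvability_discrete} give the two backward systems. The a priori bounds of Lemmas~\ref{lem:K_pointwise}--\ref{lem:K_uniform} and the driver decomposition of Lemma~\ref{lem:R1} then yield \eqref{eq:T1}--\eqref{eq:T4} with rate $r_N$. One point must be verified here: $r_N\to0$. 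This follows from Definition~\ref{def:rate} and Lemma~\ref{lem:diagonal_band_modulus}, which uses only the $S^2\times H^2$ regularity of $(\bar K,Z^{\bar K})$.

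Second, the hypotheses of Theorem~\ref{thm:state_convergence} are precisely those of Theorem~\ref{thm:convergence}, plus Assumption~\ref{ass:initial_cellwise_coupling} and $\E[\Theta_*]<\infty$, and all three are assumed in the statement. That theorem gives \eqref{eq:state_convergence_localized}. To obtain convergence to zero, choose $M_N\uparrow\infty$ with $M_Nr_N^2\to0$; uniform integrability of the single integrable variable $\Theta_*$ then makes $\E[\Theta_*\mathbf 1_{\{\Theta_*>M_N\}}]\to0$. The same choice of $M_N$ in Corollary~\ref{cor:optimal_controls} gives convergence of the centralized controls.

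Third, Theorem~\ref{thm:value_convergence} has the same hypotheses as Theorem~\ref{thm:state_convergence}, so it gives the value estimate \eqref{eq:value_convergence_rate}. In that proof, the only moment input needed is $c_\xi\le\E[\Theta_*]$, which holds by the definition of $\Theta_*$.

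The only step needing care is the hypothesis bookkeeping. The cellwise copies in Assumption~\ref{ass:initial_cellwise_coupling} are built from the continuum closed loop of Theorem~\ref{thm:verification_continuum}, whose well-posedness follows from Theorem~\ref{thm:wellposed_limit_state}. The finite optimizer comes from Theorem~\ref{thm:verification_discrete}. I also have to confirm that the moment condition $\E[\Theta_*]<\infty$, rather than $\Theta_*\in L^2$, is enough for the qualitative conclusions; the algebraic rates are quoted only under the stronger moment hypotheses, exactly as in the component statements. Once this is checked, the theorem follows by citation.
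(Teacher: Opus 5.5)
Your proposal is correct and follows the paper's proof, which simply combines Theorems~\ref{thm:convergence}, \ref{thm:state_convergence}, \ref{thm:value_convergence} and Corollary~\ref{cor:optimal_controls} after the same hypothesis check. The extra bookkeeping you add makes explicit what the component proofs already contain: $r_N\to0$ via Lemma~\ref{lem:diagonal_band_modulus}, the choice $M_N\uparrow\infty$ with $M_Nr_N^2\to0$ together with $\varepsilon_{\xi,N}\to0$, and $c_\xi\le\E[\Theta_*]$.
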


\begin{proof}
This is the combination of Theorems~\ref{thm:convergence}, \ref{thm:state_convergence}, and \ref{thm:value_convergence}, together with Corollary~\ref{cor:optimal_controls}.
\end{proof}

\section{Decentralized lifted feedback and asymptotic optimality} \label{sec:decentralized_lift}

The finite-dimensional optimizer studied above is centralized, since the control of each agent depends on the complete finite state vector. We now show that the representative-agent feedback nevertheless induces a decentralized finite-population strategy whose social cost is asymptotically optimal. The only additional error is the cell-projection modulus of the affine part of the limiting feedback. 

\noindent For the continuum optimal system, set $L_t(u):=O_t(u)^{-1}U_t(u)$ and $\chi_t(u):=O_t(u)^{-1} \big((T_{V_t}\bar X_t^*)(u)+\Gamma_t(u)\big)$.
Thus
\[
\widehat\alpha_t^* =-L_t(U)X_t^*-\chi_t(U)
\]
Recall the orthogonal cell-average projection $\mathsf P_N$ from Section~\ref{sec:state_convergence}, and define $L_t^{[N]}:=\mathsf P_NL_t$ and  $\chi_t^{[N]}:=\mathsf P_N\chi_t$. If $u\in I_i^N$, we write $L_{i,t}^{[N]}:=L_t^{[N]}(u)$ and $\chi_{i,t}^{[N]}:=\chi_t^{[N]}(u)$.

\begin{Definition}[Decentralized information class]
\label{def:decentralized_controls}
For a finite control $\alpha^N$ and its associated state $X^N$, let
\[
\mathcal F_t^{i,N,\mathrm{obs}}
:=
\sigma\big(
\xi^{i,N},W_s^{i,N},W_s^0,X_s^{i,N}:0\leq s\leq t
\big)^{\P}.
\]
We denote by $\mathcal A_0^{N,\mathrm{dec}}$ the controls $\alpha^N=(\alpha^{1,N},\ldots,\alpha^{N,N})\in\mathcal A_0^N$ such that each $\alpha^{i,N}$ is progressively measurable with respect to $\mathbb F^{i,N,\mathrm{obs}}$. The decentralized social value is
\[
V_0^{N,\mathrm{dec}}(\xi^N)
:=
\inf_{\alpha^N\in\mathcal A_0^{N,\mathrm{dec}}}
J^N(0,\xi^N,\alpha^N).
\]
We write \(V_0^{N,\mathrm{cent}}(\xi^N):=V_0^N(\xi^N)\) for the centralized
value.
\end{Definition}

\noindent The inclusion $\mathcal A_0^{N,\mathrm{dec}}\subset\mathcal A_0^N$ immediately gives $V_0^{N,\mathrm{cent}}(\xi^N) \leq V_0^{N,\mathrm{dec}}(\xi^N)$.

\begin{Lemma}[Blockwise stability of the local Riccati field]
\label{lem:blockwise_K_stability}
Under Assumption~\ref{ass:reg_consistency}, there exists a deterministic constant $C>0$ such that, for $u,v$ in the same block $J_a$, $\operatorname {ess\,sup}_{(t,\omega)} |K_t(u)-K_t(v)| +\|Z^K(u)-Z^K(v)\|_{\mathrm{BMO}} \le C|u-v|$. Consequently $O^{-1}$, $U$, and $L:=O^{-1}U$ are blockwise Lipschitz in $u$, uniformly in $(t,\omega)$.
\end{Lemma}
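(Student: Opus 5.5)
We would prove the Lipschitz estimate by comparing, for fixed $u,v$ in the same block $J_a$, the two matrix-valued local Riccati BSDEs \eqref{eq:K_BSRE_continuum} satisfied by $K(u)$ and $K(v)$. These are finite-dimensional BSDEs driven by $W^0$ only. Their coefficients $(Q,B,E,C,F,R)$ differ by at most $L_{\mathrm{spat}}|u-v|$, uniformly in $(s,\omega)$, by \textup{(A-block)}, and their terminal data $H(u),H(v)$ differ by at most $L_{\mathrm{spat}}|u-v|$ a.s.

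The first step is to fix jointly measurable bounded versions. Remark~\ref{rem:K_bounded_continuum} and Step~1 of Theorem~\ref{thm:solvability_continuum_backward_system} give $0\preceq K_s(u)\preceq C_*I_d$. On this ball, with the projection $\pi$ onto $\mathcal C_*$ used there, the driver
\[
f(s,u,k)=Q_s(u)+B_s(u)^\top k+kB_s(u)+E_s(u)^\top kE_s(u)-U_s(u,k)^\top O_s(u,k)^{-1}U_s(u,k)
\]
is globally Lipschitz in $k$ with a deterministic constant. This follows from $O\succeq c_RI_m$ and the resolvent identity. The same bounds and the resolvent identity also give $|f(s,u,k)-f(s,v,k)|\le C|u-v|$ for $k\in\mathcal C_*$, uniformly in $(s,\omega)$.

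By the mean-value linearization used in Lemma~\ref{lem:K_uniform}, we write $\Delta K:=K(u)-K(v)$, $\Delta Z:=Z^K(u)-Z^K(v)$, and obtain
\[
d\Delta K_s=-\big[\mathcal L_s\Delta K_s+\rho_s\big]ds+\Delta Z_s\,dW_s^0 ,
\]
where $\mathcal L_s$ is a bounded progressively measurable linear map on $\mathbb S^d$ and $|\rho_s|+|\Delta K_T|\le C|u-v|$ deterministically. The conditional linear-BSDE estimate then yields $|\Delta K_t|\le C\,\E[|\Delta K_T|+\int_t^T|\rho_s|ds\mid\mathcal F_t^0]\le C|u-v|$ for all $t$, a.s. This is the deterministic sup bound; continuity in time lets us pass from a.e.\ $t$ to all $t$.

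For the BMO part, we apply It\^o's formula to $|\Delta K_s|^2$ between an arbitrary stopping time $\tau$ and $T$ and take $\E[\cdot\mid\mathcal F_\tau^0]$. The left side controls $\E[\int_\tau^T|\Delta Z_s|^2ds\mid\mathcal F_\tau^0]$. The right side is bounded by $|\Delta K_T|^2+2\int_\tau^T|\Delta K_s|(C|\Delta K_s|+|\rho_s|)ds\le C|u-v|^2$, which gives $\|\Delta Z\|_{\mathrm{BMO}}\le C|u-v|$.

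The consequences are then algebraic. $O=R+F^\top KF$ and $U=C^\top K+F^\top KE$ are blockwise Lipschitz as products of bounded blockwise Lipschitz factors. For the inverse we use $O(u)^{-1}-O(v)^{-1}=O(u)^{-1}(O(v)-O(u))O(v)^{-1}$ together with $|O^{-1}|\le\sqrt m\,c_R^{-1}$, and $L=O^{-1}U$ follows.

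The main obstacle is measure-theoretic rather than analytic. The statement holds for every pair $u,v$ only for a suitable version of $K$, whereas the construction gives $K$ up to $du$-null sets. We would handle this by noting that for fixed $(s,\omega)$ the coefficients are Lipschitz in $u$ on each block, so the solution map is well defined for every $u$ in $J_a$. We define $K(u)$ pointwise for all $u$ via this family and check, by the uniqueness argument of Step~1, that it agrees with the Picard version $du$-a.e.
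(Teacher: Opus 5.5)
Your proposal is correct and follows essentially the same route as the paper's proof. You subtract the two local Riccati BSDEs, linearize on the deterministic bounded ball to get a linear BSDE with bounded operator coefficient, source and terminal gap of order $|u-v|$, and apply the conditional linear-BSDE estimate for the essential-supremum bound; It\^o's formula between a stopping time and $T$ then gives the BMO bound, and the resolvent identity yields the conclusions for $O^{-1}$, $U$ and $L$. Your extra remark on choosing a version of $K$ defined for every label in the block is a point the paper leaves implicit, and the fix you suggest is sound.
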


\begin{proof}
Subtract the two local Riccati BSDEs. On the deterministic bounded ball containing $K$, the difference of the drivers is the sum of a bounded linear operator applied to $K(u)-K(v)$ and a source bounded by $C|u-v|$; the terminal difference has the same bound. The conditional estimate for this linear BSDE gives the essential-supremum estimate for $K(u)-K(v)$. Ito's formula between an arbitrary stopping time and $T$ then gives the BMO estimate for $Z^K(u)-Z^K(v)$. The claims for $O^{-1}$, $U$, and $L$ follow from their definitions and the resolvent identity.
\end{proof}

\begin{Lemma}[Projection of the limiting feedback]
\label{lem:decentralized_projection}
Under Assumption~\ref{ass:reg_consistency}, $\operatorname*{ess\,sup}_{(t,\omega)}\|L^{[N]}_t-L_t\|_{L^2(I;\R^{m\times d})} \leq C\sqrt{h_N}$. If the grid is block-compatible, the right-hand side can be replaced by $Ch_N$. Moreover, define
\[
\big(d_N^{\mathrm{aff}}\big)^2
:=
\E\int_0^T
\|\chi_t^{[N]}-\chi_t\|_{L^2(I)}^2dt,
\quad
\textit{then}
\quad
d_N^{\mathrm{aff}}\to0
\]
\end{Lemma}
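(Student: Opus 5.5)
For the first claim, I will use Lemma~\ref{lem:blockwise_K_stability} directly. It says that $L=O^{-1}U$ is Lipschitz in $u$ on every block $J_a$, with a constant $C$ uniform in $(t,\omega)$, and that $L$ is uniformly bounded. This boundedness follows from the bound on $K$ (Remark~\ref{rem:K_bounded_continuum}), the bounds on $C,E,F$, and coercivity of $O$.

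I split the cells $I_i^N$ into two groups.
\begin{itemize}
\item \emph{Cells inside a single block.} Here $|L_t(u)-L_t(v)|\le C h_N$ for $u,v\in I_i^N$. Since $(\mathsf P_N L_t)(u)$ is the cell average, this gives $|L^{[N]}_t(u)-L_t(u)|\le Ch_N$ pointwise. The total contribution of these cells to the squared $L^2(I)$ norm is at most $Ch_N^2$.
\item \emph{Interface cells in $\mathcal B_N$.} There are at most $M-1$ of them, and $|\mathcal B_N|\le C_Mh_N$ by (A-row/col). On them only the bound $|L^{[N]}_t-L_t|\le 2\|L\|_\infty$ is available. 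Their contribution is at most $C h_N$.
\end{itemize}
Adding the two groups and taking square roots gives $C\sqrt{h_N}$, uniformly in $(t,\omega)$. On a block-compatible grid, $\mathcal B_N=\varnothing$, so only the first group is present and the bound becomes $Ch_N$.

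For the affine part, I do not expect a rate, only vanishing of $d_N^{\mathrm{aff}}$. The plan is a strong-convergence and dominated-convergence argument of the same kind as Lemma~\ref{lem:diagonal_band_modulus}.

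\begin{enumerate}
\item \emph{Square-integrability of $\chi$.} By uniform coercivity, Lemma~\ref{lem:operator_bounds} and the $S^2(\mathsf H)$ bounds on $\bar X^*$ and $Y$, we get $\|\chi_t\|_{L^2(I)}\le C(\|\bar X_t^*\|_{\mathsf H}+\|\Gamma_t\|_{L^2(I)})$. Here $\|\Gamma_t\|_{L^2(I)}$ is controlled by $\|Y_t\|_{\mathsf H}$, $\|D_t\|_{\mathsf H}$ and $\|\iota_t\|$. Hence $\E\int_0^T\|\chi_t\|_{\mathsf H}^2dt<\infty$. Measurability in $(t,\omega,u)$ comes from the jointly measurable versions already chosen.
\item \emph{Pointwise convergence.} The projections $\mathsf P_N$ are orthogonal contractions on $L^2(I)$. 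They converge strongly to the identity, because step functions on the uniform grids are dense and $\mathsf P_N$ fixes step functions on coarser dyadic grids; the martingale convergence theorem applied to the cell averages also gives this. So for $dt\otimes d\P$-a.e.\ $(t,\omega)$, $\|\mathsf P_N\chi_t-\chi_t\|_{L^2(I)}\to0$.
\item \emph{Domination.} We have $\|\mathsf P_N\chi_t-\chi_t\|^2\le 4\|\chi_t\|^2$, which is integrable by step 1. Dominated convergence on $\Omega\times[0,T]$ then yields $d_N^{\mathrm{aff}}\to0$.
\end{enumerate}

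The main obstacle is step 2, namely strong convergence $\mathsf P_N\to \mathrm{Id}$ along the full sequence of uniform grids, which are not nested. I would handle it with a $3\varepsilon$ argument. Approximate $\chi_t$ in $L^2$ by a continuous function $g$. Then $\|\mathsf P_Ng-g\|_\infty\le\omega_g(h_N)\to0$, where $\omega_g$ is the modulus of continuity of $g$, and $\|\mathsf P_N(\chi_t-g)\|\le\|\chi_t-g\|$ by contraction. This avoids any nestedness requirement.

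A secondary point is to make sure no deterministic bound on $\chi$ is needed; this is why only $L^2(\Omega\times[0,T])$ convergence without a rate is claimed. I would remark that a rate would follow if one additionally knew blockwise Lipschitz regularity of $T_{V_t}\bar X_t^*$ and $Y_t$ in $u$.
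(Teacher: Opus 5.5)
Your proposal is correct and follows the paper's proof essentially step for step. For the first claim you use the blockwise Lipschitz regularity of $L=O^{-1}U$ from Lemma~\ref{lem:blockwise_K_stability} together with the split into interior cells and interface cells; for $d_N^{\mathrm{aff}}\to0$ you use square-integrability of $\chi$, strong convergence $\mathsf P_N\to\mathrm{Id}$, and dominated convergence with the same bound $4\|\chi_t\|^2$.

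Your continuous-approximation $3\varepsilon$ argument is a useful addition: it justifies the strong convergence along the non-nested uniform grids, which the paper only asserts. Drop your preliminary remark about dyadic grids and martingale convergence. It is not valid here, because $\mathsf P_N$ does not fix dyadic step functions unless $N$ is a multiple of the dyadic denominator, and the uniform grids do not form a filtration.
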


\begin{proof}
By the blockwise stability lemma above, the standard parameter-stability estimate for the local Riccati BSDE gives $\operatorname*{ess\,sup}_{(t,\omega)} |K_t(u)-K_t(v)| \leq C|u-v|$, $u,v\in J_m$. The definitions of $O$ and $U$, the blockwise Lipschitz regularity of the local coefficients, and the resolvent identity show that $L=O^{-1}U$ satisfies the same estimate. On every cell contained in a single block, the cell-average error is therefore bounded by $Ch_N$. On the union $\mathcal B_N$ of cells crossing the block interfaces, we use only the uniform pointwise bound on $L$; since $|\mathcal B_N|\leq C h_N$, this part contributes $C\sqrt{h_N}$ in $L^2(I)$. If the grid is block-compatible, $\mathcal B_N$ is empty.

\noindent The operator bounds on $T_V$ and $O^{-1}$, together with the moment bounds on $\bar X^*$ and $\Gamma$, imply $\E\int_0^T\|\chi_t\|_{L^2(I)}^2dt<\infty$. Since the orthogonal projections $\mathsf P_N$ are contractions and converge strongly to the identity on $L^2(I)$, $\|\mathsf P_N\chi_t-\chi_t\|_{L^2(I)}\to0$ for $dt\otimes d\P$ a.e. $(t,\omega)$. The bound $\|\mathsf P_N\chi_t-\chi_t\|_{L^2(I)}^2
\leq4\|\chi_t\|_{L^2(I)}^2$ and dominated convergence give $d_N^{\mathrm{aff}}\to0$.
\end{proof}

\noindent We define the lifted decentralized feedback by
\begin{equation}
\label{eq:decentralized_lifted_control}
\alpha_{t}^{i,N,\mathrm{lift}}
:=
-L_{i,t}^{[N]}X_t^{i,N,\mathrm{lift}}
-\chi_{i,t}^{[N]},
\qquad 1\leq i\leq N,
\end{equation}
where $X^{N,\mathrm{lift}}$ solves the original finite state equation with this control and initial condition $\xi^N$.

\begin{Proposition}[Admissibility of the lifted feedback]
\label{prop:lifted_admissibility}
Under the standing assumptions, the closed-loop finite system associated with \eqref{eq:decentralized_lifted_control} has a unique solution in empirical $S^2$, and $\alpha^{N,\mathrm{lift}}\in\mathcal A_0^{N,\mathrm{dec}}$.
\end{Proposition}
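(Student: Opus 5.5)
The closed-loop system under \eqref{eq:decentralized_lifted_control} is a finite-dimensional linear SDE. The plan is to reduce well-posedness to Theorem~\ref{thm:wellposed_finite_state}, and admissibility to the measurable representation of its solution. Substituting the control gives
\[
dX_t^{i,N}=\Big[A_t^{i,N}-C_t^{i,N}\chi_{i,t}^{[N]}+\big(B_t^{i,N}-C_t^{i,N}L_{i,t}^{[N]}\big)X_t^{i,N}+(\mathsf G_{B,t}^NX_t^N)^i\Big]dt+\Big[D_t^{i,N}-F_t^{i,N}\chi_{i,t}^{[N]}+\big(E_t^{i,N}-F_t^{i,N}L_{i,t}^{[N]}\big)X_t^{i,N}+(\mathsf G_{E,t}^NX_t^N)^i\Big]dW_t^{i,N}+\Sigma_{i,t}^{0,N}dW_t^0 .
\]
This system has the same form as \eqref{eq:finite_N_state} with zero control. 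The local coefficients become $B^N-C^NL^{[N]}$ and $E^N-F^NL^{[N]}$, and the additive terms become $A^N-C^N\chi^{[N]}$ and $D^N-F^N\chi^{[N]}$.

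I would check Assumption~\ref{ass:finite_forward_coeff} for these modified coefficients. The first ingredient is boundedness of $L$. By Remark~\ref{rem:K_bounded_continuum} and Remark~\ref{rem:O_inverse_continuum}, $K$ and $O^{-1}$ are uniformly bounded, and the coefficients are bounded, so $|L_t(u)|\le C$. Since $\mathsf P_N$ is a cell average, the same bound holds for $L^{[N]}$, uniformly in $i$ and $N$.

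The second ingredient is progressive measurability. The maps $L$ and $\chi$ are built from $\mathbb F^0$-adapted backward fields, and $\bar X^*$ is $\mathbb F^0$-adapted. Because cell averages are measurable operations, $L^{[N]}$ and $\chi^{[N]}$ are $\mathbb F^0$-progressive.

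The third ingredient is square integrability of $\chi^{[N]}$ in $H^2_{\mathbb F^0}(\mathbb U_N)$. Contractivity of $\mathsf P_N$ gives $|\chi_t^{[N]}|_N\le\|\chi_t\|_{L^2(I)}$. The integrability of $\chi$ was established in the proof of Lemma~\ref{lem:decentralized_projection}, using Lemma~\ref{lem:operator_bounds}, the $S^2$ bound on $\bar X^*$ from Theorem~\ref{thm:wellposed_limit_state}, and Lemma~\ref{lem:gain_convergence} for $\Gamma$. With these checks, Theorem~\ref{thm:wellposed_finite_state} yields a unique solution in $S^2_{\mathbb F^N}$, with empirical $S^2$ bound. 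Plugging this bound into the definition gives $\E\int_0^T|\alpha_t^{N,\mathrm{lift}}|_N^2dt<\infty$, and the solution is a measurable functional of $(\xi^N,W^{1,N},\dots,W^{N,N},W^0)$, so $\alpha^{N,\mathrm{lift}}\in\mathcal A_0^N$.

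The main obstacle is the decentralized measurability. Each $\alpha_t^{i,N,\mathrm{lift}}$ is a function of $X_t^{i,N}$ and of $\mathcal F_t^0$-measurable quantities $(L_{i,t}^{[N]},\chi_{i,t}^{[N]})$. Since $X^{i,N}$ is included in $\mathcal F_t^{i,N,\mathrm{obs}}$ and $W^0\subset\mathcal F^{i,N,\mathrm{obs}}$, the control $\alpha^{i,N}$ is $\mathbb F^{i,N,\mathrm{obs}}$-progressive. One must ensure this uses only a progressive version of $X^{i,N}$, which holds because it is continuous and adapted to the larger filtration while being, by definition, observed.

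A subtle point is that the observation filtration depends on the state itself. The definition therefore does not demand that $X^{i,N}$ be a functional of agent $i$'s own noise only, even though the state is coupled through $\mathsf G^N$. I would remark that this is exactly the non-anticipative observation structure of Definition~\ref{def:decentralized_controls}, so no fixed-point or filtering argument is needed.
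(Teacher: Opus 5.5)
Your proposal is correct and follows the paper's argument. After substituting the feedback, you obtain a linear finite-dimensional SDE with uniformly bounded linear coefficients (since $L^{[N]}$ is a cell average of the bounded field $L$) and an $H^2$ affine term (by contractivity of $\mathsf P_N$ applied to $\chi$), and your reduction to Theorem~\ref{thm:wellposed_finite_state} with zero control is a clean way to get well-posedness; $\mathbb F^{i,N,\mathrm{obs}}$-progressive measurability then follows because the feedback uses only $\mathbb F^0$-progressive gains and the agent's own continuous state. One small citation slip: the square integrability of $\Gamma$, and hence of $\chi$, follows from its definition together with $Y\in S^2_{\mathbb F^0}(\mathsf H)$ and Lemma~\ref{lem:remaining_bounds}, not from Lemma~\ref{lem:gain_convergence}, which only controls the differences $\Gamma^N-\Gamma$.
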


\begin{proof}
The processes \(L_{i}^{[N]}\) and \(\chi_i^{[N]}\) are
\(\mathbb F^0\)-progressively measurable. The slope \(L^{[N]}\) is
uniformly bounded, and
\[
h_N\sum_{i=1}^N
\E\int_0^T|\chi_{i,t}^{[N]}|^2dt
=
\E\int_0^T\|\mathsf P_N\chi_t\|_{L^2(I)}^2dt
\leq
\E\int_0^T\|\chi_t\|_{L^2(I)}^2dt.
\]
The closed-loop equation is therefore a linear finite-dimensional SDE with square integrable affine term and uniformly bounded linear coefficients. The usual SDE estimate gives existence, uniqueness, and the required empirical $S^2$-bound. Formula \eqref{eq:decentralized_lifted_control} uses only the common-noise coefficients and the current individual state, so it is progressively measurable with respect to $\mathbb F^{i,N,\mathrm{obs}}$.
\end{proof}

\begin{Theorem}[Cellwise convergence of the lifted decentralized system]
\label{thm:decentralized_state_convergence}
Assume the hypotheses of Theorem~\ref{thm:state_convergence}. Then
\begin{align}
\E\left[
\sup_{t\in[0,T]}
|X_t^{N,\mathrm{lift}}-\widetilde X_t^{N,*}|_N^2
\right]
+
\E\int_0^T
|\alpha_t^{N,\mathrm{lift}}-\widetilde\alpha_t^{N,*}|_N^2dt
\leq
C\left(
\varepsilon_{\xi,N}^2+r_N^2+
\big(d_N^{\mathrm{aff}}\big)^2
\right).
\label{eq:decentralized_state_control_rate}
\end{align}
\end{Theorem}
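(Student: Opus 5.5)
We follow the synchronous coupling of Theorem~\ref{thm:state_convergence}: the lifted finite system and the cellwise continuum copies are driven by the same $W^{i,N}$ and $W^0$, and they start from $\xi^N$ and $\widetilde\xi^N$ respectively. Set $\Delta X_t:=X_t^{N,\mathrm{lift}}-\widetilde X_t^{N,*}$, $\Delta\alpha_t:=\alpha_t^{N,\mathrm{lift}}-\widetilde\alpha_t^{N,*}$, and $\phi_N(t):=\E[\sup_{s\le t}|\Delta X_s|_N^2]$.

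\textbf{Step 1 (state stability).} Step~1 of Theorem~\ref{thm:state_convergence} applies verbatim, because it never uses the specific form of the finite control. It decomposes the local coefficient errors, the $\Sigma^0$ error, and the interaction error through \eqref{eq:sampled_interaction_decomposition}, then applies BDG. This gives \eqref{eq:state_before_feedback} with $\Delta\alpha$ in place of $\Delta\alpha^N$:
\[
\phi_N(t)\le C\varepsilon_{\xi,N}^2+C\int_0^t\phi_N(s)\,ds+C\E\int_0^t|\Delta\alpha_s|_N^2ds+Cr_N^2 .
\]
Each source term is bounded by $r_N^2\E[\Theta_*]$ or by $h_N\E[\Theta_*]\le Cr_N^2$. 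The interaction term $\mathsf G^N\Delta X$ is absorbed through the uniform operator bound, and the empirical fluctuation is handled by Lemma~\ref{lem:conditional_empirical_fluctuation}.

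\textbf{Step 2 (control comparison, the key simplification).} From $\widetilde\alpha^{i,N,*}_t=-L_t(U_i^N)\widetilde X^{i,N,*}_t-\chi_t(U_i^N)$ and \eqref{eq:decentralized_lifted_control}, we decompose
\[
\Delta\alpha^i_t=-L^{[N]}_{i,t}\Delta X^i_t-\big(L^{[N]}_{i,t}-L_t(U_i^N)\big)\widetilde X^{i,N,*}_t-\big(\chi^{[N]}_{i,t}-\chi_t(U_i^N)\big).
\]
\emph{First term.} Since $L^{[N]}$ is uniformly bounded (uniform coercivity of $O$ and the bound on $K$), this term contributes at most $C\phi_N$.

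\emph{Second term.} The coefficient $L^{[N]}-L$ is $\mathcal F^0_t$-measurable and $U_i^N$ is uniform on $I_i^N$. Conditioning on $\mathcal F_t^0$ as in Theorem~\ref{thm:feedback_convergence} therefore bounds this term by $\E[\theta_t^*\|L_t^{[N]}-L_t\|^2_{L^2(I)}]$. Lemma~\ref{lem:decentralized_projection} gives the deterministic bound $C h_N$ on $\|L_t^{[N]}-L_t\|^2_{L^2(I)}$, so the term is at most $Ch_N\E[\Theta_*]\le Cr_N^2$.

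\emph{Third term.} Here $\chi^{[N]}_t-\chi_t$ is a random $\mathcal F_t^0$-measurable field that does not multiply any state. Averaging over the uniform labels gives exactly $\E\int_0^T h_N\sum_i|\chi^{[N]}_{i,t}-\chi_t(U_i^N)|^2dt=(d_N^{\mathrm{aff}})^2$.

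Because $\chi$ is the continuum quantity itself, no random kernel difference $V^N-V$ multiplies $\bar X^*$. The localization in $M$ that was needed in Theorem~\ref{thm:state_convergence} is therefore unnecessary, which is why only $r_N^2$ and no $Mr_N^2$ term appears. Altogether,
\[
\E\int_0^t|\Delta\alpha_s|_N^2ds\le C\int_0^t\phi_N(s)\,ds+C\big(r_N^2+(d_N^{\mathrm{aff}})^2\big).
\]

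\textbf{Step 3 (Gronwall).} Insert the Step~2 bound into Step~1 and apply Gronwall's lemma to control $\phi_N(T)$. Then reinsert that bound into the control estimate to obtain \eqref{eq:decentralized_state_control_rate}. The constant depends on $\E[\Theta_*]$.

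The main obstacle is Step~1 rather than the feedback. We must check that every term of the state-stability argument of Theorem~\ref{thm:state_convergence} is bounded deterministically in $r_N$ times $\E[\Theta_*]$, without the localization that was caused there only by the feedback interaction term. In particular, the kernel errors $T_{G^N-G}\bar X^*$ must be controlled through the pathwise bound $\varepsilon_N^{\mathrm{ker}}$ of (A-coeff), and the terms carrying $h_N\E[\Theta_*]$ must be dominated by $r_N^2$ using $h_N\le r_N^2$.
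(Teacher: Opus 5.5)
Your proposal is correct and follows the paper's proof essentially step by step. It uses the synchronous state estimate from Theorem~\ref{thm:state_convergence}, which indeed does not depend on the form of the finite control, and the same three-term decomposition of $\alpha^{i,N,\mathrm{lift}}_t-\widetilde\alpha^{i,N,*}_t$: conditioning on $\mathcal F_t^0$ with Lemma~\ref{lem:decentralized_projection} for the slope error, exact label averaging for the $\chi$-term (which yields $(d_N^{\mathrm{aff}})^2$), and then Gronwall. You also note that no localization in $M$ is needed here, since $\chi$ is the continuum field itself and no random difference $V^N-V$ multiplies $\bar X^*$; this correctly explains why the bound has $r_N^2$ instead of the localized rate of Theorem~\ref{thm:state_convergence}.
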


\begin{proof}
Set
\(\Delta X^N:=X^{N,\mathrm{lift}}-\widetilde X^{N,*}\). The synchronous
state-equation estimate used in the proof of
Theorem~\ref{thm:state_convergence}, together with the interaction
decomposition \eqref{eq:sampled_interaction_decomposition}, gives
\begin{align*}
\E\left[
\sup_{s\leq t}|\Delta X_s^N|_N^2
\right]
\leq{}&
C\varepsilon_{\xi,N}^2
+C\int_0^t
\E\left[
\sup_{q\leq s}|\Delta X_q^N|_N^2
\right]ds
+C\E\int_0^t
|\alpha_s^{N,\mathrm{lift}}-
\widetilde\alpha_s^{N,*}|_N^2ds
+Cr_N^2.
\end{align*}
Here the conditional empirical fluctuation is of order $h_N$, and all coefficient, kernel, and common-noise errors are contained in $r_N$. For $u\in I_i^N$, \[\alpha_t^{i,N,\mathrm{lift}}
-\widetilde\alpha_t^{i,N,*}
={}
-L_{i,t}^{[N]}
\big(
X_t^{i,N,\mathrm{lift}}-\widetilde X_t^{i,N,*}
\big)
-
\big(L_t^{[N]}(U_i^N)-L_t(U_i^N)\big)
\widetilde X_t^{i,N,*}
-
\big(\chi_t^{[N]}(U_i^N)-\chi_t(U_i^N)\big)\] 
The first term is controlled by the uniform bound on $L^{[N]}$. For the second, conditioning on $\mathcal F_t^0$ and using the conditional moment variable $\Theta_*$ yields
\begin{align*}
\E\left[
h_N\sum_i
|L_t^{[N]}(U_i^N)-L_t(U_i^N)|^2
|\widetilde X_t^{i,N,*}|^2
\right]
\leq
\E\left[
\theta_t^*
\|L_t^{[N]}-L_t\|_{L^2(I)}^2
\right]
\leq
Ch_N\E[\Theta_*]
\leq Cr_N^2
\end{align*}
Since the labels are uniform on their cells, the last term satisfies $\E\left[
h_N\sum_i
|\chi_t^{[N]}(U_i^N)-\chi_t(U_i^N)|^2
\right]
=
\E\|\chi_t^{[N]}-\chi_t\|_{L^2(I)}^2$.
Consequently,
\begin{align*}
\E\int_0^t
|\alpha_s^{N,\mathrm{lift}}-
\widetilde\alpha_s^{N,*}|_N^2ds
\leq{}&
C\int_0^t
\E\left[
\sup_{q\leq s}|\Delta X_q^N|_N^2
\right]ds
+C\left(r_N^2+\big(d_N^{\mathrm{aff}}\big)^2\right).
\end{align*}
Substitution into the state estimate and Gronwall's lemma prove both terms
in \eqref{eq:decentralized_state_control_rate}.
\end{proof}

\begin{Theorem}[Cost consistency of the lifted feedback]
\label{thm:decentralized_cost_consistency}
Under the assumptions of Theorem~\ref{thm:decentralized_state_convergence},
\[
\left|
J^N(0,\xi^N,\alpha^{N,\mathrm{lift}})
-V_0(\xi)
\right|
\leq
C\left(
\varepsilon_{\xi,N}+r_N+d_N^{\mathrm{aff}}
\right).
\]
\end{Theorem}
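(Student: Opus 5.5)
We compare the finite social cost of the lifted strategy with the cost of the cellwise continuum copies, and then compare that auxiliary cost with $V_0(\xi)$. Write $J^N(0,\xi^N,\alpha^{N,\mathrm{lift}})=\E[\mathcal C^N(X^{N,\mathrm{lift}},\alpha^{N,\mathrm{lift}})]$. Here $\mathcal C^N$ denotes the pathwise sum of the running quadratic forms $\mathfrak q_t^N,\mathfrak r_t^N$ and the terminal form $\mathfrak h^N$; the time-zero $\sigma$-field $\mathcal F_0^0$ is trivial. The proof proceeds in two steps:
\[
\big|\E\,\mathcal C^N(X^{N,\mathrm{lift}},\alpha^{N,\mathrm{lift}})-\E\,\mathcal C^N(\widetilde X^{N,*},\widetilde\alpha^{N,*})\big|
\]
is bounded first, and then
\[
\big|\E\,\mathcal C^N(\widetilde X^{N,*},\widetilde\alpha^{N,*})-V_0(\xi)\big|
\]
is bounded second. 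For the first step we use the elementary identity $\langle a,Sa\rangle-\langle b,Sb\rangle=\langle a-b,S(a+b)\rangle$ for each quadratic form. The operators $\mathbf Q^N,\mathbf H^N$ (rescaled) and $\mathbf R^N$ are uniformly bounded on $(\mathbb H_N,|\cdot|_N)$ by Assumption~\ref{ass:finite_cost_coeff}. Cauchy--Schwarz then bounds this difference by $C\,\Delta_N\,S_N$, where $\Delta_N^2$ is the left side of \eqref{eq:decentralized_state_control_rate} and $S_N^2$ collects the uniform second moments of both systems (including the $\iota^N$ shift). The moments $S_N$ are bounded uniformly in $N$ by Theorem~\ref{thm:wellposed_finite_state} for the lifted system (its affine term is bounded via Proposition~\ref{prop:lifted_admissibility}) and by $\E[\Theta_*]<\infty$ for the copies. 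Theorem~\ref{thm:decentralized_state_convergence} therefore gives the non-squared contribution $C(\varepsilon_{\xi,N}+r_N+d_N^{\mathrm{aff}})$.

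For the second step, we evaluate the finite cost of the copies term by term using cellwise uniformity of $U_i^N$ and conditional independence given $\mathcal F_t^0$ (Assumption~\ref{ass:initial_cellwise_coupling}). The local terms give
\[
\E\big[h_N\textstyle\sum_i\langle \widetilde X^{i}_t,Q_t^{i,N}\widetilde X^i_t\rangle\big]=\E\int_I \E[\langle X_t^*,Q^N_t(u)X_t^*\rangle\mid\mathcal F^0_t,U=u]\,du.
\]
This expression differs from the continuum term by at most $\E[\theta_t^*\|Q_t^N-Q_t\|_{L^2(I)}]\le C\delta_N^K$, because the coefficient errors are deterministic. The same argument handles $R^N,\iota^N$ (via $\widetilde\alpha$) and $H^N$ (via the endpoint component of $\Theta_*$). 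The interaction terms split as in Lemma~\ref{lem:conditional_empirical_fluctuation} into a fluctuation part and a mean part. The off-diagonal fluctuations have zero conditional mean, while the diagonal $i=j$ terms carry an extra factor $h_N$ and contribute $O(h_N\E\Theta_*)$. The mean part equals $\langle \bar X^*_t,T_{G^{Q,N}_t}\bar X^*_t\rangle_{\mathsf H}$ by \eqref{eq:cell_average_identity}, and it differs from the continuum term by at most $\varepsilon_N^{\mathrm{ker}}\E\Theta_*$. Summing these estimates shows that $\E\,\mathcal C^N(\widetilde X^{N,*},\widetilde\alpha^{N,*})$ equals $J(0,\xi,\widehat\alpha)$ up to $C(r_N+h_N)$. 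By Theorem~\ref{thm:verification_continuum}, $J(0,\xi,\widehat\alpha)=V_0(\xi)$. One caveat concerns the initial law: the copies start from $\widetilde\xi^N$, whose cellwise conditional law must match that of $\xi$ given $U$. This holds by the construction in Assumption~\ref{ass:initial_cellwise_coupling}, so the continuum cost of the copies is exactly the continuum optimal cost.

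The main obstacle is the second step: the conditional moments of $\bar X^*_t$ and $X_t^*$ are random through the common noise. Products of them with the coefficient differences must therefore be controlled without localization, which would lose rate as in Theorem~\ref{thm:state_convergence}. To avoid localization we will use the deterministic $L^\infty(\Omega\times[0,T];L^2(I))$ coefficient bounds of Assumption~\ref{ass:reg_consistency}(A-coeff) and condition on $\mathcal F_t^0$, exactly as in Step~2 of Theorem~\ref{thm:feedback_convergence}. The resulting errors are then linear in $\E[\Theta_*]$, and no loss of rate occurs. Combining the two steps with the triangle inequality yields the bound stated in Theorem~\ref{thm:decentralized_cost_consistency}, since $h_N\le r_N$.
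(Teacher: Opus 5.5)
Your proposal is correct and follows essentially the paper's proof. Both arguments add and subtract the cost of the cellwise copies, bound the lifted-versus-copies difference by Cauchy--Schwarz and Theorem~\ref{thm:decentralized_state_convergence}, control local coefficient errors by conditioning on $\mathcal F_t^0$ with $\Theta_*$, and treat the interaction term by conditional independence and the cell-average identity; your mean/fluctuation split keeps the diagonal blocks of $G^N$ in the mean part and so even avoids the paper's estimate of $\mathbf 1_{\mathcal D_N}G$. One small correction: the discrete kernels are only bounded in $L^2(I^2)$, so the diagonal term satisfies $h_N^2\sum_i|G_{ii,t}^N|\,\theta_t^*\le\sqrt{h_N}\,\|G_t^N\|_{L^2(I^2)}\,\theta_t^*$, which is $O(\sqrt{h_N})$ rather than $O(h_N)$ — this is still absorbed because $\sqrt{h_N}\le\delta_N^K\le r_N$.
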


\begin{proof}
The local running, terminal, and control costs are compared by adding and subtracting the corresponding quantities evaluated at the cellwise copies. The uniform coefficient bounds, Cauchy--Schwarz, and Theorem~\ref{thm:decentralized_state_convergence} control the state and control differences by $C(\varepsilon_{\xi,N}+r_N+d_N^{\mathrm{aff}})$. The local coefficient errors are bounded in the same way as in the state comparison, using the conditional moment bound $\E[\Theta_*]<\infty$.

\noindent We detail the only genuinely nonlocal term. Set $\vartheta_t :=\operatorname*{ess\,sup}_{u\in I} \mathbb E[|X_t^*|^2\mid\mathcal F_t^0,U=u]$. By the definition of $\Theta_*$, we have $\vartheta_t\le\Theta_*$ for $dt\otimes d\mathbb P$-almost every $(t,\omega)$, and $\vartheta_T\le\Theta_*$ almost surely. Let $G^N$ denote either the running or terminal cost kernel, and let $\bar x^N$ be the cell average of the corresponding limiting conditional mean. Conditional independence gives
\begin{align*}
&
\E\left[
h_N^2\sum_{i\neq j}
\left\langle
\widetilde X_t^{i,N,*},
G_{ij,t}^N\widetilde X_t^{j,N,*}
\right\rangle
\,\middle|\,\mathcal F_t^0
\right]
=
\left\langle
\mathsf P_N\bar X_t^*,
T_{\Pi_N^\circ G_t^N}
\mathsf P_N\bar X_t^*
\right\rangle_{\mathsf H}
=
\left\langle
\bar X_t^*,
T_{\Pi_N^\circ G_t^N}\bar X_t^*
\right\rangle_{\mathsf H}.
\end{align*}
Moreover, the boundedness of the limiting kernel gives
\[
\|\Pi_N^\circ G_t^N-G_t\|_{L^2(I^2)}
\leq
\|G_t^N-G_t\|_{L^2(I^2)}
+\|\mathbf1_{\mathcal D_N}G_t\|_{L^2(I^2)}
\leq
C\big(\varepsilon_N^{\mathrm{ker}}+\sqrt{h_N}\big).
\]
The diagonal part of the finite sum is negligible as well. Indeed, conditional on $\mathcal F_t^0$,
\[
h_N^2\sum_i
|G_{ii,t}^N|
\E\left[
|\widetilde X_t^{i,N,*}|^2
\,\middle|\,\mathcal F_t^0
\right]
\leq
C\vartheta_t\sqrt{h_N}\|G_t^N\|_{L^2(I^2)}.
\]
After integration and expectation, these two errors are bounded by $Cr_N$. Replacing the cellwise copies by $X^{N,\mathrm{lift}}$ is controlled by the uniform operator bounds on the finite cost kernels and the state estimate. The terminal interaction term is estimated in the same way, using
$\vartheta_T\le\Theta_*$. Collecting the local, control, and interaction estimates proves the claim.
\end{proof}

\begin{Corollary}[Asymptotic optimality gap for the centralized problem]
\label{cor:centralized_lifted_optimality_gap}
Under the assumptions of Theorem~\ref{thm:decentralized_cost_consistency},
\begin{equation}
\label{eq:centralized_lifted_optimality_gap}
0
\leq
J^N(0,\xi^N,\alpha^{N,\mathrm{lift}})
-V_0^{N,\mathrm{cent}}(\xi^N)
\leq
C\left(
\varepsilon_{\xi,N}+r_N+d_N^{\mathrm{aff}}
\right).
\end{equation}
In particular, the lifted limiting feedback is asymptotically optimal for
the original centralized finite-population problem.
\end{Corollary}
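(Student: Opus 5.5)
The plan is to squeeze the lifted cost between the centralized value and the continuum value, using only two previously established estimates. The lower bound is immediate. By Proposition~\ref{prop:lifted_admissibility}, $\alpha^{N,\mathrm{lift}}\in\mathcal A_0^{N,\mathrm{dec}}\subset\mathcal A_0^N$. Since $V_0^{N,\mathrm{cent}}(\xi^N)=V_0^N(\xi^N)$ is the essential infimum of $J^N(0,\xi^N,\cdot)$ over $\mathcal A_0^N$, and $\mathcal F_0^0$ is trivial so all quantities are deterministic, we get $J^N(0,\xi^N,\alpha^{N,\mathrm{lift}})\ge V_0^{N,\mathrm{cent}}(\xi^N)$. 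This gives the left inequality in \eqref{eq:centralized_lifted_optimality_gap}.

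For the upper bound, split the gap as
\[
J^N(0,\xi^N,\alpha^{N,\mathrm{lift}})-V_0^{N,\mathrm{cent}}(\xi^N)
=\big(J^N(0,\xi^N,\alpha^{N,\mathrm{lift}})-V_0(\xi)\big)
+\big(V_0(\xi)-V_0^{N}(\xi^N)\big).
\]
The first bracket is bounded in absolute value by $C(\varepsilon_{\xi,N}+r_N+d_N^{\mathrm{aff}})$, by Theorem~\ref{thm:decentralized_cost_consistency}. The second bracket is bounded by $\E|V_0^N(\xi^N)-V_0(\xi)|\le C(\varepsilon_{\xi,N}+r_N)$, by Theorem~\ref{thm:value_convergence}. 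The hypotheses of Theorem~\ref{thm:value_convergence} are those of Theorem~\ref{thm:state_convergence}, and these are contained in the assumptions of Theorem~\ref{thm:decentralized_cost_consistency}, which we are assuming. Because $\mathcal F_0^0$ is $\P$-trivial, both values are almost surely constant, so the expectation bound is a pointwise bound. Adding the two estimates gives the right inequality.

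Asymptotic optimality then follows from the convergence of the right-hand side to zero. Here $\varepsilon_{\xi,N}\to0$ by Assumption~\ref{ass:initial_cellwise_coupling}, $r_N\to0$ by Remark~\ref{rem:rate_block_compatible} together with Lemma~\ref{lem:diagonal_band_modulus}, and $d_N^{\mathrm{aff}}\to0$ by Lemma~\ref{lem:decentralized_projection}. The main point needing care is the identification of the random variables at time zero: the triviality of $\mathcal F_0^0$ is what lets us compare the conditional costs directly. Apart from that, the argument is only the triangle inequality. The same squeeze also gives $V_0^{N,\mathrm{dec}}-V_0^{N,\mathrm{cent}}\to0$, since $V_0^{N,\mathrm{cent}}\le V_0^{N,\mathrm{dec}}\le J^N(0,\xi^N,\alpha^{N,\mathrm{lift}})$.
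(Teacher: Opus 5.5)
Your proposal is correct and is essentially the paper's proof. The lower bound comes from admissibility of $\alpha^{N,\mathrm{lift}}$ in the centralized class. The upper bound comes from inserting $V_0(\xi)$ and invoking Theorem~\ref{thm:decentralized_cost_consistency} and Theorem~\ref{thm:value_convergence}. Your observation that the $\P$-triviality of $\mathcal F_0^0$ turns the $L^1$ value estimate into a pointwise one makes explicit a step the paper leaves implicit.
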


\begin{proof}
The lower bound follows from the optimality of the centralized finite
feedback. For the upper bound, insert the continuum value between the two
finite costs: $J^N(0,\xi^N,\alpha^{N,\mathrm{lift}})
-V_0^{N,\mathrm{cent}}(\xi^N)
\leq
\left|
J^N(0,\xi^N,\alpha^{N,\mathrm{lift}})-V_0(\xi)
\right|
+
\left|
V_0^{N,\mathrm{cent}}(\xi^N)-V_0(\xi)
\right|$. The first term is controlled by Theorem~\ref{thm:decentralized_cost_consistency}, and the second one by Theorem~\ref{thm:value_convergence}.
\end{proof}

\begin{Theorem}[Asymptotic optimality under decentralized information]
\label{thm:decentralized_asymptotic_optimality}
Under the preceding assumptions,
\[
V_0^{N,\mathrm{dec}}(\xi^N)
\longrightarrow
V_0(\xi),
\qquad
J^N(0,\xi^N,\alpha^{N,\mathrm{lift}})
-V_0^{N,\mathrm{dec}}(\xi^N)
\longrightarrow0.
\]
More precisely,
\begin{align*}
&
\left|
V_0^{N,\mathrm{dec}}(\xi^N)-V_0(\xi)
\right|
+
\left|
J^N(0,\xi^N,\alpha^{N,\mathrm{lift}})
-V_0^{N,\mathrm{dec}}(\xi^N)
\right|
\leq
C\left(
\varepsilon_{\xi,N}+r_N+d_N^{\mathrm{aff}}
\right).
\end{align*}
\end{Theorem}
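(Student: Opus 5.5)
The plan is a squeeze argument combining the two-sided bounds already available. By the inclusion $\mathcal A_0^{N,\mathrm{dec}}\subset\mathcal A_0^N$ and the admissibility of the lifted feedback (Proposition~\ref{prop:lifted_admissibility}), we have the chain
\[
V_0^{N,\mathrm{cent}}(\xi^N)\le V_0^{N,\mathrm{dec}}(\xi^N)\le J^N(0,\xi^N,\alpha^{N,\mathrm{lift}}).
\]
The first inequality holds because the infimum defining the decentralized value is taken over a smaller class. The second holds because $\alpha^{N,\mathrm{lift}}$ is one competitor in that class. The decentralized value is therefore trapped between two quantities whose distance to $V_0(\xi)$ is already controlled.

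\textbf{Lower side.} Theorem~\ref{thm:value_convergence} gives $|V_0^{N,\mathrm{cent}}(\xi^N)-V_0(\xi)|\le C(\varepsilon_{\xi,N}+r_N)$. This estimate holds in $L^1(\Omega)$, but at time zero every quantity is deterministic, since $\mathcal F_0^0$ is trivial.

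\textbf{Upper side.} Theorem~\ref{thm:decentralized_cost_consistency} gives $|J^N(0,\xi^N,\alpha^{N,\mathrm{lift}})-V_0(\xi)|\le C(\varepsilon_{\xi,N}+r_N+d_N^{\mathrm{aff}})$.

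\textbf{Bound on the decentralized value.} From the chain,
\[
V_0^{N,\mathrm{dec}}-V_0 \le J^N(\alpha^{N,\mathrm{lift}})-V_0, \qquad V_0-V_0^{N,\mathrm{dec}}\le V_0-V_0^{N,\mathrm{cent}}.
\]
Hence $|V_0^{N,\mathrm{dec}}-V_0|\le\max\{|J^N(\alpha^{N,\mathrm{lift}})-V_0|,\ |V_0^{N,\mathrm{cent}}-V_0|\}$, and this is at most $C(\varepsilon_{\xi,N}+r_N+d_N^{\mathrm{aff}})$.

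\textbf{Gap of the lifted control.} The gap $J^N(\alpha^{N,\mathrm{lift}})-V_0^{N,\mathrm{dec}}$ is nonnegative and is bounded by $J^N(\alpha^{N,\mathrm{lift}})-V_0^{N,\mathrm{cent}}$. Corollary~\ref{cor:centralized_lifted_optimality_gap} bounds this by the same rate. Summing the two estimates gives the displayed inequality.

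The convergence statements follow from $r_N\to0$ (Remark~\ref{rem:rate_block_compatible}), from $d_N^{\mathrm{aff}}\to0$ (Lemma~\ref{lem:decentralized_projection}), and from $\varepsilon_{\xi,N}\to0$ (Assumption~\ref{ass:initial_cellwise_coupling}).

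\textbf{Main obstacle.} The only delicate point is measure-theoretic: $V_0^{N,\mathrm{dec}}$ is defined with an ordinary infimum of costs, while the centralized value uses an essential infimum of $\mathcal F_0^0$-measurable random variables. Because $\mathcal F_0^0$ is $\mathbb P$-trivial, all these values are a.s.\ constants, and the two notions agree. One also has to note that the inclusion of the decentralized class in $\mathcal A_0^N$ is meaningful. This requires checking that controls adapted to $\mathbb F^{i,N,\mathrm{obs}}$ are non-anticipative functionals of $(\xi^N,W^{\cdot,N},W^0)$. That holds because $X^{i,N}$ itself is such a functional, by Theorem~\ref{thm:wellposed_finite_state}.
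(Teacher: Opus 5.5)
Your proposal is correct and is essentially the paper's proof: the same sandwich $V_0^{N,\mathrm{cent}}(\xi^N)\le V_0^{N,\mathrm{dec}}(\xi^N)\le J^N(0,\xi^N,\alpha^{N,\mathrm{lift}})$, closed by Theorem~\ref{thm:value_convergence} on the left and Theorem~\ref{thm:decentralized_cost_consistency} on the right. Your extra measurability check is harmless but unnecessary, because the paper defines $\mathcal A_0^{N,\mathrm{dec}}$ as a subset of $\mathcal A_0^N$; the inclusion therefore holds by definition, and the only admissibility fact needed is that of $\alpha^{N,\mathrm{lift}}$ (Proposition~\ref{prop:lifted_admissibility}).
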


\begin{proof}
Since the centralized information class is larger and the lifted feedback is decentralized,
\[
V_0^{N,\mathrm{cent}}(\xi^N)
\leq
V_0^{N,\mathrm{dec}}(\xi^N)
\leq
J^N(0,\xi^N,\alpha^{N,\mathrm{lift}}).
\]
The left-hand term converges to $V_0(\xi)$ by Theorem~\ref{thm:value_convergence}, with error $C(\varepsilon_{\xi,N}+r_N)$. The right-hand term satisfies the estimate
of Theorem~\ref{thm:decentralized_cost_consistency}. The two conclusions follow from the squeeze argument, and the displayed quantitative estimate follows by subtracting the two extreme quantities.
\end{proof}

\begin{Remark}[Algebraic rates]
\label{rem:decentralized_algebraic_rate}
The modulus $d_N^{\mathrm{aff}}$ is qualitative under the minimal $L^2(I)$-regularity of the affine feedback field $\chi$. If, for some $\beta>0$, one assumes the targeted estimate $d_N^{\mathrm{aff}}\leq CN^{-\beta}$, then the decentralized value and optimality-gap estimates inherit the non-squared rate $O\big( \varepsilon_{\xi,N}+r_N+N^{-\beta} \big)$. 

\noindent For example, on a block-compatible grid, the bound $\E\int_0^T \sum_{m=1}^M \|\partial_u\chi_t\|_{L^2(J_m)}^2dt <\infty$ implies \(d_N^{\mathrm{aff}}=O(h_N)\) by the cellwise Poincar\'e inequality. Thus the stronger hypothesis needed for an algebraic decentralized rate is isolated from the assumptions of the centralized convergence theorem.
\end{Remark}

\section{Conclusion}
We derived the exact finite-population stochastic Riccati system for a heterogeneous linear--quadratic social planner problem with common noise and established its quantitative convergence to the continuum system. The analysis retains the diagonal corrections, the off-diagonal band, and the common-noise martingale integrands. The backward estimates were propagated to the optimal feedbacks, cellwise coupled trajectories, controls, and initial social values. We also constructed a decentralized finite-population lift of the limiting feedback and proved its asymptotic optimality. The error decomposition identifies precisely which additional regularity assumptions yield algebraic rates. Natural extensions include controlled common-noise volatility, sparse interaction regimes, and numerical approximation of the resulting operator-valued stochastic Riccati equations. A further direction is to incorporate self-exciting jump shocks, motivated by their use in models of weather derivatives and aggregate losses; see \cite{AhmedEyjolfsson2026} for a numerical pricing approach in this setting. Extending the present finite-population analysis to such models would require a separate treatment of the jump dynamics.

\appendix

\section{Proof of the measurable representation result}\label{app:P0}
\begin{proof}[Proof of Proposition~\ref{prop:measurable_rpz}]
Set $\mathsf Z=(U,\xi,W,W^0)$, with values in the standard Borel space $\mathsf E :=I\times\mathbb R^d\times C([0,T];\mathbb R) \times C([0,T];\mathbb R^{d_0})$. By the definition of admissibility, the control has a jointly Borel, non-anticipative representative. The $\mathbb F^0$-progressive coefficients and the frozen field $m$ likewise have jointly Borel, non-anticipative representatives on the canonical common-noise path space. Standard Picard iteration for the frozen linear SDE can be implemented using simple predictable approximations of the stochastic integrals. Every iterate is then a Borel function of $\mathsf Z$; its $S^2$-limit admits an almost surely convergent subsequence, yielding a Borel map $x^{m,\alpha}:\mathsf E\longrightarrow C([t,T];\mathbb R^d)$ such that $X^{m,\alpha}=x^{m,\alpha}(\mathsf Z)$ almost surely. Pathwise uniqueness identifies this limit with the unique strong solution. Repeating the factorization with the stopped input at time $s$ gives the non-anticipative maps $x_s^{m,\alpha}$. This is the standard measurable-representation argument; see also \cite[Theorem~3.5]{DeFeoMekkaoui2025}.
\end{proof}

\section{Proof of Theorem \ref{thm:wellposed_limit_state}}\label{app:P1}

Let $m\in\mathcal H_t= H_{\mathbb F^0}^2([t,T];\mathsf H)$, the space of field $\mathbb F^0$-progressive with squared norme $\E\int_t^T\|m_s\|_{\mathsf H}^2ds<\infty$. We consider the frozen equation
\[
\left\{
\begin{aligned}
dX_s^m
&=
\Big[
A_s(U)+B_s(U)X_s^m+(T_{G_s^B}m_s)(U)+C_s(U)\alpha_s
\Big]\,ds
\\
&\quad+
\Big[
D_s(U)+E_s(U)X_s^m+(T_{G_s^E}m_s)(U)+F_s(U)\alpha_s
\Big]\,dW_s
+\Sigma_s^0(U)\,dW_s^0,
\qquad s\in[t,T],
\\
X_t^m&=\xi.
\end{aligned}
\right.
\tag{A.1}
\]
By Assumption \ref{ass:forward_coeff}, this is a linear SDE with square-integrable affine terms and uniformly bounded linear coefficients. Hence it admits a unique strong solution $X^m\in S^2([t,T];\R^d)$.

\noindent We now define $\Phi(m)_s(u) := \E[X_s^m\mid \Fc_s^0,U=u]$, $(s,u)\in[t,T]\times I$.
Since $X^m = x^{m,\alpha}(U,\xi,W,W^0)$ by Proposition 
\ref{prop:measurable_rpz}, the map $(u,\omega)\mapsto X_s^m(\omega)$ 
is jointly measurable. The conditional expectation 
$\Phi(m)_s(u) = \E[X_s^m\mid\Fc_s^0,U=u]$ then admits a jointly 
measurable version by the disintegration theorem on Polish spaces. Moreover, by Jensen's inequality,
\[
\E\left[\int_I|\Phi(m)_s(u)|^2\,du\right]
\le
\E[|X_s^m|^2],
\]
and consequently $\Phi(m)\in\mathcal H_t$. Thus $\Phi$ is a well-defined map from $\mathcal H_t$ into itself.

\noindent We now prove that $\Phi$ is a contraction on small time intervals.
Let $m,n\in\mathcal H_t$, and set $\Delta X_s:=X_s^m-X_s^n$, $\Delta m_s:=m_s-n_s$.
Then
\[
\left\{
\begin{aligned}
d\Delta X_s
&=
\Big[
B_s(U)\Delta X_s+(T_{G_s^B}\Delta m_s)(U)
\Big]\,ds
\\
&\quad+
\Big[
E_s(U)\Delta X_s+(T_{G_s^E}\Delta m_s)(U)
\Big]\,dW_s,
\\
\Delta X_t&=0.
\end{aligned}
\right.
\]
Using standard estimates for linear SDEs, together with the uniform boundedness of $B,E$ and of the operators $T_{G^B}$ and $T_{G^E}$, there exists a constant $C_T>0$, independent of $m,n$, such that, for all $r\in[t,T]$,
\[
\E|\Delta X_r|^2
\le
C_T\int_t^r \E|\Delta X_s|^2\,ds
+
C_T\int_t^r
\E\left[\int_I |\Delta m_s(u)|^2\,du\right]ds.
\]
By Gronwall's lemma, $\E|\Delta X_r|^2 \le C_T\int_t^r \E\left[\int_I |\Delta m_s(u)|^2\,du\right]ds$.
Integrating in $r$ over $[t,T]$ gives $\E\left[\int_t^T |\Delta X_r|^2\,dr\right] \le C_T(T-t)\|m-n\|_{\mathcal H_t}^2.$
On the other hand, by Jensen's inequality, $\|\Phi(m)-\Phi(n)\|_{\mathcal H_t}^2
\le \E\left[\int_t^T |\Delta X_s|^2\,ds\right]$.
Combining this estimate with the one above, we obtain $\|\Phi(m)-\Phi(n)\|_{\mathcal H_t}^2 \le C_T(T-t)\|m-n\|_{\mathcal H_t}^2.$
Hence $\Phi$ is a contraction whenever $T-t$ is small enough.

\noindent By Banach's fixed-point theorem, on every interval of length $\delta>0$ small enough, there exists a unique fixed point $m^\ast\in\mathcal H_t$ of $\Phi$. If $X:=X^{m^\ast}$, then $m_s^\ast(u)=\E[X_s\mid\Fc_s^0,U=u]$, so $X$ solves the full state equation \eqref{eq:state_equations}. Local uniqueness follows from uniqueness of the fixed point and pathwise uniqueness for the frozen SDE.

\noindent The global result on $[t,T]$ follows by iterating the previous argument over finitely many subintervals of length at most $\delta$. It remains to prove the a priori estimate. By applying standard SDE estimates to \eqref{eq:state_equations}, using the boundedness of $B,E,C,F$, the operator bounds on $T_{G^B}$ and $T_{G^E}$, Jensen's inequality for the conditional mean field, and the Burkholder--Davis--Gundy (BDG) inequality, we obtain for all $s\in[t,T]$
\[
\begin{aligned}
\E\left[\sup_{t\le q\le s}|X_q^\alpha|^2\mid \Fc_t^0\right]
\le
&C
\left(
\E\left[ |\xi|^2 +\int_t^T\Big( |\alpha_s|^2 +\|A_s\|_{\mathsf H}^2 +\|D_s\|_{\mathsf H}^2 +\|\Sigma_s^0\|_{L^2(I;\mathbb R^{d\times d_0})}^2 \Big)ds \,\middle|\,\mathcal F_t^0 \right]
\right)
+
C\int_t^s
\E\left[\sup_{t\le q\le r}|X_q^\alpha|^2\mid \Fc_t^0\right]dr.
\end{aligned}
\]
Gronwall's lemma yields
\[
\E\left[ \sup_{t\le s\le T}|X_s^\alpha|^2 \,\middle|\,\mathcal F_t^0 \right] \le C_T\E\left[ |\xi|^2 +\int_t^T\Big( |\alpha_s|^2 +\|A_s\|_{\mathsf H}^2 +\|D_s\|_{\mathsf H}^2 +\|\Sigma_s^0\|_{L^2(I;\mathbb R^{d\times d_0})}^2 \Big)ds \,\middle|\,\mathcal F_t^0 \right], \qquad \P\text{-a.s.}
\]
Finally, the existence of a jointly measurable version of $\bar X_s^\alpha(u)=\E[X_s^\alpha\mid\Fc_s^0,U=u]$
follows from the measurable representation in Proposition \ref{prop:measurable_rpz} and the fixed-point construction above. Hence the conditional marked law $\mu_s^{0,\alpha}:=\Lc((U,X_s^\alpha)\mid\Fc_s^0)$ is well defined as a $\Pc_2^\lambda(I\times\R^d)$-valued $\F^0$-adapted process. CQFD

\section{Proof of Theorem \ref{thm:wellposed_finite_state}}\label{app:P2}

Fix $N\ge1$, $t\in[0,T]$, $\xi^N\in\mathcal I_t^N$, and $\alpha^N\in\Ac_t^N$.
We work on the Banach space $\mathbb S_t^N=S_{\mathbb F^N}^2([t,T];\mathbb H_N)$. Let $Y^N\in\mathbb S_t^N$. We define $X^N=\Psi^N(Y^N)$ by
\[
\begin{aligned}
X_s^{i,N}
:=
\xi^{i,N}
&+
\int_t^s
\Big[
A_r^{i,N}
+
B_r^{i,N}Y_r^{i,N}
+
\frac1N\sum_{j=1}^N G_{B,r}^N(u_i^N,u_j^N)Y_r^{j,N}
+
C_r^{i,N}\alpha_r^{i,N}
\Big]dr
\\
&+
\int_t^s
\Big[
D_r^{i,N}
+
E_r^{i,N}Y_r^{i,N}
+
\frac1N\sum_{j=1}^N G_{E,r}^N(u_i^N,u_j^N)Y_r^{j,N}
+
F_r^{i,N}\alpha_r^{i,N}
\Big]dW_r^{i,N}
+
\int_t^s \Sigma_{0,r}^{i,N}\,dW_r^0,
\qquad i=1,\dots,N
\end{aligned}
\]
The stochastic integrals are well defined by Assumption \ref{ass:finite_forward_coeff}. Using Cauchy--Schwarz, the BDG inequality, and the uniform bounds on the coefficients, there exists a constant $C_T>0$, independent of $N$, such that
\[
\begin{aligned}
\E\left[\sup_{t\le q\le s}|X_q^N|_N^2\right]
\le
C_T\Bigg(
&\E|\xi^N|_N^2
+
\E\int_t^s |A_r^N|_N^2\,dr
+
\E\int_t^s |D_r^N|_N^2\,dr
\\
&+
\E\int_t^s \|\Sigma_{r}^{0,N}\|_{N,\mathrm{HS}}^2\,dr
+
\E\int_t^s |\alpha_r^N|_N^2\,dr 
+
\int_t^s
\E\left[\sup_{t\le \ell\le r}|Y_\ell^N|_N^2\right]dr
\Bigg)
\end{aligned}
\]
Therefore $\Psi^N(Y^N)\in \mathbb S_t^N$, so $\Psi^N$ is well defined.

\noindent We now prove that $\Psi^N$ is a contraction on sufficiently small intervals. Let $Y^N,Z^N\in\mathbb S_t^N$, and set $X^N:=\Psi^N(Y^N)$, $\widetilde X^N:=\Psi^N(Z^N)$, $\Delta X^N:=X^N-\widetilde X^N$, $\Delta Y^N:=Y^N-Z^N$.
Then, for each $i=1,\dots,N$,
\[
\begin{aligned}
\Delta X_s^{i,N}
=
&\int_t^s
\Big[
B_r^{i,N}\Delta Y_r^{i,N}
+
\frac1N\sum_{j=1}^N G_{B,r}^N(u_i^N,u_j^N)\Delta Y_r^{j,N}
\Big]dr
+
\int_t^s
\Big[
E_r^{i,N}\Delta Y_r^{i,N}
+
\frac1N\sum_{j=1}^N G_{E,r}^N(u_i^N,u_j^N)\Delta Y_r^{j,N}
\Big]dW_r^{i,N}.
\end{aligned}
\]
Using again Cauchy--Schwarz, BDG, and the uniform bounds on the local and interaction operators, we get, for all $s\in[t,T]$,
\[
\E\left[\sup_{t\le q\le s}|\Delta X_q^N|_N^2\right]
\le
C_T
\int_t^s
\E\left[\sup_{t\le \ell\le r}|\Delta Y_\ell^N|_N^2\right]dr.
\]
Hence, on an interval $[t,t+\delta]$ with $C_T\delta<1$, $\Psi^N$ is a contraction on $\mathbb S_t^N$. By Banach's fixed-point theorem, there exists a unique local solution. Repeating the argument on finitely many subintervals yields a unique global solution on $[t,T]$.

\noindent Uniqueness follows from the same estimate. Indeed, if $X^N$ and $\widetilde X^N$ are two solutions of \eqref{eq:finite_N_state} with the same initial condition and control, then $\Delta X^N$ satisfies
\[
\E\left[\sup_{t\le q\le s}|\Delta X_q^N|_N^2\right]
\le
C_T\int_t^s
\E\left[\sup_{t\le \ell\le r}|\Delta X_\ell^N|_N^2\right]\,dr.
\]
By Gronwall's lemma, $\E\left[\sup_{t\le s\le T}|\Delta X_s^N|_N^2\right]=0$, and therefore $X^N=\widetilde X^N$ up to indistinguishability.

\noindent It remains to prove the a priori estimate. Applying the previous estimate to the fixed point $X^N=\Psi^N(X^N)$ gives, with Gronwall's lemma, 
\[
\begin{aligned}
\E\left[\sup_{t\le s\le T}|X_s^N|_N^2\right]
\le
C_T
\Bigg(
&\E|\xi^N|_N^2
+
\E\int_t^T |A_s^N|_N^2\,ds
+
\E\int_t^T |D_s^N|_N^2\,ds
+
\E\int_t^T \|\Sigma_{s}^{0,N}\|_{N,\mathrm{HS}}^2\,ds
+
\E\int_t^T |\alpha_s^N|_N^2\,ds
\Bigg).
\end{aligned}
\]

\noindent Finally, the measurable representation of $X^N$ follows from the Picard construction above: each Picard iterate is a Borel functional of $(\xi^N,W^{1,N},\dots,W^{N,N},W^0)$, and the limit in $S^2_{\F^N}([t,T];\H_N)$ admits a subsequence converging almost surely. Hence the solution can be chosen as a measurable functional of the input data. 

\section{Proof of Proposition \ref{prop:fundamental_relation_continuum}}\label{app:fundamental_relation_continuum}

\begin{proof}
Fix $t\in[0,T]$, $\xi\in\Ic_t$, and an admissible control $\alpha\in\Ac_t$. 
By the measurable representation result, we may work with a jointly measurable family $(X^u,\alpha^u)_{u\in I}$ such that, conditionally on the label $U=u$, the representative state and control are denoted by $X^u$ and $\alpha^u$. 
More precisely, $X_s=X_s^U$, $\alpha_s=\alpha_s^U$, $\P$-a.s., and $\bar X_s(u)=\E[X_s^u\mid \Fc_s^0]$, $\bar\alpha_s(u)=\E[\alpha_s^u\mid \Fc_s^0]$.
To keep the notation light, in the computations below we fix $u\in I$ and write, by abuse of notation, $X_s=X_s^u$, $\alpha_s=\alpha_s^u$, while keeping the dependence on $u$ in the coefficients $K_s(u)$, $\bar K_s(u,v)$ and $Y_s(u)$. The scalar process $\lambda$ will be added only after integration with respect to the label variable. We also define $M_s^B(u):=\int_I G_s^B(u,v)\bar X_s(v)\,dv$ and $M_s^E(u):=\int_I G_s^E(u,v)\bar X_s(v)\,dv$.
Then the state equation can be written as
\[
dX_s
=
b_s^\alpha(u)\,ds+\sigma_s^\alpha(u)\,dW_s+\Sigma_s^0(u)\,dW_s^0,
\]
where $b_s^\alpha(u) := A_s(u)+B_s(u)X_s^u+M_s^B(u)+C_s(u)\alpha_s^u$, and $\sigma_s^\alpha(u):=D_s(u)+E_s(u)X_s^u+M_s^E(u)+F_s(u)\alpha_s^u$.
Moreover, taking conditional expectation with respect to $\Fc_s^0$ and $U=u$, the conditional mean field satisfies $d\bar X_s(u)=\bar b_s^\alpha(u)\,ds+\Sigma_s^0(u)\,dW_s^0$,
with $\bar b_s^\alpha(u):= A_s(u)+B_s(u)\bar X_s(u)+M_s^B(u)+C_s(u)\bar\alpha_s(u)$, and
$\bar\alpha_s(u):=\E[\alpha_s\mid \Fc_s^0,U=u]$.
For $u\in I$, we define the random field (in the remaining computations, we suppress the superscript $u$ on $X^u$ and $\alpha^u$) :
\begin{align}
\mathcal V_s(u)
:=
&\;
\langle X_s,K_s(u)X_s\rangle
+
\left\langle
\bar X_s(u),
\int_I \bar K_s(u,v)\bar X_s(v)\,dv
\right\rangle
+
2\langle Y_s(u),X_s\rangle.
\label{eq:app_random_field_V}
\end{align}
We also introduce
\begin{align}
\mathcal S_s^\alpha(u)
:=
\mathcal V_s(u)
+
\int_t^s
\Big(
&\langle X_r,Q_r(u)X_r\rangle
+
\left\langle
\bar X_r(u),
\int_I G_r^Q(u,v)\bar X_r(v)\,dv
\right\rangle +
\langle \alpha_r+\iota_r(u),R_r(u)(\alpha_r+\iota_r(u))\rangle
\Big)\,dr.
\label{eq:app_S_alpha}
\end{align}

\noindent Applying Itô's formula to $\mathcal S_s^\alpha(u)$ gives $d\mathcal S_s^\alpha(u)=D_s^{u,\alpha}\,ds+dM_s^{u,\alpha}$, where $M^{u,\alpha}$ is a local martingale and where the drift is given by
\begin{align}
D_s^{u,\alpha}
=
&\;
\langle b_s^\alpha(u),K_s(u)X_s\rangle
+
\langle X_s,K_s(u)b_s^\alpha(u)\rangle
-
\langle X_s,\dot K_s(u)X_s\rangle
+
\langle \sigma_s^\alpha(u),K_s(u)\sigma_s^\alpha(u)\rangle
+
\langle \Sigma_s^0(u),K_s(u)\Sigma_s^0(u)\rangle_{\rm F}
\\
&
+2\langle \Sigma_s^0(u),Z_s^K(u)X_s\rangle_{\rm F}
\notag+
\left\langle
\bar b_s^\alpha(u),
\int_I \bar K_s(u,v)\bar X_s(v)\,dv
\right\rangle
+
\left\langle
\bar X_s(u),
\int_I \bar K_s(u,v)\bar b_s^\alpha(v)\,dv
\right\rangle
-
\left\langle
\bar X_s(u),
\int_I \dot{\bar K}_s(u,v)\bar X_s(v)\,dv
\right\rangle
\\
&
+
\int_I
\left\langle
\Sigma_s^0(u),
\bar K_s(u,v)\Sigma_s^0(v)
\right\rangle_{\rm F}\,dv
\notag+
\int_I
\left\langle
\Sigma_s^0(u),
Z_s^{\bar K}(u,v)\bar X_s(v)
\right\rangle_{\rm F}\,dv
+
\int_I
\left\langle
\bar X_s(u),
Z_s^{\bar K}(u,v)\Sigma_s^0(v)
\right\rangle_{\rm F}\,dv
+
2\langle -\dot Y_s(u),X_s\rangle
\\
&
+2\langle Y_s(u),b_s^\alpha(u)\rangle
+
2\langle Z_s^Y(u),\Sigma_s^0(u)\rangle_{\rm F}
\notag+
\langle X_s,Q_s(u)X_s\rangle
+
\left\langle
\bar X_s(u),
\int_I G_s^Q(u,v)\bar X_s(v)\,dv
\right\rangle
+
\langle \alpha_s+\iota_s(u),R_s(u)(\alpha_s+\iota_s(u))\rangle .
\end{align}

\noindent Here $\dot K,\dot{\bar K}$ and $\dot Y$ denote the drift components appearing in the corresponding backward equations, namely\\ 
$dK_s=-\dot K_s\,ds+Z_s^K\,dW_s^0$, $d\bar K_s=-\dot{\bar K}_s\,ds+Z_s^{\bar K}\,dW_s^0$, $dY_s=-\dot Y_s\,ds+Z_s^Y\,dW_s^0$.

\noindent Let $\mathfrak c_s^\alpha$ denote the running-cost integrand in \eqref{eq:conditional_cost}. Set $m_s=\bar X_s^\alpha$ and $\mathcal R_s^\alpha :=U_s(U)X_s^\alpha+(T_{V_s}m_s)(U)+\Gamma_s(U)$, with $\alpha_s^{\star,\alpha}=-O_s(U)^{-1}\mathcal R_s^\alpha$. The algebraic square completion is
\begin{align}
\langle\alpha_s,O_s(U)\alpha_s\rangle
 +2\langle\alpha_s,\mathcal R_s^\alpha\rangle
={}&\langle\alpha_s-\alpha_s^{\star,\alpha},
 O_s(U)(\alpha_s-\alpha_s^{\star,\alpha})\rangle
 -\langle\mathcal R_s^\alpha,O_s(U)^{-1}\mathcal R_s^\alpha\rangle.
\label{eq:app_square_completion}
\end{align}
The identification of these control terms in the preceding drift calculation is made after conditioning on $\mathcal F_s^0$ and integrating the label. In particular, whenever a multiplier is $\mathcal F_s^0\vee\sigma(U)$-measurable, we use $\mathbb E[X_s^\alpha\mid\mathcal F_s^0,U=u]=m_s(u)$ and
$\mathbb E[\alpha_s\mid\mathcal F_s^0,U=u]=\bar\alpha_s(u)$. Fubini's theorem and the kernel symmetries then allow the terms
in $\bar\alpha_s$ and $m_s$ to be collected. Substitution of the
three backward equations cancels all state-dependent non-square
terms. The remaining scalar term is $\ell_s$, cancelled by the
drift $-\ell_s$ of $\lambda$. To justify the expectation step, use the scalar semimartingale
\[
\begin{aligned}
\mathcal T_s^\alpha:={}&
 (X_s^\alpha)^\top K_s(U)X_s^\alpha
 +2Y_s(U)^\top X_s^\alpha
 +\langle m_s,T_{\bar K_s}m_s\rangle_{\mathsf H}
 +\lambda_s+\int_t^s\mathfrak c_q^\alpha\,dq.
\end{aligned}
\]
Applying the same Ito calculation, with the interaction quadratic term integrated in its two labels, gives $d\mathcal T_s^\alpha=\mathfrak D_s^\alpha ds+dM_s^\alpha$, where M is initially a local martingale and
\begin{equation}
\mathbb E[\mathfrak D_s^\alpha\mid\mathcal F_s^0]
 =\mathbb E\left[
 \langle\alpha_s-\alpha_s^{\star,\alpha},
 O_s(U)(\alpha_s-\alpha_s^{\star,\alpha})\rangle
 \,\middle|\,\mathcal F_s^0\right].
\label{eq:conditional_drift_cancellation}
\end{equation}
All drift terms are integrable. Indeed, K and its driver are bounded, $T_{\bar K}$ and $T_{\mathcal F(s,\bar K_s)}$ have bounded operator norms, and the terms involving $Z^K,Z^{\bar K},Z^Y$ are integrable by Cauchy--Schwarz and the boundedness of $\Sigma^0$. Also $m\in S^2_{\mathbb F^0}(\mathsf H)$: its conditional mean equation has drift $A+Bm+T_{G^B}m+C\bar\alpha$ and common-noise coefficient $\Sigma^0$, with $\bar\alpha\in H^2(\mathsf U)$. If $\varphi^Y$ denotes the full driver of Y, then $\varphi^Y\in H^2(\mathsf H)$ and the pointwise conditional representation and Doob's inequality give $\int_I\mathbb E\sup_{s\le T}|Y_s(u)|^2du \le C_T\mathbb E\int_0^T\|\varphi_s^Y\|_{\mathsf H}^2ds<\infty$. Thus $Y(U)$ and $X^\alpha$ are in $S^2$, and their product is of class (D). The bounds on K and $T_{\bar K}$ show the same property for the quadratic terms. The scalar component is of class (D), also in the enlarged filtration: common-noise martingales remain martingales after adjoining the independent private inputs. Finally the accumulated cost has integrable total variation. Consequently $\mathcal T^\alpha$ is of class (D). Since its drift has integrable total variation, M is a uniformly integrable martingale.

We may therefore take conditional expectations in the integrated
identity. Using the tower property yields
\begin{align}
\mathbb E[\mathcal T_T^\alpha-\mathcal T_t^\alpha
 \mid\mathcal F_t^0]
=\mathbb E\left[\int_t^T
 \langle\alpha_s-\alpha_s^{\star,\alpha},
 O_s(U)(\alpha_s-\alpha_s^{\star,\alpha})\rangle ds
 \,\middle|\,\mathcal F_t^0\right].
\label{eq:app_integrated_identity}
\end{align}
The terminal conditions and disintegration in U give $\mathbb E[\mathcal T_T^\alpha\mid\mathcal F_t^0]=J(t,\xi,\alpha)$, whereas
$\mathbb E[\mathcal T_t^\alpha\mid\mathcal F_t^0]=\mathcal V_t(\xi)$. This proves the fundamental relation.
\noindent Combining the previous identities yields the desired fundamental relation.
\end{proof}

\section{Proof of Proposition \ref{prop:fundamental_relation_discrete}}
\label{app:fundamental_relation_discrete}

\begin{proof}
Fix $t\in[0,T]$, $\xi^N\in\Ic_t^N$, and an admissible control 
$\alpha^N\in\Ac_t^N$. Throughout this proof, we work with the global 
formulation \eqref{eq:discrete_global_state}--\eqref{eq:discrete_cost_global} 
and write $\mathbf X_s := \mathbf X_s^N$, 
$\boldsymbol\alpha_s := \boldsymbol\alpha_s^N$ for brevity. We further 
introduce the shorthand notations
\[
\mathbf b_s := \mathbf A_s^N + \mathbf B_s^N\mathbf X_s + \mathbf C_s^N\boldsymbol\alpha_s,
\qquad
\boldsymbol\sigma_s^r := \mathbf D_s^{r,N} + \mathbf E_s^{r,N}\mathbf X_s 
+ \mathbf F_s^{r,N}\boldsymbol\alpha_s,
\qquad
\boldsymbol\sigma_s^0 := \boldsymbol\Sigma_s^{0,N},
\]
so that the global state equation reads $d\mathbf X_s = \mathbf b_s\,ds + \sum_{r=1}^N \boldsymbol\sigma_s^r\,dW_s^{r,N}  + \boldsymbol\sigma_s^0\,dW_s^0$.
\medskip
\noindent
\textbf{Step 1: The auxiliary process.}
Consider the process
\begin{align}
\mathcal S_s^{N,\alpha}
&:=
(\mathbf X_s)^\top P_s^N \mathbf X_s
+ 2(p_s^N)^\top \mathbf X_s
+ q_s^N
+ \int_t^s
\Big[
(\mathbf X_r)^\top \mathbf Q_r^N \mathbf X_r
+ (\boldsymbol\alpha_r + \boldsymbol\iota^N_r)^\top \mathbf R_r^N (\boldsymbol\alpha_r + \boldsymbol\iota^N_r)
\Big]\,dr.
\label{eq:app_S_discrete}
\end{align}
By the terminal conditions $P_T^N = \mathbf H^N$, $p_T^N = 0$, $q_T^N = 0$ 
and the definition of the cost \eqref{eq:discrete_cost_global},
\begin{equation}\label{eq:app_terminal_S}
\E\left[\mathcal S_T^{N,\alpha} \,\big|\, \Fc_t^0\right]
= J^N(t,\xi^N,\alpha^N).
\end{equation}
On the other hand,
\begin{equation}
\label{eq:app_initial_S}
\E\left[
\mathcal S_t^{N,\alpha}
\;\middle|\;
\Fc_t^0
\right]
=
\E\left[
(\xi^N)^\top P_t^N\xi^N
+
2(p_t^N)^\top\xi^N
\;\middle|\;
\Fc_t^0
\right]
+
q_t^N
=
\mathcal V_t^N(\xi^N).
\end{equation}
Therefore, to establish the fundamental relation 
\eqref{eq:fundamental_relation_discrete}, it suffices to compute 
$\mathbb{E}[\mathcal S_T^{N,\alpha} - \mathcal S_t^{N,\alpha} \mid \Fc_t^0]$.

\medskip
\noindent
\textbf{Step 2: Itô's formula on $\mathcal S^{N,\alpha}$.}
We apply Itô's formula to each of the three building blocks of 
$\mathcal S^{N,\alpha}$.

\smallskip
\noindent
\textit{Quadratic term.}
By the product rule applied to $(\mathbf X_s)^\top P_s^N \mathbf X_s$, 
the dynamics \eqref{eq:discrete_backward_dynamics} of $P^N$, and the 
state equation,
\begin{align}
d\big[(\mathbf X_s)^\top P_s^N \mathbf X_s\big]
=
&\Big[
2(\mathbf b_s)^\top P_s^N \mathbf X_s
- (\mathbf X_s)^\top \mathscr F_s^{P,N} \mathbf X_s
+ \sum_{r=1}^N (\boldsymbol\sigma_s^r)^\top P_s^N \boldsymbol\sigma_s^r
+ \langle \boldsymbol\sigma_s^{0}, P_s^N\boldsymbol\sigma_s^{0}\rangle_{\mathrm F} +2\langle \mathbf X_s, Z_s^{P,N}\boldsymbol\sigma_s^0\rangle
\Big]\,ds
+ d\mathcal M_s^{(1)},
\label{eq:app_dXPX}
\end{align}
where $\mathcal M^{(1)}$ is a local martingale and where we used: The drift contribution $2(\mathbf b_s)^\top P_s^N \mathbf X_s$ from $d\mathbf X_s$; the drift contribution $-(\mathbf X_s)^\top \mathscr F_s^{P,N} \mathbf X_s$ from $dP_s^N$; the quadratic variation contributions from the idiosyncratic noises  $dW^{r,N}$, namely $\sum_r (\boldsymbol\sigma_s^r)^\top P_s^N \boldsymbol\sigma_s^r$; the quadratic variation contribution from the common noise $dW^0$,  namely $\langle\boldsymbol\sigma_s^0,P_s^N\boldsymbol\sigma_s^0\rangle_{\rm F}$; the cross-variation between $dP_s^N$ and the common-noise component  of $d\mathbf X_s$, namely $2(\mathbf X_s)^\top Z_s^{P,N} \boldsymbol\sigma_s^0$.

\smallskip
\noindent
\textit{Linear term.}
Similarly, applying Itô's formula to $2(p_s^N)^\top \mathbf X_s$,
\begin{equation}\label{eq:app_dpX}
d\big[2(p_s^N)^\top \mathbf X_s\big]
=
\Big[
-2(\mathscr F_s^{p,N})^\top \mathbf X_s
+ 2(p_s^N)^\top \mathbf b_s
+ 2\langle Z_s^{p,N},\boldsymbol\Sigma_s^{0,N}\rangle_{\rm F}
\Big]\,ds
+ d\mathcal M_s^{(2)},
\end{equation}
where $\mathcal M^{(2)}$ is a local martingale.

\smallskip
\noindent
\textit{Constant term.}
By \eqref{eq:discrete_backward_dynamics},
\begin{equation}\label{eq:app_dq}
dq_s^N = -\mathscr F_s^{q,N}\,ds + Z_s^{q,N}\,dW_s^0.
\end{equation}

\medskip
\noindent
\textbf{Step 3: Total drift of $\mathcal S^{N,\alpha}$.}
Combining \eqref{eq:app_dXPX}--\eqref{eq:app_dq} with the running cost in 
\eqref{eq:app_S_discrete}, the drift of $\mathcal S_s^{N,\alpha}$ is
\begin{align}
\mathcal D_s^{N,\alpha}
:=
&\;2(\mathbf b_s)^\top P_s^N \mathbf X_s
- (\mathbf X_s)^\top \mathscr F_s^{P,N} \mathbf X_s
+ \sum_{r=1}^N (\boldsymbol\sigma_s^r)^\top P_s^N \boldsymbol\sigma_s^r
+ \langle\boldsymbol\sigma_s^0,P_s^N\boldsymbol\sigma_s^0\rangle_{\rm F}
+ 2(\mathbf X_s)^\top Z_s^{P,N} \boldsymbol\sigma_s^0
\notag\\
&-2(\mathscr F_s^{p,N})^\top \mathbf X_s
+ 2(p_s^N)^\top \mathbf b_s
+ 2\langle Z_s^{p,N},\boldsymbol\Sigma_s^{0,N}\rangle_{\rm F}
- \mathscr F_s^{q,N}
+ (\mathbf X_s)^\top \mathbf Q_s^N \mathbf X_s
+ (\boldsymbol\alpha_s + \boldsymbol\iota_s^N)^\top \mathbf R_s^N (\boldsymbol\alpha_s + \boldsymbol\iota_s^N).
\label{eq:app_drift_total}
\end{align}

\medskip
\noindent
\textbf{Step 4: Expansion in $\mathbf X_s$ and $\boldsymbol\alpha_s$.}
We now expand the products involving $\mathbf b_s$ and $\boldsymbol\sigma_s^r$ 
in terms of $\mathbf X_s$, $\boldsymbol\alpha_s$, and the additive terms.

\smallskip
\noindent
\textit{Drift contribution :} $2(\mathbf b_s)^\top P_s^N \mathbf X_s
=
2(\mathbf A_s^N)^\top P_s^N \mathbf X_s
+ 2(\mathbf X_s)^\top (\mathbf B_s^N)^\top P_s^N \mathbf X_s
+ 2(\boldsymbol\alpha_s)^\top (\mathbf C_s^N)^\top P_s^N \mathbf X_s$.

\smallskip
\noindent
\textit{Idiosyncratic contribution :} $\sum_{r=1}^N (\boldsymbol\sigma_s^r)^\top P_s^N \boldsymbol\sigma_s^r
=
\sum_{r=1}^N
\big(
\mathbf D_s^{r,N} + \mathbf E_s^{r,N}\mathbf X_s + \mathbf F_s^{r,N}\boldsymbol\alpha_s
\big)^\top
P_s^N
\big(
\mathbf D_s^{r,N} + \mathbf E_s^{r,N}\mathbf X_s + \mathbf F_s^{r,N}\boldsymbol\alpha_s
\big)$.

\bigskip
\noindent Expanding the square and collecting terms by their dependence in 
$(\mathbf X_s,\boldsymbol\alpha_s)$, we have : A quadratic term in $\mathbf X_s$:  $(\mathbf X_s)^\top \big[\sum_r (\mathbf E_s^{r,N})^\top P_s^N \mathbf E_s^{r,N}\big] \mathbf X_s$; a quadratic term in $\boldsymbol\alpha_s$:  $(\boldsymbol\alpha_s)^\top \big[\sum_r (\mathbf F_s^{r,N})^\top P_s^N \mathbf F_s^{r,N}\big] \boldsymbol\alpha_s$; a cross term $\mathbf X_s\boldsymbol\alpha_s$:\\
$2(\boldsymbol\alpha_s)^\top \big[\sum_r (\mathbf F_s^{r,N})^\top P_s^N \mathbf E_s^{r,N}\big] \mathbf X_s$; linear terms in $\mathbf X_s$: 
$2(\mathbf X_s)^\top \sum_r (\mathbf E_s^{r,N})^\top P_s^N \mathbf D_s^{r,N}$; linear terms in $\boldsymbol\alpha_s$: 
$2(\boldsymbol\alpha_s)^\top \sum_r (\mathbf F_s^{r,N})^\top P_s^N \mathbf D_s^{r,N}$; and constant term: $\sum_r (\mathbf D_s^{r,N})^\top P_s^N \mathbf D_s^{r,N}$.

\bigskip
\noindent
\textit{Linear term contribution :} $2(p_s^N)^\top \mathbf b_s
=
2(p_s^N)^\top \mathbf A_s^N
+ 2(\mathbf X_s)^\top (\mathbf B_s^N)^\top p_s^N
+ 2(\boldsymbol\alpha_s)^\top (\mathbf C_s^N)^\top p_s^N$.

\smallskip
\noindent
\textit{Running control cost :} $(\boldsymbol\alpha_s + \boldsymbol\iota_s^N)^\top \mathbf R_s^N (\boldsymbol\alpha_s + \boldsymbol\iota_s^N)
=
(\boldsymbol\alpha_s)^\top \mathbf R_s^N \boldsymbol\alpha_s
+ 2(\boldsymbol\alpha_s)^\top \mathbf R_s^N \boldsymbol\iota_s^N
+ (\boldsymbol\iota_s^N)^\top \mathbf R_s^N \boldsymbol\iota_s^N$.

\medskip
\noindent
\textbf{Step 5: Collecting the $\boldsymbol\alpha_s$-dependent terms and 
square completion.}
Combining the $\boldsymbol\alpha_s$-dependent contributions from Step 4, 
the part of $\mathcal D_s^{N,\alpha}$ depending on $\boldsymbol\alpha_s$ is
\begin{align}
\chi_s^N(\boldsymbol\alpha_s)
:=
&\;(\boldsymbol\alpha_s)^\top \mathcal O_s^N \boldsymbol\alpha_s
+ 2(\boldsymbol\alpha_s)^\top \big( \mathcal U_s^N \mathbf X_s + \boldsymbol\Gamma_s^N \big),
\label{eq:app_chi_discrete}
\end{align}
where $\mathcal O_s^N$, $\mathcal U_s^N$, $\boldsymbol\Gamma_s^N$ are 
defined in \eqref{eq:def_O_discrete}--\eqref{eq:def_Gamma_discrete}. 
By Remark \ref{rem:O_inverse_discrete}, $\mathcal O_s^N$ is invertible 
and positive-definite. Completing the square:
\begin{align}
\chi_s^N(\boldsymbol\alpha_s)
=
&\;\Big(\boldsymbol\alpha_s - \widehat{\boldsymbol\alpha}_s^{N,\alpha}\Big)^\top
\mathcal O_s^N
\Big(\boldsymbol\alpha_s - \widehat{\boldsymbol\alpha}_s^{N,\alpha}\Big)
-\big(\mathcal U_s^N \mathbf X_s + \boldsymbol\Gamma_s^N\big)^\top
(\mathcal O_s^N)^{-1}
\big(\mathcal U_s^N \mathbf X_s + \boldsymbol\Gamma_s^N\big),
\label{eq:app_square_completion_discrete}
\end{align}
where 
$\widehat{\boldsymbol\alpha}_s^{N,\alpha} 
:= -(\mathcal O_s^N)^{-1}(\mathcal U_s^N \mathbf X_s + \boldsymbol\Gamma_s^N)$.

\medskip
\noindent
\textbf{Step 6: Identification of the backward drivers.}
We now collect the remaining terms in $\mathcal D_s^{N,\alpha}$ (those not depending on $\boldsymbol\alpha_s$) and group them by their dependence in $\mathbf X_s$.

\medskip
\noindent
\textit{Quadratic in $\mathbf X_s$ :} $(\mathbf X_s)^\top
\Big[
\mathbf Q_s^N
+ (\mathbf B_s^N)^\top P_s^N + P_s^N \mathbf B_s^N
+ \sum_{r=1}^N (\mathbf E_s^{r,N})^\top P_s^N \mathbf E_s^{r,N}
- (\mathcal U_s^N)^\top (\mathcal O_s^N)^{-1} \mathcal U_s^N
- \mathscr F_s^{P,N}
\Big]
\mathbf X_s$,\\

\noindent where the $-(\mathcal U_s^N)^\top (\mathcal O_s^N)^{-1} \mathcal U_s^N$ 
term comes from the square-completion correction in 
\eqref{eq:app_square_completion_discrete}. Since $P^N$ solves 
\eqref{eq:discrete_Riccati_P}, the expression in brackets is identically 
zero.

\medskip
\noindent
\textit{Linear in $\mathbf X_s$ :} $2(\mathbf X_s)^\top
\Big[
P_s^N \mathbf A_s^N
+ Z_s^{P,N} \boldsymbol\sigma_s^0
+ (\mathbf B_s^N)^\top p_s^N
+ \sum_{r=1}^N (\mathbf E_s^{r,N})^\top P_s^N \mathbf D_s^{r,N}
- (\mathcal U_s^N)^\top (\mathcal O_s^N)^{-1} \boldsymbol\Gamma_s^N
- \mathscr F_s^{p,N}
\Big]$,

\noindent where $\boldsymbol\sigma_s^0 = \boldsymbol\Sigma_s^{0,N}$. Since $p^N$ 
solves \eqref{eq:discrete_linear_p}, the expression in brackets is 
identically zero.

\medskip
\noindent
\textit{Constant :} $2(\mathbf A_s^N)^\top p_s^N
+ \sum_{r=1}^N (\mathbf D_s^{r,N})^\top P_s^N \mathbf D_s^{r,N}
+ \langle \boldsymbol\Sigma_s^{0,N}, P_s^N\boldsymbol\Sigma_s^{0,N}\rangle_{\mathrm F} +2\langle Z_s^{p,N},\boldsymbol\Sigma_s^{0,N}\rangle_{\mathrm F}
+ (\boldsymbol\iota_s^N)^\top \mathbf R_s^N \boldsymbol\iota_s^N
- (\boldsymbol\Gamma_s^N)^\top (\mathcal O_s^N)^{-1} \boldsymbol\Gamma_s^N
- \mathscr F_s^{q,N}$

\noindent Since $q^N$ solves \eqref{eq:discrete_scalar_q}, this expression is 
identically zero.

\medskip
\noindent
\textbf{Step 7: Conclusion.}
Combining Steps 5 and 6, the total drift of $\mathcal S^{N,\alpha}$ 
reduces to
\begin{equation}\label{eq:app_final_drift}
\mathcal D_s^{N,\alpha}
=
\big(\boldsymbol\alpha_s - \widehat{\boldsymbol\alpha}_s^{N,\alpha}\big)^\top
\mathcal O_s^N
\big(\boldsymbol\alpha_s - \widehat{\boldsymbol\alpha}_s^{N,\alpha}\big).
\end{equation}
Integrating from $t$ to $T$, taking conditional expectation with respect 
to $\Fc_t^0$, and using the local martingale parts from 
\eqref{eq:app_dXPX}, \eqref{eq:app_dpX}, \eqref{eq:app_dq}, with the fact that process $\mathcal S^{N,\alpha}$ is of class $(D)$: $P^N$ is bounded, $p^N$ and $\mathbf X$ belong to $S^2$, $q^N$ is of class $(D)$, and the running cost has integrable total variation. The completed-square drift is integrable. Hence the total local martingale is uniformly integrable, and taking conditional expectation gives
\[
\E\left[\mathcal S_T^{N,\alpha} - \mathcal S_t^{N,\alpha} 
\,\big|\, \Fc_t^0\right]
=
\E\left[\int_t^T 
\big(\boldsymbol\alpha_s - \widehat{\boldsymbol\alpha}_s^{N,\alpha}\big)^\top
\mathcal O_s^N
\big(\boldsymbol\alpha_s - \widehat{\boldsymbol\alpha}_s^{N,\alpha}\big)\,ds
\,\Big|\, \Fc_t^0\right].
\]
Combining with \eqref{eq:app_terminal_S} and \eqref{eq:app_initial_S} 
yields the fundamental relation \eqref{eq:fundamental_relation_discrete}.
\end{proof}

\section{Proof of Lemma~\ref{lem:R1}}
\label{app:R1}

For notational simplicity, we suppress the dependence on $(t,\omega)$. We use $T_G$ for the integral operator associated with a kernel $G$, and $M_a$ for multiplication by $a$. We repeatedly use $\|T_G\|_{\mathrm{op}} \leq \|T_G\|_{\mathrm{HS}} = \|G\|_{L^2(I^2)}$. 

\noindent Set \[a_t^N := \sum_{\Phi\in\{A,B,C,D,E,F,\iota,R,Q\}} \|\Phi_t^N-\Phi_t\|_{L^2(I)}, \qquad g_t^N := \sum_{G\in\{G^B,G^E,G^Q\}} \|G_t^N-G_t\|_{L^2(I^2)}, \qquad b_t^N := \|\mathbf1_{\mathcal D_N}\bar K_t\|_{L^2(I^2)}\] 
Assumption~\ref{ass:reg_consistency} gives $\operatorname {ess\,sup}_{(t,\omega)}a_t^N \leq C\delta_N^K$, $\operatorname {ess\,sup}_{(t,\omega)}g_t^N \leq C\varepsilon_N^{\mathrm{ker}}$ and by Lemma~\ref{lem:diagonal_band_modulus}, $\E\sup_{t\in[0,T]}|b_t^N|^2 \leq (d_N^{\mathrm{diag}})^2$.

\subsection{Local Riccati driver}

Write
\begin{align*}
\mathfrak F_t^{K,\mathrm{tot},N}
-
\mathcal F^K(t,K_t)
={}&
\Big[
\mathfrak F^{K,\mathrm{loc},N}(t,K_t^N)
-
\mathfrak F^{K,\mathrm{loc},N}(t,K_t)
\Big]
+
\Big[
\mathfrak F^{K,\mathrm{loc},N}(t,K_t)
-
\mathcal F^K(t,K_t)
\Big]
+
\mathfrak d_t^{K,N}.
\end{align*}
On the deterministic bounded set containing $K^N$ and $K$, the local Riccati driver is uniformly Lipschitz. Hence $\left\| \mathfrak F^{K,\mathrm{loc},N}(t,K_t^N) - \mathfrak F^{K,\mathrm{loc},N}(t,K_t) \right\|_{L^2(I)} \leq C\|K_t^N-K_t\|_{L^2(I)}$. The second bracket is controlled by $Ca_t^N$, using the resolvent and the uniform coercivity of $O^N$ and $O$. Finally, Lemma~\ref{lem:diagonal_remainder_bounds} gives $\operatorname*{ess\,sup}_{(t,\omega)} \|\mathfrak d_t^{K,N}\|_{L^2(I)} \leq C\sqrt{h_N}$. Thus item \textup{(i)} holds with $\rho_t^{K,N} := Ca_t^N+\|\mathfrak d_t^{K,N}\|_{L^2(I)}$, and $\E\int_0^T|\rho_t^{K,N}|^2dt \leq Cr_N^2$.

\subsection{Projected interaction Riccati driver}

Set $\Delta\bar K_t^{[N]} := \bar K_t^N-\bar K_t^{[N]}$. Since $\bar K^N$ vanishes on $\mathcal D_N$, $\|\bar K_t^N-\bar K_t\|_{L^2(I^2)} \leq \|\Delta\bar K_t^{[N]}\|_{L^2(I^2)} + b_t^N$. We first consider affine terms. For example, \[(B_t^N)^\top\bar K_t^N-B_t^\top\bar K_t
={}
(B_t^N)^\top(\bar K_t^N-\bar K_t)
+
(B_t^N-B_t)^\top\bar K_t\] 
The first term is bounded by $C\left( \|\Delta\bar K_t^{[N]}\|_{L^2(I^2)} + b_t^N \right)$. For the second one, the row estimate in
Assumption~\ref{ass:reg_consistency}\textup{(A-row/col)} gives
\[
\|(B_t^N-B_t)^\top\bar K_t\|_{L^2(I^2)}^2
\leq
\|B_t^N-B_t\|_{L^2(I)}^2
\operatorname*{ess\,sup}_{u\in I}
\int_I|\bar K_t(u,v)|^2dv.
\]
Terms acting in the second label variable are controlled by the
corresponding column estimate.For terms containing both a local Riccati component and an interaction kernel, we use the decomposition $K_t^NG_t^N-K_tG_t
=
K_t^N(G_t^N-G_t)
+
(K_t^N-K_t)G_t$. This ordering is important. The first term is controlled because $K^N$ is pointwise bounded, while the second is controlled because the limiting kernel $G$ is uniformly bounded. No pointwise bound on the
discrete kernel $G^N$ is needed. 

\noindent Kernel-composition terms are treated using, for example, 
\[
(T_{G_t^N})^\ast T_{\bar K_t^N}
-
(T_{G_t})^\ast T_{\bar K_t}
=
(T_{G_t^N}-T_{G_t})^\ast T_{\bar K_t^N}
+
(T_{G_t})^\ast
(T_{\bar K_t^N}-T_{\bar K_t})
\]
First term gives $\|
(T_{G_t^N}-T_{G_t})^\ast T_{\bar K_t^N}
\|_{\mathrm{HS}}
\leq
\|G_t^N-G_t\|_{L^2(I^2)}
\|T_{\bar K_t^N}\|_{\mathrm{op}}$, the second one is bounded by $C\left( \|\Delta\bar K_t^{[N]}\|_{L^2(I^2)} + b_t^N \right)$.

\noindent We next distinguish the full auxiliary kernel from the exact
off-diagonal feedback kernel. Set $\widetilde V_t^N
:=
(C_t^N)^\top\bar K_t^N
+
(F_t^N)^\top K_t^NG_t^{E,N}$. The exact embedded finite-dimensional gain is $V_t^N
:=
\Pi_N^\circ\widetilde V_t^N$, whereas
$V_t
=
C_t^\top\bar K_t
+
F_t^\top K_tG_t^E$. The full kernel is also the convenient object for comparing the backward drivers. For $i\ne j$, the exact off-diagonal driver is 
\[
\mathfrak F_{ij}^{\bar K,N}
=\Psi_{ij}^N
 -(U_i^N)^\top(O_i^N)^{-1}\widetilde V_{ij}^N
 -(\widetilde V_{ji}^N)^\top(O_j^N)^{-1}U_j^N
 -h_N\sum_{\ell=1}^N
 (\widetilde V_{\ell i}^N)^\top(O_\ell^N)^{-1}
 \widetilde V_{\ell j}^N
\]
Here $\Psi^N$ is \eqref{eq:Psi_continuum} evaluated at the embedded discrete coefficients, $K^N$ and $\bar K^N$, with integrals replaced by the exact cell sums. Indeed, the summands $\ell=i$ and $\ell=j$ combine with the two mixed terms because $\widehat U_i^N=U_i^N+h_N\widetilde V_{ii}^N$. Thus the exact embedded interaction driver is the off-diagonal projection of this full expression. 

\noindent Consequently,
$V_t^N-V_t
=
\Pi_N^\circ
\big(
\widetilde V_t^N-V_t
\big)
-
\mathbf 1_{\mathcal D_N}V_t$. The unprojected difference admits the decomposition
\begin{align*}
\widetilde V_t^N-V_t
={}&
(C_t^N)^\top(\bar K_t^N-\bar K_t)
+
(C_t^N-C_t)^\top\bar K_t
+
(F_t^N)^\top K_t^N
(G_t^{E,N}-G_t^E)
+
\big(
(F_t^N)^\top K_t^N-F_t^\top K_t
\big)G_t^E.
\end{align*}
The first term is bounded by $C\left(
\|\Delta\bar K_t^{[N]}\|_{L^2(I^2)}
+
b_t^N
\right)$. The second term is controlled by the row estimate in Assumption~\ref{ass:reg_consistency}\textup{(A-row/col)}. The third term is controlled by the pointwise bound on $K^N$ and the kernel consistency error, while the fourth one is controlled by the boundedness of the limiting kernel $G^E$. Moreover, $\mathbf 1_{\mathcal D_N}V_t = C_t^\top \mathbf 1_{\mathcal D_N}\bar K_t + F_t^\top K_t \mathbf 1_{\mathcal D_N}G_t^E$. Since $G^E$ is uniformly bounded and $|\mathcal D_N|=h_N$,
$\|
\mathbf 1_{\mathcal D_N}V_t
\|_{L^2(I^2)}
\leq
Cb_t^N+C\sqrt{h_N}$. We therefore obtain
\begin{align}
\label{eq:R1_V_difference}
\|V_t^N-V_t\|_{L^2(I^2)}
\leq{}&
C\|\Delta\bar K_t^{[N]}\|_{L^2(I^2)}
+
C\|K_t^N-K_t\|_{L^2(I)}
+
Cb_t^N
+
\rho_t^{V,N},
\end{align}
where $\E\int_0^T|\rho_t^{V,N}|^2dt
\leq
C\left(
(\delta_N^K)^2
+
(\varepsilon_N^{\mathrm{ker}})^2
+
h_N
\right)
\leq
Cr_N^2$.  The preceding decomposition also gives \[\|\widetilde V_t^N-V_t\|_{L^2(I^2)} \le C\|\Delta\bar K_t^{[N]}\|_{L^2(I^2)} +C b_t^N+C g_t^N+C\delta_N^K R_t\]
where $R_t=1+\|\bar K_t\|_{\rm row}+\|\bar K_t\|_{\rm col}$ and $\mathbb E\sup_t R_t^2<\infty$. The full auxiliary operators $T_{\widetilde V_t^N}$ are uniformly bounded, since $T_{\widetilde V_t^N}=M_{(C_t^N)^\top}T_{\bar K_t^N} +M_{(F_t^N)^\top K_t^N}T_{G_t^{E,N}}$. Set $M_t^N=M_{(O_t^N)^{-1}}$ and $M_t=M_{O_t^{-1}}$.
Then
\[
\begin{aligned}
T_{\widetilde V_t^N}^*M_t^NT_{\widetilde V_t^N}
 -T_{V_t}^*M_tT_{V_t}
={}&(T_{\widetilde V_t^N}-T_{V_t})^*M_t^NT_{\widetilde V_t^N} +T_{V_t}^*M_t^N(T_{\widetilde V_t^N}-T_{V_t}) +T_{V_t}^*(M_t^N-M_t)T_{V_t}.
\end{aligned}
\]
The first two terms are bounded in Hilbert--Schmidt norm by
$C\|\widetilde V_t^N-V_t\|_{L^2(I^2)}$. The last term is bounded by
$C\|(O_t^N)^{-1}-O_t^{-1}\|_{L^2(I)}\|V_t\|_{\rm row}$,
hence by $C\delta_N^K R_t$.
For the mixed term, write
\[
\begin{aligned}
M_{U_t^N}^*M_t^NT_{\widetilde V_t^N}-M_{U_t}^*M_tT_{V_t}
={}&M_{U_t^N}^*M_t^N(T_{\widetilde V_t^N}-T_{V_t})+(M_{U_t^N}^*M_t^N-M_{U_t}^*M_t)T_{V_t}.
\end{aligned}
\]
The same bounds apply; the other mixed term is its adjoint. Combining with the affine estimates and applying $\Pi_N^\circ$ gives item (ii) with $\rho_t^{\bar K,N}=C(b_t^N+g_t^N+\delta_N^K R_t)$ and $\mathbb E\int_0^T|\rho_t^{\bar K,N}|^2dt\le Cr_N^2$.

\subsection{Linear driver} 
Using the full auxiliary kernel, the exact rescaled linear driver is
\[
\begin{aligned}
\mathfrak F_t^{Y,N}={}&
 K_t^NA_t^N+T_{\bar K_t^N}A_t^N
 +Z_t^{K,N}\Sigma_t^{0,N}
 +T_{Z_t^{\bar K,N}}\Sigma_t^{0,N}
 +(B_t^N)^\top Y_t^N+(T_{G_t^{B,N}})^*Y_t^N
 +(E_t^N)^\top K_t^ND_t^N
 +(T_{G_t^{E,N}})^*(K_t^ND_t^N)\\
 &-(U_t^N)^\top(O_t^N)^{-1}\Gamma_t^N
 -(T_{\widetilde V_t^N})^*((O_t^N)^{-1}\Gamma_t^N).
\end{aligned}
\]
The diagonal contribution of the last operator combines with $U^N$ exactly as in the interaction Riccati driver. Thus no additional unidentified diagonal source is hidden in $f^N$. Write the two drivers as $\mathfrak F_t^{Y,N}
=
\mathcal A_t^NY_t^N+f_t^N$ and 
$\mathcal F_t^Y
=
\mathcal A_tY_t+f_t$. Then
\[
\mathfrak F_t^{Y,N}-\mathcal F_t^Y
=
\mathcal A_t^N(Y_t^N-Y_t)
+
(\mathcal A_t^N-\mathcal A_t)Y_t
+
(f_t^N-f_t).
\]
The uniform operator estimate gives $\|\mathcal A_t^N(Y_t^N-Y_t)\|_{L^2(I)}
\leq
C\|Y_t^N-Y_t\|_{L^2(I)}$. The pointwise products involving the limiting $Y$ are placed in the
remainder. For example, $\|(K_t^N-K_t)Y_t\|_{L^2(I)}
\leq
\|K_t^N-K_t\|_{L^2(I)}
\|Y_t\|_{L^\infty(I)}$.
Although the factor $\|Y_t\|_{L^\infty(I)}$ is random, Lemma~\ref{lem:K_uniform} gives the deterministic estimate $\operatorname*{ess\,sup}_{(t,\omega)}
\|K_t^N-K_t\|_{L^2(I)}
\leq
C\delta_N^K$. Therefore,
\begin{align}
\label{eq:R1_random_Y_product}
\E\int_0^T
\|(K_t^N-K_t)Y_t\|_{L^2(I)}^2dt
&\leq
C(\delta_N^K)^2
\E\int_0^T
\|Y_t\|_{L^\infty(I)}^2dt
\leq
C(\delta_N^K)^2.
\end{align}
The same argument applies to local coefficient errors and to the differences of $U$, $\widehat U$, and $O^{-1}$. Kernel terms acting on $Y$ are controlled in operator norm. For example, $\|(T_{G_t^{B,N}}-T_{G_t^B})^\ast Y_t\|_{L^2(I)} \leq \varepsilon_N^{\mathrm{ker}}\|Y_t\|_{L^2(I)}$. The deterministic $S^\infty(L^2)$-bound on $Y$ from
Lemma~\ref{lem:remaining_bounds} gives the required squared-integrable
remainder.

\noindent It remains to treat the common-noise terms in the source. Define $e_N^\Sigma(u)
:=
\operatorname*{ess\,sup}_{(t,\omega)}
|\Sigma_t^{0,N}(u)-\Sigma_t^0(u)|$. Then Assumption~\ref{ass:reg_consistency}\textup{(A-Sigma)} gives $\|e_N^\Sigma\|_{L^2(I)}
\leq
\varepsilon_N^\Sigma$.
Using the labelwise BMO estimate of Lemma~\ref{lem:ZK_BMO},
\begin{align}\label{eq:R1_ZK_Sigma}
&
\E\int_0^T
\|Z_t^K(\Sigma_t^{0,N}-\Sigma_t^0)\|_{L^2(I)}^2dt
\leq
\int_I
|e_N^\Sigma(u)|^2
\E\int_0^T|Z_t^K(u)|^2dt\,du
\leq
C(\varepsilon_N^\Sigma)^2.
\end{align}
Moreover, $\|(Z_t^{K,N}-Z_t^K)\Sigma_t^{0,N}\|_{L^2(I)} \leq C\|Z_t^{K,N}-Z_t^K\|_{L^2(I)}$. For the interaction martingale term,
\begin{align*}
&
T_{Z_t^{\bar K,N}}\Sigma_t^{0,N}
-
T_{Z_t^{\bar K}}\Sigma_t^0
=
T_{Z_t^{\bar K,N}-Z_t^{\bar K}}\Sigma_t^{0,N}
+
T_{Z_t^{\bar K}}
(\Sigma_t^{0,N}-\Sigma_t^0).
\end{align*}
Hence $\|
T_{Z_t^{\bar K,N}-Z_t^{\bar K}}\Sigma_t^{0,N}
\|_{L^2(I)}
\leq
C\|Z_t^{\bar K,N}-Z_t^{\bar K}\|_{L^2(I^2)}$, whereas $\|T_{Z_t^{\bar K}}
(\Sigma_t^{0,N}-\Sigma_t^0)
\|_{L^2(I)}
\leq
\|Z_t^{\bar K}\|_{L^2(I^2)}
\|\Sigma_t^{0,N}-\Sigma_t^0\|_{L^2(I)}$. Consequently, $\E\int_0^T
\|
T_{Z_t^{\bar K}}
(\Sigma_t^{0,N}-\Sigma_t^0)
\|_{L^2(I)}^2dt
\leq
C(\varepsilon_N^\Sigma)^2$. All the remaining source terms are combinations of the preceding local-multiplier, kernel-operator, and Hilbert--Schmidt estimates.  

\noindent Collecting them proves item \textup{(iii)} with $\E\int_0^T|\rho_t^{Y,N}|^2dt
\leq
Cr_N^2$.

\subsection{Integrated scalar driver}

Let $\Gamma_t^N
=
(C_t^N)^\top Y_t^N
+
(F_t^N)^\top K_t^ND_t^N
+
R_t^N\iota_t^N$ denote the embedded discrete affine gain, and let $\Gamma_t$ be its continuum counterpart. Direct expansion gives $\|\Gamma_t^N-\Gamma_t\|_{L^2(I)}
\leq
C\|Y_t^N-Y_t\|_{L^2(I)}
+
C\|K_t^N-K_t\|_{L^2(I)}
+
\rho_t^{\Gamma,N}$, where the products between local coefficient errors and $Y$ are estimated as in \eqref{eq:R1_random_Y_product}, so that $\E\int_0^T|\rho_t^{\Gamma,N}|^2dt
\leq
Cr_N^2$.

\noindent For the quadratic gain term, the trilinear decomposition gives
\begin{align*}
&
\left|
\int_I
(\Gamma_t^N)^\top(O_t^N)^{-1}\Gamma_t^N\,du
-
\int_I
\Gamma_t^\top O_t^{-1}\Gamma_t\,du
\right|
\leq
C\|\Gamma_t^N-\Gamma_t\|_{L^2(I)}
+
\|(O_t^N)^{-1}-O_t^{-1}\|_{L^2(I)}
\|\Gamma_t\|_{L^4(I)}^2.
\end{align*}
Since $\|\Gamma_t\|_{L^4(I)}^2 \leq \|\Gamma_t\|_{L^\infty(I)} \|\Gamma_t\|_{L^2(I)}$, the deterministic bound $\operatorname*{ess\,sup}_{(t,\omega)} \|\Gamma_t\|_{L^2(I)}\le C$, the $S^2(L^\infty)$-assumption on $Y$, and Lemma~\ref{lem:K_uniform} show that the last term is square-integrable with contribution bounded by $C(\delta_N^K)^2$. No fourth probabilistic moment of $Y$ is needed. The common-noise martingale term satisfies
\begin{align*}
&
\left|
\int_I
\left\langle
Z_t^Y,
\Sigma_t^{0,N}-\Sigma_t^0
\right\rangle_{\mathrm F}du
\right|
\leq
\|Z_t^Y\|_{L^2(I)}
\|\Sigma_t^{0,N}-\Sigma_t^0\|_{L^2(I)}
\leq
\varepsilon_N^\Sigma
\|Z_t^Y\|_{L^2(I)}.
\end{align*}
The term containing $Z_t^{Y,N}-Z_t^Y$ is bounded by $C\|Z_t^{Y,N}-Z_t^Y\|_{L^2(I)}$.  Finally, the double common-noise term is written in operator form: $\langle \Sigma_t^{0,N},
T_{\bar K_t^N}\Sigma_t^{0,N}
\rangle_{L^2(I)}
-
\langle
\Sigma_t^0,
T_{\bar K_t}\Sigma_t^0
\rangle_{L^2(I)}$. Adding and subtracting the two intermediate terms gives the bound $C\|\bar K_t^N-\bar K_t\|_{L^2(I^2)}
+
C\|\Sigma_t^{0,N}-\Sigma_t^0\|_{L^2(I)}$. The remaining terms $\langle D,KD\rangle_{\mathrm F}$, $\langle\Sigma^0,K\Sigma^0\rangle_{\mathrm F}$, $2A^\top Y$, $\iota^\top R\iota$ are handled by the same direct expansions, using boundedness of the
coefficients and Cauchy--Schwarz. We conclude that
\begin{align*}
|\ell_t^N-\ell_t|
\leq{}&
C\|K_t^N-K_t\|_{L^2(I)}
+
C\|\bar K_t^N-\bar K_t\|_{L^2(I^2)}
+
C\|Y_t^N-Y_t\|_{L^2(I)}
+
C\|Z_t^{Y,N}-Z_t^Y\|_{L^2(I)}
+
\rho_t^{q,N},
\end{align*}
with $\E\int_0^T|\rho_t^{q,N}|^2dt
\leq
Cr_N^2$. This proves item \textup{(iv)} and completes the proof of Lemma~\ref{lem:R1}.

\section*{Acknowledgements}
The author is deeply grateful to Samy Mekkaoui for numerous insightful discussions, valuable suggestions, and careful comments on earlier versions of this work, which greatly contributed to improving the paper. The author would also like to thank Imane Fakir for her comments and  unwavering support throughout the preparation of this work.

\bibliographystyle{plain}
\bibliography{SEbib}

\end{document}